        \title{Coarse flow spaces for relatively hyperbolic groups}
       \author{Bartels, A.}
        \email{a.bartels@wwu.de}
       \address{Westf\"alische Wilhelms-Universit\"at M\"unster\\
               Mathematisches Institut\\
               Einsteinstr.~62,
               D-48149 M\"unster, Germany} 
         \date{July 2016}
         \keywords{Farrell-Jones Conjecture, 
            $K$- and $L$-theory of group rings}
    \subjclass[2010]{18F25, 19G24, 20F67}
  \DeclareMathAlphabet{\matheurm}{U}{eur}{m}{n}
  \newcommand{\IN}{\mathbb{N}}
  \newcommand{\IR}{\mathbb{R}}
  \newcommand{\IZ}{\mathbb{Z}}
  \newcommand{\cala}{\mathcal{A}}
  \newcommand{\calc}{\mathcal{C}}
  \newcommand{\calf}{\mathcal{F}}
  \newcommand{\caln}{\mathcal{N}}
  \newcommand{\calp}{\mathcal{P}}
  \newcommand{\calu}{\mathcal{U}}
  \newcommand{\calv}{\mathcal{V}}
  \newcommand{\calw}{\mathcal{W}}
  \newcommand{\bfK}{{\mathbf K}}
  \newcommand{\bfL}{{\mathbf L}}
  \newcommand{\ignore}[1]{}
  \theoremstyle{plain}
  \newtheorem{theorem}{Theorem}[section]
  \newtheorem{lemma}[theorem]{Lemma}
  \newtheorem{corollary}[theorem]{Corollary}
  \newtheorem{proposition}[theorem]{Proposition}
  \newtheorem{addendum}[theorem]{Addendum}
  \newtheorem*{theorem*}{Theorem}
  \newtheorem*{corollary*}{Corollary}
  \theoremstyle{definition}
  \newtheorem{definition}[theorem]{Definition}
  \newtheorem*{definition*}{Definition}
  \theoremstyle{remark}
  \newtheorem{remark}[theorem]{Remark}  
  \newtheorem*{remark*}{Remark}  
  \newtheorem{example}[theorem]{Example}
\let\c@equation=\c@theorem\makeatother
  \newenvironment{numberlist}
   {\begin{list}{}%
     {%
       \setlength{\leftmargin}{\labelwidth+\labelsep}%
     }%
   }%
   {\end{list}}
  \DeclareMathOperator{\id}{id}
  \DeclareMathOperator{\ind}{ind}
  \DeclareMathOperator{\mesh}{mesh}
  \DeclareMathOperator{\VCyc}{VCyc}
  \newcommand{\ANR}{{\mathit{ANR}}}
  \newcommand{\CF}{{\mathit{CF}}}
  \newcommand{\CW}{{\mathit{CW}}}
  \newcommand{\dd}{{\partial}}
  \newcommand{\e}{{\varepsilon}}
  \newcommand{\FS}{{\mathit{FS}}}
  \newcommand{\intgf}[2]{\mbox{$\int_{#1} #2$}}
  \newcommand{\thick}{{\mathit{th}}}
  \newcommand{\x}{{\times}}
\begin{document}

  \begin{abstract}
    We introduce coarse flow spaces for relatively hyperbolic groups
    and use them  to verify a regularity condition for the action
    of relatively hyperbolic groups on their boundaries.   
    As an application the Farrell-Jones Conjecture for
    relatively hyperbolic groups can be reduced to
    the peripheral subgroups 
    (up to index $2$ overgroups in the $L$-theory case).
  \end{abstract}
  
   \maketitle

  \section*{Introduction}

  Farrell and Jones~\cite{Farrell-Jones(1986a)} used the geodesic 
  flow on closed Riemannian manifolds of negative sectional curvature
  to prove that the Whitehead group of the fundamental group of such
  manifolds vanishes.
  This method has been extremely fruitful and has been generalized 
  in many ways.

  Among the developments following~\cite{Farrell-Jones(1986a)} was the 
  formulation of what is now known as the Farrell-Jones 
  Conjecture~\cite{Farrell-Jones(1993a)}.
  This conjecture predicts that the $K$- and $L$-theory of group rings
  $R[G]$ is determined by group homology and the $K$- and $L$-theory
  of group rings of virtually cyclic subgroups.
  If the conjecture holds for a group $G$, then this often yields vanishing
  results  or computational results for Whitehead groups and the 
  manifolds structure set appearing in surgery theory.
  In particular, the Farrell-Jones Conjecture has implications for  the
  classification of higher dimensional non-simply connected  manifolds. 
  We will review the precise formulation of the conjecture in Section~\ref{sec:FJ-rel-hyp}.
  More information about the Farrell-Jones Conjecture and its 
  applications can be found for example
  in~\cite{Bartels-Lueck-Reich(2008appl),Lueck(ICM2010),Lueck-Reich(2005)}.

  In many cases it is fruitful to replace the family of virtually cyclic
  subgroups $\VCyc$ with a bigger family of subgroups $\calf$.
  There is then a formulation of the Farrell-Jones Conjecture relative
  to $\calf$.
  This version of the conjecture is particularly useful whenever the 
  groups in the family $\calf$ are already known to satisfy the
  original Farrell-Jones Conjecture.

  In work with L\"uck and  Reich 
  the geodesic flow method from~\cite{Farrell-Jones(1986a)}
  has been successfully implemented
  in~\cite{Bartels-Lueck(2012annals), Bartels-Lueck-Reich(2008cover),
  Bartels-Lueck-Reich(2008hyper)}  
  to prove the Farrell-Jones Conjecture for hyperbolic groups.
  More generally, the results 
  from~\cite{Bartels-Lueck(2012annals),Bartels-Lueck-Reich(2008hyper)}    
  state that the Farrell-Jones Conjecture for a group $G$ holds
  relative to a family $\calf$ whenever
  there exists an action of $G$ on a finite dimensional
  contractible $\ANR$ satisfying a regularity condition
  relative to the family $\calf$.
  We will review this regularity condition shortly and refer to
  actions satisfying it relative to $\calf$ as
  finitely $\calf$-amenable actions. 
  In this language the main result of~\cite{Bartels-Lueck-Reich(2008cover)}
  implies that for hyperbolic groups the action on the 
  boundary is finitely $\VCyc$-amenable.
  In this paper we prove a similar result for
  relatively hyperbolic groups.
  For definitions of relatively hyperbolic groups 
  see~\cite{Farb(1998),Gromov(1987),Gromov(1993),Szczepanski(Rel-hyp-groups)}.
  We will use Bowditch's point of view~\cite{Bowditch(Rel-hyperbolic-groups)}, 
  recalled in Section~\ref{sec:relative-hyperbolic-groups}.
  
  \begin{theorem*} \label{thm:rel-hyp-finitely-amenable}
    Let $G$ be a countable group that is relatively hyperbolic to
    subgroups $P_1,\dots,P_n$.
    Let $\calp$ be the family of subgroups of $G$ that are either
    virtually cyclic or subconjugated to one of the $P_i$.
    Then the action of $G$ on its boundary $\Delta$ is
    finitely $\calp$-amenable.
  \end{theorem*}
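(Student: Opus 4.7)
The plan is to adapt the strategy used for hyperbolic groups in~\cite{Bartels-Lueck-Reich(2008cover)}, where the finite $\VCyc$-amenability of the boundary action was extracted from the geodesic flow on a $\delta$-hyperbolic model for the group. In the present setting I would fix a Bowditch realization of the relative hyperbolicity of $(G,P_1,\dots,P_n)$: a proper geodesic hyperbolic space $X$ on which $G$ acts so that the $P_i$ are the maximal parabolic stabilizers and $\Delta = \dd X$ is the Bowditch boundary. Pushing the cusps deep enough gives an equivariant family of pairwise disjoint horoballs $B_{gP_i}$ with cocompact complement (the thick part). The goal is, for each finite $S \subset G$, to produce a $G$-invariant open cover of $G \times \Delta$ of uniformly bounded dimension, whose members are $S$-long and have stabilizers in $\calp$.

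The central construction is the coarse flow space $\FS$ of the paper, playing the role of the space of parametrized geodesics in the purely hyperbolic case. Its points should encode approximate bi-infinite quasi-geodesics in $X$ together with a time parameter, with $\IR$ acting by reparametrization; away from horoballs, $\FS$ should behave like the classical flow space, while on a horoball $B_{gP_i}$ flow lines should be allowed to degenerate onto the associated parabolic point in $\Delta$. A transfer (possibly only coarse and set-valued) $\iota \colon G \times \Delta \to \FS$ is then defined by sending $(g, \xi)$ to an approximate geodesic in $X$ emanating from (the orbit of) $g$ and going to $\xi$. The property to extract is that $S$-translates of any $(g, \xi)$ are carried by $\iota$ into a uniformly bounded flow-neighborhood of a single orbit.

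Given $\iota$, covers of $G \times \Delta$ are obtained by pulling back long thin $G$-equivariant open covers of $\FS$. Such a cover of $\FS$ is assembled from two types of members. On the thick part of $\FS$, where flow lines avoid horoballs, the argument of~\cite{Bartels-Lueck-Reich(2008cover)} --- using the flow to spread $S$ out over a long time interval --- produces members with virtually cyclic stabilizers, stabilizing a geodesic up to bounded error. On the cusp parts, where a flow line is trapped deep in $B_{gP_i}$, one takes the cover element to be built from the horoball itself, so that its stabilizer is a conjugate of $P_i$ and thus lies in $\calp$. The two types are $G$-equivariantly patched across a controlled transition region, and their dimensions add, giving a cover of bounded total dimension.

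The main obstacle is the simultaneous control of three conditions at the thick/cusp interface: the $S$-long condition must survive pullback along $\iota$ (so $\iota$ must be coarsely Lipschitz in an appropriate sense even as $\xi$ ranges over parabolic points), the total dimension must remain bounded after patching, and every cover element must have stabilizer in $\calp$, which in particular forbids members that straddle two distinct horoballs. The most delicate point is behavior at parabolic $\xi$: the geodesic representing $(g, \xi)$ enters the corresponding horoball and stays there, so the naive transfer is discontinuous, and one must arrange that $S$-translates are still carried into one cover element. I expect this to be exactly where the coarse dynamical properties of $\FS$ developed earlier in the paper --- a coarse contraction/expansion dichotomy along flow lines together with equivariant control near the horoballs --- are invoked to close the argument.
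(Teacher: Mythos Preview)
Your proposal is a plausible high-level strategy, but it diverges substantially from the paper's approach and leaves the hardest step unresolved.

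First, the model. The paper does not pass to a proper hyperbolic space with equivariant horoballs; it works directly with Bowditch's fine hyperbolic graph $\Gamma$, which is not locally finite. The role of ``depth in a horoball'' is played instead by the notion of a \emph{size for angles} $\Theta$: a $G$-cofinite set of angles, with $\Theta$-largeness at a vertex $v$ forcing all nearby geodesics to pass through $v$ (Lemmas~\ref{lem:large-angles}--\ref{lem:large-angles-in-triangles}). Fineness of $\Gamma$ makes the set $\Theta^{(3)}$ of triangle angles a size for angles, and this is the engine of the whole argument.

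Second, the coarse flow space. There is no $\IR$-action and no parametrized geodesics. The paper's $\CF(\Theta)$ is a closed subset of $V_E \times (\Delta'_+)^2$, recording a vertex close to some $\Theta$-small geodesic between two boundary/vertex points. Long thin covers of $\CF(\Theta)$ come from the abstract Theorem~\ref{thm:long-thin-cover-X-V-version}, whose only dynamical input is a uniform doubling property along the fibers $V_{\xi_-,\xi_+}$; no flow contraction/expansion is used. The pullback to $G \times \dd\Gamma$ is via a set-valued $\iota^{-\tau}$ (Definition~\ref{def:from-CF(Theta)-to-Gx_ThetaDelta}), and the ``for large $\tau$'' argument is a compactness-and-hyperbolicity contradiction rather than a flow estimate.

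Third, the decomposition. The paper does not split $G \times \Delta$ into thick and cusp pieces along horoballs. It splits according to whether there exists a $\Theta$-small geodesic from $gv_0$ to $\xi$: Proposition~\ref{prop:cover-G-x-Theta-dd} covers $G \times_{\Theta_0} \dd\Gamma$ with $\VCyc$-subsets coming from $\CF(\Theta)$, while Proposition~\ref{prop:cover-G-x-V_infty} covers the complement with explicitly defined $\calp$-subsets $V(v,\Theta)$ indexed by vertices $v$ at which geodesics to $\xi$ have a $\Theta$-large angle. The ``interface'' you worry about is not a geometric transition region to be patched; it is absorbed into the combinatorics of which vertex first sees a large angle along a geodesic.

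The step you defer --- controlling the transfer at parabolic points and across the thick/cusp boundary --- is precisely the content you cannot borrow from~\cite{Bartels-Lueck-Reich(2008cover)}, and your proposal does not supply it. A horoball-based argument along your lines may well be workable, but it would require building the analogues of Propositions~\ref{prop:cover-G-x-Theta-dd} and~\ref{prop:cover-G-x-V_infty} from scratch in the proper-space model, and in particular showing that the cusp cover has bounded dimension and $S$-long members without invoking the angle machinery. As written, your outline is a plan rather than a proof.
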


  \noindent
  This result appears as Theorem~\ref{thm:long-wide-GxDelta} in
  Section~\ref{sec:coarse-flow-space}.

  The boundary $\Delta$ is usually neither contractible nor an $\ANR$.
  For hyperbolic groups Bestvina-Mess proved that 
  the union of the Rips complex with the boundary
  is a contractible compact $\ANR$~\cite{Bestvina-Mess(1991)}.
  Similarly, for relatively hyperbolic groups there is a relative 
  Rips complex~\cite{Dahmani(2003class-spaces+boundaries-rel-hyp),
    Mineyev-Yaman(rel-hyperbolic-bounded-cohom)} 
  and in the appendix we extend the Bestvina-Mess result to the relatively 
  hyperbolic case. 
  This is closely related to results of 
  Dahmani~\cite{Dahmani(2003class-spaces+boundaries-rel-hyp)}.  
  Using~\cite{Bartels-Lueck(2012annals),Bartels-Lueck-Reich(2008hyper)}
  we will obtain the following application to the Farrell-Jones Conjecture.
  
  \begin{corollary*}
    Let $G$ be a countable group that is relatively hyperbolic to
    subgroups $P_1,\dots,P_n$.
    If $P_1,\dots,P_n$ satisfy the Farrell-Jones Conjecture then
    $G$ satisfies the Farrell-Jones Conjecture.
  \end{corollary*}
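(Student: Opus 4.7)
The plan is to combine the theorem with the standard transitivity principle for the Farrell-Jones Conjecture. The argument proceeds in two steps: first, one establishes the Farrell-Jones Conjecture for $G$ \emph{relative to} the family $\calp$, and then one upgrades this to the conjecture relative to $\VCyc$ using that every member of $\calp$ already satisfies the conjecture by hypothesis.

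For the first step, the theorem provides a finitely $\calp$-amenable action of $G$ on the boundary $\Delta$. To feed this into the machinery of~\cite{Bartels-Lueck(2012annals),Bartels-Lueck-Reich(2008hyper)} one needs such an action on a finite-dimensional contractible $\ANR$. As indicated in the introduction, the appendix extends the Bestvina-Mess theorem~\cite{Bestvina-Mess(1991)} to the relatively hyperbolic setting: the union of the relative Rips complex with $\Delta$ is a finite-dimensional contractible compact $\ANR$ on which $G$ acts, and the action on $\Delta$ extends equivariantly. Plugging this action into~\cite{Bartels-Lueck(2012annals),Bartels-Lueck-Reich(2008hyper)} yields the Farrell-Jones Conjecture for $G$ with respect to $\calp$.

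For the second step, the transitivity principle says that if $G$ satisfies the Farrell-Jones Conjecture relative to a family $\calf \supset \VCyc$ and each $H \in \calf$ satisfies the conjecture relative to $\VCyc$, then $G$ satisfies the conjecture relative to $\VCyc$. Each $H \in \calp$ is either virtually cyclic, in which case there is nothing to show, or subconjugated to some $P_i$; in the latter case $H$ embeds into a conjugate of $P_i$, and since the Farrell-Jones Conjecture is inherited by subgroups, the hypothesis gives the conjecture for $H$. Combining the two steps yields the corollary.

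Essentially all of the real content has been placed into the theorem whose statement precedes the corollary; the present deduction is a formal combination of the transitivity principle, the $\ANR$ model supplied by the appendix, and the main result of~\cite{Bartels-Lueck(2012annals),Bartels-Lueck-Reich(2008hyper)}. The only subtle point is the $L$-theory case: the $L$-theoretic transitivity principle requires a family closed under index-$2$ overgroups, so strictly speaking the $L$-theoretic statement needs the conjecture for index-$2$ overgroups of the peripheral subgroups, as flagged in the abstract.
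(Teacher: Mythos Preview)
Your outline is correct and matches the paper's approach. The only step you leave implicit is that finite $\calp$-amenability must hold for the $G$-action on all of $\overline{P_{d,\Theta}}$, not just on $\Delta$; the paper carries this out by combining the wide cover of $G \times \Delta$ from the main theorem with an equivariant $\calp$-cover of the open simplicial part $P_{d,\Theta} \setminus V_\infty$ arising from its cell structure, and your flagging of the index-$2$ caveat in $L$-theory is exactly what the paper records in its more careful formulation.
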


  This corollary is more carefully formulated as
  Corollary~\ref{cor:FJ-rel-hyp}. 
  Such results are typical for relatively hyperbolic groups.
  For example if a group $G$ is relatively hyperbolic to groups of 
  finite asymptotic dimension, then $G$ is of finite 
  asymptotic dimension~\cite{Osin(Asym-dim-rel-hyp-groups)}.
  
  For groups that are relatively hyperbolic to groups that satisfy the Farrell-Jones 
  Conjecture and are in addition residually finite 
  Antol\'in, Coulon and Gandini~\cite{Antolin-Coulon-Gandini(FJ-via-Dehn)}  
  give an alternative proof of the above corollary using Dehn fillings.

  \subsection*{A regularity condition}

  Let $X$ be a $G$-space and $\calf$ be a family of subgroups of $G$.
  An open subset $U \subseteq X$ is said to be an $\calf$-subset if
  there is $F \in \calf$ such that $g U = U$ for $g \in F$ and
  $g U \cap U = \emptyset$ if $g \not\in F$.
  A cover $\calu$ of open subsets of $X$ is said to be $G$-invariant if
  $gU \in \calu$ for all $g \in G$, $U \in \calu$.
  A $G$-invariant cover $\calu$ of $X$ is said to be an $\calf$-cover if
  the members of $\calu$ are all $\calf$-subsets. 
  The order of a collection $\calu$ of subsets of $X$ is $\leq N$ if
  each $x \in X$ is contained in at most $N+1$-members of $\calu$. 
  If $\calu$ is a cover, then we will often call the order of $\calu$ 
  the dimension of $\calu$. 

  \begin{definition} \label{def:N-calf-amenable}
    Let $G$ be a group and $\calf$ be a family of subgroups.
    An action of $G$ on a space $X$ is said to be $N$-$\calf$-amenable
    if for any finite subset $S$ of $G$ there exists an open $\calf$-cover 
    $\calu$ of
    $G \x X$ (equipped with the diagonal $G$-action) with the following
    properties:
    \begin{enumerate}
    \item \label{def:N-calf-amenable:dim}
       the dimension of $\calu$ is at most $N$;
    \item \label{def:N-calf-amenable:wide}
       for all $x \in X$ there is $U \in \calu$ with
       $S \x \{ x \} \subseteq U$.
    \end{enumerate}
    An action that is $N$-$\calf$-amenable for some $N$ 
    is said to be finitely $\calf$-amenable.
  \end{definition}
 
  \noindent
  In~\cite{Bartels-Lueck-Reich(2008hyper)} such cover were called \emph{wide}.

  \begin{remark}
    \label{rem:amenable-action-isotropy}
    Suppose that the action $G \curvearrowright X$
    is $N$-$\calf$-amenable. 
    Then all finitely generated subgroups $H$ of $G$ that
    fix a point $x \in X$ belong to $\calf$.
    Indeed, if $S$ is symmetric, contains $e$, generates $H$
    and satisfies $S \x \{ x \} \subseteq U$ 
    then $(e,x) \in sU$ for all $s \in S$ and therefore
    $U \cap sU \neq \emptyset$ for all $s \in S$.
  \end{remark}  

  \begin{remark} \label{rem:equivariant-cover} 
     Suppose that the action $G \curvearrowright X$ is such  
     that there exists an $\calf$-cover $\calv$ for
     $X$ of dimension $N$. 
     Then the action  is $N$-$\calf$-amenable.
     Indeed, if we set $\calu := \{ G \x V \mid V \in \calv \}$ 
     then $\calu$ is an $\calf$-cover for $G \x X$ of dimension 
     $N$ and $G \x \{x\} \subseteq U = G \x V \in \calv$ 
     whenever $x \in V \in \calv$. 
     Such a cover $\calv$ exists for example for any
     cellular action on an $N$-dimensional $\CW$-complex
     whenever all isotropy groups of the action belong to $\calf$
  \end{remark}

  \begin{remark}
    \label{rem:almost-equivariant-maps}
    Suppose that $X$ is compact and metrizable.  
    Then given an $N$-$\calf$-amenable action of a countable group $G$ on $X$
    there is a sequence of $G$-equivariant maps $f_n \colon G \x X \to K_n$
    with the following properties.
    The space $K_n$ is an $N$-dimensional simplicial complex with a
    simplicial $G$-action. 
    The isotropy groups for this action belong to $\calf$.
    The maps $f_n$ are contracting in the $G$-direction:
    for any $g \in G$ we have
    $\sup_{x \in X} \| f_n(e,x) - f_n(g,x) \|_1 \to 0$ as $n \to \infty$.
    The maps $f_n$ can be contructed using a partition of unity
    subordinated to the covers $\calu$ appearing in the definition of
    $N$-$\calf$-amenability, 
    compare~\cite[Sec.~4 and 5]{Bartels-Lueck-Reich(2008hyper)}.
    The maps $f_n(e,-) \colon X \to K_n$ are then almost $G$-equivariant 
    in the following sense: for any $g \in G$ we have 
    $\sup_{x \in X} \| g f_n(e,x) - f_n(e,gx) \|_1 \to 0$ as $n \to \infty$.    
    The action of $G$ on $X$ is amenable~\cite{Ozawa(2006amenact)} 
    if there is such an almost equivariant sequence of maps to
    the space of probability measures on $G$.
    The isotropy groups for the action on the space of probability measures
    are the finite subgroups of $G$.
    Thus $N$-$\calf$-amenability can be thought of 
    as both stronger and weaker than amenabilty:
    stronger since a finite dimensional target is required; weaker since
    the target may have larger isotropy groups. 
    
    Ozawa~\cite{Ozawa(2006boundary)} investigated amenable actions for 
    relatively hyperbolic groups.
    In particular, his results imply that if $G$ is 
    relatively hyperbolic to amenable groups, then the
    action of $G$ on the boundary is amenable. 
  \end{remark}

  \subsection*{Flow spaces}

  If $G$ is the fundamental group of a negatively curved manifold $M$, then
  the geodesic flow is a flow on the unit sphere bundle $SM$ of the tangent 
  bundle of $M$.
  We will say that $SM$ together with the geodesic flow is the 
  geodesic flow space for $G$.
  Let $G$ now be a hyperbolic group.
  Mineyev~\cite{Mineyev(2005)} constructed an 
  analog of the geodesic flow space and this flow space $\FS$ and its dynamic 
  properties are key ingredients to the proof of
  finite $\VCyc$-amenability for the action of $G$ on its boundary 
  in~\cite{Bartels-Lueck-Reich(2008cover)}.
  The proof has naturally two parts.
  In the first part the so called \emph{long and thin covers} of $\FS$ 
  are constructed.
  In the second part the dynamic of the flow is used to construct
  maps $G \x \dd G \to \FS$ under which the long and thin covers 
  of $\FS$ pull back to the necessary covers of $G \x \dd G$. 

  The key property of the long and thin covers 
  $\calu_\alpha$ is that they are  long in the direction 
  of the flow: for each $x \in \FS$ there is $U \in \calu_\alpha$ such 
  that $x$ stays in $U$ for time $t \in [-\alpha, \alpha]$.
  Typically the members of $\calu$ are very thin transverse to the flow 
  -- thus the name long and thin covers.
  These covers are a variation of the long and thin cell structures 
  appearing in~\cite{Farrell-Jones(1986a)}.
  The construction of these long and thin 
  covers in~\cite{Bartels-Lueck-Reich(2008cover)} 
  is quite involved but works for very general flow spaces.
  Moreover, assumptions on the order of finite subgroups of $G$
  and the structure of periodic orbits were later shown to be 
  not necessary by Mole-R\"uping~\cite{Mole-Rueping(EquivRefine)}
  and by Kasprowski-R\"uping~\cite{Kasprowski-Rueping(long-and-thin)}.
  Sauer~\cite{Sauer(AmenableCovers)} used packing methods to 
  prove in a measure theoretic context results that are similar
  to long and thin covers.
  Later he pointed out that such packing methods should also be applicable to
  the construction of long and thin covers of flow spaces.
  This led to a much simpler construction for the long and thin covers
  from~\cite{Bartels-Lueck-Reich(2008cover)} by 
  Kasprowski-R\"uping~\cite{Kasprowski-Rueping(long-and-thin)}.
  
  \subsection*{Coarse flow spaces}

  An observation of the present paper is that the construction
  of long and thin covers using the packing method works in
  a more general context than flow spaces.
  This can be used to give an alternative argument for the
  finitely $\VCyc$-amenability of the actions of hyperbolic groups 
  on their boundaries that avoids Mineyev's flow space. 
  Moreover, this alternative argument generalizes to relatively 
  hyperbolic groups\footnote{It is plausible that the argument 
   from~\cite{Bartels-Lueck-Reich(2008cover)} can also be extended to
   relatively hyperbolic groups. A step in this direction
   is~\cite{Mole(2013)}.}.
  In this and the next subsection  
  we outline this argument for hyperbolic groups.
  The case of relatively hyperbolic groups is treated in detail in the main 
  text of this paper.

  Let $G$ be a hyperbolic group.
  We will replace Mineyev's flow space with
  a more easily defined \emph{coarse flow space}. 
  Let $\Gamma$ be a Cayley graph for $G$.
  Assume that $\Gamma$ is $\delta$-hyperbolic.
  Let $\overline{G} := G \cup \dd G$
  and $Z :=  \overline{G}^2$. 

  \begin{definition} \label{def:coarse-flow-space-hyp}
    The coarse flow space $\CF$ for $G$ is the 
    subspace of $G \x Z$ consisting of triples
    $(g,\xi_-,\xi_+)$ such that there exists a geodesic in $\Gamma$ from
    $\xi_-$ to $\xi_+$ that passes $g$ within distance $\delta$. 
  \end{definition}

  There is no actual flow on this coarse flow space, but there are 
  natural analoga of the orbits of the flow on $\FS$.
  These analoga are the subsets
  $G_{\xi_-,\xi_+} := \{ g \in G \mid (g,\xi_-,\xi_+) \in \CF \} \subseteq G$. 
  Since $\Gamma$ is hyperbolic each $G_{\xi_-,\xi_+}$ is contained in
  a uniformly bounded neighborhood of a geodesic.   
  Consequently, the $G_{\xi_-,\xi_+}$ satisfy a uniform 
  doubling property: 
  there is $D$ such that for any $R$ and any 
  subset $S$ of a $2R$-ball  in $G_{\xi_-,\xi_+}$ the following holds:
  if $S$ is $R$-separated, i.e., $d_G(s,s') \geq R$ for all $s \neq s' \in S$,
  then the cardinality of $S$ is at most $D$. 
  This observation is the main ingredient for the following version of
  long and thin covers for $\CF$.
  Let $d_G$ be a left-invariant word metric on $G$.

  \begin{proposition} \label{prop:long-thin-hyp}
    There is $N$ such that for any $\alpha > 0$ there 
    exists an $\VCyc$-cover $\calw$ of $\CF$ such that
    the following holds:
    \begin{enumerate}
    \item \label{prop:long-thin-hyp:long} 
      for any $(g,\xi_-,\xi_+) \in \CF$ there is $W \in \calw$
      such that $B_\alpha(g)  
                   \x \{ (\xi_-,\xi_+) \} \cap \CF \subseteq W$;   
    \item \label{prop:long-thin-hyp:dim} 
      the dimension of $\calw$ is at most $N$.
    \end{enumerate}
  \end{proposition}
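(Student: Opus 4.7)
The plan is to apply the packing method of Sauer~\cite{Sauer(AmenableCovers)} and Kasprowski-R\"uping~\cite{Kasprowski-Rueping(long-and-thin)} directly to $\CF$, bypassing a flow. The starting point is the uniform doubling property made explicit just before the proposition: since $\Gamma$ is $\delta$-hyperbolic and each $G_z$ lies in a uniformly bounded neighborhood of a geodesic in $\Gamma$, there is a universal constant $D$ such that every $R$-separated subset of any $2R$-ball inside any $G_z$ has at most $D$ elements. I would set $N := D - 1$. I would also use the classical fact that for a hyperbolic group the stabilizer $F_z := \mathrm{Stab}_G(z)$ of any $z \in Z = \overline{G}^2$ under the diagonal action is virtually cyclic, and acts on $G_z$ by left multiplication, hence by $d_G$-isometries.

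For each $z \in Z$ with $G_z \neq \emptyset$ I would choose a maximal $\alpha$-separated subset $\bar\Lambda_z$ of the quotient $G_z/F_z$ and then a lift $\Lambda_z \subseteq G_z$. Organizing these choices $G$-equivariantly along orbits of $G$ on $Z$, so that $\Lambda_{gz} = g\Lambda_z$, for each $F_z$-orbit $[\lambda] = F_z \cdot \lambda$ with $\lambda \in \Lambda_z$ I define
\[
W_{z,[\lambda]} := \bigl( B_{2\alpha}(F_z \cdot \lambda) \cap G_z \bigr) \times V_z \,\cap\, \CF,
\]
where $V_z$ is a small $F_z$-invariant open neighborhood of $z$ in $Z$ to be chosen later. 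Openness in $\CF$ is automatic because $G$ is discrete. The cover $\calw$ is the collection of all $G$-translates of these sets, hence $G$-invariant by construction.

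The long property and the dimension bound follow in the standard packing style. For given $(g_0, z_0) \in \CF$, maximality of $\bar\Lambda_{z_0}$ yields $\lambda \in \Lambda_{z_0}$ and $f \in F_{z_0}$ with $d_G(g_0, f\lambda) \le \alpha$, so $B_\alpha(g_0) \cap G_{z_0} \subseteq B_{2\alpha}(F_{z_0} \cdot \lambda)$ and hence $B_\alpha(g_0) \times \{z_0\} \cap \CF \subseteq W_{z_0,[\lambda]}$. The number of orbits $[\lambda]$ with $(g,z) \in W_{z,[\lambda]}$ equals the number of $\bar\lambda \in \bar\Lambda_z$ at quotient distance $\le 2\alpha$ from $\bar g$; since $\bar\Lambda_z$ is $\alpha$-separated and the quotient $G_z/F_z$ inherits the uniform doubling constant $D$ from $G_z$, this count is at most $D = N+1$.

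The main obstacle is verifying the $\VCyc$-subset condition, which hinges on choosing the $V_z$ with care. By construction $F_z$ preserves $W_{z,[\lambda]}$ (it permutes $F_z \cdot \lambda$, fixes $z$, and one arranges $V_z$ to be $F_z$-invariant), while for $g \notin F_z$ one has $gz \neq z$, so $gW_{z,[\lambda]}$ and $W_{z,[\lambda]}$ are disjoint as soon as $gV_z \cap V_z = \emptyset$. Arranging this globally is delicate because $G \cdot z$ may accumulate at $z$; I would handle it by using the convergence behavior of $G \curvearrowright Z$ to inductively shrink the $V_z$ along a countable dense family of $G$-orbits, while checking that the long property and the dimension estimate survive in the limit. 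This inductive shrinking is the technical heart of the argument.
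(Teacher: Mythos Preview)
Your dimension estimate only bounds, for a fixed $z$, the number of classes $[\lambda]$ with $(g,z) \in W_{z,[\lambda]}$. But your cover $\calw$ contains $W_{z',[\lambda']}$ for \emph{every} $z'$ (or at least for a $G$-invariant family dense in $Z$), and a point $(g,z_0)$ lies in $W_{z',[\lambda']}$ whenever $z_0 \in V_{z'}$ and $g \in B_{2\alpha}(F_{z'}\cdot\lambda')$. Nothing in your argument limits how many distinct $z'$ (in different $G$-orbits, or far apart in the same orbit) have $z_0 \in V_{z'}$, so the order of $\calw$ is uncontrolled. Your proposed ``inductive shrinking'' addresses only the $\VCyc$-subset condition (disjointness of $gV_z$ and $V_z$ within one orbit); it does nothing about overlaps of the $V_{z'}$ coming from different base points. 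Conversely, if you restrict to only countably many $z$ and shrink all $V_z$ enough to recover a dimension bound, you lose the long property at points $z_0$ outside this countable set, because $\Lambda_z$ was chosen maximal $\alpha$-separated in $G_z$, not in the nearby $G_{z_0}$. The fact that your $N=D-1$ does not see $\dim Z$ is already a warning sign.

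The paper's method is essentially dual to yours: rather than packing each fiber $G_z$ and then thickening in $Z$, one starts from a countable cover of $\CF$ by product boxes $\{v_i\}\times U_i$ and \emph{modifies the $U_i$ in the $Z$-direction} (inductively subtracting $\overline{U'_j}$ for earlier $j$ with $d_G(v_i,v_j)\leq\alpha$) so that any point hit by many $W_i = B_{2\alpha}(v_i)\times U'_i$ forces the corresponding $v_i$ to be $\alpha$-separated in a single $V^{\thick}_z$, where the doubling bound applies. The price is that the $U'_i$ need not cover; one is left with a residual set sitting in $\bigcup_i \partial U_i$, which has strictly smaller dimension, and this is handled by induction on $\dim Y$. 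This ``pack in $V$ by carving in $Z$, then induct on dimension'' step is the idea your proposal is missing.
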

  
  In Theorem~\ref{thm:long-thin-cover-X-V-version} we prove a version 
  of this result in a more general situation that will also be applicable to
  the coarse flow spaces for relatively hyperbolic groups introduced 
  in Definition~\ref{def:coarse-FS-Theta}.
  An important assumption is again a uniform doubling property.
  As an application of Theorem~\ref{thm:long-thin-cover-X-V-version}
  we obtain a version of Proposition~\ref{prop:long-thin-hyp} for 
  relatively hyperbolic groups in Proposition~\ref{prop:cover-CF(Theta)}.
  In a different direction a corollary to 
  Theorem~\ref{thm:long-thin-cover-X-V-version} 
  is that all actions of finitely generated 
  virtually nilpotent groups on finite dimensional normal 
  separable spaces with isotropy in $\calf$ are finitely $\calf$-amenable,
  see Corollary~\ref{cor:nilpotent}.
  This generalizes a result for free actions of nilpotent groups by Szab{\'o}-Wu-Zacharis~\cite{Szabo-Wu-Zacharias-Rohklin-res-finite}.
    
  \begin{proof}[Sketch of proof of Proposition~\ref{prop:long-thin-hyp} 
           if $\dim \dd G = 0$]
    Since $\dim \overline{G}^{2} = 0$ there is a basis of the topology
    of $\overline{G}^{2}$ consisting of sets that are open and closed.
    Choose a covering of $\CF$ by sets of the form
    $\{ g_i \} \x V_i \cap \CF$, $i \in \IN$ where $V_i$ is open and closed
    in $\overline{G}^2$.
    Then define inductively $U_i$ by $U_0 :=  V_0$ and
    \begin{equation*}
      U_i := V_i \setminus \bigcup_{j} U_j
    \end{equation*}
    where the union is over all $j$ with $j < i$ and 
    $d_G(g_i,g_j) \leq \alpha$.
    Since the $V_i$ are closed and open, the $U_i$ are still open.
    The open sets
    $W_i := B_{2\alpha}(g_i) \x U_i \cap \CF$ form then the desired cover
    $\calw$ of $\CF$. 
    This cover is $\alpha$-long in the direction of $G$, 
    more or less by construction.
    To compute the dimension of $\calw$ one checks that if 
    $W_{i_1} \cap \dots \cap W_{i_N} \neq \emptyset$, then
    the $g_i$ form an $\alpha$-separated set in a ball of radius
    $2\alpha$ in one of the $G_{\xi_-,\xi_+}$ and therefore
    $\dim \calw \leq D-1$.
    
    In this sketch we ignored the action of $G$ on $\CF$.
    To amend this one has to choose the $V_i$ sufficiently small in 
    order to avoid intersections $g W_i \cap W_i$ for to many $g$.
    Moreover, in the definition of $U_i$ and $W_i$ the group action has 
    to be taken into account.
    In order to extend the argument to the case
    $\dim \dd G > 0$  an induction over subspaces of $\CF$ 
    of lower dimension can be used.
  \end{proof}

  \subsection*{Pulling back long and thin covers from $\CF$ to $G \x \dd G$.}  

  Let $\CF$ be the coarse flow space for the hyperbolic group $G$.
  For $W \subseteq \CF$ and $\tau > 0$ we define 
  $\iota^{-\tau}W \subseteq G \x \dd G$ to consist of all pairs $(g,\xi)$
  with the following property.
  If $v \in G$ belongs to a geodesic from $g$ to $\xi$ and satisfies
  $d_G(g,v) = \tau$, then $(v,g,\xi) \in W$.
  
  One way to think about $\iota^{-\tau} W$ is as follows:
  First define $\iota \colon G \x \dd G \to \CF$ by 
  $\iota(g,\xi) = (g,g,\xi)$.
  Next apply a partially defined multi-valued geodesic flow $\phi_\tau$ on $\CF$:
  this flow takes $(g,g,\xi)$ to the set of all $(g,v,\xi)$ where $v \in G$
  belongs to a geodesic between $g$ and $\xi$ and is of distance $\tau$ 
  from $g$. 
  Then $\iota^{-\tau} W$ is the pull-back of $W$ under the composition 
  $\phi_\tau \circ \iota$.

  This construction allows us to use the long thin covers of $\CF$ from
  Proposition~\ref{prop:long-thin-hyp} to prove that the action of
  $G$ on $\dd G$ is finitely $\VCyc$-amenable:
  If $\calw$ is a $\VCyc$-cover of $\CF$ then the same holds
  for $\iota^{-\tau} \calw := \{ \iota^{-\tau} W \mid W \in \calw \}$. 
  By construction
  $\dim \iota^{-\tau} \calw \leq \dim \calw$. 
  Finally, if $\calw$ is a 
  long cover of $\CF$ (as in  
  Proposition~\ref{prop:long-thin-hyp}~\ref{prop:long-thin-hyp:long})
  then for sufficiently large $\tau$ the cover  $\iota^{-\tau} \calw$ 
  of $G \x \dd G$ is wide in the $G$-direction (as
  in Definition~\ref{def:N-calf-amenable}~\ref{def:N-calf-amenable:wide}). 
  This last fact can be thought of as a consequence of dynamic 
  properties of $\phi_\tau$ and uses the hyperbolicity of $\Gamma$.

  For relatively hyperbolic groups this step is carried out in detail in 
  Section~\ref{sec:coarse-flow-space}.
  The main additional difficulty appearing is discussed in the next subsection. 
 
  \subsection*{Relatively hyperbolic groups}
  The precise definitions for relatively hyperbolic groups that we use will 
  be given
  in Section~\ref{sec:relative-hyperbolic-groups} and mostly follows
  Bowditch~\cite{Bowditch(Rel-hyperbolic-groups)}.
  Let $G$ be relatively hyperbolic to the peripheral subgroups $P_1,\dots,P_n$.
  By definition $G$ acts on a hyperbolic graph $\Gamma$.
  Unlike the Cayley graph for hyperbolic groups $\Gamma$ will contain vertices
  of infinite valency and the isotropy groups of these vertices will 
  be conjugated to the $P_i$. 
  We write $V$ for the set of vertices of $\Gamma$ and 
  $V_\infty$ for the set of vertices of $\Gamma$ of infinite valency.
  The boundary of $G$ is defined by Bowditch as the union of
  $\Delta := \dd \Gamma \cup V_\infty$; this is a compact space.
  
  The key additional property of the graph $\Gamma$ used here is fineness, 
  as introduced 
  by Bowditch~\cite{Bowditch(Rel-hyperbolic-groups)}.
  Under a mild additional assumption
  this property can be used to define a proper metric on the set of
  edges of $\Gamma$, see~\cite{Mineyev-Yaman(rel-hyperbolic-bounded-cohom)}. 
  In particular, it is possible to measure angles in $\Gamma$.
  Here an angle is a pair of edges that share a vertex.
  In order to allow for peripheral subgroups that are not necessarily 
  finitely generated it is better to avoid the additional assumption.
  To this end we take a slightly different point of view and consider
  $G$-invariant $G$-cofinite subsets $\Theta$ of the set of all angles.
  Such a subset will be called a \emph{size for angles}.
  Each size for angles $\Theta$ is then locally finite in the following sense: 
  for each edge $e$ there are only finitely many edges $e'$ such that 
  $(e,e') \in \Theta$.
  Fineness of $\Gamma$ implies that the set of all angles appearing in 
  any non-degenerate
  geodesic triangle is such a size for angles.

  For any size for angles $\Theta$ we define a coarse flow space $\CF(\Theta)$.
  Its definition is similar to the hyperbolic case in 
  Definition~\ref{def:coarse-flow-space-hyp}, where we replace $G$ with
  vertices of finite valency and only use geodesics along which all
  angles are $\Theta$-small.
  The argument outlined in the hyperbolic case above can then be used to
  produce wide covers of a certain subspace of $G \x \Delta$, see 
  Proposition~\ref{prop:cover-G-x-Theta-dd}.
  In order to prove that the action of $G$ on $\Delta$ is finitely 
  $\calp$-amenable, we need to
  extend these wide covers to all of $G \x \Delta$.
  This is done by an explicit construction in
  Proposition~\ref{prop:cover-G-x-V_infty}.
  
  \subsection*{Acknowledgements}
 
  I thank Adam Mole for many discussions about relatively hyperbolic groups,
  Roman Sauer and Daniel Kasprowski for discussions about 
  long and thin covers and packing methods,
  Brian Bowditch and Fran\c{c}ois Dahmani for helpful emails about
  relatively hyperbolic groups and their boundaries.
  Comments by  
  Svenja Knopf, Timm von Puttkamer and a referee 
  greatly improved this paper. 
  This work has been supported by the SFB 878 in M\"unster.
  
  \section{Long thin covers of subspaces of $V \x Z$}  
     \label{sec:covering-V-x-Z}

  \noindent
  Throughout this section we fix
  \begin{itemize}
  \item a group $G$;
  \item a family $\calf$ of subgroups of $G$;
  \item a discrete countable proper 
     metric space $V$ with a proper isometric $G$-action,
     the metric of $V$ will be denoted by $d_V$;
     we allow distances for $d_V$ to be $\infty$;
  \item a separable metrizable space $Z$ with an action of $G$ by
    homeomorphisms;
  \item a closed $G$-invariant subspace $X$ of $V \x Z$;
    we will always use the diagonal action of $G$ on $V \x Z$. 
  \end{itemize}  

  An example for $X$ is the coarse flow space for hyperbolic groups 
  from Definition~\ref{def:coarse-flow-space-hyp}.
  Let $\alpha > 0$. 
  We write $B_\alpha(v) := \{ w \in V \mid d_V(v,w) \leq \alpha \}$
  for the closed $\alpha$-ball around $v$.
  A subset $S \subseteq V$ is said to be \emph{$\alpha$-separated} if
  $d_V(s,s') > \alpha$ for any two distinct elements of $S$.
  We will say that a subset $V_0$ of $V$ has the 
  \emph{$(D,R)$-doubling property} 
  if  for any $\alpha \geq R$ 
  the following holds: if $S \subseteq V_0$ is 
  $\alpha$-separated and contained in a ball of radius $2\alpha$, then the
  cardinality of $S$ is at most $D$.  

  For $v \in V$ we set $Z_v := \{ z \in Z \mid (v,z) \in X \}$.
  This is a closed subset of $Z$.
  For $z \in Z$ we set $V_z := \{ v \in V \mid (v,z) \in X \} \subseteq V$.
  Then 
  \begin{equation*}
    X = \bigcup_{v \in V} \{ v \} \x Z_v = \bigcup_{z \in Z} V_z \x \{ z \}.
  \end{equation*}
 
  \noindent 
  The following is the most abstract result about long thin covers 
  in this paper.

  \begin{theorem}
    \label{thm:long-thin-cover-X-V-version}
    Assume that $X$ satisfies the following assumptions.
    \begin{enumerate}[\;\;\; A)] 
     \item \label{thm:long-thin-V:assm-dim} 
        $X$ is finite dimensional;
     \item \label{thm:long-thin-V:assm-packing} 
        there are $D > 0$, $R \geq 0$ such that for all $z \in Z$
        the subspace $V_z$ of $V$ 
        has the $(D,R)$-doubling property;
     \item \label{thm:long-thin-V:isotropy}
        for each $(v,z) \in X$ the isotropy group
        $G_z := \{ g \in G \mid gz=z \}$ belongs to $\calf$.
     \end{enumerate}    
     Then $X$ admits long thin covers as follows:
     there is a number $N$ depending only on the
     dimension of $X$ and the doubling constant $D$,
     such that for any $\alpha > 0$ 
     there is an $\calf$-cover $\calw$ of $X$ such that
     \begin{enumerate}
     \item \label{thm:long-thin-V:dim}
         $\dim \calw \leq N$;
     \item \label{thm:long-thin-X:long}
         for any $(v,z) \in X$ there is $W \in \calw$ such that
         $B_\alpha(v) \x \{z\} \cap X \subseteq W$.
     \end{enumerate}    
  \end{theorem}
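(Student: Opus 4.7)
The plan is to adapt and extend the sketch for the 0-dimensional case of Proposition~\ref{prop:long-thin-hyp} from the excerpt, treating general finite-dimensional $X$ by induction on $\dim X$ and handling equivariance alongside.

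For the base case I assume the sets $Z_v$ admit bases of clopen sets (for instance when $\dim Z = 0$). Following the sketch, I choose a countable cover of $X$ by sets $\{v_i\} \x V_i$ with $V_i \subseteq Z$ clopen, fix a well-ordering of the index set, and recursively set
\begin{equation*}
  U_i := V_i \setminus \bigcup_{\substack{j < i \\ d_V(v_i,v_j) \le \alpha}} U_j, \qquad W_i := B_{2\alpha}(v_i) \x U_i \cap X.
\end{equation*}
Since the $V_i$ are clopen, the $U_i$ remain open. The cover $\{W_i\}$ is $\alpha$-long by construction, and an intersection $(v,z) \in W_{i_1} \cap \dots \cap W_{i_k}$ forces $\{v_{i_1},\ldots,v_{i_k}\}$ to be an $\alpha$-separated subset of $V_z$ contained in $B_{2\alpha}(v)$, so $k \le D$ by assumption~\ref{thm:long-thin-V:assm-packing}.

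For general $X$, I induct on $n := \dim X$. Using classical dimension theory for separable metric spaces, I produce a closed $G$-invariant subspace $X' \subseteq X$ with $\dim X' < n$ such that $X \setminus X'$ has fiberwise clopen-basis structure, to which the base case applies to yield a cover $\calw''$. By induction $X'$ admits a long thin $\calf$-cover $\calw'$; after extending $\calw'$ slightly into an open collar of $X'$ in $X$ and restricting $\calw''$ away from $X'$, the union $\calw := \calw' \cup \calw''$ covers $X$ with dimension bounded by $\dim \calw' + \dim \calw'' + 1$, giving the required $N = N(\dim X, D)$. Equivariance is built in throughout by choosing each $V_i$ small enough in $Z$ that $\{g \in G : gV_i \cap V_i \ne \emptyset\}$ equals an isotropy group $G_z \in \calf$ (possible by assumption~\ref{thm:long-thin-V:isotropy}, the properness of the $G$-action on $V$, and the regularity of $Z$), and by running the recursion along a $G$-invariant well-ordering of orbits; the resulting $W_i$ are $\calf$-subsets and form a $G$-invariant cover.

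The principal obstacle is the inductive step: producing a $G$-invariant closed subspace $X'$ of strictly smaller dimension whose complement is fiberwise 0-dimensional, and gluing $\calw'$ with $\calw''$ so that neither the $\alpha$-long property nor the doubling-controlled multiplicity bound is lost on the collar. A secondary difficulty is the equivariantization: the initial clopen sets must be simultaneously small in $Z$ (to have isotropy in $\calf$) and compatible with the metric structure on $V$ (so the recursive construction produces only finitely many overlapping $W_i$ at each point, matching the doubling count $D$). No single ingredient is hard in isolation, but coordinating them with the group action is the delicate part.
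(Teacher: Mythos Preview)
Your inductive step has a genuine gap. You write that ``using classical dimension theory'' you produce a closed $G$-invariant $X' \subseteq X$ with $\dim X' < n$ such that $X \setminus X'$ has fiberwise clopen bases. No such decomposition exists in general: removing a closed subspace of dimension $< n$ from an $n$-dimensional separable metric space does not make the complement $0$-dimensional, even fiberwise. (The decomposition theorems you may have in mind write $X$ as a union of $n{+}1$ zero-dimensional pieces, but none of these need be open or closed.) So your base case, which genuinely requires clopen $V_i$, never becomes applicable when $\dim X > 0$.

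The paper's induction (Proposition~\ref{prop:induction-V}) is organized the other way around. One does not first isolate a lower-dimensional bad set and then cover the complement with clopen pieces; instead one runs your recursive construction directly on \emph{all} of the current $Y$, but with two modifications. First, the $U_i$ are chosen (Lemma~\ref{lem:dim-U}) so that each $Y_w \cap \partial U_i$ has dimension $< \dim Y$; this uses the small inductive dimension characterization $\operatorname{ind} = \dim$ for separable metric spaces. Second, in the recursion one subtracts closures, $U'_i := U_i \setminus \bigcup h\overline{U'_j}$, so that $U'_i$ stays open without any clopen hypothesis. The long property can then fail only at points lying in some $\partial U'_j$, and these are contained in the countable union $Y'' = Y \cap \big(V \times \bigcup_{i,g} g\,\partial U_i\big)$, which has dimension $< \dim Y$ by the countable sum theorem. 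That $Y''$ (or rather its closed hull $Y'$) is the lower-dimensional piece fed to the next inductive step. The doubling bound on the order is preserved by a thickening lemma (Lemma~\ref{lem:thick-X-still-D}), since the points $v_i$ arising in an intersection only lie in a thickened $V_z^{\mathit{th}}$, not in $V_z$ itself.

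A smaller point: your equivariance condition ``$\{g : gV_i \cap V_i \neq \emptyset\}$ equals an isotropy group $G_z$'' cannot be arranged as stated; that set need not be a group and need not be finite. The correct condition, as in~\eqref{nl:F}, restricts to those $g$ with $d_V(gv_i,v_i)$ bounded by (a multiple of) $\alpha$, using properness of the $G$-action on $V$ to make this a finite set.
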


  The proof of Theorem~\ref{thm:long-thin-cover-X-V-version}
  will proceed by induction on the dimension of subspaces of $X$.
  The following proposition is the induction step.
  Since we can take $Y = X$ to start the induction
  it implies Theorem~\ref{thm:long-thin-cover-X-V-version}. 

  \begin{proposition} \label{prop:induction-V}
    Retain the assumptions of 
    Theorem~\ref{thm:long-thin-cover-X-V-version}.
    Let $Y \subseteq X$ be a non-empty $G$-invariant closed subspace.
    Assume that $\dim Y = n$.
    For any $\alpha > 0$ there is 
    a $G$-invariant collection $\calw$ of $\calf$-subsets of $X$ 
    and a $G$-invariant closed subspace $Y' \subseteq Y$ such that
    \begin{enumerate}
     \item \label{prop:ind-V:Y'}
         $\dim Y' < \dim Y$;
     \item \label{prop:ind-V:dim}
         the order of $\calw$ is at most $D-1$;
     \item \label{prop:ind-V:long}
         for any $(v,z) \in Y \setminus Y'$ there is $W \in \calw$ such that
         $B_\alpha(v) \x \{z\} \cap X \subseteq W$.
    \end{enumerate}     
  \end{proposition}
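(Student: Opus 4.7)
The plan is to adapt the sketch given after Proposition~\ref{prop:long-thin-hyp} (the zero-dimensional case) to $Y$ of arbitrary dimension $n$, replacing the clopen refinement used there by a dimension-theoretic shrinking of an open cover of $Y$. I may assume $\alpha \geq R$, since the conclusion for smaller $\alpha$ follows from that at $\alpha = R$. The first step is to produce, for each $y = (v,z) \in Y$, an open $\calf$-subset $W^0_y \subseteq X$ which is $\alpha$-long at $y$. Using $G_z \in \calf$ together with separability of $Z$, choose a $G_z$-invariant open $U_z \ni z$ small enough that $gU_z \cap U_z = \emptyset$ for $g \notin G_z$, and set
\[
  W^0_y := G_z \cdot \bigl( B_\alpha(v) \x U_z \bigr) \cap X.
\]
Its set-wise stabilizer lies in $G_z$ and hence in $\calf$. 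By second countability I pass to a countable $G$-invariant subfamily $(W^0_i)_{i \in \IN}$ with centres $(v_i, z_i)$ that is still $\alpha$-long on $Y$.

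For the packing step, since $\dim Y = n$ the countable-sum theorem for covering dimension yields an open shrinking $W^1_i$ with $\overline{W^1_i} \subseteq W^0_i$ whose members cover $Y$ apart from a closed $G$-invariant $Y' \subseteq Y$ with $\dim Y' < n$. Fix a $G$-invariant total preorder $\prec$ on the label set $\IN \x G$ and define inductively
\[
  W_i := W^0_i \,\setminus\, \bigcup_{\substack{(j,g) \prec (i,e) \\ d_V(g v_j,\, v_i) \leq \alpha}} g\,\overline{W^1_j}, \qquad \calw := \{\, g W_i \mid g \in G,\ i \in \IN \,\}.
\]
Properness of the $G$-action on $V$ makes the subtracted family locally finite at every point, so each $W_i$ remains open, and since we only subtract sets that are $g W^0_i$-disjoint under stabilizer elements, $W_i$ remains an $\calf$-subset (its stabilizer is a subgroup of the $\calf$-stabilizer of $W^0_i$). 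The shrinking keeps $\calw$ $\alpha$-long on $Y \setminus Y'$. For the order, suppose $(v,z)$ lies in $k$ distinct members $g_1 W_{i_1}, \dots, g_k W_{i_k}$ of $\calw$; the subtraction rule forces the translates $g_\ell v_{i_\ell}$ to be pairwise separated by more than $\alpha$ in $V$, while they all lie in $B_{2\alpha}(v) \cap V_z$ (the latter thanks to a careful choice of the $U_z$'s that ties points of $W^0_y$ back to the centre via the closedness of $X$). The $(D,R)$-doubling property of $V_z$ then gives $k \leq D$, so the order of $\calw$ is at most $D - 1$.

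The principal obstacle is carrying out the shrinking and the subtraction $G$-equivariantly while simultaneously preserving openness, the $\calf$-subset condition and $\alpha$-longness. Equivariance is obtained by orbit-by-orbit choices, for which $G$-cofiniteness of the initial cover (available since $V$ is $G$-cofinite up to a countable orbit set) is needed; the bound $\dim Y' < n$ rests on the countable-sum theorem for covering dimension in the normal space $Y$ (a subspace of the separable metrizable space $V \x Z$). The most delicate point is arranging that the centres of intersecting members of $\calw$ actually live in the doubling set $V_z$ — this requires tuning the $U_z$'s so that $X$ is sufficiently rigid in the $Z$-direction over them, and will be the place where the specific coarse-geometric structure of $X$ (the coarse flow space) enters in the later applications.
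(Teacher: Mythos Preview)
Your outline has the right overall shape (cover, subtract, use doubling for the order bound), but two of its load-bearing steps are genuine gaps rather than details.

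\textbf{The dimension reduction.} You invoke ``the countable-sum theorem'' to produce a shrinking $W^1_i$ with $\overline{W^1_i}\subseteq W^0_i$ covering $Y$ up to a closed $Y'$ with $\dim Y'<n$. No such statement follows from the countable sum theorem; that theorem only says a countable union of closed sets of dimension $\leq n$ has dimension $\leq n$. The mechanism the paper actually uses is different and essential: because $\dim = \ind$ in separable metrizable spaces, one can choose each neighborhood $U_i\subseteq Z$ so that $\partial U_i\cap Y_w$ has dimension $<n$ for every $w\in V$ (this is Lemma~\ref{lem:dim-U}). Then $Y'':=Y\cap V\times\bigcup_{i,g} g\,\partial U_i$ has $\dim Y''<n$ by the countable sum theorem, and this $Y''$ (after passing to a closed $G$-invariant $Y'\subseteq Y''$) is the lower-dimensional leftover. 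Your proposal has no analogue of this boundary control, so the claimed $\dim Y'<n$ is unsupported.

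\textbf{The order bound.} You correctly notice that the centres $g_\ell v_{i_\ell}$ need not lie in $V_z$, but your resolution --- ``tuning the $U_z$'s'' or deferring to ``specific coarse-geometric structure of $X$'' in applications --- is the wrong direction. The paper handles this abstractly, for arbitrary $X$ satisfying the hypotheses, via Lemma~\ref{lem:thick-X-still-D}: one thickens $X$ to an open $X^{\thick}\supseteq X$ in $V\times Z$ so that each $V^{\thick}_z$ still has the doubling property; the centres then lie in $V^{\thick}_z$ because the initial cover was chosen inside $X^{\thick}$. This thickening is the missing ingredient, and it requires no extra structure on $X$.

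A smaller but related issue: your $W^0_y$ uses balls of radius $\alpha$, so it is $\alpha$-long only at its centre, and after subtraction there is no reason the surviving sets are $\alpha$-long anywhere. The paper uses balls of radius $2\alpha$ in the definition of $W_i$ while subtracting only when $d_V(v_i,hv_j)\leq\alpha$; this buffer is what makes the $\alpha$-longness survive the subtraction.
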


  \begin{lemma}
    \label{lem:dim-U}
    Retain the assumptions of Proposition~\ref{prop:induction-V}.
    Let $(v,z) \in Y$, $\alpha > 0$ and $U_0$ be a neighborhood of $z$ in $Z$.
    Then $U_0$ contains a smaller open neighborhood $U$ of $z$ in $Z$
    such that
    \begin{enumerate}
    \item \label{lem:dim-U:F} 
      $\{ g \in G \mid d_V(gv,v) \leq \alpha,
               U \cap gU \neq \emptyset \} \subseteq 
       G_z = \{ g \in G \mid gz=z \}$;
    \item \label{lem:dim-U:dim-dd}
      for all $w \in V$ we have $\dim Y_w \cap \dd U < \dim Y$,
      where $Y_w := \{ z \in Z_w \mid (w,z) \in Y \}$ 
      and $\dd U$ denotes the boundary of $U$ in $Z$. 
    \end{enumerate}
  \end{lemma}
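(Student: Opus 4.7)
The plan is to handle (a) by a standard properness-and-Hausdorff argument, and (b) by a Urysohn construction combined with a slicing theorem from dimension theory applied on the $F_\sigma$-subspace $A := \bigcup_{w \in V} Y_w \subseteq Z$.

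For (a): since $G$ acts properly and isometrically on the discrete metric space $V$, the set $F := \{g \in G \mid d_V(gv, v) \leq \alpha\}$ is finite. For each $g \in F \setminus G_z$ one has $gz \neq z$, so Hausdorffness of $Z$ furnishes an open neighborhood $U_g \ni z$ with $U_g \cap gU_g = \emptyset$. The finite intersection $U_1 := U_0 \cap \bigcap_{g \in F \setminus G_z} U_g$ is then an open neighborhood of $z$ satisfying the isotropy-type condition (a), as does any smaller open neighborhood of $z$. It therefore suffices to find $U \subseteq U_1$ realizing (b).

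For (b): Urysohn's lemma (applicable since $Z$ is metrizable) produces a continuous $f \colon Z \to [0, 1]$ with $f(z) = 0$ and $f \equiv 1$ on $Z \setminus U_1$. For $t \in (0, 1)$ put $U_t := f^{-1}([0, t))$; this is open, contains $z$, lies in $U_1 \subseteq U_0$, and satisfies $\partial U_t \subseteq f^{-1}(t)$. The aim is to find $t$ so that $f^{-1}(t)$ meets every $Y_w$ in dimension $< n := \dim Y$. Consider $A := \bigcup_{w \in V} Y_w \subseteq Z$. Since $V$ is discrete and countable, each $Y_w$ is closed in $Z$; it also embeds as a closed subspace of $Y$ via $z' \mapsto (w, z')$, so $\dim Y_w \leq \dim Y = n$. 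The countable closed sum theorem in separable metric dimension theory then yields $\dim A \leq n$. Note that $z \in A$, since $(v, z) \in Y$ forces $z \in Y_v \subseteq A$. Applying a standard slicing theorem (see e.g.\ Engelking, \emph{Theory of Dimensions: Finite and Infinite}, Section~1.10) to $f|_A \colon A \to [0, 1]$, a continuous map on a separable metric space of dimension $\leq n$ with $f|_A(z) = 0$, produces $t \in (0, 1)$ with $\dim(f^{-1}(t) \cap A) \leq n - 1$. For every $w \in V$ one then has $\partial U_t \cap Y_w \subseteq f^{-1}(t) \cap A$, a set of dimension at most $n - 1 < n$. Setting $U := U_t$ finishes the argument.

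The main obstacle lies in (b): extracting a single value of $t$ that simultaneously controls the slice $f^{-1}(t) \cap Y_w$ in \emph{every} $Y_w$. The device of collapsing all slices into the single subspace $A$ and bounding $\dim A \leq n$ via the countable closed sum theorem reduces this to one application of the slicing theorem to $f|_A$, sidestepping any need to synthesize choices of $t$ across infinitely many $w$ by Baire-category or similar arguments.
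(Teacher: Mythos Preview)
Your handling of part~(a) and the reduction in part~(b) to the single subspace $A = \bigcup_{w \in V} Y_w$ with $\dim A \leq n$ via the countable sum theorem are correct and match the paper's argument. The gap is in the final step: the ``slicing theorem'' you invoke is false as stated. It is not true that every continuous map $g \colon A \to [0,1]$ on a separable metric space with $\dim A \leq n$ has some level set $g^{-1}(t)$, $t \in (0,1)$, of dimension $\leq n-1$. For a counterexample take $A = [0,1] \times C$ where $C$ is the Cantor set (so $\dim A = 1$), let $\phi \colon C \to [0,1]$ be a continuous surjection, and set $g(s,c) = \phi(c)$; then $g^{-1}(t) = [0,1] \times \phi^{-1}(t)$ contains a copy of $[0,1]$ and hence has dimension $1$ for every $t \in [0,1]$. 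One can realize this $g$ as the restriction to $A$ of a Urysohn function separating a point of $A$ from a closed set, so the obstruction applies to your construction: fixing an arbitrary Urysohn function on $Z$ first and then restricting to $A$ gives no control over the level sets in $A$.

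The paper avoids this by reversing the order. It first uses the coincidence $\dim = \operatorname{ind}$ on separable metric spaces to find an open neighborhood $U''$ of $z$ in $A$, contained in $U_1 \cap A$, with $\dim \partial_A U'' < n$; it then extends $U''$ to an open subset $U$ of $Z$ via the explicit formula $U = \{\, x \in Z : d(x,U'') < d(x, A \setminus U'') \,\}$ from Lemma~\ref{lem:dd-U_+}, which guarantees $U \cap A = U''$ and $\partial U \cap A = \partial_A U''$. Your approach can be repaired along the same lines: rather than starting from a Urysohn function on $Z$, first use the inductive-dimension characterization on $A$ to obtain a neighborhood of $z$ in $A$ with low-dimensional boundary, and only then extend to $Z$ in a boundary-preserving way.
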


  \begin{proof}    
    We have $Y = \bigcup_{w \in V} \{w\} \x Y_w$.
    Since $\dim Y = n$ and $V$ is discrete each $Y_w$ satisfies 
    $\dim Y_w \leq n$.
    Since $Y$ is closed, $Y_w \subseteq Z$ is closed for each $w \in V$.   
    The countable sum theorem in dimension 
    theory~\cite[Thm.~2.5, p.125]{Pears(Dimension-theory)} 
    asserts that in a normal space the 
    dimension of the countable union of closed subspaces is the
    supremum of the dimension of the subspace. 
    Since $V$ is countable this implies that the dimension of 
    $Y_V := \bigcup_{w \in V} Y_w$ is at most $n$.
      
    Since every subspace $Z'$ of $Z$ is separable and metrizable,
    its (covering) dimension equals its (small) inductive dimension: 
    $\dim Z' = \ind Z'$~\cite[Cor.~5.10, p.~184]{Pears(Dimension-theory)}.
    This means that for any $z \in Z'$ there are arbitrary small
    neighborhoods whose boundary in $Z'$ has dimension less than
    $\dim Z'$. 

    Since $d_V$ is proper and the action of $G$ on $V$ is proper, 
    $\{ g \mid d_V(gv,v) \leq \alpha \}$ is finite.
    Since $Z$ is Hausdorff there is an open neighborhood $U'$ of $z$ in $Z$
    that is contained in $U_0$ and satisfies
    $g U' \cap U' = \emptyset$ for all $g \in G \setminus G_z$ with
    $d_V(gv,v) \leq \alpha$. 
    Applying the facts from the preceding paragraph to $Y_V$, we obtain 
    an open neighborhood $U''$ of $z$ in $Y_V$ that is contained
    in $U'$ and satisfies $\dim \dd^{Y_V} U'' < \dim Y$, 
    where $\dd^{Y_V}$ denotes the boundary in $Y_V$.
    Using Lemma~\ref{lem:dd-U_+} we can extend $U''$
    to an open subset $U$ of $Z$ 
    that is contained in $U'$ and satisfies
    $U \cap Y_V = U''$, and $\dd U \cap Y_V = \dd^{Y_V}U''$, 
    where $\dd$ denotes the boundary in $Z$.

    Let $w \in V$. 
    Since $Y_w$ is closed in $Y_V$ we have
    $\dim Y_w \cap \dd U \leq \dim Y_V \cap \dd U 
        = \dim \dd^{Y_V}U'' < \dim Y$.
    Thus $U$ satisfies~\ref{lem:dim-U:dim-dd}.
    Since $U \subseteq U'$, $U$ also satisfies~\ref{lem:dim-U:F}.     
  \end{proof}

  \begin{lemma} \label{lem:thick-X-still-D}
    Retain the assumptions of Theorem~\ref{thm:long-thin-cover-X-V-version}.
    Let $\alpha \geq R$ be given.    
    Then there 
    exists an open $G$-invariant neighborhood $X^{\thick}$ of $X$ in 
    $V \x Z$ such that for all $z \in Z$, the subspace
    $V^{\thick}_{z} := 
     \{ v \in V \mid (v,z) \in X^{\thick} \}$ of $V$
    has the following property: if $S \subseteq V^{\thick}_z$ is $\alpha$-separated and 
    contained in a ball of radius $2\alpha$, then
    the cardinality of $S$ is at most $D$.  
  \end{lemma}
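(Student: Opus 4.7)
The plan is to thicken $X$ by enlarging each closed fiber $Z_v := \{z \in Z \mid (v,z) \in X\}$ to an open neighborhood $U_v \subseteq Z$ in a $G$-equivariant way, and then set $X^{\thick} := \bigcup_{v \in V} \{v\} \x U_v$. Since $V$ is discrete, any such $X^{\thick}$ is automatically open in $V \x Z$ and contains $X$. The two conditions that must be arranged are $G$-equivariance ($U_{gv} = g U_v$) and preservation of the doubling bound: any $\alpha$-separated $S \subseteq V^{\thick}_z$ contained in a $2\alpha$-ball has $|S| \leq D$.

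First I would reformulate doubling as a family of disjointness statements. Let $\mathcal{S}$ be the $G$-invariant collection of all $(D+1)$-element subsets $\Sigma \subseteq V$ that are $\alpha$-separated and contained in some closed ball of radius $2\alpha$. Since $\alpha \geq R$, the $(D,R)$-doubling property of each $V_z$ forces $\bigcap_{v \in \Sigma} Z_v = \emptyset$ for every $\Sigma \in \mathcal{S}$. Moreover, because $V$ is proper and discrete, each ball $B_{4\alpha}(v)$ is finite, so each fixed $v$ lies in only finitely many $\Sigma \in \mathcal{S}$; this observation is what will make the subsequent intersections finite.

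Next, for each $\Sigma \in \mathcal{S}$ and each $v \in \Sigma$, I would produce an open $O_v^{\Sigma} \supseteq Z_v$ with $\bigcap_{v \in \Sigma} O_v^{\Sigma} = \emptyset$. Since $Z$ is separable metrizable, hence normal, such open sets exist by the standard fact that finitely many closed sets with empty intersection admit open neighborhoods with empty intersection. To make the whole assignment $G$-equivariant, I would pick a representative $\Sigma_j$ in each $G$-orbit of $\mathcal{S}$, start from any such family $(P_v^{\Sigma_j})_{v \in \Sigma_j}$, and symmetrize by
\[
  O_v^{\Sigma_j} := \bigcap_{h \in \mathrm{Stab}(\Sigma_j)} h\, P_{h^{-1}v}^{\Sigma_j},
\]
using that $\mathrm{Stab}(\Sigma_j)$ is finite (it is an extension of a subgroup of $\mathrm{Sym}(\Sigma_j)$ by the pointwise stabilizer $\bigcap_{v \in \Sigma_j} G_v$, each $G_v$ being finite by properness of the $G$-action on $V$). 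The result is a $\mathrm{Stab}(\Sigma_j)$-equivariant family whose intersection is still empty, and it extends unambiguously to the whole $G$-orbit of $\Sigma_j$ via $O_{gv}^{g\Sigma_j} := g\, O_v^{\Sigma_j}$.

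Finally I would define $U_v := \bigcap_{\Sigma \in \mathcal{S},\, v \in \Sigma} O_v^{\Sigma}$. By the finiteness observation above, this is a finite intersection of open sets, hence open; it contains $Z_v$ and the $G$-equivariant construction yields $U_{gv} = g U_v$. Any $\alpha$-separated $S \subseteq V^{\thick}_z$ in a $2\alpha$-ball of cardinality $\geq D+1$ would supply some $\Sigma \in \mathcal{S}$ with $z \in \bigcap_{v \in \Sigma} U_v \subseteq \bigcap_{v \in \Sigma} O_v^{\Sigma} = \emptyset$, a contradiction. The main obstacle I foresee is reconciling $G$-equivariance with the finiteness needed for openness of each $U_v$; both are secured simultaneously by properness of the $G$-action and discreteness of $V$, which together bound the number of bad configurations meeting any given vertex.
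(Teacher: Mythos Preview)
Your proof is correct and takes a genuinely different route from the paper's. The paper works pointwise in $z$: it chooses $G$-compatible metrics $d_v$ on $Z$, defines for each $(v,z) \in X$ a radius $\e(v,z)$ equal to one third of the minimum $d_v$-distance from $z$ to those $Z_w$ with $d_V(v,w) \leq 4\alpha$ and $z \notin Z_w$, and thickens $Z_v$ to the union of the resulting small balls $B(v,z)$. The doubling bound is then verified by an $\e$-comparison argument: if $z'$ lay in $Z_v^{\thick}$ for all $v$ in a bad configuration, one picks the $v$ with maximal $\e(v,z_v)$ and derives $d_v(z_v,Z_w) \leq 2\e(v,z_v)$ for all $w$, contradicting the definition of $\e$.

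Your approach replaces this metric computation by a direct appeal to normality, organized around the finite set $\mathcal{S}$ of forbidden $(D+1)$-configurations. This makes the logical structure more transparent: the doubling hypothesis is exactly the statement $\bigcap_{v \in \Sigma} Z_v = \emptyset$ for all $\Sigma \in \mathcal{S}$, and normality lets you preserve this disjointness under thickening. The key finiteness input---that each $v$ lies in only finitely many $\Sigma \in \mathcal{S}$---is the same as the paper's (properness of $d_V$ bounds $B_{4\alpha}(v)$), but you use it once to make $U_v$ open rather than repeatedly inside an $\e$-argument. The paper's metric construction is perhaps more explicit and would give quantitative control over the thickening if one needed it; your argument is shorter and isolates exactly which general-topology fact is doing the work.
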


  \begin{proof}
    Since the action of $G$ on $V$ is proper, we can 
    pick metrics $d_v$, $v \in V$ on $Z$ that are 
    compatible with the $G$-action, i.e., such that we have
    $d_v(z,z') = d_{gv}(gz,gz')$ 
    for all $v \in V$, $g \in G$, $z,z' \in Z$.      
  
    Let $(v,z) \in X$  (i.e., $z \in Z_v$).
    For $w \in V$ with $z \not\in Z_w$, let
    $\e(v,w,z) := d_v(z,Z_w)/{3}$.
    The $Z_w$ are closed since $X$ is closed.
    Thus $\e(v,w,z) > 0$. 
    Let $\e(v,z) := \min \e(v,w,z)$, where the minimum is
    taken over all $w \in V$ with $d_V(v,w) \leq 4\alpha$ and $z \not\in Z_w$.
    Since $d_V$ is proper the minimum exists and is positive.
    Let $B(v,w,z)$ be the open ball of radius $\e(v,z)$ around $z$
    with respect to the metric $d_w$.
    Set $B(v,z) := \cap_{d_V(v,w) \leq 4\alpha} B(v,w,z)$.
    Since $d_V$ is proper $B(v,z)$ is open.
    Moreover, by construction,
    \begin{numberlist}
    \item[\label{nl:B(v,z),Z_w}] 
        $d_v(z,Z_w) > 2 \e(v,z)$ whenever $z \in Z_v \setminus Z_w$ and
      $d_V(v,w) \leq 4\alpha$;
    \item[\label{nl:z-z'}] 
        $d_{v}(z',z) < \e(w,z)$ whenever $z \in Z_w$, $z' \in B(w,z)$ and
        $d_V(v,w) \leq 4\alpha$.
    \end{numberlist}
    
    Define $Z_v^{\thick} := \bigcup_{z \in Z_v} B(v,z)$ and
    $X^{\thick} := \bigcup_{v \in V} \{ v \} \x Z_v^{\thick}$.
    By construction $X^{\thick}$ is an open neighborhood of $X$ in $V \x Z$.
    The compatibility of the metrics $d_v$ with the $G$-action 
    guarantees that $X^{\thick}$ is $G$-invariant.

    Let $S \subseteq V$ be an $\alpha$-separated subset of
    some $2\alpha$-ball in $V$.
    Assume that the cardinality of $S$ exceeds $D$.
    Since all $V_z$ have the $D$-doubling property we have
    $\bigcap_{w \in S} Z_w = \emptyset$.
    We need to show that $\bigcap_{w \in S} Z^{\thick}_w = \emptyset$ as well.
    Assume by contradiction that $z' \in \bigcap_{w \in S} Z^{\thick}_w$.
    Then we find for each $w \in S$, a point $z_w \in Z_w$ with
    $z' \in B(w,z_w)$. 
    Choose $v \in S$ such that $\e(w,z_w) \leq \e(v,z_{v})$ for all
    $w \in S$. 
    Note that $d_V(w,v) \leq 4\alpha$ for all $w \in S$.
    Now~\eqref{nl:z-z'} implies that for all $w \in S$
     \begin{align*}
      d_{v}(z_{v},Z_w) \leq 
           d_{v}(z_{v},z_w)   \leq d_{v}(z_{v},z') & + d_{v}(z',z_w) \\
          & \leq \e(v,z_{v}) + \e(w,z_w) \leq 2\e(v,z_{v}).
     \end{align*}
    On the other hand~\eqref{nl:B(v,z),Z_w} implies 
    $d_{v}(z_{v},Z_w) > 2 \e(v,z_{v})$ if $z_{v} \not\in Z_w$.
    Therefore $z_{v} \in Z_w$ for all $w \in S$.
    This contradicts  $\bigcap_{w \in S} Z_w = \emptyset$.
  \end{proof}

  \begin{proof}[Proof of Proposition~\ref{prop:induction-V}]
    Throughout this proof closure and boundary will always be taken with
    respect to $Z$.

    Let $\alpha \geq R$.
    Let $X^{\thick}$ be as in Lemma~\ref{lem:thick-X-still-D}.
    Since $Y$ is separable we can use Lemma~\ref{lem:dim-U}
    and find sequences $v_i \in V$, $F_i \in \calf$, 
    $U_i \subseteq Z$ open, such that
    \begin{numberlist}
    \item [\label{nl:F}] 
      $\{ g \in G \mid d_V(gv_i,v_i) \leq 4\alpha,
               U_i \cap gU_i \neq \emptyset \} \subseteq F_i$;
    \item [\label{nl:dim-dd}]
      for all $w \in V$ we have $\dim Y_w \cap \dd U_i < \dim Y$;
    \item [\label{nl:cover-Y}]
      $Y \subseteq \bigcup_{i} \{v_i\} \x U_i \subseteq X^{\thick}$. 
    \end{numberlist} 
    Now we define inductively $U'_i$ by $U'_0 := U_0$ and
    \begin{equation*}
      U'_i := U_i \setminus 
         \bigcup_{h,j} h \overline{U'_j}
    \end{equation*}
    where the union is over all pairs $(h,j)$ with $h \in G$, $j < i$, 
    and $d_V(v_i,hv_j) \leq \alpha$. 
    Since $d_V$ is proper and the action of $G$ on $V$ is proper,
    this is a finite set of pairs and $U'_i$ is open.
    An easy induction shows that 
    $\dd U'_i \subseteq \bigcup_{j \leq i, g \in G} g \dd U_j$.
    Thus with $Y'' :=  Y \cap V \x  \bigcup_{i,g} g\dd U_i$ we have
    \begin{equation*}
      Y \cap \{ v \} \x  g \dd U'_i \subseteq Y''
    \end{equation*}
    for all $v \in V$, $i \in \IN$, $g \in G$. 
    For $i \in \IN$ let
    \begin{equation*}
      W_i := B_{2\alpha}(v_i) \x U'_i \cap X
    \end{equation*}
    and set $\calw := \{ g F_i W_i \mid g \in G, i \in \IN \}$.
    
    Clearly, $\calw$ consists of open subsets of $X$ and is $G$-invariant.
    Consider $g F_i W_i \in \calw$.
    If $\gamma \in g F_i g^{-1}$, then $\gamma g F_i W_i = g F_i W_i$.
    If $\gamma \notin g F_i g^{-1}$, then  
    $\gamma g F_i W_i \cap g F_i W_i = \emptyset$.
    Indeed, if $(v,z) \in \gamma g F_i W_i \cap g F_i W_i$, 
    then there are $a,b \in F_i$ with
    $d_V(v,\gamma g a v_i), d_V(v,g b v_i) \leq 2\alpha$
    and $z \in \gamma g a U_i \cap g b U_i$.
    Then~\eqref{nl:F} implies $b^{-1}g^{-1}\gamma ga \in F_i$ and therefore
    $\gamma \in g F_i g^{-1}$.   
    Thus each $g F_i W_i$ is an $\calf$-subset.

    We now prove $\dim Y'' < \dim Y$.
    For $v \in V$, $i \in \IN$ and $g \in G$ we have
    $Y \cap \{v \} \x g \dd U_i = g( Y \cap \{ g^{-1}v \} \x \dd U_i) \cong
      Y_{g^{-1}v} \cap \dd U_i$. 
    By~\eqref{nl:dim-dd} all these spaces are of dimension $< \dim Y$.
    Since they are all closed in $Y$
    the countable sum theorem~\cite[Thm.~2.5, p.125]{Pears(Dimension-theory)}
    implies that their union, $Y''$, is also of dimension $< \dim Y$.
    
    \vspace{1ex}
    Next we prove that $\calw$ is of order $\leq D-1$, i.e., that
    it satisfies~\ref{prop:ind-V:dim}. 
    Let $(v,z) \in X$.
    Suppose that $(v,z) \in g F_i  W_i$.
    Then there is $a \in F_i$ such that
    $d_V(v, g a v_i) \leq 2 \alpha$ and $z \in  g a U'_i$. 
    As $X^{\thick}$ is $G$-invariant,~\eqref{nl:cover-Y} implies 
    $(gav_i,z) \in \{ gav_i \} \x \{ gaU'_i \} \subseteq X^{\thick}$.
    Thus $gav_i \in V^{\thick}_z$. 
    If also $(v,z) \in h F_j  W_j$ then there is $b \in F_j$ with
    $d_V(v,hbv_j) \leq 2 \alpha$, $z \in hbU'_j$ and $hbv_j \in V^{\thick}_z$.

    If $i=j$ then $g F_i  W_i \cap h F_i W_i \neq \emptyset$
    and since $g F_i W_i$ is a $\calf$-subset
    we then have $g F_i  W_i = h F_j W_j$. 
    If $i \neq j$,
    then $\gamma U'_i \cap U'_j = \emptyset$ for all 
    $\gamma \in G$ with $d_V(\gamma v_i,  v_j) \leq \alpha$.
    Since $b^{-1}h^{-1}gaU'_i \cap U'_j \neq \emptyset$, this
    implies 
    $d_V (b^{-1}h^{-1}ga v_i, v_j) = d_V( gav_i,hb v_j ) > \alpha$.
    
    All together, we have shown that if $(v,z)$ is contained in $N$ distinct 
    members in $\calw$, then there is an $\alpha$-separated set in 
    $B_{2\alpha}(v) \cap V^{\thick}_z$ of cardinality $N$.
    Lemma~\ref{lem:thick-X-still-D} implies now that $N$ is 
    bounded by the doubling constant $D$ appearing
    in the assumptions of Theorem~\ref{thm:long-thin-cover-X-V-version}.
    This implies that the order of $\calw$ is at most $D-1$.   
   
    \vspace{1ex} 
    Next we claim that for any $(v,z) \in Y \setminus Y''$ there
    is  $W \in \calw$ with
    $B_\alpha(v) \x \{ z \} \cap X \subseteq W$.  
    Let $(v,z) \in Y \setminus Y''$.
    By~\eqref{nl:cover-Y} there is $i \in \IN$ 
    with $(v,z) \in \{v_i\} \x  U_i$. 
    So $v = v_i$.
    If $z \in  U'_i$, then 
    \begin{equation*}
      B_\alpha(v) \x \{z\} \cap X \subseteq B_{2\alpha} ( v_i) \x  U'_i \cap X
        \subseteq  F_i W_i.
    \end{equation*}
    If $z \not\in  U'_i$, then $z \in h \overline U'_j$
    for some $j < i$ and $h \in G$ with $d_V(v_i,hv_j) \leq \alpha$.
    We observed earlier $Y \cap \{ v \} \x h \dd U'_j \subseteq Y''$.
    Thus we have $z \in h U'_j$. 
    Since  $d_V(v,h v_j) = d_V( v_i, h v_j)  \leq \alpha$ we have then
    \begin{equation*}
      B_\alpha(v) \x \{ z \} \cap X \subseteq B_{2\alpha} (h v_j) \x h U'_j \cap X
        \subseteq h F_j W_j.
    \end{equation*}
    This proves our claim.
     
    \vspace{1ex}
    Now observe that if $B_\alpha(v) \x \{ z \} \cap X \subseteq W$,
    then, since $W$ is open, $B_\alpha(v) \x \{ z' \} \cap X \subseteq W$
    for all $z'$ in a neighborhood of $z$. 
    Since $\calw$ is $G$-invariant, this implies that there is an 
    open $G$-invariant neighborhood $N$ of $Y \setminus Y''$
    such that all $(v,z) \in N$ satisfy~\ref{prop:ind-V:long}. 
    
    \vspace{1ex}
    Now $Y' := Y \setminus N$ is $G$-invariant and closed.     
    Moreover, since $Y' \subseteq Y''$, we have $\dim Y' \leq \dim Y'' < \dim Y$
    and~\ref{prop:ind-V:Y'} holds.
  \end{proof}

  \begin{corollary}
    \label{cor:nilpotent}
    Let $G$ be a finitely generated virtually nilpotent group.
    Then any action of $G$ on a finite dimensional separable metrizable space $Z$
    is finitely $\calf$-amenable where $\calf$ consists of all
    subgroups of $G$ that fix a point in $Z$.
  \end{corollary}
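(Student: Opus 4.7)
The plan is to apply Theorem~\ref{thm:long-thin-cover-X-V-version} with $V := G$ equipped with a left-invariant word metric $d_V$ associated to a finite generating set, with the given separable metrizable $G$-space $Z$, and with $X := V \x Z$ itself. Since $G$ is finitely generated, $V$ is a discrete countable proper metric space carrying a proper isometric $G$-action by left translation, and $X$ is trivially a closed $G$-invariant subspace of $V \x Z$ equipped with the diagonal action.

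Next I would verify the three hypotheses of the theorem. Hypothesis~\ref{thm:long-thin-V:assm-dim} follows from $V$ being countable and discrete: the countable sum theorem gives $\dim X = \dim Z < \infty$. Hypothesis~\ref{thm:long-thin-V:isotropy} is immediate from the definition of $\calf$, since for any $(v,z) \in X$ the stabilizer $G_z$ fixes the point $z \in Z$ and therefore lies in $\calf$. The main obstacle is hypothesis~\ref{thm:long-thin-V:assm-packing}: because $V_z = G$ for every $z \in Z$, what is required is a uniform $(D,R)$-doubling property for the word metric on $G$. This is where the virtual nilpotence enters through the Bass--Guivarc'h polynomial growth formula (which does not require the full strength of Gromov's theorem in the virtually nilpotent case): there are constants $c,C > 0$, an integer $d$, and $R_0 \geq 1$ with $c r^d \leq |B_r(e)| \leq C r^d$ for all $r \geq R_0$. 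A standard volume packing argument then finishes the job: given an $\alpha$-separated $S \subseteq B_{2\alpha}(v)$ with $\alpha$ beyond some threshold $R$, the balls of radius slightly less than $\alpha/2$ about the points of $S$ are pairwise disjoint and contained in a ball of radius slightly more than $5\alpha/2$, whence $|S| \leq D$ for some $D = D(c,C,d)$ independent of $\alpha$ and $v$.

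Finally, to deduce $N$-$\calf$-amenability, let a finite subset $S \subseteq G$ be given and choose $\alpha \geq R$ large enough that $S \subseteq B_\alpha(e)$. Theorem~\ref{thm:long-thin-cover-X-V-version} provides an $\calf$-cover $\calw$ of $X = G \x Z$ of dimension at most a constant $N$ (independent of $\alpha$) such that for each $(v,z) \in X$ there exists $W \in \calw$ with $B_\alpha(v) \x \{z\} \subseteq W$. Specializing to $v = e$, for every $z \in Z$ one obtains some $W \in \calw$ with $S \x \{z\} \subseteq B_\alpha(e) \x \{z\} \subseteq W$, which is precisely the wide condition~\ref{def:N-calf-amenable:wide} of Definition~\ref{def:N-calf-amenable}. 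Therefore the action of $G$ on $Z$ is $N$-$\calf$-amenable, and in particular finitely $\calf$-amenable.
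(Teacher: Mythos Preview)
Your proof is correct and follows exactly the same approach as the paper: apply Theorem~\ref{thm:long-thin-cover-X-V-version} with $V=G$ (word metric), $X=G\times Z$, and verify the three hypotheses, using polynomial growth of virtually nilpotent groups (the paper cites Bass) for the doubling condition. You have simply spelled out in more detail the packing argument for doubling and the passage from the long cover of $G\times Z$ to the wide condition of Definition~\ref{def:N-calf-amenable}, both of which the paper leaves implicit.
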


  \begin{proof}
    This follows from Theorem~\ref{thm:long-thin-cover-X-V-version} 
    where we take
    $V = G$ with a word metric and $X = G \x Z$:
    Assumption A) is satisfied since $Z$ is finitely dimensional.
    Finitely generated virtually nilpotent groups are of 
    polynomial growth~\cite{Bass(Degree-of-poly-growth)}.
    This implies that $G$ has a doubling property.
    Thus assumption B) is satisfied.    
    Assumption C) holds by choice of $\calf$.
  \end{proof}

  \section{Relative hyperbolic groups}
    \label{sec:relative-hyperbolic-groups}    

  Throughout this section $\Gamma$ will be a fine and hyperbolic 
  graph in the sense of Bowditch~\cite{Bowditch(Rel-hyperbolic-groups)}
  and $G$ will be a countable group equipped with a 
  cocompact simplicial action on $\Gamma$.
  In particular, $\Gamma$ is uniformly fine: 
  for any $\alpha$ there is $N_\alpha$
  such that for any edge $e$ there are at most $N_\alpha$ circuits of length
  $\leq \alpha$ containing $e$.
  (A circuit is an embedded loop in $\Gamma$.)
  The isotropy groups of edges for the action of $G$ on $\Gamma$ 
  are assumed to be finite.  
  Let $P_1,\dots,P_n$ be representatives of the conjugacy classes of 
  the isotropy groups of the vertices of $\Gamma$ of infinite valency.  
  We will then say that $G$ is 
   \emph{relatively hyperbolic to $P_1,\dots,P_n$}\footnote{
      Bowditch~\cite[Def.~2]{Bowditch(Rel-hyperbolic-groups)} 
      assumes in addition
      that $P_1,\dots,P_n$ are finitely generated, but this restriction will not
      be necessary here.}.

  We denote by $\Delta$ the union of the Gromov boundary $\dd \Gamma$ of
  $\Gamma$ with the set $V_\infty$ of vertices of $\Gamma$ with infinite
  valency.
  By $V$ we denote the set of all vertices of $\Gamma$ and by 
  $E$ the set of all edges of $\Gamma$.
  We write $\Delta_+ := \dd \Gamma \cup V$. 
  In other words, $\Delta_+$ is the union of $\Delta$ with the set of all vertices of $\Gamma$ of finite valency. 
  We think of edges as subsets of $V$ with two elements;
  in particular, edges are not oriented. 

  We will use the term geodesic for finite geodesics, geodesic rays and 
  bi-infinite geodesics. 
  If $c$ is a geodesic and $\xi,\xi' \in \Delta_+$ are both contained in $c$ or  
  endpoints of $c$ then we write $c|_{[\xi,\xi']}$ for the restriction of $c$ to
  a geodesic between $\xi$ and $\xi'$. 
 
  Hyperbolicity of a graph is usually formulated in terms of geodesic triangles
  whose sides are finite geodesics.
  But it then follows that all geodesic triangles, including geodesic 
  triangles with one or more corners at the boundary are uniformly 
  slim~\cite[Lem.~2.11]{Groves-Manning(Dehn-filling)}\footnote{In this reference the proof is left as an exercise; for completeness a solution to this exercise is included in Appendix~\ref{app:ideal-slim}.}.
  We can therefore fix a constant $\delta > 0$ such that in all 
  geodesic triangles each side is contained in the union of the 
  $\delta$-neighborhood of the other two.
  It will be convenient to assume that $\delta$ is an integer.
  We will refer to $\delta$ as a hyperbolicity constant for $\Gamma$.

  \subsection*{Angles.}

   An unordered pair $(e,e')$ 
  of edges in $\Gamma$ that have a vertex $v$ in 
  common is called an \emph{angle} at $v$.
  If $e=e'$, then we say that the angle $(e,e')$ is trivial.
  The group $G$ acts on the set of angles; this action will usually be 
  not cofinite.
  A $G$-invariant, $G$-cofinite subset $\Theta$ of the set of all angles,
  that contains all trivial angles will be called a
  \emph{size for angles}.
  Members of $\Theta$ will be called \emph{$\Theta$-small}.
  Angles that are not contained in $\Theta$ will 
  be called \emph{$\Theta$-large}.
  If $c$ is a geodesic in $\Gamma$ then $c$ determines a non-trivial angle at 
  every internal vertex $v$ of $c$; this angle will be called
  the angle of $c$ at $v$ and sometimes be denoted by $\varangle_v c$.
  If all these angles are $\Theta$-small, then we will say that 
  $c$ is $\Theta$-small.
  In particular, geodesics of length $1$ are always $\Theta$-small, as they
  contain no internal vertices. 
  If $\Theta$ and $\Theta'$ are two sizes for angles then we define
  $\Theta + \Theta'$ to consist of all angles $(e,e'')$ for which there
  is an edge $e'$ with $(e,e') \in \Theta$ and $(e',e'') \in \Theta'$. 
  As a consequence of the following Lemma~\ref{lem:angles-locally-finite}, 
  $\Theta + \Theta'$ is again a size for angles. 

  \begin{lemma} \label{lem:angles-locally-finite}
    If $\Theta$ is a size for angles then
    each edge $e$ is contained in only finitely many angles
    of $\Theta$, i.e., $\Theta_e := \{ e' \in E \mid (e,e') \in \Theta \}$
    is finite. 
    Conversely, a $G$-invariant set of angles $\Theta$ with the property
    that $\Theta_e$ is finite for any edge $e$ is a size for angles. 
  \end{lemma}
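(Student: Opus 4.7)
My plan is to prove the two implications separately. Both rely only on the cocompactness of the $G$-action on $\Gamma$ and the hypothesis that edge stabilizers are finite, so no geometric input from fineness or hyperbolicity is needed.

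For the forward direction, I would start from the $G$-cofiniteness of $\Theta$ and list finitely many orbit representatives $(e_1,e_1'), \dots, (e_k,e_k')$. Fixing an edge $e$, every $e' \in \Theta_e$ gives an angle $(e,e')$ lying in one of these orbits. Since angles are unordered pairs, there exist $g \in G$ and an index $i$ with either $(ge_i, ge_i') = (e,e')$ or $(ge_i', ge_i) = (e,e')$. In either case the group element $g$ is constrained to a left coset of the stabilizer of $e_i$ (respectively $e_i'$), which is finite by assumption; hence only finitely many values of $e' = ge_i'$ (respectively $ge_i$) arise. Summing the resulting finite counts over the $k$ orbits and the two orderings yields $|\Theta_e| < \infty$.

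For the converse, I would invoke cocompactness to choose representatives $e_1,\dots,e_m$ of the $G$-orbits of edges. Given any $(e,e') \in \Theta$, pick $g \in G$ with $ge = e_i$ for some $i$; then by $G$-invariance $ge' \in \Theta_{e_i}$. Therefore the orbit of $(e,e')$ meets the set
\[
\bigcup_{i=1}^{m} \bigl\{ (e_i, e'') \bigm| e'' \in \Theta_{e_i} \bigr\},
\]
which is finite by the local finiteness hypothesis. Consequently $\Theta$ has only finitely many $G$-orbits and is $G$-cofinite. The only delicate point is keeping track of the unordered-pair convention for angles in the forward direction, but this merely doubles a finite count; I do not anticipate any substantive obstacle.
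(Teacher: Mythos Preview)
Your proof is correct and follows essentially the same approach as the paper. For the forward direction the paper phrases the argument more abstractly---observing that cofiniteness of $G$ on $\Theta$ forces cofiniteness of $G_e$ on $\Theta_e$, which is then finite since $G_e$ is finite---while you unpack this explicitly via orbit representatives and cosets; the content is the same. Your converse is identical to the paper's.
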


  \begin{proof}
    If the action of $G$ on $\Theta$ is cofinite, the same holds for the
    action of the isotropy group $G_e$ of $e$ on $\Theta_e$.
    Since $G_e$ is finite, so is then $\Theta_e$.

    Conversely, since the action of $G$ on $E$ is cofinite, there are finitely
    many edges $e_1,\dots,e_n$ such that each orbit for the action of $G$
    on the set of all angles contains an angle of the form $(e,e_i)$.
    Thus, if $\Theta_{e_i}$ is finite for $i=1,\dots,n$, then the action of $G$ on
    $\Theta$ is cofinite.
  \end{proof}

  \begin{corollary}
    \label{cor:finite-theta-balls}
    Let $\Theta$ be a size for angles and $\alpha > 0$.
    Let $v$ be a vertex and $e$ be an edge incident to $v$.
    Consider the set $V'$ of all vertices $v'$
    for which there exists a $\Theta$-small geodesic
    from $v$ to $v'$ of length at most $\alpha$ whose initial edge $e'$ 
    satisfies $(e,e') \in \Theta$.
    Then the cardinality of $V'$ is bounded by a number depending only
    on $\Theta$ and $\alpha$.
  \end{corollary}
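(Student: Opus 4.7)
The plan is to induct on $\alpha$, using Lemma~\ref{lem:angles-locally-finite} to bound the branching of candidate $\Theta$-small geodesics at each step. Since $\Gamma$ is simplicial I may assume $\alpha$ is a positive integer. The key preparatory step is to extract a uniform finiteness constant: Lemma~\ref{lem:angles-locally-finite} tells me that $\Theta_e$ is finite for every edge $e$, and because $\Theta$ is $G$-invariant while $G$ acts cofinitely on the set of edges, the cardinality $|\Theta_e|$ takes only finitely many values over all edges $e$. I therefore set $M := \max_{e} |\Theta_e|$, a constant depending only on $\Theta$ (with $\Gamma$, $G$ fixed throughout the section), and define recursively $f(0) := 1$ and $f(\alpha) := 1 + M \cdot f(\alpha - 1)$ for $\alpha \geq 1$. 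The statement to prove by induction is $|V'| \leq f(\alpha)$.

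For the induction step, given $v' \in V'$ with $v' \neq v$, pick a witnessing $\Theta$-small geodesic $c$ from $v$ to $v'$ of length at most $\alpha$, and let $e_1$ be its initial edge and $w$ its second vertex. The edge $e_1$ is incident to $v$ and satisfies $(e, e_1) \in \Theta$, so there are at most $M$ possibilities for $e_1$, and hence at most $M$ possibilities for $w$. The remainder $c|_{[w, v']}$ is a $\Theta$-small geodesic from $w$ to $v'$ of length at most $\alpha - 1$; if nondegenerate, its initial edge $e_2$ satisfies $(e_1, e_2) \in \Theta$ by the $\Theta$-smallness of $c$ at the internal vertex $w$. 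Applying the inductive hypothesis to the pair $(w, e_1)$ bounds the number of such endpoints $v'$ by $f(\alpha - 1)$. Summing over the $M$ choices of $e_1$ and adding $1$ for the case $v' = v$ yields $|V'| \leq 1 + M \cdot f(\alpha - 1) = f(\alpha)$, which depends only on $\Theta$ and $\alpha$ as required.

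The only substantial ingredient is the existence of the uniform bound $M$, which is immediate from combining Lemma~\ref{lem:angles-locally-finite} with the $G$-cofiniteness of the action on edges; everything else is a routine recursion on path length, so I expect no real obstacle.
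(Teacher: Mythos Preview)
Your proof is correct and follows essentially the same approach as the paper: both rely on Lemma~\ref{lem:angles-locally-finite} together with the cofiniteness of the $G$-action on edges to obtain a uniform bound on $|\Theta_e|$. The paper simply declares the result an ``immediate consequence'' of these two facts, while you spell out the induction on $\alpha$ explicitly; your version is a faithful unpacking of what the paper leaves implicit.
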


  \begin{proof}
    This is an immediate consequence of the first statement in 
    Lemma~\ref{lem:angles-locally-finite} and the cofiniteness of
    the action of $G$ on the set of edges.
  \end{proof}

  \begin{definition}
    We define $\Theta^{(3)}$ as the set of all angles $(e,e')$ such that there 
    exists a geodesic triangle (possibly with some corners in $\dd \Gamma$)
    with sides $c$, $c'$ and $c''$ 
    such that $c$ and $c'$ determine the angle $(e,e')$ at the corner $v \in V$,
    and such that  $c''$ does not meet $v$.
  \end{definition}

  \begin{lemma} \label{lem:theta_3-cofinite}
    The set $\Theta^{(3)}$ is a size for angles.
  \end{lemma}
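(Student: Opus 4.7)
By Lemma~\ref{lem:angles-locally-finite}, to prove that $\Theta^{(3)}$ is a size for angles it suffices to check: (i) $\Theta^{(3)}$ is $G$-invariant, (ii) it contains all trivial angles, and (iii) $\Theta^{(3)}_e := \{ e' \in E \mid (e,e') \in \Theta^{(3)} \}$ is finite for every edge $e$. Property (i) is immediate from the $G$-equivariance of the defining condition: a triangle $(c,c',c'')$ witnessing $(e,e')$ is sent by $g \in G$ to a triangle witnessing $g(e,e')$. Trivial angles can be included by convention; the bulk of the proof is (iii).

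My plan for (iii) is to exhibit a constant $L = L(\delta)$, depending only on the hyperbolicity constant, such that every $(e,e') \in \Theta^{(3)}$ lies on a common circuit of length at most $L$. Uniform fineness of $\Gamma$ then bounds $|\Theta^{(3)}_e|$ by $2 N_L$, since each circuit through $e$ contains at most two further edges at the shared vertex. To build the short circuit, fix a witnessing triangle $(c, c', c'')$ with corners $v, w, w'$ and $c''$ avoiding $v$, and write $e = \{v,u\}$, $e' = \{v,u'\}$. Set $s := \delta + 1$. By the $\delta$-slim triangle property, either $s \leq (w \mid w')_v$, in which case $d(c(s), c'(s)) \leq 2\delta$ and there is a direct path of length $\leq 2\delta$ between these two vertices; or $s > (w \mid w')_v$, in which case $c(s)$ and $c'(s)$ each lie within $\delta$ of $c''$, and combining two $\delta$-bridges with a sub-arc of $c''$ produces a path of bounded length from $c(s)$ to $c'(s)$. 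Prepending the sub-arc of $c$ from $u$ to $c(s)$ and appending the sub-arc of $c'$ from $c'(s)$ to $u'$ yields a walk $\pi$ from $u$ to $u'$ of length depending only on $\delta$.

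The key property is that $\pi$ avoids $v$: the $c$- and $c'$-sub-arcs move monotonically away from $v$; $c''$ avoids $v$ by assumption; and since the bridges have length at most $2\delta$ while their endpoints on $c, c'$ sit at distance $s = \delta + 1 > \delta$ from $v$, no bridge can reach $v$. Extracting a simple sub-path of $\pi$ yields the desired circuit $\pi \cup e \cup e'$. The main obstacle, and the reason for the specific choice $s = \delta + 1$, is precisely this avoidance of $v$: the slim triangle property alone only provides short paths, which a priori could backtrack through $v$. A minor ancillary case arises when $c$ or $c'$ has length less than $s$: then the corresponding corner $w$ or $w'$ must be a finite vertex lying on $c''$, so the bridge on that side collapses, and the bound on $L$ is unaffected.
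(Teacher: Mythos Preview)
Your proposal is correct and follows essentially the same strategy as the paper: reduce via Lemma~\ref{lem:angles-locally-finite} to finiteness of each $\Theta^{(3)}_e$, show that every angle in $\Theta^{(3)}$ lies on a circuit of length bounded in terms of $\delta$, and conclude by uniform fineness. The paper walks out to distance $3\delta$ rather than your $\delta+1$, which makes the avoidance of $v$ immediate (all bridges have length $\leq \delta$ from points at distance $3\delta$); your stated justification ``bridges have length at most $2\delta$ while their endpoints sit at distance $\delta+1 > \delta$'' is not literally sufficient for the direct bridge in your first case, but the conclusion still holds there because \emph{both} endpoints are at distance $\delta+1$, so a geodesic passing through $v$ would need length at least $2\delta+2$.
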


  \begin{proof}
    Clearly, $G$ acts on $\Theta^{(3)}$ and $\Theta^{(3)}$ contains all trivial 
    angles.
    To show that $\Theta^{(3)}$ is cofinite it suffices to show by 
    Lemma~\ref{lem:angles-locally-finite}, that for
    all $e$ there are only finitely many $e'$ with $(e,e') \in \Theta^{(3)}$.
    We will use hyperbolicity of $\Gamma$ to show
    for each $e'$ with $(e,e') \in \Theta^{(3)}$ 
    there is a circuit of uniformly bounded length 
    in $\Gamma$ that contains both $e$ and $e'$.
    Since $\Gamma$ is fine there are only finitely many such circuits.

    To construct the circuit let $c$, $c'$ and $c''$ be sides of a geodesic
    triangle such that $c$ and $c'$ determine the angle $(e,e')$ 
    at the corner $v \in V$, and such that  $c''$ does not meet $v$.
    Now pick $w \in c$ as follows.
    Recall that we picked the hyperbolicity constant $\delta$ to be an integer.
    If the length of $c$ is  $\geq 3\delta$,  then let $w \in c$ with 
    $d_\Gamma(v,w) = 3\delta$. 
    Otherwise, let $w$ be the endpoint of $c$ (not $v$).
    Similarly, pick $w' \in c'$.
    Hyperbolicity implies that there is a path of length $\leq 10 \delta$ between
    $w$ and $w'$ that misses $v$.
    Indeed, if $d_\Gamma(w,c') \leq \delta$ then we can connect $w$ first to $c'$ and
    then to $w'$ along $c'$.
    If $d_\Gamma(w',c) \leq \delta$ then we can connect $w'$ first to $c$ and then
    to $w$ along $c$.
    Otherwise, by hyperbolicity, we have $d_\Gamma(w,c'') \leq \delta$ and 
    $d_\Gamma(w',c'') \leq \delta$ and we can first connect $w$ and $w'$ to $c''$, and
    then connect along $c''$.
    Altogether we have constructed a loop of length at most $16\delta$ that meets
    $v$ exactly once in the angle $(e,e')$. 
    The loop may not be embedded, but we can shorten it to produce a circuit 
    containing the angle $(e,e')$.
  \end{proof}

  \begin{remark} \label{rem:geodesic-n-gones}
    By cutting geodesic $n$-gons in $\Gamma$ into geodesic 
    triangles one obtains a version of Lemma~\ref{lem:theta_3-cofinite}
    for geodesic $n$-gons.

    For $n \geq 3$ let $(n-2) \Theta^{(3)}$ 
    be the $n-2$-fold sum of $\Theta^{(3)}$.
    Suppose that the geodesics $c_1,\dots,c_n$ are the sides of an $n$-gon 
    in $\Gamma$ (possibly with some corners in $\dd \Gamma$).
    Let $(e,e')$ be the angle in $\Gamma$ determined by the corner $v \in V$
    of the $n$-gon between $c_1$ and $c_2$.
    If $v \notin c_3 \cup \dots \cup c_n$ then $(e,e') \in (n-2)\Theta^{(3)}$.

    Similarly, if $v$ is an internal vertex of $c_i$ that does not belong to
    any of the $c_j$, $j\neq i$, then $\varangle_v c_i \in (n-1)\Theta^{(3)}$.
  \end{remark}

  \begin{lemma} \label{lem:geodesic-2-gons}
    Let $c$ and $c'$ be two geodesics from $v \in V$ to 
    $\xi \in \Delta_+$, $\xi \neq v$.
    Let $e$ and $e'$ be the initial edges of $c$ and $c'$.
    Then $(e,e')$ is $\Theta^{(3)}$-small.
  \end{lemma}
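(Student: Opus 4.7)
The strategy is to realize the angle $(e,e')$ as the angle of a non-degenerate geodesic triangle by splitting one of the two geodesics. If $c=c'$ then $e=e'$ and the angle is trivial, hence automatically in $\Theta^{(3)}$. So from now on assume $c\neq c'$.

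In this case at least one of the two geodesics, say $c'$, has length $\geq 2$. Let $w$ be the vertex on $c'$ adjacent to $v$, so that $c'|_{[v,w]}$ is exactly the initial edge $e'$ of $c'$, and $w\notin\{v,\xi\}$. I would then form the geodesic triangle with corners $v, w, \xi$ (all lying in $V\cup\dd\Gamma$, and $v\in V$ as required) whose three sides are
\begin{itemize}
\item $c$, from $v$ to $\xi$;
\item $c'|_{[v,w]}=e'$, from $v$ to $w$;
\item $c'|_{[w,\xi]}$, from $w$ to $\xi$.
\end{itemize}
Each of these is a geodesic, since restrictions of geodesics to sub-intervals are geodesic. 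At the corner $v$ the two meeting sides are $c$ and $c'|_{[v,w]}$, whose initial edges are $e$ and $e'$; thus they determine the angle $(e,e')$. The side opposite $v$ is $c'|_{[w,\xi]}$, which does not contain $v$ because $c'$ is a geodesic (hence visits each vertex at most once) and $v$ is its initial vertex. By the definition of $\Theta^{(3)}$ this gives $(e,e')\in\Theta^{(3)}$.

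The only step that requires any care is the assertion that one of $c, c'$ has length $\geq 2$ whenever $c\neq c'$. For a simple graph $\Gamma$ this is automatic, because two distinct length-one geodesics from $v$ to $\xi$ would give two parallel edges. If one wished to allow multi-edges between $v$ and $\xi$, the residual case is handled by choosing an auxiliary neighbor $u$ of $\xi$ with $u\neq v$ and using the triangle with corners $v,\xi,u$, sides $e$, the edge $\{\xi,u\}$, and the length-two geodesic $e'\cdot\{\xi,u\}$. This is a minor case analysis rather than a real obstacle; the main content of the lemma is the splitting argument above.
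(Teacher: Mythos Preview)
Your proof is correct and follows essentially the same approach as the paper: subdivide one of the two geodesics to manufacture a geodesic triangle with corner $v$ whose opposite side misses $v$, then invoke the definition of $\Theta^{(3)}$. The paper's proof is a one-liner (``subdivide $c$ or $c'$''), and you have simply filled in the details---choosing the subdivision point to be the vertex $w$ adjacent to $v$ and verifying that $v\notin c'|_{[w,\xi]}$.

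One minor stylistic remark: it is slightly cleaner to split on $e=e'$ versus $e\neq e'$ (as the paper does) rather than on $c=c'$ versus $c\neq c'$, since the case $e=e'$ is immediately handled by the fact that $\Theta^{(3)}$ contains all trivial angles. Your split still works, but it forces you to discuss the possibility of multi-edges at the end; since the paper explicitly identifies edges with two-element subsets of $V$, the graph is simple and that case genuinely cannot occur, so your parenthetical remark about it is unnecessary.
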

   
  \begin{proof}
    If $e \neq e'$ then we can subdivide $c$ or $c'$ and obtain a 
    geodesic triangle for which $(e,e')$ will be the angle at $v$
    and for which $v$ does not belong to all three sides.
    Thus $(e,e') \in \Theta^{(3)}$.
  \end{proof}

  \begin{lemma} \label{lem:geodescs-between-geodesics}
    Let $c$ and $c'$ be two geodesics between $\xi_-$ and $\xi_+ \in \Delta_+$.
    Let $v \in c$ and $v' \in c'$ be such that no geodesic between $v$ and $v'$
    is contained in $c \cup c'$.
    Then there exists a $2\Theta^{(3)}$-small geodesic $\hat c$ between $v$ and $v'$.  
  \end{lemma}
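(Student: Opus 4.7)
The plan is to take $\hat c$ to be an arbitrary geodesic from $v$ to $v'$ and show it is $2\Theta^{(3)}$-small by controlling the angle $\varangle_w \hat c = (e_1, e_2)$ at each internal vertex $w$ of $\hat c$. Note $v \neq v'$, for otherwise the trivial path would be a geodesic contained in $c \cup c'$, contradicting the hypothesis. The engine is Remark~\ref{rem:geodesic-n-gones}: if $\hat c$ is a side of a geodesic triangle in $\Gamma$ whose other two sides miss $w$, then $\varangle_w \hat c \in (3-1)\Theta^{(3)} = 2\Theta^{(3)}$.

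The key preliminary step is to show that no internal vertex $w$ of $\hat c$ lies in $c \cap c'$. Suppose to the contrary that $w \in c|_{[v, \xi_\epsilon]}$ and $w \in c'|_{[v', \xi_{\epsilon'}]}$ for some signs $\epsilon, \epsilon'$. Then $c|_{[v, w]}$ and $c'|_{[w, v']}$ are subgeodesics whose lengths sum to $d_\Gamma(v, w) + d_\Gamma(w, v') = d_\Gamma(v, v')$, the last equality because $w$ lies on the geodesic $\hat c$. Their concatenation is then a walk of minimal length from $v$ to $v'$ inside $c \cup c'$; any internal repetition would strictly shorten it, so this concatenation is a simple path, hence a genuine geodesic inside $c \cup c'$, contradicting the hypothesis.

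For each internal vertex $w$ of $\hat c$ I then select $\epsilon \in \{+, -\}$ so that the triangle $T_\epsilon$ with sides $\hat c$, $c|_{[v, \xi_\epsilon]}$, $c'|_{[v', \xi_\epsilon]}$ and corners $v, v', \xi_\epsilon$ has $w$ off its last two sides. By the previous paragraph $w$ lies on at most one of $c, c'$; if $w \in c$, pick $\epsilon$ so that $w$ lies on the opposite half of $c$ at $v$ (then $w \notin c|_{[v, \xi_\epsilon]}$ and also $w \notin c'|_{[v', \xi_\epsilon]}$ since $w \notin c'$); the case $w \in c'$ is symmetric; and if $w \notin c \cup c'$ either sign of $\epsilon$ works. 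Remark~\ref{rem:geodesic-n-gones} applied to $T_\epsilon$ and $w$ as an internal vertex of the side $\hat c$ yields the required bound $\varangle_w \hat c \in 2\Theta^{(3)}$.

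The main obstacle is the preliminary step ruling out $w \in c \cap c'$, which makes essential use of the hypothesis via the concatenation construction. A minor technical point is that $T_\epsilon$ collapses to a bigon when $v$ or $v'$ coincides with $\xi_\epsilon$; in that situation one replaces the use of Remark~\ref{rem:geodesic-n-gones} on the degenerate side by an application of Lemma~\ref{lem:geodesic-2-gons} to the two geodesics between $w$ and the collapsed corner, while the other side of the argument still uses a proper triangle. A global configuration forcing every $T_\epsilon$ to degenerate would require $\{v, v'\} = \{\xi_+, \xi_-\}$, but then $c$ itself is a geodesic from $v$ to $v'$ inside $c \cup c'$, again contradicting the hypothesis.
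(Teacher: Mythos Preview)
Your argument is correct and in fact proves the slightly stronger statement that \emph{every} geodesic from $v$ to $v'$ is $2\Theta^{(3)}$-small, not merely that one exists. The key preliminary step ruling out $w\in c\cap c'$ is exactly right and is where the hypothesis is used.

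The paper proceeds differently: it \emph{constructs} a particular $\hat c$ that first travels along $c$ to some vertex $p$, then along a geodesic $c_1$ meeting $c\cup c'$ only at its endpoints, then along $c'$ from some vertex $q$ to $v'$. The crossing segment $c_1$ cuts the bigon bounded by $c$ and $c'$ into two geodesic triangles with corners $\{p,q,\xi_+\}$ and $\{p,q,\xi_-\}$, and each internal vertex of $\hat c$ lies on a side of one of these triangles but off the other two sides (this uses that $c_0:=\hat c\cap c$ misses $c'$ and $c_2:=\hat c\cap c'$ misses $c$, which again comes from the hypothesis). Remark~\ref{rem:geodesic-n-gones} then gives the bound. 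So the paper's triangles sit inside the bigon and are indexed once and for all by the sign of $\xi_\pm$, whereas your triangles $T_\epsilon$ have corners $v,v',\xi_\epsilon$ and the sign is chosen vertex by vertex. Your route avoids the construction of $c_1$ entirely; the paper's route avoids your preliminary step, since the disjointness of $c_0$ from $c'$ (and $c_2$ from $c$) plays the same role.

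One small imprecision: in the degenerate case where $T_\epsilon$ collapses to a bigon, Lemma~\ref{lem:geodesic-2-gons} is not the right reference---that lemma only controls the angle between initial edges at a common endpoint, not the angle at an internal vertex. The clean fix is simply to split $\hat c$ at $w$, obtaining a genuine triangle with corners $v,w,v'$ whose third side (the surviving side of the bigon) misses $w$ by your choice of $\epsilon$; then the definition of $\Theta^{(3)}$ directly gives $\varangle_w\hat c\in\Theta^{(3)}\subseteq 2\Theta^{(3)}$. With this adjustment your discussion of the degenerate cases is complete, and the ``global configuration'' paragraph at the end is then unnecessary.
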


  \begin{proof}
     Choose a geodesic $\hat{c}$ from $v$ to $v'$, that first travels 
     along $c$, then along a  
     geodesic $c_1$ that meets $c$ and $c'$ only in its
     endpoints and finally along $c'$.
     We set $c_0 := c \cap \hat c$ and $c_2 := c' \cap \hat c$. 
     Then $c_0$ is disjoint from $c'$, and $c_2$ is disjoint from $c$, since otherwise there is a geodesic from $v$ to $v'$ that is contained in $c \cup c'$.
     Now $c_1$ splits the bi-gone formed by $c$ and $c'$ in two
     geodesic triangles.
     For any internal vertex $w$ of $\hat c$ we can use one of these two triangles   
     and apply Remark~\ref{rem:geodesic-n-gones}
     to conclude that $\varangle_{w} \hat c$ is $2\Theta^{(3)}$-small.
  \end{proof}

  \subsection*{Existence of geodesics.}

  The following (presumably well-known) 
  fact is implicitly used in~\cite{Bowditch(Rel-hyperbolic-groups)}, 
  but not explicitly stated.
  
  \begin{lemma}
    \label{lem:existenc-geodesics}
    For $\xi \neq \xi' \in \Delta_+$, 
    there exists a geodesic between $\xi$ and $\xi'$.
  \end{lemma}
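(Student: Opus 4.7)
The plan is to split into cases according to whether each of $\xi, \xi'$ lies in $V$ or in $\dd \Gamma$ and, in each case, construct a geodesic by an Arzel\`a--Ascoli--style diagonal argument adapted to the fine setting. If both $\xi, \xi' \in V$ the statement is trivial: take a shortest edge path in $\Gamma$. Suppose instead that $\xi \in V$ and $\xi' \in \dd \Gamma$. Choose vertices $v_n \in V$ converging to $\xi'$ in the Gromov boundary and geodesics $c_n$ from $\xi$ to $v_n$; their lengths tend to infinity. The key sublemma I would establish is that only finitely many edges at $\xi$ occur as first edges of $c_n$ for $n$ large. Once the Gromov product $(v_n \mid v_m)_\xi$ is sufficiently large, hyperbolicity forces $c_n$ and $c_m$ to $2\delta$-fellow-travel on their initial segments; in particular, the immediate neighbours $u_n$ of $\xi$ along $c_n$ all lie within $2\delta$ of a fixed $u_N$. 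Concatenating the edges $\{\xi, u_n\}$ and $\{\xi, u_N\}$ with a path of length at most $2\delta$ from $u_n$ to $u_N$ produces a circuit of length at most $2\delta + 2$ through the fixed edge $\{\xi, u_N\}$; fineness of $\Gamma$ gives only finitely many such circuits, hence only finitely many admissible first edges. Passing to a subsequence I may assume the first edge is constant; iterating the argument at its endpoint stabilises the second edge, and so on. A standard diagonal extraction produces a pointwise limit geodesic ray from $\xi$, and this ray represents $\xi'$ because it passes arbitrarily close to each $v_n$.

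For $\xi, \xi' \in \dd \Gamma$, I would fix a basepoint $v_0 \in V$ and apply the previous case to obtain rays $\gamma, \gamma'$ from $v_0$ to $\xi, \xi'$. Let $c_n$ be a geodesic from $\gamma(n)$ to $\gamma'(n)$. Hyperbolicity of triangles forces each $c_n$ to pass within a uniformly bounded distance $M$ of $v_0$, so I pick $w_n \in c_n$ with $d_\Gamma(v_0, w_n) \leq M$ and run the stabilisation argument of the previous paragraph out of $w_n$ in both directions along $c_n$. A two-sided diagonal extraction then yields a bi-infinite geodesic from $\xi$ to $\xi'$.

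The main obstacle is precisely the non-local-finiteness of $\Gamma$: since vertices of infinite valency are present, closed balls are not compact and the usual Arzel\`a--Ascoli step is unavailable. What replaces compactness of balls is the sublemma on first edges, in which hyperbolicity (fellow-travelling, confining the successor vertices to a small set) meets fineness (finiteness of short circuits through any fixed edge). Once that step is in place the remaining diagonal extraction, and its two-sided variant needed for Case~3, is routine.
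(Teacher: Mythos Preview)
Your overall strategy matches the paper's: split into cases by whether the endpoints are vertices or boundary points, and replace the missing Arzel\`a--Ascoli step by a fineness argument that bounds the number of possible initial edges. The paper packages the finiteness of initial edges via its $\Theta^{(3)}$ machinery (initial edges of $c_n,c_m$ form a $\Theta^{(3)}$-small angle, and $\Theta^{(3)}$ is a size for angles by a circuit argument), while you invoke fineness directly through circuits; these are the same idea.

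There is, however, a genuine gap in your Case~3. You pick $w_n\in c_n$ with $d_\Gamma(v_0,w_n)\leq M$ and then ``run the stabilisation argument out of $w_n$''. But the $w_n$ vary with $n$, and in a non-locally-finite graph the ball $B_M(v_0)$ may be infinite, so you cannot simply pass to a subsequence with $w_n$ constant. This is precisely the obstacle you yourself flag, and your proposal does not address it for the bi-infinite case. The paper handles this by first showing that the approach points $w_n$ range over a finite set: it takes short geodesics $\tilde c_n$ from a fixed point $c(n_0)$ on one ray to $w_n\in c_n$, stabilises their last intersection $w'$ with the ray (finitely many choices since these lie on the ray), and then uses that $\tilde c_n|_{[w',w_n]}$ is $2\Theta^{(3)}$-small with initial edge forming a $\Theta^{(3)}$-small angle with a fixed edge of the ray --- so only finitely many $w_n$ occur. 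You need an analogous step; your circuit idea can supply it, but it must be made explicit.

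A minor point in Case~2: concatenating the two edges at $\xi$ with a short path between the neighbours $u_n,u_N$ gives a closed walk, not a circuit, and the shortening to a circuit containing the fixed edge $\{\xi,u_N\}$ requires the walk to visit $\xi$ only once --- i.e.\ the connecting path must avoid $\xi$. This is easy to arrange by comparing the geodesics a bit further out (at distance larger than $2\delta$ from $\xi$) rather than at the immediate neighbours, just as the paper does when proving that $\Theta^{(3)}$ is a size for angles.
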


  \begin{proof}
    We will use angles to extend the proofs in the locally finite case
    from~\cite[Lem.~3.1,3.2, p.428]{Bridson-Haefliger(1999)}.

    First, we consider the case $\xi' = v \in V$.
    If also $\xi \in V$, then the existence of a geodesic between $\xi$ and $v$ 
    is obvious.
    Let $c$ be a geodesic ray from some vertex $w$ to $\xi$.
    For $n \in \IN$ pick finite geodesics $c_n$ from $v$ to $c(n)$.
    It will be convenient to assume that $c_n \cap c$ is a finite geodesic,
    i.e.,  $c_n$ does not leave $c$ after meeting $c$.
    We claim, that, as in the locally finite case, a subsequence of $c_n$ converges
    pointwise to a geodesic ray $c'$.
    This ray is then a ray from $v$ to $\xi$.
    Let $e_n$ be the initial edge of $c_n$.
    Then we can use the geodesic triangle whose sides are
    $c_n$, $c_m$ and $c_{[c(n),c(m)]}$ to deduce that
    $(e_n,e_m)$ is $\Theta^{(3)}$-small.
    It follows that the $e_n$ range only over a finite set and we can pick
    a subsequence $I \subseteq \IN$ with $e=e_n$, $n \in I$ constant.
    Inductively we can now produce a subsequence of the $c_n$ that converges
    pointwise as claimed.

    Next, we consider the case $\xi, \xi' \in \dd \Gamma$.
    Let $c$ and $c'$ be geodesic rays to $\xi$ and $\xi'$.
    By the first case we can assume $c(0) = c'(0)$.
    We can also assume $c \cap c' = c(0)$.
    Let $c_n$ be a finite geodesic from $c(n)$ to $c'(n)$.
    We claim that, as in the locally finite case, there is 
    a vertex $p$ that belongs to infinitely many of the $c_n$.
    Since $\xi \neq \xi'$ there is $n_0 \in \IN$ with
    $d_\Gamma(c(n_0), c') > \delta$.
    Hyperbolicity implies then $d_\Gamma(c({n_0}),c_n) \leq \delta$ for $n \geq n_0$.
    We can therefore for $n \geq n_0$ 
    pick geodesics $\tilde c_n$ starting in $c(n_0)$ and ending
    in $w_n \in c_n$  of length $\leq \delta$.
    We can assume $\tilde c_n \cap c_n = w_n$.
    Let $w'_n$ be the last vertex on $\tilde c_n$ that is also on $c$.
    Since $d_\Gamma(w'_n,c(n_0)) \leq \delta$ and $w'_n \in c$ there
    is a subsequence $I \subseteq \IN$ with $w' = w'_n$, $n \in I$ constant.
    Consider for $n \in I$ now the geodesic triangles whose sides are
    $c|_{[w',c(n)]}$, $c_n|_{[c(n),w_n]}$ and $\tilde c_n|_{[w',w_n]}$.
    Using Remark~\ref{rem:geodesic-n-gones} it follows that 
    $\tilde c_n|_{[w',w_n]}$ is $2\Theta^{(3)}$-small.
    It also follows that the initial edge $e_n$ of $\tilde c_n|_{[w',w_n]}$
    forms a $\Theta^{(3)}$-small angle with an edge $e$ of $c$ incident to $w'$.
    Now Corollary~\ref{cor:finite-theta-balls} implies that
    the $w_n$ range only over a finite set.
    Thus after passing to a further subsequence we can assume 
    that there is a vertex $p$ contained in all $c_n$, $n \in I$.
    Now the argument used in the first case allows us to
    pass to a further subsequence and to assume that
    $c_n|_{[p,c(n)]}$, $n \in I$ converges pointwise to a geodesic ray
    from $p$ to $\xi$ and that $c_n|_{[p,c'(n)]}$, $n \in I$ converges
    pointwise to a geodesic ray from $p$ to $\xi'$.
    These two rays now combine to a bi-infinite geodesic between $\xi$ and $\xi'$.
  \end{proof}

  \subsection*{The observer topology.}

  Bowditch~\cite[Sec.~8]{Bowditch(Rel-hyperbolic-groups)} defined a 
  topology on $\Delta$; this topology is sometimes called the
  observer topology.
  This topology naturally is also defined on $\Delta_+ = \dd \Gamma \cup V$.
  We recall a basis for the observer topology. 
  For $\xi \in \Delta_+$ and a finite subset $V_0 \subseteq V$ let
  $M(\xi,V_0)$ consist of all $\xi' \in \Delta_+$ for which 
  all geodesics between $\xi$ and $\xi'$  miss $V_0 \setminus \{ \xi \}$.
  The $M(\xi,V_0)$ form an open basis for the 
  observer topology. 
  An important fact~\cite[p.~51]{Bowditch(Rel-hyperbolic-groups)} is
  that the $\forall$ in the definition of $M(\xi,V_0)$ 
  can be replaced with $\exists$ without changing the topology: 
  the sets $M'(\xi,V_0)$, defined to consist of all $\xi'$ for which 
  there \emph{exists} a geodesic between $\xi$ and $\xi'$ missing 
  $V_0 \setminus \{ \xi \}$,
  also form an open neighborhood basis for the observer topology. 
  The $M(v,V_0)$ with $v \in V$ and $V_0 \subset V$ finite are a countable basis for the observer topology, see the discussion preceding Lemma~8.4 in~\cite{Bowditch(Rel-hyperbolic-groups)}.
  The observer topology is compact and in particular 
  Hausdorff~\cite[Lem.~8.4, 8.6]{Bowditch(Rel-hyperbolic-groups)}.
  A convenient procedure to produce convergent subsequences in $\Delta_+$ is
  reviewed in Lemma~\ref{lem:compactness}.
  The $M(v,V_0)$ with $v \in V$ observer topology has a countable basis, 
  As a compact space with a countable basis for the topology $\Delta_+$ is 
  metrizable.  

   \begin{lemma}
    \label{lem:compactness}
    Let $(\xi_n)_{n \in \IN}$ be a sequence in $\Delta_+$.
    Let $v \in V$.
    For each $n$, let $c_n$ be a geodesic from $v$ to $\xi_n$.
    Then there exists a subsequence $I \subseteq \IN$ such
    that $\xi_n \to \xi \in \Delta_+$ and moreover
    the $c_n$, $n \in I$ converge as follows
    \begin{enumerate}
    \item \label{lem:compactness:boundary}
       if $\xi \in \dd \Gamma$, then the $c_n$, $n \in I$ converge
       pointwise to a geodesic ray from $v$ to $\xi$;
    \item \label{lem:compactness:finite}
       if $\xi \in V$, then $\xi \in c_n$ for all $n \in I$ 
       and each edge $e$ incident to $\xi$ is the initial edge
       for the restriction $c_n|_{[\xi,\xi_n]}$ for at most finitely
       many $n \in I$.
    \end{enumerate}
  \end{lemma}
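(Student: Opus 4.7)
The plan is to use compactness of $\Delta_+$ in the observer topology (noted in the preceding paragraph) to pass to a first subsequence $I_0 \subseteq \IN$ with $\xi_n \to \xi \in \Delta_+$, and then treat the two clauses separately according to whether $\xi \in \dd\Gamma$ or $\xi \in V$.

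\textbf{Boundary case $\xi \in \dd\Gamma$.} This mirrors the boundary portion of the proof of Lemma~\ref{lem:existenc-geodesics}. One first shows $|c_n| \to \infty$: were $|c_n| \leq L$ along a subsequence, concatenating $c_n$ with a geodesic from $\xi_n$ back to $v$ through a vertex on a fixed geodesic $v \to \xi$ would produce infinitely many short circuits through a common edge, violating uniform fineness of $\Gamma$. Then extract nested subsequences $I_0 \supseteq I_1 \supseteq \cdots$ on which the first $k$ edges of $c_n$ agree: after agreement up through step $k-1$ at a common vertex $u$, the competing $k$-th edges $e_n, e_m$ together with a geodesic between $c_n(N)$ and $c_m(N)$ for large $N$ span a geodesic triangle, so $(e_n, e_m) \in \Theta^{(3)}$, and Corollary~\ref{cor:finite-theta-balls} bounds the possibilities. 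A diagonal subsequence $I$ converges pointwise to a geodesic ray $c$ from $v$, whose endpoint is $\xi$ by the description of basic observer neighborhoods of $\xi$.

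\textbf{Interior case $\xi \in V$.} Fix a geodesic $\gamma$ from $v$ to $\xi$ of length $d_0 = d_\Gamma(v,\xi)$ (available by Lemma~\ref{lem:existenc-geodesics}), and let $G_\xi \subseteq V$ be the set of vertices lying on some geodesic from $v$ to $\xi$; an iterated application of Lemma~\ref{lem:geodesic-2-gons} and the finite-fibre statement of Lemma~\ref{lem:angles-locally-finite} shows that $G_\xi$ is finite. First verify that $|c_n| \geq d_0$ eventually (by the same fineness + observer argument as in the boundary case). Then induct on $k = 1, 2, \dots, d_0$ to extract a further subsequence on which $c_n(k) = \gamma(k)$: having secured $c_n(j) = \gamma(j)$ for $j < k$, observe that for each $u \notin G_\xi$ one has $\xi \in M(v, \{u\})$, so $c_n$ eventually avoids $u$; combining a diagonal argument over the countable set $V \setminus G_\xi$ with the angle machinery applied to the geodesic triangle on $\gamma(k-1), \xi_n, \xi$ (whose corner at $\gamma(k-1)$ yields, via Lemma~\ref{lem:geodesic-2-gons} and Corollary~\ref{cor:finite-theta-balls}, a finite set of possibilities for the edge out of $\gamma(k-1)$ along $c_n$) restricts $c_n(k)$ to $G_\xi^{(k)} := G_\xi \cap \{u : d_\Gamma(v, u) = k\}$, and pigeonhole stabilizes $c_n(k)$ on a further subsequence. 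After $d_0$ steps we have $c_n(d_0) = \gamma(d_0) = \xi$, whence $\xi \in c_n$. For the final clause, if some edge $e = \{\xi, w\}$ were the initial edge of $c_n|_{[\xi, \xi_n]}$ for infinitely many $n$, those restrictions would be geodesics from $\xi$ to $\xi_n$ passing through $w$, contradicting the eventual membership of $\xi_n$ in the observer neighborhood $M(\xi, \{w\})$ of $\xi$, which requires every geodesic from $\xi$ to $\xi_n$ to miss $w$; a further extraction removes the offending $n$.

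\textbf{Main obstacle.} The inductive extraction in the interior case is the crux. The delicate point is not the finiteness of $G_\xi^{(k)}$ itself but the verification that, at each inductive step, $\gamma(k-1)$ does not lie on the geodesic $\xi_n \to \xi$ (which is needed for the triangle-based angle argument to pin $c_n(k)$ into a finite set), combined with ensuring that the diagonal extraction interacts cleanly with the observer-topology avoidance of vertices in $V \setminus G_\xi$ despite the possibly infinite valencies along $\gamma$.
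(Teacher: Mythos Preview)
Your approach diverges substantially from the paper's and, as written, contains a gap you yourself identify but do not close. In the interior case you flag as the ``main obstacle'' the verification that $\gamma(k-1)$ lies off every geodesic from $\xi_n$ to $\xi$; without this your triangle-based $\Theta^{(3)}$ argument does not apply, so the proof is incomplete. The obstacle is in fact easy to dissolve: each $\gamma(j)$ with $j<d_0$ is a fixed vertex distinct from $\xi$, so $M(\xi,\{\gamma(0),\dots,\gamma(d_0-1)\})$ is an open neighborhood of $\xi$, and once $\xi_n$ enters it every geodesic from $\xi$ to $\xi_n$ avoids all the $\gamma(j)$. With that patch your interior argument can be made to run. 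Your boundary case also has loose ends: the ``circuit'' argument for $|c_n|\to\infty$ is not clearly correct (bounded balls in $\Gamma$ need not be finite, so the concatenation you describe does not obviously yield circuits of bounded length through a common edge), and for the $\Theta^{(3)}$ step you must check that the geodesic between $c_n(N)$ and $c_m(N)$ misses the common vertex $u$, which you do not justify. Patching these may tempt you to invoke later lemmas such as Lemma~\ref{lem:pass-through-v} or Lemma~\ref{lem:U-V}, but those come after the present lemma in the logical order.

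By contrast, the paper's proof is much shorter and uses neither compactness of $\Delta_+$ nor any angle machinery. It simply sets $V_k := \{c_n(k) : n\in\IN\}$ (with $c_n(k):=\xi_n$ once $k$ exceeds the length) and splits on whether all $V_k$ are finite. If so, a diagonal subsequence makes each $c_n(k)$ eventually constant, yielding either a limiting ray to a point of $\dd\Gamma$ or an eventually constant sequence of vertices. If some $V_{k_0}$ is infinite with $k_0$ minimal, a subsequence makes $\xi:=c_n(k_0-1)$ constant while the $c_n(k_0)$ are pairwise distinct, which immediately gives $\xi_n\to\xi\in V$ in the observer topology and verifies~\ref{lem:compactness:finite}. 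No induction over $k\le d_0$, no geodesic triangles, and the limit $\xi$ is produced by the construction rather than assumed in advance.
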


  \begin{proof}
    For each $k$ let $c_n(k)$ be the $k$-th vertex along $c_n$ 
    (starting from $v = c_n(0)$). If $d_\Gamma(v,\xi_n) < k$, then
    we set $c_n(k) := \xi_n$.

    If $V_k := \{ c_n(k) \mid n \in \IN \}$ is finite for all
    $k$ then (using a diagonal subsequence) we can pick
    a subsequence $I \subseteq \IN$ such that $c(k) := c_n(k)$, $n \in I$ is
    eventually constant.
    If $d_\Gamma(v,c(k)) \to \infty$ with $k \to \infty$, $k \in \IN$,
    then the $c_n$, $n \in I$ converge pointwise to a geodesic ray $c$
    whose endpoint $\xi$ is also a limit for the $\xi_n$.
    In particular, \ref{lem:compactness:boundary} holds.
    If $d_\Gamma(v,c(k)) \not\to \infty$, then for all sufficiently large
    $n \in I$, $k \in \IN$, $\xi_n = c_n(k) = c(k) = \xi \in V$ must
    be constant. 
    In this case both $\xi_n$ and $c_n$, $n \in I$ are eventually constant
    and~\ref{lem:compactness:finite} holds.    
   
    If not all $V_k$ are finite, then
    there is $k_0$ such that $V_{k}$ is finite for all $k < k_0$ 
    and $V_{k_0}$ is infinite.
    Then we find a subsequence $I \subseteq \IN$ such that
    $c_i(k)$ is constant in $i$ for $k < k_0$ and
    $c_i(k_0) \neq c_j(k_0)$ for $i \neq j \in I$.
    Then $\xi_i \to \xi := c_i(k_0-1) \in V$ for $i \in I$, $i \to \infty$
    and again~\ref{lem:compactness:finite} holds.   
  \end{proof}

  \begin{addendum}
    \label{add:compactness}
    Suppose that in Lemma~\ref{lem:compactness} in addition $v$ is a vertex
    of finite valency and that the $c_n$ are all $\Theta$-small for
    some size for angles $\Theta$.
    Assume also the $\xi_n$ eventually leave every ball of finite radius in $\Gamma$.
    Then $\xi \in \dd \Gamma$ and, for a subsequence, the $c_n$ converge
    pointwise to a geodesic ray from $v$ to $\xi$.
  \end{addendum}

  \begin{proof}
    Corollary~\ref{cor:finite-theta-balls} implies that the $V_k$ appearing in the
    proof of Lemma~\ref{lem:compactness} are all finite.
    Since the $\xi_n$ leave eventually every ball of finite radius in $\Gamma$
    we have $d_\Gamma(v,c(k)) \to \infty$ with $k \to \infty$ in the proof
    of Lemma~\ref{lem:compactness} and the result follows.
  \end{proof}

  \subsection*{Large angles.}

  If the angle of a geodesic at some vertex $v$ is large,
  then this often forces further geodesics to pass through $v$.
  In the next few lemmas we collect some results of this kind.
  Results of this form are very common in 
  connection with relatively hyperbolic groups,
  see for example~\cite[Lem.~3]{Mineyev-Yaman(rel-hyperbolic-bounded-cohom)}. 

  \begin{lemma} \label{lem:large-angles}
    Let $c$ be a geodesic between $\xi_-$ and $\xi_+$.
    Assume that $v$ is an internal vertex of $c$ and that 
    $\varangle_v c$ is $\Theta^{(3)}$-large.
    Then any other geodesic $c'$ between $\xi_-$ and $\xi_+$ will
    also pass through $v$.
  \end{lemma}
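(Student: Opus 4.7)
The plan is to argue by contradiction, constructing an explicit geodesic triangle that would witness $\varangle_v c \in \Theta^{(3)}$. Suppose there exists a geodesic $c'$ between $\xi_-$ and $\xi_+$ that does not pass through $v$. Since $v$ is an internal vertex of $c$, we can split $c$ at $v$ to obtain two geodesics $c_1 := c|_{[\xi_-,v]}$ and $c_2 := c|_{[v,\xi_+]}$; each of these has an edge incident to $v$, and these two edges are precisely the pair defining the angle $\varangle_v c$.

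Now $c_1$, $c_2$, $c'$ are the three sides of a geodesic triangle with corners $\xi_-$, $v$, $\xi_+$ (possibly with $\xi_-$ or $\xi_+$ lying in $\dd \Gamma$, which is explicitly allowed in the definition of $\Theta^{(3)}$). The angle at the vertex corner $v$ determined by $c_1$ and $c_2$ is exactly $\varangle_v c$, and the opposite side $c'$ is assumed to miss $v$. By the very definition of $\Theta^{(3)}$, this forces $\varangle_v c \in \Theta^{(3)}$, i.e.\ $\varangle_v c$ is $\Theta^{(3)}$-small, contradicting the hypothesis.

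There is no real obstacle here: the statement essentially unpacks the definition of $\Theta^{(3)}$, and the only thing to verify is that $c_1$ and $c_2$ are honest geodesics (they are, as subpaths of a geodesic) and that their initial edges at $v$ are precisely those defining $\varangle_v c$ (they are, because $v$ is internal to $c$). The use of corners in $\dd \Gamma$ in the definition of $\Theta^{(3)}$ is exactly what permits $\xi_\pm$ to lie in the Gromov boundary or to be vertices in $V_\infty$, so no separate case distinction is required.
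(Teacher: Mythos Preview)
Your proof is correct and is essentially the same as the paper's: both form the geodesic triangle with sides $c|_{[v,\xi_-]}$, $c|_{[v,\xi_+]}$, and $c'$, and read off the conclusion from the definition of $\Theta^{(3)}$. The only difference is cosmetic --- you phrase it as a contradiction, the paper as a direct contrapositive.
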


  \begin{proof}
    Consider the geodesic triangle whose sides are 
    $c'$, $c|_{[v,\xi_-]}$ and $c|_{[v,\xi_+]}$. 
    By definition of $\Theta^{(3)}$ we have $v \in c'$.
  \end{proof}

  \begin{lemma} \label{lem:large-angles-in-triangles-no-c}
    Let $\xi$, $\xi_1$ and $\xi_2 \in \Delta_+$.
    Let $c_1$ be a geodesic between $\xi$ and $\xi_1$ and
    $c$ be a geodesic between $\xi_1$ and $\xi_2$.
    Let $\Theta_0$  be a size for angles.
    Let $v \notin c$ be an internal vertex of $c_1$
    such that the angle $\varangle_v c_1$ is $\Theta_0 + 2\Theta^{(3)}$-large.
    Then any geodesic $c_2$ between $\xi_2$ and $\xi$ contains
    $v$ as an internal vertex and  $\varangle_v c_2$ is
    $\Theta_0$-large.
  \end{lemma}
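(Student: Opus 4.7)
My plan is to establish the lemma in two steps: (i) that $v$ must lie on $c_2$ as an internal vertex, and (ii) that $\varangle_v c_2$ must be $\Theta_0$-large. For (i) I argue by contradiction. If $v \notin c_2$, then in the geodesic triangle with corners $\xi, \xi_1, \xi_2$ and sides $c_1, c, c_2$, the vertex $v$ is internal to $c_1$ and lies on neither of the other two sides. The internal-vertex form of Remark~\ref{rem:geodesic-n-gones} (with $n=3$) then forces $\varangle_v c_1 \in 2\Theta^{(3)}$. Since $\Theta_0$ contains all trivial angles, one has $2\Theta^{(3)} \subseteq \Theta_0 + 2\Theta^{(3)}$ (take the intermediate edge equal to one of the given edges), contradicting the hypothesis that $\varangle_v c_1$ is $(\Theta_0 + 2\Theta^{(3)})$-large. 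Hence $v \in c_2$, and $v$ is internal there: $v \neq \xi$ since $v$ is already internal to $c_1$, and $v \neq \xi_2$ (if $\xi_2 \in V$ then $\xi_2 \in c$ while $v \notin c$; if $\xi_2 \in \dd\Gamma$ then $v \in V$ forces $v \neq \xi_2$).

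For (ii), suppose for contradiction that $\varangle_v c_2 \in \Theta_0$. Label the two edges of $c_1$ at $v$ by $e_1$ (towards $\xi$) and $e_1'$ (towards $\xi_1$), and those of $c_2$ at $v$ by $e_2$ (towards $\xi_2$) and $e_2'$ (towards $\xi$); then $\varangle_v c_1 = (e_1, e_1')$ and $(e_2', e_2) \in \Theta_0$. Lemma~\ref{lem:geodesic-2-gons} applied to the two geodesics $c_1|_{[v,\xi]}$ and $c_2|_{[v,\xi]}$ from $v$ to $\xi$, with initial edges $e_1$ and $e_2'$ respectively, yields $(e_1, e_2') \in \Theta^{(3)}$. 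The corner form of Remark~\ref{rem:geodesic-n-gones} applied to the sub-triangle with corners $v, \xi_1, \xi_2$ and sides $c_1|_{[v,\xi_1]}, c, c_2|_{[\xi_2, v]}$ yields, since $v \notin c$, that $(e_1', e_2) \in \Theta^{(3)}$. Concatenating the three relations $(e_1, e_2') \in \Theta^{(3)}$, $(e_2', e_2) \in \Theta_0$, $(e_2, e_1') \in \Theta^{(3)}$ along the chain $e_1, e_2', e_2, e_1'$ gives $(e_1, e_1') \in \Theta^{(3)} + \Theta_0 + \Theta^{(3)} = \Theta_0 + 2\Theta^{(3)}$, where the equality uses the obvious commutativity and associativity of $+$ on sizes for angles (both immediate from angles being unordered pairs). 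This contradicts the $(\Theta_0 + 2\Theta^{(3)})$-largeness of $\varangle_v c_1$, completing the proof.

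The argument is essentially bookkeeping once one spots the correct three small-angle relations at $v$ (one from the two-gon lemma and two from the triangle remark). The only even slightly subtle point is in step (i), where one must remember that $\Theta_0$ contains the trivial angles, so that the ``$2\Theta^{(3)}$'' output of Remark~\ref{rem:geodesic-n-gones} is absorbed into $\Theta_0 + 2\Theta^{(3)}$ to produce the contradiction. No geometric input beyond the remark and Lemma~\ref{lem:geodesic-2-gons} is required.
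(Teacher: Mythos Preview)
Your proof is correct and follows essentially the same approach as the paper. The only cosmetic difference is in step~(i): you apply the internal-vertex form of Remark~\ref{rem:geodesic-n-gones} directly to the triangle with sides $c_1,c,c_2$, whereas the paper first subdivides $c_1$ at $v$ to obtain a $4$-gon and applies the corner form; both yield $\varangle_v c_1 \in 2\Theta^{(3)}$ and the rest of your argument (the two $\Theta^{(3)}$-small angles from Lemma~\ref{lem:geodesic-2-gons} and the sub-triangle at $v,\xi_1,\xi_2$, then concatenating through $\Theta_0$) matches the paper's exactly.
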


  \begin{proof}
    For the first claim, we subdivide $c_1$ at $v$ and obtain a $4$-gon 
    whose corners are $v$, $\xi$, $\xi_1$ and $\xi_2$.
    Since $v \notin c$, we must have $v \in c_2$ for otherwise
    the angle of $c_1$ at $v$ would need to be $2\Theta^{(3)}$-small,
    see Remark~\ref{rem:geodesic-n-gones}.
    Let $e_1$, $e'_1$, $e_2$ and $e_2'$ be edges incident to $v$
    such that $e_1$ points towards $\xi_1$ along $c_1$,
    $e'_1$ points towards $\xi$ along $c_1$,
    $e_2$ points towards $\xi_2$ along $c_2$, and
    $e'_2$ points towards $\xi$ along $c_2$.
    Thus $(e_1,e'_1) = \varangle_v c_1$ and
    $(e_2,e'_2) = \varangle_v c_2$.
    Lemma~\ref{lem:geodesic-2-gons} implies that $(e'_1,e'_2)$ is 
    $\Theta^{(3)}$-small.
    Since $v \notin c$, we can use the geodesic triangle whose sides
    are $c$, $c_1|_{[v,\xi_1]}$ and $c_2|_{[v,\xi_2]}$ 
    to see that $(e_1,e_2)$ is also $\Theta^{(3)}$-small.
    Since $(e_1,e'_1)$ is $\Theta_0 + 2\Theta^{(3)}$-large,
    this implies that $(e_2,e'_2)$ is $\Theta_0$-large.
  \end{proof}

  \begin{lemma}
    \label{lem:tripod}
    Let $\Theta_0$ be a size for angles.
    Let $\xi$, $\xi_1$ and $\xi_2 \in \Delta_+$.
    Let $c$ be a geodesic between $\xi_1$ and $\xi_2$,
    $c_1$ be a geodesic between $\xi_1$ and $\xi$,
    $c_2$ be a geodesic between $\xi_2$ and $\xi$.
    Suppose that $v$ is an internal vertex of
    $c$, $c_1$ and $c_2$ and that $\varangle_v c$ is 
    $2 \Theta_0 + 3 \Theta^{(3)}$-large.
    Then $\varangle_v c_1$ or $\varangle_v c_2$  is $\Theta_0$-large. 
  \end{lemma}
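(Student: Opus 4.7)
The plan is a direct chasing argument around the vertex $v$ using Lemma~\ref{lem:geodesic-2-gons}. First I would introduce names for the six edges incident to $v$ that are relevant: let $e_1$ and $e_2$ be the edges of $c$ at $v$ pointing toward $\xi_1$ and $\xi_2$ respectively, so that $\varangle_v c = (e_1,e_2)$; let $f_1$ and $f_1'$ be the edges of $c_1$ at $v$ pointing toward $\xi_1$ and $\xi$ respectively, so that $\varangle_v c_1 = (f_1,f_1')$; and let $f_2$ and $f_2'$ be the edges of $c_2$ at $v$ pointing toward $\xi_2$ and $\xi$ respectively, so that $\varangle_v c_2 = (f_2,f_2')$.

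Next I would extract three $\Theta^{(3)}$-small pairings from Lemma~\ref{lem:geodesic-2-gons}. The restrictions $c|_{[v,\xi_1]}$ and $c_1|_{[v,\xi_1]}$ are two geodesics from $v$ to $\xi_1$ with initial edges $e_1$ and $f_1$, hence $(e_1,f_1)\in\Theta^{(3)}$. Analogously, from geodesics to $\xi_2$ we get $(e_2,f_2)\in\Theta^{(3)}$, and from the two geodesics $c_1|_{[v,\xi]}$ and $c_2|_{[v,\xi]}$ to $\xi$ we get $(f_1',f_2')\in\Theta^{(3)}$.

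Then the proof is by contradiction: suppose both $\varangle_v c_1$ and $\varangle_v c_2$ were $\Theta_0$-small, i.e.\ $(f_1,f_1')\in\Theta_0$ and $(f_2,f_2')\in\Theta_0$. Concatenating the five pairings
\begin{equation*}
  (e_1,f_1)\in\Theta^{(3)},\quad (f_1,f_1')\in\Theta_0,\quad (f_1',f_2')\in\Theta^{(3)},\quad (f_2',f_2)\in\Theta_0,\quad (f_2,e_2)\in\Theta^{(3)}
\end{equation*}
through the definition of the sum of sizes for angles, I conclude that $(e_1,e_2)\in 3\Theta^{(3)}+2\Theta_0$. But $(e_1,e_2)=\varangle_v c$ was assumed $2\Theta_0+3\Theta^{(3)}$-large, a contradiction. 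Hence one of $\varangle_v c_1$, $\varangle_v c_2$ must be $\Theta_0$-large.

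I do not expect a real obstacle: the only mild care needed is to keep the symmetry of the angle pairs and the order of the summands in the definition of $\Theta+\Theta'$ straight (angles are unordered, and the sum of sizes is clearly commutative and associative on the level of membership, which is all that is used).
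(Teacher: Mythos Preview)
Your proposal is correct and follows essentially the same approach as the paper: name the six edges at $v$, apply Lemma~\ref{lem:geodesic-2-gons} three times to get the three $\Theta^{(3)}$-small pairings, and then observe that if both $\varangle_v c_1$ and $\varangle_v c_2$ were $\Theta_0$-small the chain would force $\varangle_v c$ to be $2\Theta_0+3\Theta^{(3)}$-small. The only difference is cosmetic (edge names and the explicit phrasing as a contradiction).
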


  \begin{proof}
    Let $e,e',e_1,e_1',e_2,e_2'$ be edges incident to $v$ such that
    $e$ points towards $\xi_1$ along $c$,
    $e'$ points towards $\xi_2$ along $c$,
    $e_1$ points towards $\xi_1$ along $c_1$,
    $e'_1$ points towards $\xi$ along $c_1$,
    $e_2$ points towards $\xi_2$ along $c_2$, and
    $e'_2$ points towards $\xi$ along $c_2$.
    Thus $(e,e') = \varangle_v c$,
    $(e_1,e_1') = \varangle_v c_1$, and
    $(e_2,e_2') = \varangle_v c_2$. 
    Lemma~\ref{lem:geodesic-2-gons} implies that $(e'_1,e'_2)$, $(e,e_1)$ 
    and $(e',e_2)$ are  $\Theta^{(3)}$-small.
    By assumption $(e,e')$ is $2 \Theta_0 + 3 \Theta^{(3)}$-large.
    Thus not both $(e_1,e_1')$ and $(e_2,e_2')$ can be $\Theta_0$-small.
  \end{proof}

  \begin{lemma}
    \label{lem:large-angles-in-triangles}
    Let $\Theta_0$ be a size for angles. 
    Then there exists a size $X$ for angles such that
    the following holds.
    Assume there is a  $\Theta_0$-small geodesic $c$ 
    between $\xi_1$ and $\xi_2$.
    Let $\xi \in \Delta_+ \setminus \{ \xi_1, \xi_2 \}$,  
    and $c_1$ be a geodesic from $\xi_1$ to $\xi$.
    Let $v \in V$ be an internal vertex of $c_1$ for which
    $\varangle_v c_1$ is $X$-large and $v \neq \xi_2$.
    Then any geodesic $c_2$ between $\xi_2$ and $\xi$ will also pass
    through $v$.
    Moreover, for any size for angles $\Theta$ we have
    \begin{enumerate}
    \item \label{lem:angles-in-triangles:small}
      if $\varangle_v c_1$ is $\Theta$-small, 
      then $\varangle_v c_2$ is $\Theta + X$-small; 
    \item \label{lem:angles-in-triangles:large}
      if $\varangle_v c_1$ is  $\Theta + X$-large,  
      then $\varangle_v c_2$ is $\Theta$-large.  
    \end{enumerate}
  \end{lemma}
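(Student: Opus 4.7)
The plan is to set $X := \Theta_0 + 3\Theta^{(3)}$, which is a size for angles by Lemma~\ref{lem:theta_3-cofinite} together with the observation preceding Lemma~\ref{lem:angles-locally-finite} that sums of sizes are sizes. The argument splits according to whether $v$ lies on $c$.

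First suppose $v \notin c$. Since $X \supseteq 2\Theta^{(3)}$, the hypothesis that $\varangle_v c_1$ is $X$-large implies it is $2\Theta^{(3)}$-large, so Lemma~\ref{lem:large-angles-in-triangles-no-c}, with the parameter $\Theta_0$ there taken to be the trivial size, already forces $v$ to be an internal vertex of every geodesic $c_2$ between $\xi_2$ and $\xi$. Part~(b) then follows by reapplying Lemma~\ref{lem:large-angles-in-triangles-no-c} with $\Theta$ in place of its parameter: since $\Theta + X \supseteq \Theta + 2\Theta^{(3)}$, if $\varangle_v c_1$ is $\Theta + X$-large it is a fortiori $\Theta + 2\Theta^{(3)}$-large, which forces $\varangle_v c_2$ to be $\Theta$-large. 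Part~(a) is obtained by applying the same lemma with the roles of $c_1$ and $c_2$ swapped; this is legitimate because $v$ is now known to be an internal vertex of $c_2$ (and $v \neq \xi_2, \xi$). Taking the contrapositive gives that $\varangle_v c_1 \in \Theta$ implies $\varangle_v c_2 \in \Theta + 2\Theta^{(3)} \subseteq \Theta + X$.

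The case $v \in c$ is the main obstacle, since Lemma~\ref{lem:large-angles-in-triangles-no-c} does not apply here. Observe that $v$ is an internal vertex of $c$ (as $v \neq \xi_1, \xi_2$), so $\varangle_v c \in \Theta_0$. Pick any geodesic $\hat c_2$ between $\xi_2$ and $\xi$, which exists by Lemma~\ref{lem:existenc-geodesics}; I claim $v$ must lie on $\hat c_2$. If not, then in the geodesic triangle with sides $c_1|_{[v,\xi]}$, $\hat c_2$, and $c|_{[v,\xi_2]}$ the corner $v$ is missed by the opposite side $\hat c_2$, so Remark~\ref{rem:geodesic-n-gones} forces the angle at $v$ between $c_1|_{[v,\xi]}$ and $c|_{[v,\xi_2]}$ to lie in $\Theta^{(3)}$. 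Combined with the $\Theta^{(3)}$-small angle at $v$ between $c|_{[v,\xi_1]}$ and $c_1|_{[v,\xi_1]}$ (two geodesics from $v$ to $\xi_1$, Lemma~\ref{lem:geodesic-2-gons}) and with $\varangle_v c \in \Theta_0$, a chain of three small angles through the edges of $c$ and $c_1$ at $v$ places $\varangle_v c_1$ in $\Theta_0 + 2\Theta^{(3)} \subseteq X$, contradicting $X$-largeness. Hence $v$ lies on every such $c_2$.

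Once $v$ is known to be on $c_2$, it is an internal vertex of all three geodesics $c, c_1, c_2$, which pairwise meet at $\xi_1, \xi_2, \xi$. Three applications of Lemma~\ref{lem:geodesic-2-gons}, to the pairs $(c, c_1)$, $(c, c_2)$, $(c_1, c_2)$ regarded as pairs of geodesics from $v$ to the respective common endpoint, give three $\Theta^{(3)}$-small relations among the six edges of $c, c_1, c_2$ incident to $v$. Chaining these with $\varangle_v c \in \Theta_0$ shows that $\varangle_v c_1 \in \Theta$ implies $\varangle_v c_2 \in \Theta + \Theta_0 + 3\Theta^{(3)} = \Theta + X$, and the analogous chain in the reverse direction gives~(b); this completes the case analysis.
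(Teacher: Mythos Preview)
Your proof is correct and uses the same ingredients as the paper's (the choice $X=\Theta_0+3\Theta^{(3)}$, Lemma~\ref{lem:large-angles-in-triangles-no-c}, Lemma~\ref{lem:geodesic-2-gons}, and edge-chain arguments), with only an organizational difference: you split into the cases $v\in c$ and $v\notin c$ at the outset, whereas the paper first proves $v\in c_2$ uniformly by applying Lemma~\ref{lem:large-angles-in-triangles-no-c} twice (once with $c$ as the opposite side, once with $c_2$ as the opposite side) and only afterwards introduces the case split to bound $(e_1,e_2)$.
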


  \begin{proof}
    For the first statement we can take $X := \Theta_0 + 2\Theta^{(3)}$.
    Lemma~\ref{lem:large-angles-in-triangles-no-c} implies
    $v \in c \cup c_2$.
    If $v \not\in c_2$ then Lemma~\ref{lem:large-angles-in-triangles-no-c}
    implies that $v$ is an internal vertex of $c$ and that
    $\varangle_v c$ is $\Theta_0$ large.
    But $c$ is assumed to be $\Theta_0$-small and thus $v \in c_2$. 
    
    For the second statement we can take $X := \Theta_0 + 3\Theta^{(3)}$.
    Let $e_1,e_1',e_2,e_2'$ be the edges incident to $v$, such that
    $e_1$ points towards $\xi_1$ along $c_1$,  
    $e_1'$ points towards $\xi$ along $c_1$,
    $e_2$ points towards $\xi_2$ along $c_2$, and 
    $e_2'$ points towards $\xi$ along $c_2$.
    In particular $\varangle_v c_1 = (e_1,e_1')$ and 
    $\varangle_v c_2 = (e_2,e_2')$.
    In order to prove~\ref{lem:angles-in-triangles:small}
    and~\ref{lem:angles-in-triangles:large} it 
    will suffice to show that
    $(e'_1,e'_2)$ is $\Theta^{(3)}$-small and that
    $(e_1,e_2)$ is $\Theta_0 + 2 \Theta^{(3)}$-small.
    The first statement follows from Lemma~\ref{lem:geodesic-2-gons}.
    For the second statement we need to distinguish the cases 
    $v \notin c$ and $v \in c$, i.e., to distinguish whether only $c_1$ and $c_2$ intersect in $v$, or whether all three geodesics intersect in $v$.
    If $v \notin c$ we can use the geodesic triangle whose sides
    are $c$, $c_1|_{[v,\xi_1]}$ and $c_2|_{[v,\xi_2]}$ to deduce that $(e_1,e_2)$ 
    is $\Theta^{(3)}$-small.
    If $v \in c$, then let $e$ and $e'$ 
    be the edges on $c$ incident to $v$,
    such that $e$ points towards $\xi_1$ along $c$,
    and $e'$ points towards $\xi_2$ along $c$. 
    Lemma~\ref{lem:geodesic-2-gons} implies that
    the angles $(e,e_1)$ and $(e',e_2)$ are $\Theta^{(3)}$-small.
    Since $c$ is $\Theta_0$-small the angle $(e,e')$ is $\Theta_0$-small.
    Thus $(e_1,e_2)$ is indeed $\Theta_0 + 2 \Theta^{(3)}$-small.
  \end{proof}

  \begin{lemma}
    \label{lem:pass-through-v} 
    Let $\xi \in \Delta_+$ and $v \in V$ where $\xi \neq v$.
    Then there exist neighborhoods $U$ of $v$ and $U'$ of $\xi$ 
    in $\Delta_+$  such that 
    any geodesic starting in $U$ and ending in $U'$ will meet $v$.   
  \end{lemma}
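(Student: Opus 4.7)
The plan is to fix a geodesic $c$ from $v$ to $\xi$ (which exists by Lemma~\ref{lem:existenc-geodesics}) with initial edge $e_c$ at $v$, and then use a sufficiently large size for angles so that geodesics approaching $v$ from any direction other than $e_c$ form an angle with $e_c$ too wide to be bypassed in a geodesic polygon argument.

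Concretely, set $X := 2\Theta^{(3)}$; this is a size for angles by Lemma~\ref{lem:theta_3-cofinite} and the closure of sizes under sums. By Lemma~\ref{lem:angles-locally-finite}, the set $\{e' \in E : (e_c, e') \in X\}$ is finite, so
\[
   V_0 := \{v\} \cup \{w \in V : \{v,w\} \in E \text{ and } (e_c, \{v,w\}) \in X\}
\]
is a finite subset of $V$. Take $U := M(v, V_0)$ and $U' := M(\xi, \{v\})$; these are basic open neighborhoods of $v$ and $\xi$ in the observer topology, with $v \notin U'$ (every geodesic from $\xi$ to $v$ terminates at $v$) and $\xi \notin U$ (since the chosen $c$ passes through the other endpoint of $e_c$, which lies in $V_0$).

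For the main argument, let $\zeta \in U$, $\xi' \in U'$, and let $c''$ be any geodesic from $\zeta$ to $\xi'$. If $\zeta = v$ we are done, so assume $\zeta \neq v$ and pick a geodesic $a$ from $v$ to $\zeta$; its initial edge $e_a = \{v, w\}$ satisfies $w \notin V_0$ since $\zeta \in U$, hence $(e_c, e_a) \notin X$, i.e., this angle is $2\Theta^{(3)}$-large. Pick a geodesic $b$ from $\xi$ to $\xi'$; by choice of $U'$, $v \notin b$. Then $a, c, b, c''$ are the sides of a geodesic $4$-gon with corners $\zeta, v, \xi, \xi'$ in $\Delta_+$, and $v$ is the corner between $a$ and $c$. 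Assuming for contradiction $v \notin c''$, we have $v \notin b \cup c''$, and Remark~\ref{rem:geodesic-n-gones} forces $(e_a, e_c) \in (4-2)\Theta^{(3)} = 2\Theta^{(3)}$, contradicting the choice of $V_0$.

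The main technical subtlety is balancing the size $X$: large enough for the $4$-gon version of Remark~\ref{rem:geodesic-n-gones} to produce a $2\Theta^{(3)}$-small conclusion at $v$, yet still a size so that Lemma~\ref{lem:angles-locally-finite} keeps $V_0$ finite; the choice $X = 2\Theta^{(3)}$ is the natural one. The degenerate configurations do not cause real trouble: if $\xi' = \xi$ then $b$ is trivial and the $4$-gon collapses to a triangle, where Remark~\ref{rem:geodesic-n-gones} yields the even stronger conclusion $(e_a, e_c) \in \Theta^{(3)}$; the case $\zeta = \xi'$ (which would make $c''$ trivial) is ruled out because the analogous triangle argument applied to any hypothetical $\zeta \in (U \cap U') \setminus \{v, \xi\}$ produces the same contradiction, showing $U \cap U' = \emptyset$; and corners in $\dd \Gamma$ are permitted throughout by Lemma~\ref{lem:existenc-geodesics} and Remark~\ref{rem:geodesic-n-gones}.
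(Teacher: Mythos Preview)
Your proof is correct and uses essentially the same geometric idea as the paper: both hinge on the $4$-gon with sides $a$, $c$, $b$, $c''$ and invoke Remark~\ref{rem:geodesic-n-gones} to conclude that a $2\Theta^{(3)}$-large angle at $v$ forces $v$ onto one of the opposite sides. The only difference is packaging: the paper argues by contradiction with sequences $x_n \to v$, $\xi_n \to \xi$, whereas you give the neighborhoods $U = M(v,V_0)$ and $U' = M(\xi,\{v\})$ explicitly, which is slightly cleaner and also handles the degenerate cases transparently.
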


  \begin{proof}
    Assume this fails.
    Then there are $\xi_n, x_n \in \Delta_+$ with $\xi_n \to \xi$
    and $x_n \to v$ for which there are geodesics $b_n$ from $\xi_n$ to $x_n$ 
    in $\Gamma$ that do not meet $v$.
    Let $c$ be a geodesic from $\xi$ to $v$, $a_n$ be geodesics
    from $v$ to $x_n$ and $c_n$ be geodesics from $\xi_n$ to $\xi$. 
    Since $x_n \to v$, the initial edges of the
    $a_n$ form an infinite set.
    Lemma~\ref{lem:angles-locally-finite} and 
    Remark~\ref{rem:geodesic-n-gones} imply that
    not all angles at $v$ 
    determined by $c$ and $a_n$ can be $2\Theta^{(3)}$-small.
    On the other hand we can consider the geodesic $4$-gon with 
    sides $c$, $a_n$, $b_n$ and $c_n$.
    Since the angle at $v$ determined by $c$ and $a_n$ is eventually
    not $2\Theta^{(3)}$-small, eventually $v \in b_n \cup c_n$.
    Since $\xi_n \to \xi$, we can arrange that the $c_n$ eventually miss $v$.
    This implies that eventually $v \in b_n$, contradicting our assumption.
  \end{proof}

  \subsection*{Isotropy groups.}

  \begin{lemma}
    \label{lem:finitely-many-edges}
    Let $\xi \in \Delta_+$, $v \in V$ where $v \neq \xi$.
    There is a neighborhood $U$ of $\xi \in \Delta_+$ such that
    only finitely many edges in $\Gamma$ appear as the initial edge
    of a geodesic from $v$ to some $\xi' \in U$.
  \end{lemma}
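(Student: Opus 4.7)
The plan is to argue by contradiction using the compactness Lemma~\ref{lem:compactness}. Assume the conclusion fails. Then for a suitable shrinking basis of neighborhoods of $\xi$ we can extract a sequence $\xi_n \to \xi$ in $\Delta_+$ together with geodesics $c_n$ from $v$ to $\xi_n$ whose initial edges $e_n$ are pairwise distinct.

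Apply Lemma~\ref{lem:compactness} to $(\xi_n)$ and the geodesics $c_n$. After passing to a subsequence $I \subseteq \IN$ one of the two alternatives occurs. If $\xi \in \dd\Gamma$, then the $c_n$ converge pointwise to a geodesic ray from $v$ to $\xi$, so the initial edges $e_n$ are eventually constant for $n \in I$, contradicting that the $e_n$ are pairwise distinct. Hence we are in the second alternative: $\xi \in V$ and $\xi \in c_n$ for every $n \in I$. Since $v \neq \xi$, the restriction $c_n|_{[v,\xi]}$ is then a finite geodesic from $v$ to $\xi$ whose initial edge is still $e_n$.

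The key remaining observation is that the set of initial edges of geodesics from $v$ to the fixed vertex $\xi$ is finite. Indeed, Lemma~\ref{lem:geodesic-2-gons} shows that for any two such initial edges $e$, $e'$ the angle $(e,e')$ lies in $\Theta^{(3)}$. Fixing one such initial edge $e_*$, all others lie in $\Theta^{(3)}_{e_*}$, which is finite by Lemma~\ref{lem:angles-locally-finite} (since $\Theta^{(3)}$ is a size for angles by Lemma~\ref{lem:theta_3-cofinite}). Therefore the $e_n$, $n \in I$ take only finitely many values, again contradicting that they are pairwise distinct. This contradiction establishes the lemma.

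The argument is essentially a two-line compactness plus angle-finiteness argument; the only point requiring care is to make sure that the sequence $\xi_n$ with distinct initial edges $e_n$ can really be extracted from the failure of the statement, which just means taking a countable neighborhood basis at $\xi$ (available since $\Delta_+$ is metrizable, as noted in the paper) and diagonalizing. No other obstacle is expected.
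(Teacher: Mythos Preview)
Your proof is correct. The paper's proof is shorter and takes a slightly different tack: after extracting $\xi_n \to \xi$ with geodesics $c_n$ from $v$ having pairwise distinct initial edges, it simply observes that distinct initial edges force $\xi_n \to v$ in the observer topology, contradicting $\xi \neq v$. This observation is essentially the $k_0 = 1$ case in the proof of Lemma~\ref{lem:compactness}, and its justification (for any finite $V_0 \not\ni v$ only finitely many edges at $v$ can start a geodesic hitting $V_0$) rests on the same fineness/angle facts you invoke. Your route instead applies the \emph{statement} of Lemma~\ref{lem:compactness} to split into the cases $\xi \in \dd\Gamma$ (pointwise convergence forces the initial edges to stabilize) and $\xi \in V$ (Lemmas~\ref{lem:geodesic-2-gons} and~\ref{lem:angles-locally-finite} bound the number of initial edges of geodesics from $v$ to the fixed vertex $\xi$). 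Both arguments are short; yours is a bit more explicit about which lemmas from the paper are used, while the paper's compresses the whole thing into a single appeal to the observer topology.
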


  \begin{proof}
    Assume this fails.
    Then there are $\xi_n \in \Delta_+$, geodesics $c_n$ from $v$ to $\xi_n$ such
    that $\xi_n \to \xi$  and the initial edges of the $c_n$ are 
    pairwise different.
    Since the initial edges of the $c_n$ are all different it follows that 
    $\xi_n \to v$. 
    But $v \neq \xi$.
  \end{proof}

  \begin{lemma} \label{lem:isotropy-2-points}
    Let $\xi_- \neq \xi_+ \in \Delta_+$.
    Then the intersection of the isotropy groups 
    $G_{\xi_-} \cap G_{\xi_+}$ is virtually cyclic.
  \end{lemma}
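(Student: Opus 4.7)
Set $H := G_{\xi_-} \cap G_{\xi_+}$ and, by Lemma~\ref{lem:existenc-geodesics}, fix a geodesic $c$ between $\xi_-$ and $\xi_+$. The plan is to split according to whether the two fixed points lie in $V$ or in $\dd\Gamma$, using throughout that $H$ carries every geodesic between $\xi_-$ and $\xi_+$ to another such geodesic.

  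Suppose first that one of $\xi_\pm$, say $\xi_- =: v$, lies in $V$. Let $e$ be the initial edge of $c$ at $v$. For any $h \in H$, the geodesic $hc$ also starts at $v$ and ends at $\xi_+$, with initial edge $he$. Lemma~\ref{lem:geodesic-2-gons} yields that $(e,he)$ is $\Theta^{(3)}$-small, and Lemma~\ref{lem:angles-locally-finite} then forces the $H$-orbit of $e$ to be finite. Hence the stabilizer $H_e$ has finite index in $H$, and since edge stabilizers in $G$ are finite by assumption, $H$ itself is finite. By symmetry this settles every case except $\xi_-, \xi_+ \in \dd\Gamma$.

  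For that remaining case I would fix a bi-infinite geodesic $c$ with vertex sequence $(v_i)_{i \in \IZ}$. For each $h \in H$, $hc$ is another bi-infinite geodesic between $\xi_\pm$, so Lemma~\ref{lem:geodescs-between-geodesics} applied to $v_i \in c$ and $h v_j \in hc$ yields either a geodesic between them contained in $c \cup hc$ (so that $hv_j \in c$, $v_i \in hc$, or $c$ and $hc$ share an intermediate vertex) or a $2\Theta^{(3)}$-small geodesic joining them. Together with Corollary~\ref{cor:finite-theta-balls} this should produce a coarsely well-defined translation number $n \colon H \to \IZ$ describing the position of $hv_0$ relative to $c$, turn $n$ into a quasi-homomorphism whose kernel fixes some edge of $c$ up to finite ambiguity, and thereby yield that $H$ is virtually cyclic.

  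The main obstacle is this last step. A possibly cleaner route is to isolate the set of internal vertices $v$ of $c$ for which $\varangle_v c$ is $\Theta^{(3)}$-large: by Lemma~\ref{lem:large-angles} each such vertex lies on every geodesic between $\xi_\pm$, so $H$ permutes this set while preserving its linear order along $c$. If the set is infinite, this yields an action of $H$ on $\IZ$ by order-preserving bijections whose pointwise stabilizer is finite by the Case~A argument applied at one such vertex; if it is finite and non-empty, a finite-index subgroup of $H$ fixes one of these vertices and the argument reduces directly to Case~A. The subtle subcase is that in which $c$ admits no $\Theta^{(3)}$-large internal angle at all, and this seems to require the direct fellow-travelling argument sketched in the previous paragraph.
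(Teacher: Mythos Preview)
Your Case~A argument (one endpoint in $V$) is correct and matches the paper's approach: the paper cites Lemma~\ref{lem:finitely-many-edges} rather than the angle lemmas directly, but the content is the same.

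For the case $\xi_-,\xi_+ \in \dd\Gamma$ your proposal is incomplete, and the ``cleaner route'' has a gap you may not have noticed. You claim that $H$ permutes the set of vertices $v$ of $c$ at which $\varangle_v c$ is $\Theta^{(3)}$-large. But for $h \in H$ the vertex $hv$ has a $\Theta^{(3)}$-large angle on $hc$, not on $c$; Lemma~\ref{lem:large-angles} only tells you $hv \in c$, and Lemma~\ref{lem:geodesic-2-gons} controls the discrepancy between $\varangle_{hv} c$ and $\varangle_{hv} (hc)$ only up to $2\Theta^{(3)}$, which is not enough to conclude that $\varangle_{hv} c$ is $\Theta^{(3)}$-large. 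You could repair this by working instead with the $H$-invariant set of vertices lying on \emph{every} geodesic between $\xi_\pm$, but you would still be left with the subcase where this set is empty, which you already flag as unresolved. The translation-number sketch via Lemma~\ref{lem:geodescs-between-geodesics} and Corollary~\ref{cor:finite-theta-balls} is too vague to evaluate; making it precise essentially requires knowing that only finitely many vertices of any $hc$ lie in a fixed ball around $v_0$, and you have not established that.

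The paper's argument avoids all of this by invoking a structural fact you do not use: the subgraph $L \subseteq \Gamma$ spanned by \emph{all} geodesics between $\xi_-$ and $\xi_+$ is locally finite. This is~\cite[Lem.~8.2]{Bowditch(Rel-hyperbolic-groups)} and is a consequence of fineness. Given this, $H$ acts properly on the locally finite graph $L$, which lies in a $\delta$-neighbourhood of $c$; one then argues directly that any $g \in H$ with $d_\Gamma(v_0,gv_0)$ large must act as a coarse translation along $c$ (a coarse reflection would swap $\xi_\pm$), and the cyclic subgroup it generates is cocompact on $L$, hence of finite index in $H$. This bypasses the large-angle bookkeeping entirely and handles the ``no large angles'' case uniformly.
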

 
  \begin{proof}
    If $\xi_- \in V$, then, by Lemma~\ref{lem:finitely-many-edges} 
    there are only finitely many edges that are incident to $\xi_-$
    and are part of a geodesic from $\xi_-$ to $\xi_+$. 
    The group $G_{\xi_-} \cap G_{\xi_+}$ acts on this finite set.
    Since the action of $G$ on the set of all edges is proper, 
    this implies that $G_{\xi_-} \cap G_{\xi_+}$ is finite.
    The same argument applies if $\xi_+ \in V$.

    It remains to treat the case $\xi_-, \xi_+ \in \dd \Gamma$.
    Let $L$ be the subgraph of $\Gamma$ spanned by all geodesics
    from $\xi_-$ to $\xi_+$.
    By hyperbolicity $L$ is contained in a bounded neighborhood of
    a fixed geodesic $c$ between $\xi_-$ and $\xi_+$. 
    By~\cite[Lem.~8.2]{Bowditch(Rel-hyperbolic-groups)}
    the intersection of $L$ with any bounded ball
    in $\Gamma$ around a vertex $v$ from $L$ contains only finitely many vertices,
    in particular, $L$ is locally finite.
    
    Since the action of $G$ on edges is proper, 
    the action of $G_{\xi_-} \cap G_{\xi_+}$ on $L$ is proper as well.
    Fix a vertex $v_0 \in c$.
    Now, if the group $G_{\xi_-} \cap G_{\xi_+}$ is infinite, then
    it contains elements $g$, with
    $d_\Gamma(v_0,gv_0)$ arbitrarily large.
    If $d_\Gamma(v_0,gv_0) >> 0$, then, since $L$ is contained in a 
    $\delta$-neighborhood of the geodesic $c$, it follows that
    either $d_\Gamma(v_0,g^2v_0) \sim 0$
    or $d_\Gamma(v_0,g^2v_0) \sim 2d_\Gamma(v_0,gv_0)$.
    In both cases, again since $L$ is contained in a bounded neighborhood of $c$,
    we get a good understanding of the action of $g$ on $L$.
    In the first case $g$ will act, up to bounded error, 
    as reflection with respect to a point of $c$.
    In particular, $g$ will exchange  $\xi_-$ and $\xi_+$.
    This excludes the first case.
    In the second cases $g$ acts, up to bounded error, like a translation along $c$.
    It follows  that $g^{\pm n}v_0 \to \xi_\pm$, and, in particular,
    that the action on $L$ of the cyclic group $C$ generated by $g$ is cocompact.
    Moreover, since $L$ is contained in a bounded neighborhood of $L$ and 
    since bounded balls in $L$ are finite and since the action of 
    $G$ on the edges of $L$ is proper, it follows that $C$ has finite
    index in $G_{\xi_-} \cap G_{\xi_+}$.  
  \end{proof}

  \begin{remark} \label{rem:type-I}
    Suppose that the group $G_{\xi_-} \cap G_{\xi_+}$ appearing in 
    Lemma~\ref{lem:isotropy-2-points} is infinite virtually cyclic.
    Then, since $G_{\xi_-} \cap G_{\xi_+}$ fixes the two ends of $L$,
    this group is virtually cyclic of type I, i.e., admits
    a surjection to an infinite cyclic group.
  \end{remark}

  \subsection*{Sequences of bounded distance} 

  \begin{lemma}
    \label{lem:small-at-infinity}
    Let $(v_n)_{n \in \IN}$ be a sequence in $V$
    that converges in $\Delta_+$ to $\xi \in \dd \Gamma$.
    If $(v'_n)_{n \in \IN}$ is another sequence in $V$ such that
    $d_\Gamma(v_n,v'_n)$ is bounded, then $v'_n$ also converges to $\xi$.
  \end{lemma}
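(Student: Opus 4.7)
The plan is to show that every subsequential limit of $(v'_n)$ in the compact metrizable space $\Delta_+$ equals $\xi$. So suppose $v'_{n_k} \to \eta \in \Delta_+$; I must show $\eta = \xi$. As a preliminary, fix a basepoint $v_0 \in V$. Since $v_n \to \xi \in \dd \Gamma$ and $\Delta_+$ is Hausdorff, every subsequence of $(v_n)$ also converges to $\xi$; if $d_\Gamma(v_0,v_n)$ stayed bounded along some subsequence, Lemma~\ref{lem:compactness}\ref{lem:compactness:finite} would produce a further subsequence whose limit lies in $V$ (geodesics from $v_0$ of bounded length cannot converge to a ray as in case \ref{lem:compactness:boundary}), contradicting $\xi \in \dd \Gamma$. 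Hence $d_\Gamma(v_0,v_n) \to \infty$, and since $d_\Gamma(v_n,v'_n)$ is bounded by some constant $K$, also $d_\Gamma(v_0,v'_{n_k}) \to \infty$.

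If $\eta \in V$, I would apply Lemma~\ref{lem:pass-through-v} to the pair $\xi,\eta$ to produce neighborhoods $U$ of $\eta$ and $U'$ of $\xi$ in $\Delta_+$ such that every geodesic from a point of $U$ to a point of $U'$ passes through $\eta$. For $k$ large, $v'_{n_k} \in U$ and $v_{n_k} \in U'$, so any geodesic from $v'_{n_k}$ to $v_{n_k}$ -- which has length at most $K$ -- must contain $\eta$. This forces $d_\Gamma(\eta, v_{n_k}) \leq K$, contradicting $d_\Gamma(v_0,v_{n_k}) \to \infty$. I expect this case to be the main obstacle: in the observer topology a sequence can converge to a vertex of infinite valency while simultaneously escaping every bounded set in the graph metric, and Lemma~\ref{lem:pass-through-v} is precisely the geometric tool that prevents $(v'_{n_k})$, being close to $(v_{n_k})$, from doing so.

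If $\eta \in \dd\Gamma$, fix geodesics $c_{n_k}$ from $v_0$ to $v_{n_k}$ and $c'_{n_k}$ from $v_0$ to $v'_{n_k}$. By Lemma~\ref{lem:compactness}\ref{lem:compactness:boundary}, after extracting a further subsequence these converge pointwise to geodesic rays $c$ from $v_0$ to $\xi$ and $c'$ from $v_0$ to $\eta$. A standard $\delta$-slim triangle argument applied to the geodesic triangle with vertices $v_0,v_{n_k},v'_{n_k}$ (whose third side has length $\leq K$) shows that for $t \leq \min\bigl(d_\Gamma(v_0,v_{n_k}),\, d_\Gamma(v_0,v'_{n_k})\bigr) - K - \delta$ one has $d_\Gamma(c_{n_k}(t), c'_{n_k}(t)) \leq 2\delta$: indeed $c_{n_k}(t)$ must be $\delta$-close to $c'_{n_k}$ rather than to the short third side, and comparing distances from $v_0$ along the two geodesics pins the matching parameter within $\delta$ of $t$. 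Passing to the pointwise limit in $k$ yields $d_\Gamma(c(t), c'(t)) \leq 2\delta$ for every $t$, so the rays $c$ and $c'$ stay at uniformly bounded distance and thus determine the same boundary point of $\dd\Gamma$; hence $\xi = \eta$, completing the proof.
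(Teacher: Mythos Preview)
Your proof is correct and follows essentially the same strategy as the paper: reduce via compactness to a subsequential limit $\eta$, use Lemma~\ref{lem:pass-through-v} to rule out $\eta \in V$, and show the two limiting rays are asymptotic when $\eta \in \dd\Gamma$. Your version is in fact more carefully written than the paper's, which asserts without justification that one can arrange $v_i \in c$ and $v'_i \in c'$ along the subsequence; your explicit slim-triangle estimate on the finite triangles followed by a pointwise limit is a cleaner way to reach the same conclusion.
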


  \begin{proof}
    Assume this fails.
    As $\Delta_+$ is compact there is then a convergent subsequence 
    $(v'_n)_{n \in I}$ with $\lim_{n \in I} v'_n =: \xi' \neq \xi$.
    If $\xi' \in \dd \Gamma$, then we can use Lemma~\ref{lem:compactness}
    to produce geodesic rays $c$ to $\xi$ and $c'$ to $\xi'$
    such that, after passing to a further subsequence, we can 
    assume $v_i \in c$ and $v'_i \in c'$ for all $i \in I$.
    Then, by hyperbolicity, $c$ and $c'$ are asymptotic and $\xi = \xi'$.

    It remains to contradict $\xi' \in V$.
    If $\xi' \in V$, then Lemma~\ref{lem:pass-through-v} allows us to assume, 
    after passing to a further subsequence, that for all $i \in I$ any geodesic
    between $v_i$ and $v'_i$ passes through $\xi'$.
    But then $d_\Gamma(\xi',v_i)$ is bounded and this contradicts $\xi \in \dd \Gamma$. 
  \end{proof}

  \begin{addendum}
    \label{add:small-at-infinty-V_infty}
    Retain the assumptions of Lemma~\ref{lem:small-at-infinity},
    but assume  $\xi = v \in V$.
    Assume in addition that there is $d > 0$ and a size for angles $\Theta$ 
    such that there are $\Theta$-small geodesics $c_n$ of length $\leq d$
    between $v_n$ and $v'_n$.
    Assume also that the $v'_n$ do \emph{not} converge to $v$.
    Then there is a subsequence $I \subseteq \IN$ such 
    that both $v_n$ and $v'_n$ are constant for $n \in I$.
  \end{addendum}

  \begin{proof} 
    We can argue as in the proof of Lemma~\ref{lem:small-at-infinity}
    and assume that $\xi' = \lim_n v'_n$ exists in $\Delta_+$ with $\xi' \neq v$.
    Using Lemma~\ref{lem:pass-through-v} as in the last 
    paragraph of proof of Lemma~\ref{lem:small-at-infinity},
    we can assume that the $c_n$, $n \in I$ all pass through $v$.
    Lemma~\ref{lem:finitely-many-edges} allows us, after passing to a further
    subsequence, to assume that  the initial edge
    of the restriction $c_n|_{[v,v'_n]}$ does not depend on $n \in I$.
    Since the $c_n$ are $\Theta$-small, it follows from 
    Lemma~\ref{lem:angles-locally-finite} that the initial
    edges of the $c_n|_{[v,v_n]}$, $n \in I$, can only vary over a finite set of edges.
    But now, as $v_n \to v$, we must have $v_n = v$ for all sufficiently large
    $n \in I$.  
    Since the $c_n$ are $\Theta$-small and of bounded length,   
    Corollary~\ref{cor:finite-theta-balls} implies that the $v'_n$, $n \in I$  
    also vary only over a finite set. 
  \end{proof}

  \subsection*{Coarse convexity.} 
  
  \begin{lemma}
    \label{lem:U-V}
    Let $\xi \in \Delta_+$ and $U$ be a neighborhood
    of $\xi \in \Delta_+$.
    Then there exists a neighborhood $U'$ of $\xi$ with
    the following property.
    If $v_-, v_+ \in U'$ and $v$ belongs to a geodesic
    between $v_-$ and $v_+$ then $v \in U$.
  \end{lemma}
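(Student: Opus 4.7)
The plan is to prove the lemma by contradiction, using compactness of $\Delta_+$ together with the pass-through property from Lemma~\ref{lem:pass-through-v}. First I would reduce to $U = M(\xi, V_0)$ for finite $V_0 \subseteq V$, since these form a basis of the observer topology. Supposing no suitable $U'$ exists, a diagonal choice with a shrinking neighborhood basis of $\xi$ gives sequences $v_n^\pm \to \xi$ and vertices $v_n$ on geodesics $c_n$ from $v_n^-$ to $v_n^+$ with $v_n \notin U$. By compactness of $\Delta_+$ and closedness of $\Delta_+ \setminus U$, after passing to a subsequence $v_n \to v_\infty \neq \xi$.

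For $v_\infty \in V$, I would apply Lemma~\ref{lem:pass-through-v} to $(\xi, v_\infty)$ to obtain neighborhoods $N_v \ni v_\infty$ and $N_\xi \ni \xi$ such that every geodesic between them meets $v_\infty$. For large $n$ one has $v_n \in N_v$ and $v_n^\pm \in N_\xi$, so both sub-geodesics $c_n|_{[v_n, v_n^\pm]}$ contain $v_\infty$; since a geodesic cannot revisit a vertex, $v_\infty = v_n$ eventually. If moreover $\xi \in V$, the same lemma applied to $(v_\infty, \xi)$ forces $\xi \in c_n$ on both sides of $v_n$, whence $\xi = v_n = v_\infty$, contradicting $\xi \neq v_\infty$. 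If instead $\xi \in \dd \Gamma$, Lemma~\ref{lem:compactness} with basepoint $v_\infty$ applied to the sub-geodesics $c_n|_{[v_\infty, v_n^\pm]}$ produces on a subsequence pointwise limits $r^\pm$ that are geodesic rays from $v_\infty$ to $\xi$. Since $r^\pm$ are realized as opposite halves of $c_n$ through $v_\infty = v_n$, we have $d_\Gamma(r^+(t), r^-(t)) = 2t$, so the Gromov product $(r^+(t) \mid r^-(t))_{v_\infty}$ is identically zero; this contradicts that two rays converging to the same point of $\dd \Gamma$ have Gromov product tending to infinity.

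The remaining case $v_\infty \in \dd \Gamma$ is the main obstacle, since Lemma~\ref{lem:pass-through-v} requires a vertex target. My plan is to pick a vertex $u$ deep along a geodesic joining $\xi$ (or a fixed basepoint, if $\xi \in \dd \Gamma$) to $v_\infty$---existence being guaranteed by Lemma~\ref{lem:existenc-geodesics}---chosen so that $u \neq \xi$ and $u \neq v_\infty$. Applying Lemma~\ref{lem:pass-through-v} to both pairs $(u, \xi)$ and $(u, v_\infty)$ supplies neighborhoods which, combined with $\delta$-slimness of (possibly ideal) geodesic triangles on the corners $v_n^-, v_n^+$ and a basepoint, together with Lemma~\ref{lem:small-at-infinity} and Addendum~\ref{add:small-at-infinty-V_infty} to track vertices of $c_n$ lying at bounded graph-distance from a geodesic heading to $\xi$, should force $u$ to lie on $c_n$ for all sufficiently large $n$. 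Once this is established, the argument of the previous paragraph applies with $u$ in place of $v_\infty$, yielding the desired contradiction.
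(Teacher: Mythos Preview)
Your setup by contradiction and the treatment of the case $v_\infty \in V$ are essentially correct (though the Gromov-product detour for the sub-case $\xi \in \dd\Gamma$ is heavier than necessary). The genuine gap is the case $v_\infty \in \dd\Gamma$: your plan to pick an auxiliary vertex $u$ and force $u \in c_n$ is only sketched, and the combination of Lemma~\ref{lem:pass-through-v} at two different pairs with slimness does not in any obvious way put $u$ on $c_n$. At best it puts $u$ within $\delta$ of $c_n$, and from there you are essentially back to the problem you started with (a vertex at bounded distance from $c_n$ that you want to show converges to $\xi$).

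The paper sidesteps this entirely by splitting on $\xi$ rather than on $v_\infty$, which makes the location of $v_\infty$ irrelevant. If $\xi \in V$, apply Lemma~\ref{lem:pass-through-v} with $\xi$ in the vertex role and any accumulation point of $(v_n)$ as the other point: both restrictions $c_n|_{[v_n,(v_\pm)_n]}$ then eventually contain $\xi$, forcing $v_n = \xi$, a contradiction. If $\xi \in \dd\Gamma$, consider the ideal triangle with corners $(v_-)_n,(v_+)_n,\xi$; by slimness $v_n \in c_n$ lies within $\delta$ of some vertex $v'_n$ on one of the two sides ending at $\xi$. Since $(v_\pm)_n \to \xi$, for any finite $V_0$ these sides eventually lie in $M'(\xi,V_0)$, so $v'_n \to \xi$ in the observer topology; then Lemma~\ref{lem:small-at-infinity} gives $v_n \to \xi$, contradicting $v_n \notin U$. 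This short argument covers your entire unresolved case and also subsumes your Gromov-product sub-case.
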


  \begin{proof}
    Assume this fails.
    Then we find sequence of vertices $(v_-)_n, (v_+)_n, v_n$
    such that $(v_-)_n \to \xi$, $(v_+)_n \to \xi$,
    $v_n \notin U$ and $v_n$ belongs to a geodesic $c_n$ between
    $(v_-)_n$ and $(v_+)_n$.
    If $\xi \in V$, then we  apply Lemma~\ref{lem:pass-through-v}
    to  $c_n|_{[v_n,(v_-)_n]}$  and to $c_n|_{[v_n,(v_+)_n]}$.
    Lemma~\ref{lem:pass-through-v} implies that both restrictions
    eventually contain $\xi$, which can only happen if
    $v_n = \xi$.
    Since this contradicts $v_n \notin U$ we have
    $\xi \in \dd \Gamma$.
    Hyperbolicity implies that there are $v'_n$ such that
    $v'_n$ belongs to a geodesic from $(v_-)_n$ to $\xi$
    or to a geodesic from $(v_+)_n$ to $\xi$ and such 
    that $d_\Gamma(v_n,v_n')$ is uniformly bounded.
    The first property ensures $v'_n \to \xi$ since
    $(v_-)_n \to \xi$ and $(v_+)_n \to \xi$.
    The second property allows us to apply
    Lemma~\ref{lem:small-at-infinity} 
    and deduce that also $v_n \to \xi$.
    This contradicts again $v_n \notin U$.
  \end{proof}

  \section{The coarse $\Theta$-flow space for 
                                      relatively hyperbolic groups}
    \label{sec:coarse-flow-space}

  Throughout this section we use the notations and assumptions from
  Section~\ref{sec:relative-hyperbolic-groups}.
  In particular, $G$ is a group with a simplicial cocompact action on 
  a fine and hyperbolic graph $\Gamma$. 
  The stabilizers of edges under this action are finite. 
  We fix again a hyperbolicity constant $\delta \geq 0$ for $\Gamma$. 
  We will use the family $\calp$ consisting of 
  all subgroups $H \leq G$ that are virtually cyclic or 
  fix a vertex $v \in V$. 
  The space $\Delta$ is the union $\dd \Gamma \cup V_\infty$ and 
  equipped with the observer topology.  
  We also fix a proper left invariant metric $d_G$ on $G$.

  \begin{theorem}
    \label{thm:long-wide-GxDelta}
    The action of $G$ on $\Delta$ is finitely $\calp$-amenable.
  \end{theorem}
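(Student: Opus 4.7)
The plan is to implement the strategy outlined in the introduction: construct a coarse flow space $\CF(\Theta)$ for a suitable size for angles $\Theta$, apply the long thin cover machinery of Section~\ref{sec:covering-V-x-Z}, and then transfer the resulting covers to $G \x \Delta$ by means of a (multi-valued) geodesic flow, finally patching the result over the peripheral part.

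First I would set up $\CF(\Theta)$ following Definition~\ref{def:coarse-flow-space-hyp}, but with $G$ replaced by the vertex set $V_{\mathrm{fin}} := V \setminus V_\infty$ of finite valency (equipped with the graph metric $d_\Gamma$, on which $G$ acts properly since edge stabilizers are finite), with $Z = \Delta_+^{\,2}$ (minus the diagonal), and requiring that the witnessing geodesic between $\xi_-$ and $\xi_+$ be $\Theta$-small. The crucial point is to check the three hypotheses of Theorem~\ref{thm:long-thin-cover-X-V-version}. Finite dimensionality of $\CF(\Theta)$ follows from metrizability of $\Delta_+$ and the Bestvina--Mess type results discussed in the appendix. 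The doubling property for the slices $V_z = \{v \in V_{\mathrm{fin}} \mid (v,\xi_-,\xi_+) \in \CF(\Theta)\}$ follows because each such slice lies in a uniformly bounded neighborhood of a $\Theta$-small geodesic between $\xi_-$ and $\xi_+$: local finiteness of the graph away from vertices of $V_\infty$ combined with Corollary~\ref{cor:finite-theta-balls} gives a uniform doubling constant. Isotropy groups of points $(\xi_-,\xi_+) \in \Delta_+^{\,2}$ lie in $\calp$ by Lemma~\ref{lem:isotropy-2-points} (they are virtually cyclic) together with the fact that isotropy of a vertex lies in $\calp$ by definition. Theorem~\ref{thm:long-thin-cover-X-V-version} then yields, for every $\alpha$, an $\calp$-cover $\calw_\alpha$ of $\CF(\Theta)$ that is $\alpha$-long in the $V$-direction and of dimension bounded by a constant $N$ independent of $\alpha$.

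Next I would define the pull-back map $\iota^{-\tau}$ analogous to the hyperbolic case: for a point $(g,\xi) \in G \x \Delta$, take all vertices $v \in V_{\mathrm{fin}}$ lying on a $\Theta$-small geodesic from $g$ (viewed in $V$, or rather from a chosen basepoint tied to $g$) to $\xi$ at distance $\tau$, and declare $(g,\xi) \in \iota^{-\tau} W$ iff all such $v$ satisfy $(v,g,\xi) \in W$ (appropriately interpreted in $\CF(\Theta)$). Then $\iota^{-\tau}\calw_\alpha$ has dimension at most $N$ and remains an $\calp$-cover. For the wideness property, given a finite $S \subset G$, one chooses $\tau$ large compared to $\max_{s \in S} d_G(s,e)$ and uses hyperbolicity of $\Gamma$ (the thin triangle property plus Lemma~\ref{lem:U-V} coarse convexity) to show that for all $s \in S$ the flowed footprints are uniformly close in $V$, so that taking $\alpha$ large enough forces $(s,\xi)$ to lie in the same member of the cover for every $s \in S$. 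This gives wide $\calp$-covers of the subspace $G \x \Delta$ restricted to directions $\xi$ that admit $\Theta$-small geodesics issued from $g$ — as announced this is carried out in Proposition~\ref{prop:cover-G-x-Theta-dd} with an appropriate choice of $\Theta$ controlled by $\Theta^{(3)}$ and its multiples.

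The main obstacle, and the step I expect to be hardest, is to extend these wide covers so that they cover all of $G \x \Delta$, because the construction above is blind to the peripheral directions $\xi \in V_\infty$ for which no $\Theta$-small geodesic of the prescribed form exists (the geodesic might want to enter the peripheral vertex immediately, where the doubling property fails). To handle this I would carry out an explicit direct construction of an $\calp$-cover of $G \x V_\infty$-neighborhoods separately, using that the stabilizer of $v \in V_\infty$ belongs to $\calp$ and Lemma~\ref{lem:pass-through-v}, which lets one take a small observer-topology neighborhood of $v$ on which all geodesics to $v$ pass through $v$, so that translates by the peripheral stabilizer are well-controlled. This produces an equivariant open neighborhood of $G \x V_\infty$ covered by $\calp$-sets that are each wide in $G$, of dimension bounded by a fixed constant. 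I would then glue this peripheral cover with the flow-pulled cover using a $G$-equivariant partition into closed regions near and away from $V_\infty$, taking the union of the two families; the wideness is preserved on each piece and the total dimension only grows by the sum of the two bounds. Combining, for every finite $S \subset G$ we produce an open $\calp$-cover of $G \x \Delta$ that is wide relative to $S$ and of uniformly bounded dimension, establishing $N'$-$\calp$-amenability and hence finite $\calp$-amenability of the action $G \curvearrowright \Delta$.
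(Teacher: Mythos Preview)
Your overall strategy matches the paper's: the theorem is deduced from Proposition~\ref{prop:cover-G-x-Theta-dd} (covers coming from the coarse flow space via $\iota^{-\tau}$) together with Proposition~\ref{prop:cover-G-x-V_infty} (the explicit peripheral cover). However, there is a genuine gap in your setup of the coarse flow space.

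You propose to take $V_{\mathrm{fin}} = V \setminus V_\infty$ with the graph metric $d_\Gamma$. But $(V_{\mathrm{fin}}, d_\Gamma)$ is \emph{not} a proper metric space: if $w \in V_\infty$ has infinitely many neighbors, these all lie in $V_{\mathrm{fin}}$ and are pairwise at $d_\Gamma$-distance $\leq 2$. Theorem~\ref{thm:long-thin-cover-X-V-version} requires properness of the metric on $V$. For the same reason the doubling hypothesis fails: if a $\Theta$-small geodesic $c$ between $\xi_-$ and $\xi_+$ passes through some $w \in V_\infty$, then infinitely many vertices of $V_{\mathrm{fin}}$ lie within $d_\Gamma$-distance $1$ of $c$, so the slice $V_{(\xi_-,\xi_+)}$ is infinite inside a bounded $d_\Gamma$-ball. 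Your appeal to Corollary~\ref{cor:finite-theta-balls} hints at the correct fix but is incompatible with $d_\Gamma$: that corollary bounds vertices reachable via \emph{$\Theta$-small} geodesics, not arbitrary $d_\Gamma$-paths.

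The paper's remedy is to work instead on $V_E$ (the vertices of the barycentric subdivision coming from edges, all of valence $2$) equipped with the angle-adapted metric $d_\Theta$, obtained by minimising lengths over concatenations of $\Theta$-small geodesics. This metric is proper (Lemma~\ref{lem:d_Theta-proper}) exactly because $\Theta$-small balls are finite, and with it the doubling property for the slices of $\CF(\Theta)$ goes through (Lemma~\ref{lem:properties-CF(Theta)}\ref{lem:prop-CF(Theta):D}). Once this choice of metric is made, the rest of your outline --- the pull-back $\iota^{-\tau}$, the use of hyperbolicity to obtain wideness for large $\tau$, and the separate peripheral cover glued on --- is essentially what the paper carries out. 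Note also that the peripheral cover must handle not only $\xi \in V_\infty$ but every $(g,\xi)$ for which no $\Theta$-small geodesic from $gv_0$ to $\xi$ exists; the paper's $V(v,\Theta)$-sets in Proposition~\ref{prop:cover-G-x-V_infty} are designed with this in mind.
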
     

  \begin{proof}
    This will be a consequence of Proposition~\ref{prop:cover-G-x-Theta-dd}
    and~\ref{prop:cover-G-x-V_infty}.
  \end{proof}

  \subsection*{Covering $G \x_{\Theta_0} \dd \Gamma$}
  It will be convenient to replace $\Gamma$ with its first
  barycentric subdivision $\Gamma'$.
  The set of vertices in the barycentric subdivision corresponding to
  the edges of the original graph will be denoted $V_E$.
  Thus the set of vertices of $\Gamma'$ is the disjoint union $V \cup V_E$,
  where $V$ is the set of vertices of $\Gamma$.
  The vertices in $V_E$ are all of valence $2$.
  We will give edges in $\Gamma'$ the length $1/2$.
  Then the path length metric of $\Gamma$ and $\Gamma'$ coincide.
  We will in this subsection use $\delta' := \delta + 1$.
  This has the effect that we can use vertices from $V_E$ when we apply 
  hyperbolicity:
  for any geodesic triangle and any vertex $v$ from $V_E$ on one
  side of the triangle there is a vertex $w$ from $V_E$ on one of the two
  other sides with $d_\Gamma(v,w) \leq \delta'$.
  Moreover, $\Gamma'$ is still fine and the 
  considerations from Section~\ref{sec:relative-hyperbolic-groups}
  and the appendix
  all apply to $\Gamma'$ as well.
  In particular, we can define $\Delta'$ and $\Delta'_+$.
  Since all vertices in $V_E$ are of valence $2$ we have
  $\Delta' = \Delta$ and $\Delta'_+ = \Delta_+ \cup V_E$.
  Of course, $\dd \Gamma = \dd \Gamma'$.
  For $v \in V$ there is a canonical bijection
  between angles at $v$ with respect to $\Gamma$ and
  angles with respect to $\Gamma'$. 
  In this subsection sizes for angles will only be considered at vertices
  from $V$.
  (Since there is only one non-trivial angle at any vertex $v \in V_E$,
  angles at $v \in V_E$ are not very interesting.)  

  We fix a base point $v_0 \in V_E$.
  For a size for angles $\Theta$ we write $G \x_\Theta \dd \Gamma$ for the
  subset of $G \x \dd \Gamma$ consisting of all pairs 
  $(g,\xi) \in G \x \dd \Gamma$ for which there exists a $\Theta$-small
  geodesic from $gv_0$ to $\xi$. 
  The main result of this subsection produces partial covers of
  $G \x \Delta$. 
  
  \begin{proposition}
    \label{prop:cover-G-x-Theta-dd}
    There is a number $N$ such that for any $\alpha > 0$ and any
    size for angles $\Theta_0$ there exists
    a $G$-invariant collection $\calu$ of  $\VCyc$-subsets of
    $G \x \Delta$ such that
    \begin{enumerate}
    \item the order of $\calu$ is at most $N$;
    \item for every $(g,\xi) \in G \x_{\Theta_0} \dd \Gamma$ there is
      $U \in \calu$ with $B_\alpha(g) \x \{ \xi \} \subseteq U$.
    \end{enumerate}
  \end{proposition}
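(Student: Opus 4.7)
The plan is to implement the coarse flow space strategy sketched in the introduction, now in the relatively hyperbolic setting, using Theorem~\ref{thm:long-thin-cover-X-V-version} as a black box. I first introduce the coarse flow space $\CF(\Theta) \subseteq V_f \x \Delta_+^2$, where $V_f$ denotes the vertices of the barycentric subdivision $\Gamma'$ of finite valency (hence containing the basepoint $v_0 \in V_E$), as the set of triples $(v,\xi_-,\xi_+)$ with $\xi_- \neq \xi_+$ for which some $\Theta$-small geodesic in $\Gamma'$ from $\xi_-$ to $\xi_+$ passes within $\delta'$ of $v$. The size for angles $\Theta$ will be chosen larger than $\Theta_0$ in a controlled way via Lemma~\ref{lem:geodescs-between-geodesics} and Remark~\ref{rem:geodesic-n-gones}, so that the geodesics arising when I flow out of $g v_0$ along $\Theta_0$-small geodesics to $\xi$ remain $\Theta$-small. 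The set $V_f$ carries a proper $G$-invariant discrete metric inherited from $\Gamma'$, and stabilisers of its points are finite.

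Next, I verify the hypotheses of Theorem~\ref{thm:long-thin-cover-X-V-version} for $\CF(\Theta)$ with the family $\calp$: finite-dimensionality of $\CF(\Theta)$ reduces to finite-dimensionality of $\Delta_+$ in the observer topology (from the Bestvina--Mess-type result of the appendix); the $(D,R)$-doubling property of each slice $V_{(\xi_-,\xi_+)}$ is hyperbolicity, since this slice sits in a uniformly bounded tubular neighbourhood of any fixed $\Theta$-small geodesic between $\xi_-$ and $\xi_+$; and the isotropy $G_{\xi_-} \cap G_{\xi_+}$ is virtually cyclic by Lemma~\ref{lem:isotropy-2-points}, hence in $\calp$. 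The theorem then supplies, for any $\alpha' > 0$, a $\calp$-cover $\calw$ of $\CF(\Theta)$ of dimension at most $N = N(\dim \CF(\Theta), D)$ that is $\alpha'$-long in the $V_f$-direction; the virtual cyclicity of the slice isotropies upgrades $\calw$ automatically to a $\VCyc$-cover.

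I then pull $\calw$ back to $G \x \Delta$ by a flow of distance $\tau$. For $W \in \calw$, let $\iota^{-\tau}W$ be the set of $(g,\xi) \in G \x \Delta$ such that every vertex $v \in V_f$ at $\Gamma$-distance $\tau$ from $g v_0$ lying on some $\Theta_0$-small geodesic from $g v_0$ to $\xi$ satisfies $(v, g v_0, \xi) \in W$. Lemma~\ref{lem:compactness}, Addendum~\ref{add:compactness} and Corollary~\ref{cor:finite-theta-balls}, together with openness of $W$, show that $\iota^{-\tau} W$ is open; the assignment is $G$-equivariant, does not increase the order, and preserves the $\VCyc$-subset property because stabilisers can only shrink under a preimage. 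Thus $\calu := \{\iota^{-\tau} W : W \in \calw\}$ is a $G$-invariant family of $\VCyc$-subsets of order at most $N$.

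Finally, I check the width condition. Given $(g,\xi) \in G \x_{\Theta_0} \dd\Gamma$ and $g' \in B_\alpha(g)$, hyperbolicity --- via the coarse convexity of Lemma~\ref{lem:U-V} and the asymptotic behaviour of geodesic rays to a common point of $\dd\Gamma$ --- guarantees that, for $\tau$ sufficiently large in terms of $\alpha$, the vertex $v'$ at $\Gamma$-distance $\tau$ from $g' v_0$ on a $\Theta_0$-small geodesic to $\xi$ lies within $B_{\alpha'}(v)$ of the corresponding vertex $v$ from $g v_0$. Choosing $\alpha' \gg \alpha$, the long-thin property applied to $(v, g v_0, \xi)$ then places $(v', g v_0, \xi)$ in a single $W \in \calw$. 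The main obstacle --- and where most of the real work sits --- is the concomitant change of the middle coordinate from $g v_0$ to $g' v_0$, since the pull-back really asks $(v', g' v_0, \xi) \in W$. Here the observer topology on $\Delta_+$ is essential: although $g v_0 \neq g' v_0$ as vertices, they lie in a common basic open $M(g v_0, V_0) \subseteq \Delta_+$ whenever the finite set $V_0$ excludes the intermediate vertices on the short geodesics between them. By choosing the open neighbourhoods $U'_i$ in the $\Delta_+^2$-factor that appear in the construction of $\calw$ (Proposition~\ref{prop:induction-V} together with Lemma~\ref{lem:dim-U}) large enough in the first $\Delta_+$-coordinate to contain all $\leq \alpha$-translates of the relevant basepoints, while keeping them small enough in the second to preserve the dimension bound, this basepoint change can be absorbed and the proof is complete.
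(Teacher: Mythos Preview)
Your overall architecture matches the paper's, but the final step --- absorbing the basepoint change from $gv_0$ to $g'v_0$ --- has a genuine gap. You propose to go back into the proof of Theorem~\ref{thm:long-thin-cover-X-V-version} and enlarge the sets $U'_i$ in the first $\Delta_+$-coordinate so that they contain all $\leq\alpha$-translates of the relevant basepoints. This cannot be done uniformly: the basepoints $gv_0$ range over an infinite orbit, and for each pair $(g,g')$ with $d_G(g,g')\leq\alpha$ the finite set $V_0$ one must exclude to place both $gv_0$ and $g'v_0$ in a common $M(\cdot,V_0)$ depends on the pair; enlarging $U'_i$ this way also threatens condition~\ref{lem:dim-U:F} of Lemma~\ref{lem:dim-U}. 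The paper does not touch the construction of $\calw$ at all; it argues by contradiction. Assuming no $\tau$ works, one extracts a sequence $(g_\tau,\xi_\tau)$, normalises so that the flowed vertex $v_\tau=v$ is constant, and uses Addendum~\ref{add:compactness} to get $g_\tau v_0\to\xi_-\in\dd\Gamma$ and $\xi_\tau\to\xi_+\in\dd\Gamma$. The crucial ingredient is Lemma~\ref{lem:small-at-infinity}: since $d_\Gamma(g_\tau v_0,g_\tau h v_0)$ is uniformly bounded, \emph{all} translated basepoints $g_\tau h v_0$, $h\in B_\alpha(e)$, also converge to $\xi_-$. One then takes $W\in\calw$ long at $(v,\xi_-,\xi_+)$, uses openness of $W$ to find a finite set $V'$ and neighbourhoods $U_\pm$ with $V'\times U_-\times U_+\cap\CF(\Theta)\subseteq W$, and chooses a single $\tau_0$ so large that $g_{\tau_0}hv_0\in U_-$ for every $h$ and $\xi_{\tau_0}\in U_+$. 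The basepoint change is thus absorbed by the openness of a single $W$ \emph{after} $\calw$ is built, not by pre-engineering $\calw$.

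A secondary but real issue: the $\Gamma'$-path-metric restricted to your $V_f$ is not proper when $V_\infty\neq\emptyset$ --- a vertex of $V_E$ adjacent to some $w\in V_\infty$ has infinitely many $V_E$-vertices at $\Gamma$-distance $1$ --- so Theorem~\ref{thm:long-thin-cover-X-V-version} does not apply with that metric. The paper works on $V_E$ with the metric $d_\Theta$, shown to be proper in Lemma~\ref{lem:d_Theta-proper}, and verifies the doubling property with respect to $d_\Theta$ in Lemma~\ref{lem:properties-CF(Theta)}~\ref{lem:prop-CF(Theta):D}; you need this or an equivalent device.
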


  To prove Proposition~\ref{prop:cover-G-x-Theta-dd} we will
  construct a coarse flow space and use the long and thin covers from
  Theorem~\ref{thm:long-thin-cover-X-V-version}.
  For a size for angles $\Theta$,    
  we define the metric $d_\Theta$ on $V_E$ as follows.
  For $v,v' \in V_E$ we set
  \begin{equation*}
    d_\Theta(v,v') := \min \sum_{i=1}^n d_\Gamma(w_{i-1},w_i)
  \end{equation*}
  where the minimum is taken over all finite sequences 
  $v = w_0, w_1,\dots,w_n =v'$ of vertices from $V_E$
  such that there are $\Theta$-small geodesics 
  between $w_{i-1}$ and $w_i$ for all $i$.
  If there is no such sequence, then $d_\Theta(v,v') = \infty$.
  If there exists a $\Theta$-small geodesic 
  between $v$ and $v'$, then $d_\Theta(v,v') = d_\Gamma(v,v')$, but 
  in general $d_\Theta(v,v') \geq d_\Gamma(v,v')$. 

  \begin{lemma}
    \label{lem:d_Theta-proper}
    The metric $d_\Theta$ on $V_E$ is proper.
  \end{lemma}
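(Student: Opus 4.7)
The plan is to deduce properness by iterating a one-step finiteness statement. Given $v \in V_E$ and $R > 0$, any chain $v = w_0, w_1, \ldots, w_n = v'$ of pairwise distinct elements of $V_E$ with consecutive $\Theta$-small geodesics and total length at most $R$ must satisfy $n \leq R$, since any two distinct elements of $V_E$ are at $\Gamma$-distance at least $1$ (consecutive $V_E$-vertices are always separated by at least one $V$-vertex, and $d_\Gamma = d_{\Gamma'}$). Hence it suffices to show that for each $w \in V_E$ the one-step reachable set
\[
S_R(w) := \{ w' \in V_E \mid \text{some } \Theta\text{-small geodesic in } \Gamma' \text{ from } w \text{ to } w' \text{ has length} \leq R \}
\]
is finite; iterating this bound at most $R$ times then bounds the closed $d_\Theta$-ball of radius $R$ around $v$.

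For the one-step statement I would apply Corollary~\ref{cor:finite-theta-balls} in the subdivided graph $\Gamma'$. Since $w \in V_E$ has $\Gamma'$-valence $2$, any $\Theta$-small geodesic issuing from $w$ begins with one of at most two edges. For each such initial edge $e$, applying Corollary~\ref{cor:finite-theta-balls} with the role of the auxiliary edge also played by $e$ (so that the trivial angle $(e,e) \in \Theta$ is automatic) yields only finitely many vertices of $\Gamma'$ reachable from $w$ by a $\Theta$-small geodesic of length $\leq R$ starting with $e$. Taking the union over the two choices of initial edge (and throwing in $w$ itself for the trivial geodesic) exhibits $S_R(w)$ as a finite set.

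The one subtlety is that Corollary~\ref{cor:finite-theta-balls} and its prerequisite Lemma~\ref{lem:angles-locally-finite} are formulated for $\Gamma$, whereas I need them for the subdivision $\Gamma'$. This transfer is harmless and is consistent with the preceding remark in the paper that the considerations from Section~\ref{sec:relative-hyperbolic-groups} apply to $\Gamma'$: angles at $V$-vertices of $\Gamma'$ correspond canonically to angles of $\Gamma$, any angle at a $V_E$-vertex is automatically handled because of the valence-$2$ condition, and the $G$-action on the edges of $\Gamma'$ is still cocompact. I expect this bookkeeping to be the only step that requires attention; once it is in place the argument is immediate from the corollary and the induction.
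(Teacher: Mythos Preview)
Your proof is correct and follows essentially the same approach as the paper's: bound the number of chain steps and then invoke the local finiteness of $\Theta$-angles for the one-step bound. The paper streamlines this slightly by subdividing each $\Theta$-small geodesic at its intermediate $V_E$-vertices, thereby reducing to $d_\Theta$-balls of radius~$1$, whose finiteness follows directly from Lemma~\ref{lem:angles-locally-finite} without needing Corollary~\ref{cor:finite-theta-balls}.
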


  \begin{proof}
    If $d_\Theta(v,v') \leq n$, then there is a finite sequence 
    $v = w_0, w_1 \dots,w_n = v'$ in $V_E$ with
    $d_\Theta(w_{i-1}, w_i) \leq 1$ for all $i$.
    Therefore, it suffices to check that balls of radius $1$ are 
    finite.
    This is a consequence of Lemma~\ref{lem:angles-locally-finite}. 
  \end{proof}

  \begin{definition}
    \label{def:coarse-FS-Theta}
    Set $Z := (\Delta'_+)^{2}$.
    Let $\Theta$ be a size for angles. 
    The coarse $\Theta$-flow space $\CF(\Theta)$ for $\Gamma$ is the
    subset of $V_E \x Z$ consisting of all
    triples $(v,\xi_-,\xi_+)$ for which there exist $v' \in V_E$ and
    a $\Theta$-small geodesic $c$ between $\xi_-$ and $\xi_+$ such that
    $v' \in c$ and  $d_{\Theta}(v,v') \leq \delta'$.
    Moreover, we require $\xi_-, \xi_+ \in V_E \cup \dd \Gamma$.
  \end{definition}

  \begin{example}
    \label{ex:coarse-flow-space-tree}
    Suppose that $\Gamma$ is a locally finite tree. 
    The flow space $\FS$ from~\cite{Bartels-Lueck(2012CAT(0)flow)} for $\Gamma$
    consists of all generalized (parametrized) geodesics 
    $c \colon \IR \to \Gamma$.
    If we use $\delta' = 0$ and all angles for $\Theta$, then there is a natural 
    embedding $\CF(\Theta) \to \FS$ that sends
    $(v,\xi_-,\xi_+)$ to the generalized geodesic $c \colon \IR \to \Gamma$ with
    $c(-\infty) = \xi_-$, $c(0) = v$ and $c(+\infty) = \xi_+$.
  \end{example}

  \begin{lemma}
    \label{lem:properties-CF(Theta)}
    Let $\Theta$ be a size for angles with $2\Theta^{(3)} \subseteq \Theta$.
    Then
    \begin{enumerate}
    \item \label{lem:prop-CF(Theta):closed}
      $\CF(\Theta) \subseteq V_E \x Z$ is closed;
    \item \label{lem:prop-CF(Theta):dim} 
      $\dim \CF(\Theta) \leq \dim Z < \infty$;
    \item \label{lem:prop-CF(Theta):D} 
      for $(\xi_-,\xi_+) \in Z$, 
      $V_{\xi_-,\xi_+} := \{ v \in V_E \mid (v,\xi_-,\xi_+) \in \CF(\Theta) \}$
      has the $(D,R)$-doubling property with respect to $d_\Theta$,
      where $D$ and $R$ are independent of $(\xi_-,\xi_+)$ and $\Theta$;
    \item \label{lem:prop-CF(Theta):isotropy}
      for $(v,\xi_-,\xi_+)$ the isotropy group 
      $G_{\xi_-,\xi_+} = \{ g \in G \mid g\xi_-=\xi_-, g\xi_+=\xi_+ \}$
      is virtually cyclic. 
    \end{enumerate}
  \end{lemma}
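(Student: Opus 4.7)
The plan is to treat the four assertions separately. Claim \ref{lem:prop-CF(Theta):isotropy} follows immediately from Lemma \ref{lem:isotropy-2-points} when $\xi_\pm \in \dd \Gamma$, since $G_{\xi_-,\xi_+} = G_{\xi_-} \cap G_{\xi_+}$; if either of $\xi_\pm$ lies in $V_E$, then $G_{\xi_-,\xi_+}$ is a subgroup of the finite stabilizer of that vertex and hence finite. For \ref{lem:prop-CF(Theta):dim}, the countable discrete set $V_E$ gives a decomposition $\CF(\Theta) = \bigsqcup_{v \in V_E} \{v\} \x Z_v$ where $Z_v := \{(\xi_-,\xi_+) \in Z \mid (v,\xi_-,\xi_+) \in \CF(\Theta)\}$; once \ref{lem:prop-CF(Theta):closed} is in hand each $Z_v$ is closed in $Z$, so the countable sum theorem in dimension theory yields $\dim \CF(\Theta) \leq \dim Z$, which is finite because $\Delta'_+$ is (addressed in the appendix).

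For \ref{lem:prop-CF(Theta):closed}, closedness reduces to closedness of each fiber $Z_v$ in $Z$. Given $(\xi_\pm)_n \to \xi_\pm$ with $(v, (\xi_-)_n, (\xi_+)_n) \in \CF(\Theta)$, I would pick witnesses $v'_n \in V_E$ and $\Theta$-small geodesics $c_n$ from $(\xi_-)_n$ to $(\xi_+)_n$ through $v'_n$ with $d_\Theta(v,v'_n) \leq \delta'$. Lemma \ref{lem:d_Theta-proper} confines $v'_n$ to a finite set, so after passing to a subsequence $v'_n = v'$ is constant. Splitting each $c_n$ at $v'$ into two half-geodesics and applying Lemma \ref{lem:compactness} together with Addendum \ref{add:compactness} to each half extracts a pointwise convergent sub-sequence whose limit is a geodesic from $\xi_-$ to $\xi_+$ through $v'$. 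Since each internal angle of the limit is realized as an angle of some $c_n$ at that vertex, the limit is again $\Theta$-small, so $(v,\xi_-,\xi_+) \in \CF(\Theta)$.

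The substantive part is \ref{lem:prop-CF(Theta):D}, which I plan to reduce to the trivial doubling property of a single line. Assuming $V_{\xi_-,\xi_+} \neq \emptyset$, fix any $\Theta$-small geodesic $c_0$ from $\xi_-$ to $\xi_+$. The key claim is that there exists a constant $K$ depending only on $\delta'$ such that for every $v \in V_{\xi_-,\xi_+}$ one can select $\pi(v) \in c_0 \cap V_E$ with $d_\Theta(v,\pi(v)) \leq K$. Granted the claim, any $\alpha$-separated subset $S \subseteq V_{\xi_-,\xi_+}$ of a $d_\Theta$-ball of radius $2\alpha$ maps under $\pi$ to a subset of $c_0 \cap V_E$; for $\alpha \geq R := 4K$ the map $\pi$ is injective on $S$, the image $\pi(S)$ is $(\alpha - 2K)$-separated, and $\pi(S)$ has $d_\Theta$-diameter $\leq 4\alpha + 2K$. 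Since $d_\Theta$ agrees with $d_\Gamma$ on $c_0$, $\pi(S)$ sits on a $d_\Gamma$-interval of that length, which contains at most a universal number $D$ of $(\alpha-2K)$-separated points. As $K$ depends only on $\delta'$, both $D$ and $R$ are independent of $\Theta$ and $(\xi_-,\xi_+)$, as required.

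For the projection claim, I would pick a $\Theta$-small geodesic $c$ and $v' \in c \cap V_E$ with $d_\Theta(v,v') \leq \delta'$. Hyperbolicity applied to the geodesic triangle with sides $c|_{[\xi_-,v']}$, $c|_{[v',\xi_+]}$, and $c_0$ yields $v_0 \in c_0$ with $d_\Gamma(v',v_0) \leq \delta'$, and the plan is to bound $d_\Theta(v',v_0)$ by invoking Lemma \ref{lem:geodescs-between-geodesics}, which requires the standing hypothesis $2\Theta^{(3)} \subseteq \Theta$ and is precisely the reason it appears. The main obstacle will be the degenerate case in which every geodesic between $v'$ and $v_0$ lies in $c \cup c_0$, where Lemma \ref{lem:geodescs-between-geodesics} does not directly apply; in each sub-case ($v_0 \in c$, or $v' \in c_0$, or passage through a common vertex $w \in c \cap c_0$) one builds an explicit $\Theta$-small connection from $v'$ to $v_0$ of length at most $2\delta'$ by walking along sub-segments of $c$ and $c_0$. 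Adding $d_\Theta(v,v') \leq \delta'$ gives $K = 3\delta'$, closing the reduction.
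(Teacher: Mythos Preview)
Your approach matches the paper's in all four parts, including the reduction of \ref{lem:prop-CF(Theta):D} to projecting onto a fixed $\Theta$-small geodesic and the identification of Lemma~\ref{lem:geodescs-between-geodesics} (hence the hypothesis $2\Theta^{(3)} \subseteq \Theta$) as the key ingredient. Two places deserve a bit more care. In \ref{lem:prop-CF(Theta):closed} you must verify that the limits $\xi_\pm$ land in $V_E \cup \dd\Gamma$ rather than in $V$; the paper does this by observing that if $\xi_- \in V$ then the angles $\varangle_{\xi_-} c_n$ would eventually be $\Theta$-large unless $(\xi_-)_n = \xi_-$ eventually. In the degenerate sub-case of \ref{lem:prop-CF(Theta):D} where the geodesic from $v'$ to $v_0$ passes through a common vertex $w \in c \cap c_0$, the junction vertex $w$ may lie in $V$ rather than $V_E$, so it cannot serve as an intermediate point for $d_\Theta$, and the angle at $w$ of the concatenated path is not a priori $\Theta$-small; the paper resolves this by replacing $v_0$ with the $V_E$-vertex on $c_0$ adjacent to $w$ on the far side and invoking Lemma~\ref{lem:geodesic-2-gons} to bound the relevant angle by $\Theta^{(3)} \subseteq \Theta$, giving $d_\Theta(v,v') \leq \delta'+1$ rather than your $2\delta'$.
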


  \begin{proof}
    \ref{lem:prop-CF(Theta):closed}
     It suffices to show that 
     $Z_v := \{ (\xi_-,\xi_+) \mid (v,\xi_-,\xi_+) \in \CF(\Theta) \}$
     is closed for each $v \in V_E$.
     Let $((\xi_-)_n,(\xi_+)_n) \in Z_v$ with 
     $((\xi_-)_n,(\xi_+)_n) \to (\xi_-,\xi_+)$ in $Z$.
     We need to show  $(\xi_-,\xi_+) \in Z_v$.
     Since $((\xi_-)_n,(\xi_+)_n) \in Z_v$ there are $\Theta$-small geodesics
     $c_n$ from $(\xi_-)_n$ to $(\xi_+)_n$ and vertices $v_n \in c_n \cap V_E$
     with $d_\Theta(v_n,v) \leq \delta'$.
     As $d_\Theta$ is proper by Lemma~\ref{lem:d_Theta-proper}
     we can pass to a subsequence and assume that $v_n = w$ is constant.
     Moreover, we can apply Lemma~\ref{lem:compactness} to
     the $c_n|_{[w,(\xi_-)_n]}$ and assume that
     either the $c_n|_{[w,(\xi_-)_n]}$ converge pointwise to a geodesic 
     ray from $w$ to $\xi_- \in \dd \Gamma$, 
     or that $\xi_- \in c_n|_{[w,(\xi_-)_n]}$ for all $n$.
     In the second case, it also follows
     that eventually $(\xi_-)_n = \xi_-$, as otherwise $(\xi_-)_n \to \xi_-$ would imply
     that the $\varangle_{\xi_-} c_n$ are eventually $\Theta$-large by
     Lemma~\ref{lem:angles-locally-finite}.
     Therefore, $\xi_- \in V_E \cup \dd \Gamma$ and
     the $c_n|_{[w,(\xi_-)_n]}$ converge pointwise to a geodesic $c_-$ from
     $w$ to $\xi_-$.
     Similar, $\xi_+ \in V_E \cup \dd \Gamma$ and we can assume that the
     $c_n|_{[w,(\xi_+)_n]}$ converge pointwise to a geodesic $c_+$ from
     $w$ to $\xi_+$.
     Now $c_-$ and $c_+$ combine to a geodesic between $\xi_-$ and $\xi_+$
     that passes through $w$.
     Thus $(\xi_-,\xi_+) \in Z_v$.    
     \\[1ex]
    \ref{lem:prop-CF(Theta):dim} 
     Theorem~\ref{thm:dim-delta-finite} asserts that $\Delta'_+$
     is finite dimensional.
     Since $V_E$ is discrete and $\CF(\Theta)$ is closed in
     $V_E \x Z$ it follows that 
     $\dim \CF(\Theta) \leq \dim Z = 2 \dim \Delta'_+ < \infty$.
     \\[1ex]
     \ref{lem:prop-CF(Theta):D} 
     If $V_{\xi_-,\xi_+}$ is non-empty, then there is a $\Theta$-small
     geodesic $c$ between $\xi_-$ and $\xi_+$.
     By hyperbolicity, any vertex $v' \in V_E$ on 
     any other $\Theta$-small geodesic $c'$ from $\xi_-$ to $\xi_+$
     will be within distance $\delta'$ of some vertex $v \in V_E \cap c$.
     If $c \cup c'$ does not contain a geodesic between $v$ and $v'$, then
     Lemma~\ref{lem:geodescs-between-geodesics} provides us with a $\Theta$-small
     geodesic between $v$ and $v'$.
     If $c \cup c'$ contains a geodesic $c''$ between $v$ and $v'$, then
     we can assume that $c''$ changes only once from $c$ to $c'$, at some 
     vertex $w \in V$.
     Then $c''$ may fail to be $\Theta$-small only at this vertex $w$.
     In this case we may assume that $v$ is incident to $w$.
     Let now $\tilde v \in V_E$ be the unique vertex incident to
     $w$ on $c' \setminus c''$.
     Then $c'|_{[v',\tilde v]}$ is a $\Theta$-small geodesic of length
     at most $\delta'$ and thus $d_\Theta(v',\tilde v) \leq \delta'$.
     We can apply Lemma~\ref{lem:geodesic-2-gons} to the bi-gone 
     spanned by $c|_{[w,\xi_\pm]}$ and $c'|_{[w,\xi_\pm]}$ to
     deduce that the angle at $w$ spanned by the edges $(w,\tilde v)$
     and $(w,v)$ is $\Theta^{(3)}$-small.
     Thus $d_\Theta(v,\tilde v) = 1$ and $d_\Theta(v,v') \leq \delta'+1$.
          
     Thus any $v \in V_{\xi_-,\xi_+}$ will be within distance $2\delta'+1$ of
     a vertex from $V_E \cap c$ with respect to $d_\Theta$.
     Therefore, any $\alpha$-separated subset $S$ in a $2\alpha$-ball 
     in $V_{\xi_-,\xi_+}$ can be mapped injectively to an 
     $\alpha-4\delta'-2$-separated subset $S'$
     of an $2\alpha+2\delta'+1$-ball in $V_E \cap c$.   
     For sufficiently large $\alpha$ (for example, $\alpha > R := 24 \delta' + 12$)
     any such set $S'$ contains at most $5$ elements, since $V_E \cap c$ is isometric
     to a subset of $\IZ$.
     \\[1ex]
     \ref{lem:prop-CF(Theta):isotropy}
     If $\xi_- \in V_E$ or $\xi_+ \in V_E$, then $G_{\xi_-,\xi_+}$ is finite
     since the action of $G$ on $V_E$ is proper.
     If $\xi_-,\xi_+ \in \dd \Gamma$, then, by definition of $\CF(\Theta)$,
     $\xi_- \neq \xi_+$.
     Lemma~\ref{lem:isotropy-2-points} implies that then $G_{\xi_-,\xi_+}$ is
     virtually cyclic. 
  \end{proof}

  \begin{proposition}
    \label{prop:cover-CF(Theta)}
    There is a number $N'$ such that for any $\alpha' > 0$ and 
    any size for angles $\Theta$ containing $2\Theta^{(3)}$ there exists a $\VCyc$-cover
    $\calw$ of $\CF(\Theta)$ such that
    \begin{enumerate}
    \item \label{prop:cover-CF(Theta):dim}
     $\dim \calw \leq N'$;
    \item \label{prop:cover-CF(Theta):long}
     for any $(v,\xi_-,\xi_+) \in \CF(\Theta)$ there is
     $W \in \calw$ such that 
     $B^\Theta_{\alpha'}(v) \x \{ (\xi_-,\xi_+) \} \cap \CF(\Theta) \subseteq W$.
     Here $B^\Theta_{\alpha'}(v)$ is the $\alpha'$-ball in $V_E$ with respect to
     $d_\Theta$.
    \end{enumerate}
  \end{proposition}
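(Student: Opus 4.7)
The plan is to obtain Proposition~\ref{prop:cover-CF(Theta)} as a direct application of Theorem~\ref{thm:long-thin-cover-X-V-version} to the coarse $\Theta$-flow space. I take $V := V_E$ with the metric $d_\Theta$, set $Z := (\Delta'_+)^2$ equipped with the observer topology, let $X := \CF(\Theta)$ regarded as a $G$-invariant subspace of $V_E \x Z$, and use $\calf := \VCyc$.

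First I would check that the framework assumptions of the section are met: $V_E$ is countable and discrete; $d_\Theta$ is proper by Lemma~\ref{lem:d_Theta-proper}; the $G$-action on $V_E$ is isometric with respect to $d_\Theta$ because $\Theta$ is $G$-invariant, and it is proper because edge stabilizers in $\Gamma$ are finite by assumption. The space $Z = (\Delta'_+)^2$ is separable metrizable, since $\Delta'_+$ is compact Hausdorff with a countable basis for the observer topology. Finally $\CF(\Theta) \subseteq V_E \x Z$ is $G$-invariant by construction, and closed by Lemma~\ref{lem:properties-CF(Theta)}\ref{lem:prop-CF(Theta):closed}.

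Next I verify the three hypotheses A), B), C) of Theorem~\ref{thm:long-thin-cover-X-V-version}. Assumption A) (finite dimensionality of $X$) is Lemma~\ref{lem:properties-CF(Theta)}\ref{lem:prop-CF(Theta):dim}, and crucially gives a bound $\dim \CF(\Theta) \leq 2 \dim \Delta'_+$ that is independent of $\Theta$. Assumption B) (the $(D,R)$-doubling property for each fibre $V_{\xi_-,\xi_+}$ with respect to $d_\Theta$) is exactly Lemma~\ref{lem:properties-CF(Theta)}\ref{lem:prop-CF(Theta):D}, with $D$ and $R$ again independent of $\Theta$. Assumption C) holds with $\calf = \VCyc$ because by Lemma~\ref{lem:properties-CF(Theta)}\ref{lem:prop-CF(Theta):isotropy} the isotropy group $G_{(\xi_-,\xi_+)}$ is virtually cyclic for every $(\xi_-,\xi_+) \in Z$ appearing in $\CF(\Theta)$.

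Given these, Theorem~\ref{thm:long-thin-cover-X-V-version} produces, for the input parameter $\alpha := \max\{\alpha', R\}$, a $\VCyc$-cover $\calw$ of $\CF(\Theta)$ whose dimension is bounded by a number $N$ depending only on $\dim X$ and $D$. Since both bounds are uniform in $\Theta$, setting $N' := N$ yields a uniform bound as required. The long property of Theorem~\ref{thm:long-thin-cover-X-V-version}\ref{thm:long-thin-X:long} gives, for each $(v,\xi_-,\xi_+) \in \CF(\Theta)$, a member $W \in \calw$ with $B^\Theta_\alpha(v) \x \{(\xi_-,\xi_+)\} \cap \CF(\Theta) \subseteq W$; since $B^\Theta_{\alpha'}(v) \subseteq B^\Theta_\alpha(v)$, the conclusion \ref{prop:cover-CF(Theta):long} follows. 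There is no substantive new obstacle at this stage: the entire content of the proposition has been packaged into Lemma~\ref{lem:properties-CF(Theta)} together with the abstract Theorem~\ref{thm:long-thin-cover-X-V-version}; the only point needing attention is the uniformity of $N'$ in $\Theta$, which is automatic because the constants in Lemma~\ref{lem:properties-CF(Theta)} parts \ref{lem:prop-CF(Theta):dim} and \ref{lem:prop-CF(Theta):D} are independent of $\Theta$.
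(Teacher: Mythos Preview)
Your proposal is correct and follows essentially the same approach as the paper: apply Theorem~\ref{thm:long-thin-cover-X-V-version} using Lemma~\ref{lem:d_Theta-proper} and the four parts of Lemma~\ref{lem:properties-CF(Theta)} to verify its hypotheses, and then observe that the resulting bound $N'$ is uniform in $\Theta$ because the dimension bound and doubling constants from Lemma~\ref{lem:properties-CF(Theta)} are. Your write-up is more explicit about checking the standing framework assumptions of Section~\ref{sec:covering-V-x-Z}, and your detour through $\alpha := \max\{\alpha',R\}$ is harmless but unnecessary since Theorem~\ref{thm:long-thin-cover-X-V-version} applies for any $\alpha > 0$.
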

  
  \begin{proof}
    Let $\Theta$ be a size for angles.
    Lemma~\ref{lem:d_Theta-proper} and~\ref{lem:properties-CF(Theta)}
    allow us to apply Theorem~\ref{thm:long-thin-cover-X-V-version}.
    We obtain a number $N'$ such that for any $\alpha' > 0$ there
    exists a $\VCyc$-cover $\calw$ of $\CF(\Theta)$ 
    satisfying \ref{prop:cover-CF(Theta):dim} and 
    \ref{prop:cover-CF(Theta):long}. 

    The number $N'$ only depends on $\dim \CF(\Theta)$ and 
    the doubling constant. For these numbers
    Lemma~\ref{lem:properties-CF(Theta)} provides bounds  
    that do not depend on $\Theta$.
    Therefore $N'$ does not depend on $\Theta$.  
  \end{proof}
  
  \begin{lemma}
    \label{lem:alpha+Theta_0-->Theta}
    Let $\alpha > 0$ and $\Theta_0$ be a size for angles.
    Then there is a size for angles $\Theta$ containing $\Theta_0 + 2\Theta^{(3)}$ with the following property:
    If $c$ is a $\Theta_0$-small geodesic from $gv_0$ to $\xi \in \dd \Gamma$ and $g' \in G$ with $d_G(g,g') \leq \alpha$, then 
    \begin{enumerate}
    \item \label{lem:alpha-Theta_0:c'} 
      any geodesic $c'$ from $g'v_0$ to $\xi$
      is $\Theta$-small;
    \item \label{lem:alpha-Theta_0:c''}
      any geodesic $c''$ starting on $c$ and ending on a geodesic
      $c'$ from $g'v_0$ to $\xi$ is $\Theta$-small.
    \end{enumerate}
  \end{lemma}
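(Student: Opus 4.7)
The plan is to build $\Theta$ as a finite sum of sizes, with the summands chosen so that for each of the finitely many possibilities for the relative position of $g$ and $g'$ we can bound all relevant angles via the $n$-gon calculus of $\Theta^{(3)}$ developed earlier.

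First, translating by $g^{-1}$ reduces to the case $g = e$, $g' = h$ with $h$ in the finite set $F_\alpha := \{ h \in G \mid d_G(e,h) \leq \alpha \}$ (finite by properness of $d_G$). For each $h \in F_\alpha$ fix a geodesic $\gamma_h$ in $\Gamma'$ from $v_0$ to $hv_0$, and let $\Theta_\gamma$ be the $G$-invariant set generated by the (finitely many) angles $\varangle_w \gamma_h$ at the internal vertices of these $\gamma_h$. As $\Theta_\gamma$ is a finite union of $G$-orbits of angles, Lemma~\ref{lem:angles-locally-finite} ensures $\Theta_\gamma$ is a size for angles. Define
\[
\Theta \; := \; \Theta_0 + \Theta_\gamma + C\,\Theta^{(3)}
\]
where $C$ is a universal integer determined by the case analysis below (in practice $C = 6$ suffices). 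Since the sum of sizes is a size and $C \geq 2$, $\Theta$ is a size for angles that contains $\Theta_0 + 2\Theta^{(3)}$.

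For part~\ref{lem:alpha-Theta_0:c'}, consider the ideal geodesic triangle with sides $c$, $c'$, $\gamma_h$ and corners $v_0$, $hv_0$, $\xi$, and let $w$ be an internal vertex of $c'$. If $w$ lies on neither $c$ nor $\gamma_h$, Remark~\ref{rem:geodesic-n-gones} gives $\varangle_w c' \in 2\Theta^{(3)} \subseteq \Theta$. If $w \in c$, the chain is: the $\Theta_0$-smallness of $c$ at $w$; Lemma~\ref{lem:geodesic-2-gons} applied to the two geodesics $c|_{[w,\xi]}$ and $c'|_{[w,\xi]}$ (giving a $\Theta^{(3)}$-small angle between their initial edges at $w$); and Remark~\ref{rem:geodesic-n-gones} applied to the triangle with corners $v_0,hv_0,w$ (bounding the angle at $w$ between the edges of $c$ and $c'$ pointing toward $v_0$ and $hv_0$ by $\Theta^{(3)}$ since $w \notin \gamma_h$). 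Combining, $\varangle_w c' \in \Theta_0 + 3\Theta^{(3)} \subseteq \Theta$. The case $w \in \gamma_h$, $w \notin c$ is symmetric with $\Theta_\gamma$ in place of $\Theta_0$, yielding $\varangle_w c' \in \Theta_\gamma + 3\Theta^{(3)} \subseteq \Theta$; the combined case $w \in c \cap \gamma_h$ adds only one more $\Theta^{(3)}$-summand.

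For part~\ref{lem:alpha-Theta_0:c''}, let $p \in c$, $q \in c'$ be the endpoints of $c''$. The sub-geodesics $c|_{[p,\xi]}$, $c'|_{[q,\xi]}$ and $c''$ form an ideal triangle with corners $p,q,\xi$; a parallel case analysis on an internal vertex $w$ of $c''$ applies. The new feature is the case $w \in c'$: part~\ref{lem:alpha-Theta_0:c'} already shows $\varangle_w c' \in \Theta_0 + \Theta_\gamma + 3\Theta^{(3)}$, and combining this with Lemma~\ref{lem:geodesic-2-gons} (for the angle between the edges toward $q$) and Remark~\ref{rem:geodesic-n-gones} (for the angle between the edges toward $p$ and $\xi$) yields $\varangle_w c'' \in \Theta_0 + \Theta_\gamma + 6\Theta^{(3)} \subseteq \Theta$ once $C \geq 6$. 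The main obstacle is exactly this hybrid case: naively bounding $\varangle_w c''$ via $\varangle_w c'$ produces extra $\Theta^{(3)}$-summands that would blow up an inductive definition of $\Theta$, so one instead fixes $C$ large enough from the start. Because the required number of $\Theta^{(3)}$-summands is uniform in $c$, $c'$, $c''$, $\xi$, this yields a single size $\Theta$ that works for all configurations.
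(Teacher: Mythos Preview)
Your approach is essentially correct and is a more hands-on variant of the paper's argument. The paper's proof is very short: it fixes a size $\Theta_1$ so that for each $h\in B_\alpha(e)$ some geodesic from $v_0$ to $hv_0$ is $\Theta_1$-small, and then invokes Lemma~\ref{lem:large-angles-in-triangles} as a black box. That lemma supplies a size $X$ (depending only on $\Theta_0$) with the property that in any triangle with one $\Theta_0$-small side and one $Y$-small side, the third side is $(Y+X)$-small. Applied once to the triangle $(c,\gamma_h,c')$ this gives part~\ref{lem:alpha-Theta_0:c'} with $\Theta\supseteq\Theta_1+X$; applied again to the triangle $(c|_{[p,\xi]},c'|_{[q,\xi]},c'')$ it gives part~\ref{lem:alpha-Theta_0:c''} with $\Theta\supseteq\Theta_1+2X$. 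You instead unpack the proof of Lemma~\ref{lem:large-angles-in-triangles} inline, doing the angle chase case-by-case with Remark~\ref{rem:geodesic-n-gones} and Lemma~\ref{lem:geodesic-2-gons}. Both routes rest on the same mechanism; the paper's packaging via Lemma~\ref{lem:large-angles-in-triangles} is what makes the iteration from~\ref{lem:alpha-Theta_0:c'} to~\ref{lem:alpha-Theta_0:c''} a one-liner.

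There is one bookkeeping slip in your version. In part~\ref{lem:alpha-Theta_0:c''}, the ``parallel'' sub-case $w\in c|_{[p,\xi]}\cap c'|_{[q,\xi]}$, treated exactly as in part~\ref{lem:alpha-Theta_0:c'} (with $c'|_{[q,\xi]}$ playing the role of $\gamma_h$), chains through $\varangle_w c'$ and picks up a \emph{second} copy of $\Theta_0$, giving $2\Theta_0+\Theta_\gamma+6\Theta^{(3)}$ rather than $\Theta_0+\Theta_\gamma+6\Theta^{(3)}$. Your $\Theta=\Theta_0+\Theta_\gamma+C\Theta^{(3)}$ therefore does not absorb this case regardless of $C$. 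The fix is trivial: either take $\Theta:=2\Theta_0+\Theta_\gamma+C\Theta^{(3)}$ (which is what the paper's $\Theta_1+2X$ amounts to, since $X=\Theta_0+3\Theta^{(3)}$), or in that sub-case bound $(e_2,f_2)$ directly via the triangle with corners $w,v_0,hv_0$ and side $\gamma_h$, which avoids routing through $\varangle_w c'$ altogether.
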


  \begin{proof}
    Pick $\Theta_1$ such that for any $h \in B_\alpha(e)$ there exists
    a $\Theta_1$-small geodesic from $hv_0$ to $v_0$.
    Thus, if $g' \in G$ with $d_G(g,g') \leq \alpha$ 
    then there is a $\Theta_1$-small
    geodesic between $gv_0$ and $g'v_0$.
    Let $X$ be the size for angles from 
    Lemma~\ref{lem:large-angles-in-triangles}.
    In particular, 
    for any size for angles $Y$, and any geodesic triangle in $\Gamma$
    where the first side is $\Theta_0$-small and the second side is $Y$-small,
    the third side will be $Y + X$-small.
    Then~\ref{lem:alpha-Theta_0:c'} holds whenever 
    $\Theta_1 + X  \subseteq \Theta$ 
    and~\ref{lem:alpha-Theta_0:c''} holds whenever 
    $\Theta_1 + 2X \subseteq \Theta$.
  \end{proof}

  \begin{definition}
    \label{def:from-CF(Theta)-to-Gx_ThetaDelta}
    Let $W \subseteq \CF(\Theta)$ and $\tau \in \IN$.
    We define
    \begin{equation*}
      \iota^{-\tau} W \subseteq G \x_\Theta \dd \Gamma 
    \end{equation*}
    as the subspace consisting of pairs $(g,\xi) \in G \x_\Theta \dd \Gamma$
    with the following property.
    For every $\Theta$-small geodesic $c$ from $gv_0$ to $\xi$ 
    we have $(v_c,gv_0,\xi) \in W$, where $v_c$ is the unique vertex
    in $V_E \cap c$ with $d_\Gamma(gv_0,v_c) = \tau$. 
  \end{definition}

  \begin{lemma}
    \label{lem:iota^-tau-open}
    If $W$ is open in $\CF(\Theta)$, then $\iota^{-\tau} W$
    is open in $G \x_\Theta \dd \Gamma$.
  \end{lemma}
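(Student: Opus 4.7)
The plan is to argue by contradiction using the sequential compactness machinery of Section~\ref{sec:relative-hyperbolic-groups}. Since $G$ is discrete in $d_G$ and the observer topology on $\Delta'_+$ is metrizable, openness of $\iota^{-\tau}W$ in $G \x_\Theta \dd\Gamma$ reduces to the following statement: whenever $(g,\xi) \in \iota^{-\tau}W$ and $\xi_n \to \xi$ in $\dd\Gamma$ with each $(g,\xi_n) \in G \x_\Theta \dd\Gamma$, then $(g,\xi_n) \in \iota^{-\tau}W$ for all sufficiently large $n$. If this failed, then after passing to a subsequence I could select for each $n$ a witness: a $\Theta$-small geodesic ray $c_n$ from $gv_0$ to $\xi_n$ whose distinguished vertex $v_{c_n} \in V_E \cap c_n$ at distance $\tau$ from $gv_0$ satisfies $(v_{c_n}, gv_0, \xi_n) \notin W$.

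Next I would apply Lemma~\ref{lem:compactness} to the sequence $c_n$ with base vertex $gv_0 \in V_E$. Since $\xi \in \dd\Gamma$, part~\ref{lem:compactness:boundary} of that lemma yields, along a further subsequence, a pointwise limit $c$ of the $c_n$ which is a geodesic ray from $gv_0$ to $\xi$. Two routine observations then close the argument: first, a pointwise limit of $\Theta$-small geodesic rays is $\Theta$-small, because every internal angle of $c$ eventually coincides with the corresponding angle of some $c_n$; second, the $\tau$-marker vertex $v_c \in V_E$ of $c$ equals $v_{c_n}$ for all large $n$ by pointwise convergence.

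Because $c$ is itself a $\Theta$-small geodesic from $gv_0$ to $\xi$ and $(g,\xi) \in \iota^{-\tau}W$, the definition of $\iota^{-\tau}W$ forces $(v_c, gv_0, \xi) \in W$. The triples $(v_{c_n}, gv_0, \xi_n)$ then converge to $(v_c, gv_0, \xi)$ in $V_E \x Z$: the first two coordinates are eventually constant, and the third converges in the observer topology on $\Delta'_+$. Openness of $W$ in $\CF(\Theta)$ therefore places $(v_{c_n}, gv_0, \xi_n) \in W$ for $n$ large, contradicting our choice. The only real subtlety is ensuring that the limit geodesic inherits $\Theta$-smallness and that its $\tau$-marker agrees with the limit of the $v_{c_n}$; both are immediate from the angle-by-angle formulation of $\Theta$-smallness combined with the pointwise nature of the convergence supplied by Lemma~\ref{lem:compactness}.
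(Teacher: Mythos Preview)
Your proof is correct and follows essentially the same approach as the paper's: both argue by contradiction, extract a witnessing $\Theta$-small geodesic $c_n$ for each $(g,\xi_n)\notin\iota^{-\tau}W$, pass to a pointwise convergent subsequence via Lemma~\ref{lem:compactness}~\ref{lem:compactness:boundary}, and use openness of $W$ at the limit triple $(v_c,gv_0,\xi)$ to obtain the contradiction. Your write-up is slightly more explicit about why the limit ray is $\Theta$-small and why the $\tau$-markers stabilize, but the argument is the same.
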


  \begin{proof}
    Let $(g,\xi) \in \iota^{-\tau} W$. 
    Assume that $(g,\xi)$ does not belong to the interior of $\iota^{-\tau} W$
    in $G \x_\Theta \dd \Gamma$.
    Then there are $\xi_n \in \dd \Gamma$ with $\xi_n \to \xi$ and
    $\Theta$-small geodesics $c_n$ from $gv_0$ to $\xi_n$
    such that for the vertices $v_n \in V_E \cap c_n$ with 
    $d_\Gamma(gv_0,v_n) = \tau$ we have $(v_n,gv_0,\xi_n) \not\in W$.
    Using Lemma~\ref{lem:compactness}~\ref{lem:compactness:boundary} 
    and passing to a subsequence
    we may assume that the $c_n$ converge pointwise to a $\Theta$-small
    geodesic $c$ from $gv_0$ to $\xi$.
    Then eventually $v_n = v_c$ is constant and belongs to $c$.
    Since $(g,\xi) \in W$, it follows that $(v_c,gv_0,\xi) \in W$.
    Since $W$ is open, eventually $(v_n,gv_0,\xi_n) \in W$,
    contradicting our assumption. 
  \end{proof}

  \begin{proof}[Proof of Proposition~\ref{prop:cover-G-x-Theta-dd}]
    Let $\Theta_0$ and $\alpha > 0$ be given.
    Let $\Theta$ be the size for angles from 
    Lemma~\ref{lem:alpha+Theta_0-->Theta}.
    Since $B_\alpha(e) \subseteq G$ is finite, we can find a number 
    $\alpha' > 0$
    such that $d_\Gamma(g v_0, gh v_0) + 2 \delta' \leq \alpha'$ for all
    $g \in G$ and $h \in B_\alpha(e)$.
    Let $\calw$ be the cover of $\CF(\Theta)$ 
    from Proposition~\ref{prop:cover-CF(Theta)}.
    For $\tau \in \IN$ let 
    $\iota^{-\tau}\calw := \{ \iota^{-\tau} W \mid W \in \calw \}$. 
    By Lemma~\ref{lem:iota^-tau-open}, the members of $\iota^{-\tau}\calw$
    are open subsets of $G \x_\Theta \dd \Gamma$.
    Since $\iota^{-\tau}$ is a $G$-equivariant operation and 
    $\calw$ is $G$-invariant and consist of 
    $\VCyc$-subset, the same is true for $\iota^{-\tau} \calw$.
    The order of $\calw$ is bounded by 
    Proposition~\ref{prop:cover-CF(Theta)}~\ref{prop:cover-CF(Theta):dim}.
    Since $\iota^{-\tau}$ commutes with intersection, the order of $\iota^{-\tau} \calw$ is bounded by the order of $\calw$.     
    
    We claim that there exists $\tau \in \IN$ such that 
    $\iota^{-\tau} \calw$ is $\alpha$-wide in the $G$-direction, i.e.,
    we claim that there is $\tau$ such that 
    \begin{numberlist}
      \item [\label{nl:long}]
        for any $(g,\xi) \in G \x_{\Theta_0} \dd \Gamma$ 
        there is $W \in \calw$ with 
        $B_\alpha(g) \x \{ \xi \} \subseteq \iota^{-\tau} W$.
    \end{numberlist}
    Suppose there is no such $\tau$. 
    Then there is a sequence of pairs $(g_\tau, \xi_\tau)_{\tau \in \IN}$ 
    in $G \x_{\Theta_0} \dd \Gamma$  such that 
    \begin{numberlist}
      \item [\label{nl:no-W}]
         for all $W \in \calw$ and all $\tau \in \IN$ we have 
         $B_\alpha(g_\tau) \x \{ \xi_\tau \} \not\subseteq \iota^{-\tau} W$.
    \end{numberlist}
    By definition of $G \x_{\Theta_0} \dd \Gamma$
    there are $\Theta_0$-small geodesics $c_\tau$ from $g_\tau v_0$ to 
    $\xi_\tau$  in $\Gamma$.
    Let $v_\tau$ be the unique vertex on $c_\tau$ with 
    $d_\Gamma(g_\tau v_0, v_\tau) = \tau$.
    Since the action of $G$ on $V$ is cofinite, and since the 
    $\iota^{-\tau}\calw$ are $G$-invariant  we may, 
    after passing to a subsequence 
    $(g_\tau, \xi_\tau)_{\tau \in T}$, assume that $v_\tau = v$ is constant.
    Using the compactness of $\Delta'_+$, we may, after passing to a further
    subsequence, assume that
    $\xi_- := \lim g_\tau v_0$ and $\xi_+ := \lim \xi_\tau$
    exist in $\Delta'_+$.
    Addendum~\ref{add:compactness} implies $\xi_-,\xi_+ \in \dd \Gamma$
    and allows us to assume that the restrictions  $c_\tau|_{[v,g_\tau v_0]}$
    and $c_\tau|_{[v,\xi_\tau]}$ 
    converge pointwise to geodesic rays $c_-$ from $v$ to $\xi_-$
    and $c_+$ from $v$ to $\xi_+$.  
    In particular,  $c_-$ can be combined with $c_+$ to obtain
    a geodesic $c$ between $\xi_-$ and $\xi_+$.
    This geodesic $c$ is $\Theta_0$-small, since the $c_\tau$ are 
    $\Theta_0$-small.

    By Proposition~\ref{prop:cover-CF(Theta)}~\ref{prop:cover-CF(Theta):long}
    there is $W \in \calw$ such that
    $B^\Theta_{\alpha'}(v) \x \{ (\xi_-,\xi_+) \} \cap \CF(\Theta) \subseteq W$.
    Let $V'$ be the set of $v' \in B^\Theta_{\alpha'}(v)$ with 
    $d_\Theta(v',w) \leq \delta'$ for some $w \in c$.
    Then $(v',\xi_-,\xi_+) \in \CF(\Theta)$ for all $v' \in V'$.
    Since $d_\Theta$ is proper by Lemma~\ref{lem:d_Theta-proper},
    the set $V'$ is finite.
    Therefore we find neighborhoods $U_-$ of $\xi_-$ and $U_+$ of $\xi_+$,
    such that $V' \x U_- \x U_+ \cap \CF(\Theta) \subseteq W$.
    Elements of $B_\alpha(g_\tau)$ can be written as $g_\tau h$ with
    $h \in B_\alpha(e)$.
    For each $h$, $\lim_\tau g_\tau h v_0 = \lim_\tau g_\tau v_0 = \xi_-$
    by Lemma~\ref{lem:small-at-infinity}.
    Since $B_\alpha(e)$ is finite we find $\tau_0$ such that
    $g_{\tau_0} hv_0 \in U_-$ for all $h \in B_\alpha(e)$.
    Moreover, we can arrange for $\tau_0$ to satisfy
    $\tau_0 > \alpha'$, $\xi_{\tau_0} \in U_+$,
    and
    $c_{\tau_0} \cap B_{\alpha'}(v) = c \cap B_{\alpha'}(v)$.
    We claim that
    \begin{numberlist}
      \item[\label{nl:tau-in-iota-tau-W}]
      $B_\alpha(g_{\tau_0}) \x \{ \xi_{\tau_0} \} \subseteq \iota^{-\tau_0} W$. 
    \end{numberlist}
    Let $h \in B_\alpha(e)$.
    By Lemma~\ref{lem:alpha+Theta_0-->Theta}~\ref{lem:alpha-Theta_0:c'}
    there is a $\Theta$-small geodesic $c'$
    from $g_{\tau_0} hv_0$ to $\xi_{\tau_0}$.
    Let $v' \in V_E \cap c'$ be the vertex with 
    $d_\Gamma(g_{\tau_0} hv_0,v') = \tau_0$.
    We need to show that $(v',g_{\tau_0}hv_0,\xi_{\tau_0})$ belongs to $W$.    
    To this end it suffices to show $v' \in V'$, 
    where $V'$ is as defined earlier.
    Now we use hyperbolicity to find $w \in V_E \cap c_{\tau_0}$ with
    $d_\Gamma(w,v') \leq \delta'$.
    Lemma~\ref{lem:alpha+Theta_0-->Theta}~\ref{lem:alpha-Theta_0:c''}
    implies that there is a $\Theta$-small geodesic between $w$ and $v'$.
    Therefore $d_\Theta(w,v') \leq \delta'$.
    Since $d_\Gamma(g_{\tau_0}v_0,g_{\tau_0}hv_0) \leq \alpha' - 2\delta'$ and 
    $d_\Gamma(w,v') \leq \delta'$ we have
    $d_\Gamma(g_{\tau_0}hv_0,v') - (\alpha' - \delta') 
     \leq d_\Gamma(g_{\tau_0}v_0,w) 
     \leq d_\Gamma(g_{\tau_0}hv_0,v') + (\alpha' - \delta')$.
    Since $d_\Gamma(g_{\tau_0}v_0,v) = d_\Gamma(g_{\tau_0}hv_0,v') = \tau_0$
    we have $d_\Gamma(v,w) \leq \alpha' - \delta'$.
    Now $c_{\tau_0} \cap B_{\alpha'}(v) = c \cap B_{\alpha'}(v)$
    implies $w \in c$.
    Since $c$ is $\Theta_0$-small and therefore $\Theta$-small,
    and since $v,w \in c$,
    we have $d_\Theta(w,v) \leq \alpha' - \delta'$.
    Therefore $d_\Theta(v',v) \leq \alpha'$ and $v' \in B_{\alpha'}^\Theta(v)$.
    Altogether we have shown $v' \in V'$ and 
    therefore~\eqref{nl:tau-in-iota-tau-W}.
    This contradicts~\eqref{nl:no-W} and finishes the proof of~\eqref{nl:long}.

    It remains to extend $\iota^{-\tau} \calw$ to a $G$-invariant collection
    of $\VCyc$-subset of $G \x \Delta$ of the same order.
    Since $\Delta$ is metrizable, there is a $G$-invariant metric on 
    $G \x \Delta$.
    Thus the needed extension exists by Lemma~\ref{lem:extend-cover}.
  \end{proof}
   
  \subsection*{Covering $G \x \Delta \setminus G \x_\Theta \dd \Gamma$}

  \begin{definition} \label{def:N(v)}
    Let $v \in V$ and let $\Theta$ be a size for angles in $\Gamma$.
    Define 
    \begin{equation*}
      V_+(v,\Theta) \subseteq G \x \Delta
    \end{equation*}
    as the set of all $(g,\xi)$ with the following properties
    \begin{enumerate}
    \item all geodesics from $gv_0$ to $v$ 
      are $\Theta$-small;
    \item if $\xi \neq v$, then there is a geodesic $c$ in 
      $\Gamma$ from $gv_0$ to $\xi$ such that
      $\varangle_v c$ is $\Theta$-large.
      (In particular, we require $v \in c$.)
    \end{enumerate}
    Let $V(v,\Theta)$ be the interior of $V_+(v,\Theta)$.
  \end{definition}

  \begin{lemma} \label{lem:V(beta,v)} 
    Assume that $\Theta$ is a size for angles
    containing $\Theta^{(3)}$.
    \begin{enumerate}
    \item \label{lem:V(beta,v):interior}
       For $v \in V$ we have $V_+(v,\Theta) \cap G \x V_\infty \subseteq V(v,\Theta)$;
    \item \label{lem:V(beta,v):dim}
       for $v \neq v' \in V$  we have 
       $V(v,\Theta) \cap V(v',\Theta) = \emptyset$;
    \item \label{lem:V(beta,v):G}
       if $g V(v,\Theta) \cap V(v,\Theta) \neq \emptyset$ with 
       $g \in G$, $v \in V$,
       then $gv = v$ and $g V(v,\Theta) = V(v,\Theta)$. 
    \end{enumerate}
  \end{lemma}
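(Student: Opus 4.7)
The plan is to prove the three parts in sequence: (a) by a direct construction of geodesics by concatenation, (b) by an ordering argument on where $v$ and $v'$ appear on certain geodesics, and (c) by $G$-equivariance of the construction reducing to (b). For (a), fix $(g,\xi) \in V_+(v,\Theta) \cap G \x V_\infty$; since $G$ is discrete, I need only find a neighborhood $U$ of $\xi$ in $\Delta$ with $\{g\} \x U \subseteq V_+(v,\Theta)$. Condition (a) of Definition~\ref{def:N(v)} is independent of the second coordinate, so the task reduces to arranging condition (b) at every $\xi' \in U$ with $\xi' \neq v$, i.e., to exhibiting a geodesic from $gv_0$ to $\xi'$ through $v$ whose angle at $v$ is $\Theta$-large. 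The strategy is to build this geodesic by concatenation, inheriting the $\Theta$-large angle from a chosen model. In the subcase $\xi = v$, I fix a $\Theta$-small geodesic $c_0$ from $gv_0$ to $v$ with terminal edge $e_1$; by Lemma~\ref{lem:angles-locally-finite} the set $\Theta_{e_1}$ is finite, hence so is the set $V^*$ of vertices of $V$ adjacent to $v$ via edges in $\Theta_{e_1}$. Applying Lemma~\ref{lem:pass-through-v} to $v$ and $gv_0$ yields a neighborhood $U_0$ of $v$ such that every geodesic from $U_0$ to $gv_0$ passes through $v$; I set $U := U_0 \cap M(v, V^* \cup \{v\})$. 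For any $\xi' \in U \setminus \{v\}$, the concatenation of $c_0$ with any geodesic from $v$ to $\xi'$ is a geodesic by the choice of $U_0$, and by the choice of $M(v, V^* \cup \{v\})$ the initial edge of the second piece at $v$ lies outside $\Theta_{e_1}$, so the angle at $v$ is $\Theta$-large.

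In the subcase $\xi \neq v$, let $c$ be the model geodesic from $gv_0$ to $\xi$ through $v$ with $\Theta$-large angle at $v$ supplied by membership in $V_+(v,\Theta)$. Applying Lemma~\ref{lem:pass-through-v} twice, with $\xi$ playing the role of the vertex one is forced through (once paired with $v$, once paired with $gv_0$), I obtain a neighborhood $U$ of $\xi$, which I shrink further to exclude $v$, such that for every $\xi' \in U$ each geodesic from $\xi'$ to $v$ and each geodesic from $\xi'$ to $gv_0$ meets $\xi$. The resulting distance identities force $c|_{[gv_0,v]} \cdot c|_{[v,\xi]} \cdot c''$ to be a geodesic from $gv_0$ to $\xi'$ for any geodesic $c''$ from $\xi$ to $\xi'$; its two edges at $v$ coincide with those of $c$, so its angle at $v$ equals $\varangle_v c$ and is $\Theta$-large.

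For (b), suppose $(g,\xi) \in V(v,\Theta) \cap V(v',\Theta) \subseteq V_+(v,\Theta) \cap V_+(v',\Theta)$ with $v \neq v'$. If $\xi \in \{v,v'\}$, say $\xi = v$, then condition (b) at $v'$ produces a geodesic from $gv_0$ to $v$ with a $\Theta$-large angle at $v'$, contradicting condition (a) at $v$. Otherwise condition (b) yields geodesics $c, c'$ from $gv_0$ to $\xi$ through $v, v'$ respectively with $\Theta$-large angles at $v$ and $v'$, and Lemma~\ref{lem:large-angles} (applicable since $\Theta \supseteq \Theta^{(3)}$) forces $v \in c'$ and $v' \in c$. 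Since $c|_{[gv_0,v']}$ is a geodesic from $gv_0$ to $v'$ and is therefore $\Theta$-small by condition (a) at $v'$, $v$ cannot be an internal vertex of it (else its angle at $v$ would equal the $\Theta$-large $\varangle_v c$), and the facts $v \neq v'$ and $v \neq gv_0$ (the latter because $v \in V$ while $gv_0 \in V_E$) force $v$ to lie strictly past $v'$ on $c$, giving $d_\Gamma(gv_0,v) > d_\Gamma(gv_0,v')$; the symmetric argument on $c'$ gives the opposite inequality, a contradiction. Part (c) then follows formally: $G$-invariance of $\Theta$ and the definition of $V_+(v,\Theta)$ yield $g V_+(v,\Theta) = V_+(gv,\Theta)$, hence $g V(v,\Theta) = V(gv,\Theta)$; if this intersects $V(v,\Theta)$, then (b) forces $gv = v$, and $g V(v,\Theta) = V(v,\Theta)$ is immediate. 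The main obstacle is the $\xi \neq v$ subcase of (a), where the dual application of Lemma~\ref{lem:pass-through-v} is what converts the concatenated path into a genuine geodesic while preserving the model angle at $v$.
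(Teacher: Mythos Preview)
Your proof is correct. Parts~\ref{lem:V(beta,v):dim} and~\ref{lem:V(beta,v):G} follow the paper's argument essentially verbatim: the paper also invokes Lemma~\ref{lem:large-angles} to place $v$ on $c'$ and $v'$ on $c$, then uses the $\Theta$-smallness condition~(a) on the initial segment $c|_{[gv_0,v']}$ to derive a contradiction, and reduces~\ref{lem:V(beta,v):G} to~\ref{lem:V(beta,v):dim} via $gV(v,\Theta) = V(gv,\Theta)$.

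For part~\ref{lem:V(beta,v):interior} the approaches diverge in style but not in substance. The paper argues by contradiction with sequences: assuming $(g,w)$ is not interior, it picks $\xi_n \to w$ with $(g,\xi_n) \notin V_+(v,\Theta)$, then uses Lemma~\ref{lem:pass-through-v} to force geodesics $c_n$ from $gv_0$ to $\xi_n$ through $w$, replaces their initial segment by a fixed geodesic to $w$, and in the case $w=v$ uses Lemma~\ref{lem:angles-locally-finite} to conclude the angle at $v$ is eventually $\Theta$-large. You instead construct the neighborhood directly, building witness geodesics by concatenation and using the observer-topology basic set $M(v,V^*)$ to control the outgoing edge at $v$. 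Both routes rest on the same two lemmas; your version is more explicit but slightly longer. One small redundancy: in your subcase $\xi \neq v$, the application of Lemma~\ref{lem:pass-through-v} paired with $gv_0$ alone already gives $d_\Gamma(gv_0,\xi') = d_\Gamma(gv_0,\xi) + d_\Gamma(\xi,\xi')$, which suffices to make $c \cdot c''$ a geodesic through $v$ with the desired angle; the pairing with $v$ is not needed.
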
  

  \begin{proof} 
    \ref{lem:V(beta,v):interior}
    Let $(g,w) \in V_+(v,\Theta)$ with $w \in V_\infty$.
    Let $c$ be a geodesic from $gv_0$ to $w$. 
    We proceed by contradiction and assume that $(g,w)$ does not
    belong to the interior of $V_+(v,\Theta)$.
    Then there is a sequence $\xi_n$ in $\Delta$ with $\xi_n \to w$
    and $(g,\xi_n) \notin V_+(v,\Theta)$.
    Let $c_n$ be a geodesic from $gv_0$ to $\xi_n$.
    By Lemma~\ref{lem:pass-through-v} the $c_n$ will eventually pass 
    through $w$.
    In this case we can change $c_n$ and arrange for $c_n|_{[gv_0,w]} = c$. 
    If $w \neq v$ then these $c_n$ prove that eventually 
    $(g,\xi_n) \in V_+(v,\Theta)$ which 
    contradicts our assumption.
    If $w = v$, then it remains to show that $\varangle_v c_n$  
    will eventually be $\Theta$-large.
    In this case the initial edges of the restriction  $c_n|_{[v,\xi_n]}$
    will form an infinite set since $\xi_n \to v$. 
    This implies, by Lemma~\ref{lem:angles-locally-finite}, that 
    $\varangle_v c_n$ at $v$ will eventually be $\Theta$-large. 
    \\[1ex]
    \ref{lem:V(beta,v):dim}
    Assume there is $(g,\xi) \in V(v,\Theta) \cap V(v',\Theta)$.
    Let $c$ and $c'$ be  geodesics from
    $gv_0$ to $\xi$ such that $\varangle_v c$ is $\Theta$-large (if $v \neq \xi$)
    and  $\varangle_{v'} c'$  is $\Theta$-large (if $v' \neq \xi$).
    Since $\Theta^{(3)} \subseteq \Theta$, Lemma~\ref{lem:large-angles} 
    implies $v \in c'$ and $v' \in c$.
    Assume $v \neq \xi$.
    Without loss of generality, we may assume that $v$ is closer to $gv_0$
    than $v'$.
    Then $v$ is an internal vertex of $c|_{[gv_0,v']}$.
    Since $\varangle_v c$ is $\Theta$-large, $c|_{[gv_0,v']}$ 
    is not $\Theta$-small.
    This contradicts $(gv_0,\xi) \in V(v',\Theta)$.   
    \\[1ex]
    \ref{lem:V(beta,v):G}
    Clearly, $g V(v,\Theta) = V(gv, \Theta)$. 
    Thus~\ref{lem:V(beta,v):dim} shows that 
    $g V(v,\Theta) \cap V(v,\Theta) \neq \emptyset$
    implies $gv = v$. 
  \end{proof}

  \begin{lemma}
    \label{lem:interior-V}
    Let $\Theta$ be a size for angles.
    Let $v \in V$, $g \in G$, $\xi \in \Delta$.
    Assume that any geodesic from $gv_0$ to $v$ is
    $\Theta$-small.
    Assume that there is a geodesic $c$ from 
    $gv_0$ to $\xi$ that passes through $v$ and satisfies
    one of the following two conditions
    \begin{enumerate}
    \item \label{lem:interior-V:3} 
       $\varangle_v c$ is $\Theta + 2\Theta^{(3)}$-large; 
    \item \label{lem:interior-V:4}
       $\varangle_v c$ is $\Theta$-large and 
       there is an interior vertex $w$ of $c$ with
       $d_\Gamma(gv_0,v) < d_\Gamma(gv_0,w)$ such that  $\varangle_w c$  
       is $2\Theta^{(3)}$-large.
    \end{enumerate}
    Then $(g,\xi) \in V(v,\Theta)$.
  \end{lemma}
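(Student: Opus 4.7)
The plan is to exhibit an observer-neighborhood $U$ of $\xi$ in $\Delta$ such that $\{g\} \x U \subseteq V_+(v,\Theta)$; since $G$ is discrete this identifies $(g,\xi)$ as an interior point of $V_+(v,\Theta)$, as required. Condition~(1) in the definition of $V_+(v,\Theta)$ depends only on $g$ and is part of the hypothesis, so the task reduces to producing, for every $\xi' \in U$ with $\xi' \neq v$, a geodesic from $gv_0$ to $\xi'$ whose angle at $v$ is $\Theta$-large. I would choose $U$ among the basic observer-neighborhoods $M'(\xi,V_0) \cap \Delta$ with $V_0 \subseteq V$ a finite set containing $v$; the point is that $v \notin M'(\xi,V_0)$ because $v$ is an internal vertex of $c$, hence $v \neq \xi$, so no geodesic from $\xi$ to $v$ can miss $v$. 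This automatically guarantees $\xi' \neq v$ for every $\xi' \in U$.

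For case~\ref{lem:interior-V:3}, I would take $U := M'(\xi,\{v\}) \cap \Delta$. For $\xi' = \xi$ the original geodesic $c$ works. For $\xi' \in U \setminus \{\xi\}$ the definition of $M'$ supplies a geodesic $c''$ from $\xi$ to $\xi'$ that misses $v$. Applying Lemma~\ref{lem:large-angles-in-triangles-no-c} with $\Theta_0 := \Theta$, with $c_1 := c$ playing the role of the geodesic with $\Theta + 2\Theta^{(3)}$-large angle at $v$, and with $c''$ playing the role of the opposite side, one concludes that every geodesic from $\xi'$ to $gv_0$ passes through $v$ as an internal vertex with $\Theta$-large angle; in particular such a geodesic exists.

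For case~\ref{lem:interior-V:4} the hypothesis supplies only a $\Theta$-large angle at $v$, which is too weak to route every geodesic from $gv_0$ to a nearby $\xi'$ through $v$. Instead I would take $U := M'(\xi,\{v,w\}) \cap \Delta$ (noting $w$ is an internal vertex of $c$, so $w \neq \xi$) and, for $\xi' \in U \setminus \{\xi\}$, exploit the $2\Theta^{(3)}$-large angle at $w$: pick a geodesic $c''$ from $\xi$ to $\xi'$ missing both $v$ and $w$ and apply Lemma~\ref{lem:large-angles-in-triangles-no-c} with $\Theta_0$ taken to be the size for angles consisting of all trivial angles (so that $\Theta_0 + 2\Theta^{(3)} = 2\Theta^{(3)}$) and with the distinguished interior vertex $w$. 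One concludes that every geodesic from $gv_0$ to $\xi'$ passes through $w$, so in particular $d_\Gamma(gv_0,\xi') = d_\Gamma(gv_0,w) + d_\Gamma(w,\xi')$. Then the concatenation of $c|_{[gv_0,w]}$ with any geodesic from $w$ to $\xi'$ is a geodesic from $gv_0$ to $\xi'$ passing through $v$ whose angle at $v$ coincides with $\varangle_v c$, which is $\Theta$-large by assumption. The main subtlety is this asymmetry: case~\ref{lem:interior-V:3} admits the direct ``every geodesic'' argument, while case~\ref{lem:interior-V:4} must instead be proved by using the large angle at $w$ to pin down the far end and then pasting in $c|_{[gv_0,w]}$ by hand to retain the $\Theta$-large angle at $v$.
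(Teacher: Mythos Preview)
Your proof is correct and follows essentially the same approach as the paper: choose an $M'(\xi,A)$-neighborhood and apply Lemma~\ref{lem:large-angles-in-triangles-no-c} to the triangle with sides $c$, $c''$, and a geodesic $c'$ from $gv_0$ to $\xi'$; in case~\ref{lem:interior-V:4} the paper likewise routes $c'$ through $w$ and then replaces $c'|_{[gv_0,w]}$ by $c|_{[gv_0,w]}$, exactly your concatenation step. Two harmless redundancies: the paper takes $A=\{w\}$ rather than $\{v,w\}$ in case~\ref{lem:interior-V:4} (you never use $v\notin c''$ there), and you need not exclude $\xi'=v$ from $U$, since condition~(b) in the definition of $V_+(v,\Theta)$ is vacuous when $\xi'=v$ and condition~(a) is the standing hypothesis.
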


  \begin{proof}
    If \ref{lem:interior-V:3} holds, then set $A := \{ v \}$;
    if \ref{lem:interior-V:4} holds, then set $A := \{ w \}$.
    Recall that $M'(\xi,A)$ consists of all 
    $\xi'$ for which there exists a geodesic between $\xi$ and $\xi'$ 
    that misses $A \setminus \{ \xi \}$.
    Since $M'(\xi,A)$ is part of a neighborhood basis for the 
    observer topology, it suffices to show that
    $(g,\xi') \in  V_+(v,\Theta)$ for all $\xi' \in M'(\xi,A)$.
    Let $\xi' \in M'(\xi,A)$. 
    Let $c''$ be a geodesic between $\xi$ and 
    $\xi'$ that misses $A$.
    Let $c'$ be a geodesic between $gv_0$ and $\xi'$.
    We apply Lemma~\ref{lem:large-angles-in-triangles-no-c} to
    the geodesic triangle with sides $c$, $c'$ and $c''$.
    
    If \ref{lem:interior-V:3} holds,  then
    Lemma~\ref{lem:large-angles-in-triangles-no-c} implies that 
    $c'$ passes through $v$ and that $\varangle_v c'$ is $\Theta$-large.
    Thus in this case $(g,\xi') \in  V_+(v,\Theta)$.     

    If \ref{lem:interior-V:4} holds, then 
    Lemma~\ref{lem:large-angles-in-triangles-no-c} implies $w \in c'$.
    Then we can replace $c'$ and assume $c'|_{[gv_0,w]} = c|_{[gv_0,w]}$.
    In particular $\varangle_v c' = \varangle_v c$ is $\Theta$-large.
    Thus in this case $(g,\xi') \in  V_+(v,\Theta)$ as well.
  \end{proof}

  \begin{proposition}
    \label{prop:cover-G-x-V_infty}
    Let $\alpha > 0$.
    There is a size for angles $\Theta$ and
    a $G$-invariant collection $\calv$ of $\calp$-subsets of $G \x \Delta$ 
    such that
    \begin{enumerate}
    \item \label{prop:cover-V_infty:dim}
      the order of $\calv$ is at most $2$; 
    \item \label{prop:cover-V_infty:long}
      for every $(g,\xi) \in G \x \Delta$ at least one of the following
      two statements holds
      \begin{itemize}
      \item there is $V \in \calv$ such that
        $B_\alpha(g) \x \{ \xi \} \subseteq V$;
      \item $\xi \in \dd \Gamma$ and 
        there is a $\Theta$-small geodesic from $gv_0$ to $\xi$.
      \end{itemize}
    \end{enumerate}
  \end{proposition}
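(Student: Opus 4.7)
The plan is to take $\calv := \{ V(v, \Theta) \mid v \in V \}$, where $V(v, \Theta)$ is the open set from Definition~\ref{def:N(v)} and $\Theta$ is a size for angles chosen sufficiently large, in particular containing $2\Theta^{(3)}$ and absorbing $\alpha$-perturbations in the sense of Lemma~\ref{lem:alpha+Theta_0-->Theta}. The collection $\calv$ is manifestly $G$-invariant, since $g V(v, \Theta) = V(gv, \Theta)$. By Lemma~\ref{lem:V(beta,v)}~\ref{lem:V(beta,v):G}, the set-stabilizer of $V(v, \Theta)$ is $G_v$, which fixes the vertex $v$ and hence belongs to $\calp$; so each member of $\calv$ is a $\calp$-subset. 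Moreover, Lemma~\ref{lem:V(beta,v)}~\ref{lem:V(beta,v):dim} gives that distinct members of $\calv$ are pairwise disjoint, so its order is in fact $0$, comfortably below the stated bound~$2$.

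The substance of the proof is the ball condition. For $(g, \xi) \in G \x \Delta$ not covered by the second alternative of the conclusion, I would distinguish two sub-cases. If $\xi = v \in V_\infty$ and one can arrange that for every $g' \in B_\alpha(g)$ each geodesic from $g' v_0$ to $v$ is $\Theta$-small, then Lemma~\ref{lem:V(beta,v)}~\ref{lem:V(beta,v):interior} immediately yields $B_\alpha(g) \x \{ v \} \subseteq V(v, \Theta)$, the second clause of $V_+(v, \Theta)$ being vacuous since $\xi = v$. In all remaining cases I would fix a geodesic $c$ from $g v_0$ to $\xi$ and select an internal vertex $u$ of $c$ whose angle $\varangle_u c$ is large enough to trigger one of the clauses of Lemma~\ref{lem:interior-V}; such a vertex exists precisely because we are not in the second alternative, so $c$ is not $\Theta$-small. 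Lemma~\ref{lem:large-angles} then forces every geodesic between $g v_0$ and $\xi$ to pass through $u$, Lemma~\ref{lem:interior-V} yields $(g, \xi) \in V(u, \Theta)$, and Lemma~\ref{lem:alpha+Theta_0-->Theta} propagates the construction to $g' \in B_\alpha(g)$ by perturbing $c$ to a geodesic from $g' v_0$ to $\xi$ that still passes through $u$ with a sufficiently large angle.

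The main obstacle lies in verifying the first condition of $V_+(u, \Theta)$, which demands that \emph{every} geodesic from $g' v_0$ to $u$ be $\Theta$-small, not merely the initial segment $c|_{[g' v_0, u]}$. The large-angle rigidity at $u$ constrains geodesics between $g' v_0$ and the endpoint $\xi$ to coincide at $u$, but it gives no direct control over competing geodesics from $g' v_0$ to the intermediate vertex $u$ itself. I would address this by taking $u$ to be the \emph{first} vertex along $c$ at which a sufficiently large angle occurs, so that $c|_{[g v_0, u]}$ has no large internal angles, and by employing a chain of size enlargements $\Theta_0 \subseteq \Theta_0 + 2\Theta^{(3)} \subseteq \cdots \subseteq \Theta$ with enough margin to absorb both the $\alpha$-perturbation and any detour; Lemmas~\ref{lem:geodesic-2-gons} and~\ref{lem:geodescs-between-geodesics} would then provide the required bounds on the angles of any alternative geodesic from $g' v_0$ to $u$.
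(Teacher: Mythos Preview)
Your proposal has a genuine gap, and it is precisely the obstacle you yourself flag in the last paragraph: with a \emph{single} size $\Theta$ for the whole collection $\calv = \{V(v,\Theta) \mid v \in V\}$, the two conditions defining $V_+(u,\Theta)$ pull in opposite directions and cannot both be met for all $g' \in B_\alpha(g)$.

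Concretely, suppose you pick $u$ as the first vertex along $c$ where $\varangle_u c$ is $\Theta$-large. Condition~(a) of $V_+(u,\Theta)$ asks that \emph{every} geodesic from $g'v_0$ to $u$ be $\Theta$-small. Even for $g'=g$, an alternative geodesic $c'$ from $gv_0$ to $u$ only satisfies $\varangle_w c' \in \Theta + 2\Theta^{(3)}$ at a vertex $w$ common to $c$ and $c'$ (combine Lemma~\ref{lem:geodesic-2-gons} applied at $w$ in both directions with the fact that $\varangle_w c$ is $\Theta$-small); the $\alpha$-perturbation to $g'$ adds a further $X$ via Lemma~\ref{lem:large-angles-in-triangles}. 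So you only get $(\Theta + X)$-small, not $\Theta$-small. If instead you pick $u$ as the first vertex where the angle is $\Theta'$-large for some $\Theta' \subsetneq \Theta$ (so that the margin absorbs these shifts and (a) holds), then condition~(b) requires the angle at $u$ to be $\Theta$-large, and $\Theta'$-large does not imply $\Theta$-large. Your ``chain of size enlargements $\Theta_0 \subseteq \cdots \subseteq \Theta$'' is only internal to the argument; since $\calv$ uses the single terminal $\Theta$, both (a) and (b) are tested against that same $\Theta$, and the tension is irreducible. Lemma~\ref{lem:interior-V}, which you need to land in the \emph{interior} $V(u,\Theta)$, demands an even larger angle $(\Theta + 2\Theta^{(3)})$-large at $u$, making the situation worse.

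The paper resolves this by taking $\calv$ to be the union of \emph{three} families $\{V(v,2X)\}$, $\{V(v,5X)\}$, $\{V(v,6X)\}$ for a fixed auxiliary size $X$, and running a case analysis on where along $c_0$ the first large angle occurs and how large it is; in each case one of the three thresholds gives enough slack between (a) and (b). Each family has order $0$ by Lemma~\ref{lem:V(beta,v)}, so the union has order at most $2$ --- this is exactly why the proposition allows order $2$ rather than $0$. Your observation that a single family would have order $0$ is correct but is the symptom of the problem, not a bonus.
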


  \begin{proof} 
    Since $\Gamma$ is fine there are only finitely many geodesics 
    between any two vertices.
    In particular, we can pick a
    size for angles $\Theta_0$ such that for all $h,h',h'' \in B_\alpha(e)$
    any geodesic starting at $hv_0$ and ending in some vertex
    on another geodesic between vertices $h'v_0$ 
    and $h''v_0$ is $\Theta_0$-small.
    Let $X$ be the size for angles appearing in 
    Lemma~\ref{lem:large-angles-in-triangles}.
    After increasing $X$, if necessary, we may assume 
    $\Theta_0 + 2\Theta^{(3)} \subseteq X$.  
    Let $\calv_1 := \{ V(v,2X) \mid v \in V \}$,
    $\calv_2 := \{ V(v,5X) \mid v \in V \}$
    and $\calv_3 := \{ V(v,6X) \mid v \in V \}$.
    All three collections are $G$-invariant.
    Lemma~\ref{lem:V(beta,v)} implies that 
    all three collections are of order $0$,
    and consist of $\calp$-sets. 
    Thus $\calv := \calv_1 \cup \calv_2 \cup \calv_3$ 
    is a $G$-invariant collection of $\calp$-sets
    and satisfies~\ref{prop:cover-V_infty:dim}.
 
    It remains to check that $\calv$ also 
    satisfies~\ref{prop:cover-V_infty:long}
    where we use $\Theta := 6X$. 
    Since $\calv$ is $G$-invariant it suffices to consider 
    $(e,\xi) \in G \x \Delta$.

    Let $\calc$ be the set of all geodesics from some 
    $hv_0$, $h \in B_\alpha(e)$ to $\xi$.
    If all $c \in \calc$ are $6X$-small, 
    then~\ref{prop:cover-V_infty:long} holds.
    Indeed, if $\xi \in V$, then, using 
    Lemma~\ref{lem:V(beta,v)}~\ref{lem:V(beta,v):interior},
    we obtain 
    $B_\alpha(e) \x \{ \xi \} \subseteq V(\xi,6X)$ 
    and~\ref{prop:cover-V_infty:long} holds. 
    If $\xi \in \dd \Gamma$, then~\ref{prop:cover-V_infty:long} 
    holds as well, simply since $\calc$ contains a  $\Theta$-small
    geodesic from $v_0$ to $\xi$. 
    Therefore we may assume that not all $c \in \calc$ are $6X$-small.

    Let $c_0$ be a geodesic from $v_0$ to $\xi$.
    Let $W$ be the set of all internal vertices $w$ of $c_0$  
    for which $\varangle_w c_0$ is $X$-large.
    Lemma~\ref{lem:large-angles-in-triangles}  implies for any $c \in \calc$
    and any size for angles $\Theta'$ the following holds:
    \begin{numberlist}
    \item[\label{nl:w-on-c}] $c$ will pass through any $w \in W$;
    \item[\label{nl:small-at-v}] 
       if $v$ is an internal vertex of $c$, where $v \notin W$,
       then the angle of $c$ at  $v$ is 
       $2X$-small;
    \item[\label{nl:large-at-w}] if, for $w \in W$, 
       the angle of $c_0$ at $w$ is
       $\Theta' + X$-large, then the angle of $c$ at $w$ is
       $\Theta'$-large;
       if, for $w \in W$, the angle of $c_0$ at $w$ is
       $4X$-small, then the angle of $c$ at $w$ is
       $5X$-small.
    \end{numberlist}
    Since not all $c \in \calc$ are $6X$-small,
    we conclude from~\eqref{nl:large-at-w} that $W \neq \emptyset$.
    Among all $w \in W$ we let $w_0$ be the one closest to $v_0$.
    Then~\eqref{nl:w-on-c}  implies that $w_0$ 
    is also closest to all $h v_0$, $h \in B_\alpha(e)$.
    In particular, any $c$ will, on its way to $\xi$, meet $w_0$ before meeting
    any of the other $w$.
    Using~\eqref{nl:small-at-v} this implies that any geodesic starting in 
    some $hv_0$, $h \in B_\alpha(e)$ and ending in $w_0$ will be $2X$-small.
    If $\varangle_{w_0} c_0$ is $4X$-large,
    then by~\eqref{nl:large-at-w} for any $c \in \calc$ 
    the angle of $c$ at $w_0$ is $3X$-large.
    Using Lemma~\ref{lem:interior-V}~\ref{lem:interior-V:3} we see that in 
    this case, $B_\alpha(e) \x \{ \xi \} \subseteq V(w_0,2X)$
    and~\ref{prop:cover-V_infty:long} holds. 

    Therefore we may assume that $\varangle_{w_0} c_0$ is $4X$-small.
    Then~\eqref{nl:large-at-w} implies that for all $c \in \calc$ the 
    angle at $w_0$ is $5X$-small.
    Since not all $c \in \calc$ are $\Theta$-small, there
    are $w \in W$ and $c \in \calc$ such that
    the angle of $c$ at $w$ is $5X$-large.
    Among all such pairs we pick $(w_1,c_1)$ such that 
    $w_1$ is closest to $v_0$.
    As before~\eqref{nl:w-on-c}  implies that $w_1$ 
    is also closest to all $h v_0$, $h \in B_\alpha(e)$.
    Using~\eqref{nl:small-at-v} 
    this implies that all geodesics
    starting in some $hv_0$, $h \in B_\alpha(e)$ 
    and ending at $w_1$ are $5X$-small.
    Since $w_0 \neq w_1$ we can use~\eqref{nl:w-on-c} again, to see that
    for any $h \in B_\alpha(e)$ there is a geodesic from
    $hv_0$ to $\xi$ that agrees with $c_1$ between $w_0$ and $\xi$.
    Therefore, for any $h \in B_{\alpha}(e)$ there is a geodesic
    from $h v_0$ to $\xi$ for which the angle at $w_1$ is
    $5X$-large.
    If $\xi \in V$, then using 
    Lemma~\ref{lem:V(beta,v)}~\ref{lem:V(beta,v):interior}, we deduce  
    $B_\alpha(e) \x \{ \xi \} \subseteq V(w_1,5X)$ 
    and~\ref{prop:cover-V_infty:long} holds.  
    
    If $\xi \in \dd \Gamma$ we need to distinguish two
    further cases.
    If the angle of $c_1$ at $w_1$ is even $\Theta = 6X$-large,
    then Lemma~\ref{lem:interior-V}~\ref{lem:interior-V:3} 
    implies $B_\alpha(e) \x \{ \xi \} \subseteq V(w_1,5X)$ 
    and~\ref{prop:cover-V_infty:long} holds. 
    Otherwise, we use again that not all $c \in \calc$ are $6X$-small.
    Therefore, there is $w_2 \in W$ and $c_2 \in \calc$ such that the
    angle of $c_2$ at $w_2$ is $6X$-large.
    Since $w_1$ was chosen closest to $v_0$, $w_2$ will be further
    from $v_0$ than $w_1$.
    Now using~\eqref{nl:large-at-w} twice we see that
    for any geodesic $c \in \calc$ the angle at $w_2$ is $4X$-large. 
    Since we already found geodesics for any $h \in B_\alpha(e)$
    from $hv_0$ to $\xi$ whose angle at $w_1$ is $5X$-large, 
    we can now use
    Lemma~\ref{lem:interior-V}~\ref{lem:interior-V:4} to conclude
    $B_\alpha(e) \x \{ \xi \} \subseteq V(w_1,5X)$. 
    Therefore~\ref{prop:cover-V_infty:long} holds.  
  \end{proof}

  \subsection*{Covering $G \x \overline{P_{d,\Theta}}$} 

  Associated to $\Gamma$ there is, for given $d > 0$ and a given size
  for angles $\Theta$,  a finite dimensional simplicial complex $P_{d,\Theta}$,
  the relative Rips complex of $\Gamma$.
  The construction of $P_{d,\Theta}$ is reviewed 
  in Definition~\ref{def:relative-rips}.
  The union $\overline{P_{d,\Theta}} := P_{d,\Theta} \cup \dd \Gamma$ 
  carries a compact topology,
  see Lemma~\ref{lem:properties-of-overlineP_d,Theta}
    ~\ref{lem:overlineP:compact}.
  Theorem~\ref{thm:long-wide-GxDelta} has the following 
  straight forward extension  from $\Delta$ to $\overline{P_{d,\Theta}}$.

  \begin{corollary}
    \label{cor:wide-cover-relative-Rips}
    For any $d > 0$ and any size for angles $\Theta$ the action of $G$ on
    $\overline{P_{d,\Theta}}$ is finitely $\calp$-amenable.
  \end{corollary}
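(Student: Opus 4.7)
The plan is to combine the $S$-wide $\calp$-cover of $G \x \Delta$ furnished by Theorem~\ref{thm:long-wide-GxDelta} with an equivariant cellular $\calp$-cover of the relative Rips complex $P_{d,\Theta}$ itself.

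First I would note that the simplicial action of $G$ on $P_{d,\Theta}$ has all isotropy in $\calp$: vertices of $P_{d,\Theta}$ are vertices of $\Gamma$, with stabilizers that are either finite (finite valency) or subconjugate to a peripheral $P_i$ (infinite valency); stabilizers of higher simplices are intersections of vertex stabilizers and hence lie in $\calp$. Since $P_{d,\Theta}$ is finite-dimensional, Remark~\ref{rem:equivariant-cover} produces a $G$-invariant $\calp$-cover $\calv$ of $P_{d,\Theta}$ of dimension at most $\dim P_{d,\Theta}$. The collection $\{G \x V : V \in \calv\}$ then automatically covers $G \x P_{d,\Theta}$ $S$-widely for every finite $S \subseteq G$, since $S \x \{x\} \subseteq G \x V$ whenever $x \in V$.

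Given a finite $S \subseteq G$, I would next apply Theorem~\ref{thm:long-wide-GxDelta} to obtain an open $\calp$-cover $\calu$ of $G \x \Delta$ of bounded dimension $N$ that is $S$-wide. For each $U \in \calu$ meeting $G \x \dd \Gamma$, I would extend $U \cap (G \x \dd \Gamma)$ to an open subset $\tilde U$ of $G \x \overline{P_{d,\Theta}}$. This is possible because $\dd \Gamma$ sits as a closed subspace of $\overline{P_{d,\Theta}}$, with its subspace topology coinciding with the observer topology by the compactification result of the appendix; by normality every open subset of $G \x \dd \Gamma$ is the restriction of some open subset of $G \x \overline{P_{d,\Theta}}$. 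After shrinking $\tilde U$ equivariantly if necessary, one ensures that $\tilde U$ remains a $\calp$-subset and that $\tilde U \cap (G \x \dd \Gamma) = U \cap (G \x \dd \Gamma)$, so that the $S$-wideness at $\dd \Gamma$-points is preserved.

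Combining the two collections, $\{G \x V : V \in \calv\} \cup \{\tilde U : U \in \calu, U \cap (G \x \dd \Gamma) \neq \emptyset\}$, yields a $G$-invariant $\calp$-cover of $G \x \overline{P_{d,\Theta}}$ of dimension at most $\dim P_{d,\Theta} + N + 1$ that is $S$-wide throughout. The main obstacle is the extension step: producing each $\tilde U$ as an open $\calp$-subset of $G \x \overline{P_{d,\Theta}}$ while maintaining the $S$-wide property at $\dd \Gamma$-points. This relies on the compatibility between the observer topology on $\dd \Gamma$ and the subspace topology inherited from the compactification $\overline{P_{d,\Theta}}$, together with the standard trick of shrinking $\tilde U$ to restore the $\calp$-condition potentially lost during extension.
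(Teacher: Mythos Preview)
Your overall plan matches the paper's: combine an equivariant cellular cover of the Rips complex with the wide cover of $G \x \Delta$ from Theorem~\ref{thm:long-wide-GxDelta}, suitably extended. But two genuine gaps remain.

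First, the sets $G \x V$ with $V$ open in the simplicial complex $P_{d,\Theta}$ are \emph{not} open in $G \x \overline{P_{d,\Theta}}$ at points of $V_\infty$. The canonical map $i \colon P_{d,\Theta} \to \overline{P_{d,\Theta}}$ is continuous but is not a homeomorphism onto its image: the subspace $P_{d,\Theta}$ is not open in the compactification, only $P_{d,\Theta} \setminus V_\infty$ is (see the discussion after Lemma~\ref{lem:is-neighborhood}). Any neighborhood of $v \in V_\infty$ in $\overline{P_{d,\Theta}}$ must contain $P_{d,\Theta}(U)$ for some observer-neighborhood $U$ of $v$, which a small simplicial open star does not. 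The paper fixes this by replacing each $W$ with $W \setminus (G \x V_\infty)$, obtaining genuine open sets that are $S$-wide only over $P_{d,\Theta} \setminus V_\infty$. Once you do this, the points of $V_\infty$ must be handled by the second collection, so you must extend the wide cover from all of $G \x \Delta = G \x (\dd \Gamma \cup V_\infty)$, not just from $G \x \dd \Gamma$. Restricting first to $\dd \Gamma$ throws away exactly the $S$-wideness at $V_\infty$ that you now need.

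Second, your extension step (``normality, then shrink equivariantly'') is where the real content lies, and as written it is not a proof. Extending an open $\calp$-subset of a closed $G$-subspace to an open $\calp$-subset of the ambient space, while preserving both the order bound and the $\calf$-subset property, is not automatic from normality; a naive extension can create new intersections $g\tilde U \cap \tilde U$ for $g$ outside the original isotropy. The paper handles this cleanly via Lemma~\ref{lem:extend-cover}: since $\overline{P_{d,\Theta}}$ is compact metrizable one chooses a $G$-invariant metric on $G \x \overline{P_{d,\Theta}}$ and uses the explicit formula $U^+ = \{ x : d(x,U) < d(x,X_0 \setminus U) \}$, which satisfies $(U \cap V)^+ = U^+ \cap V^+$ and $(gU)^+ = g(U^+)$, so both the order and the $\calp$-subset condition are preserved on the nose. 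You should invoke this (or reproduce it) rather than gesture at shrinking.
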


  \begin{proof}
    Given $\alpha > 0$ we need to construct a $\calp$-cover of
    $G \x \overline{P_{d,\Theta}}$ as in Definition~\ref{def:N-calf-amenable}.

    The action of $G$ on $P_{d,\Theta}$ is simplicial and 
    the dimension of $P_{d,\Theta}$ is finite by
    Lemma~\ref{lem:P_d,Theta-finite-dim+loc-finite}. 
    In particular, Remark~\ref{rem:equivariant-cover} applies and we 
    find a $\calp$-cover $\calw$ of $G \x P_{d,\Theta}$ of   
    dimension $N'$ such that 
    \begin{numberlist}
      \item[\label{nl:calw-dim}]
         $\dim \calw$ is bounded by the dimension of $P_{d,\Theta}$;
      \item[\label{nl:calw-wide}]
         for any $(g,x) \in G \x P_{d,\Theta}$ there
         is $W \in \calw$ such that $G \x \{ x \} \subseteq W$.
    \end{numberlist}
    The compact topology on $\overline{P_{d,\theta}}$ is such that 
    $P_{d,\Theta}$ is not necessarily open in $\overline{P_{d,\theta}}$,
    but $P_{d,\Theta} \setminus V_\infty$ is open in $\overline{P_{d,\theta}}$.
    Therefore, the collection 
    $\{ W \setminus G \x  V_\infty \mid W \in \calw \}$
    consists of open $\calp$-subsets, satisfies~\eqref{nl:calw-dim}  and 
    satisfies~\eqref{nl:calw-wide} for all 
    $(g,x) \in G \x (P_{d,\Theta} \setminus V_\infty)$.

    Given $\alpha > 0$, Theorem~\ref{thm:long-wide-GxDelta} provides a 
    $\calp$-cover $\calv$ for $G \x \Delta$  such that
    \begin{numberlist}
      \item[\label{nl:calv-dim}]
         $\dim \calv$ is bounded by a number independent of $\alpha$;
      \item[\label{nl:calv-wide}]
         for any $(g,\xi) \in G \x \Delta$ there
         is $V \in \calv$ such that $B_\alpha(g) \{ \xi \} \subseteq V$.  
    \end{numberlist}
    Since $\overline{P_{d,\Theta}}$ is compact with a countable
    basis for the topology  it is also metrizable.
    Therefore we find a $G$-invariant metric $d_{G \x \overline{P_{d,\Theta}}}$ on 
    $G \x \overline{P_{d,\Theta}}$.
    Using Lemma~\ref{lem:extend-cover} we can extend $\calv$
    to $G$-invariant collection $\calv^+$ of $\calp$-subset of 
    $G \x \overline{P_{d,\Theta}}$
    that still satisfies~\eqref{nl:calv-dim} and~\eqref{nl:calv-wide}.     
    Altogether $\{ W \setminus G \x  V_\infty \mid W \in \calw \}
        \cup  \calv^+$
    is the desired cover of $G \x \overline{P_{d,\Theta}}$.
  \end{proof}

  \section{The Farrell-Jones Conjecture for relatively hyperbolic groups}
    \label{sec:FJ-rel-hyp}

  Let $G$ be a group and $\cala$ be an additive category with a 
  strict $G$-action and a strict direct sum. 
  In~\cite[Sec.~4.1]{Bartels-Lueck(2012annals)} such categories are 
  called additive $G$-categories.
  Similar to (twisted) group rings there is an 
  additive category $\int_G \cala$ whose morphisms are formal 
  linear combinations of
  group elements with morphisms from $\cala$ as coefficients\footnote{
  In~\cite[Sec.~2]{Bartels-Reich(2007coeff)} this category 
  is denoted $\cala*_G G/G$.}.
  Given a family $\calf$ of subgroups of $G$ there is the assembly map
  \begin{equation}
    \label{eq:K-theory-assembly-map}
    H_*^G(E_\calf G; \bfK_\cala) \to K_*(\intgf{G}{\cala}).
  \end{equation}
  The $K$-theoretic Farrell-Jones Conjecture (with  coefficients) 
  asserts that this map is an isomorphism for the family 
  $\calf := \VCyc$ of virtually
  cyclic subgroups of $G$~\cite[Conj.~3.2]{Bartels-Reich(2007coeff)}. 
  The original formulation of the Conjecture in~\cite{Farrell-Jones(1993a)} 
  can be recovered as a special case of~\eqref{eq:K-theory-assembly-map} 
  by using for $\cala$
  the category of finitely generated free $\IZ$-modules; then 
  $\int_G \cala$ is equivalent to the category of finitely generated free 
  $\IZ[G]$-modules.
  We will say that 
  \emph{$G$ satisfies the $K$-theoretic Farrell-Jones Conjecture 
  relative to $\calf$} 
  if~\eqref{eq:K-theory-assembly-map} is an isomorphism for all 
  additive  $G$-categories $\cala$.
  A consequence of the transitivity principle 
  (see~\cite[Thm.~A.10]{Farrell-Jones(1993a)} 
      and~\cite[Thm.~2.10]{Bartels-Farrell-Lueck(cocompact-lattices)}) 
  is the following: Suppose that $G$ satisfies the $K$-theoretic 
  Farrell-Jones Conjecture 
  relative to $\calf$ and that every group $F \in \calf$ satisfies the
  $K$-theoretic Farrell-Jones Conjecture. 
  Then $G$ satisfies the $K$-theoretic Farrell-Jones Conjecture.    

  If $\cala$ is in addition equipped with a strict involution, 
  see for example~\cite[Sec.~4.1]{Bartels-Lueck(2012annals)}, 
  then $\int_G \cala$ inherits 
  an involution and there is the $L$-theoretic assembly map
  \begin{equation}
    \label{eq:L-theory-assembly-map}
    H_*^G(E_\calf G; \bfL^{-\infty}_\cala) \to 
                 L_*^{\langle -\infty \rangle}(\intgf{G}{\cala}).
  \end{equation}
  The $L$-theoretic Farrell-Jones Conjecture (with coefficients) asserts 
  that this map
  is an isomorphism for the family of virtually cyclic subgroups $\VCyc$.
  Everything said for $K$-theory above has an $L$-theory counterpart.
  In particular, we will say that 
  \emph{$G$ satisfies the $L$-theoretic Farrell-Jones Conjecture 
    relative to $\calf$} 
  if~\eqref{eq:L-theory-assembly-map} is an isomorphism for all 
  additive $G$-categories $\cala$ with involution.
  
  Next we give a minor reformulation of conditions 
  from~\cite{Bartels-Lueck(2012annals),Bartels-Lueck-Reich(2008hyper)} 
  that implies the Farrell-Jones Conjecture relative to $\calf$.  
  For a family $\calf$ of groups we write $\calf_2$ for the family of groups 
  that contain a group from $\calf$ as a subgroup of index $\leq 2$.

  \begin{theorem}
    \label{thm:conditions-for-FJ}
    Let $G$ be a group that admits a finitely $\calf$-amenable action
    on a compact contractible finite-dimensional $\ANR$ $\overline{X}$.
    Then
    \begin{enumerate}
      \item  \label{thm:cond-FJ:K}
         $G$ satisfies the $K$-theoretic Farrell-Jones Conjecture  
         relative to $\calf$;
      \item \label{thm:cond-FJ:L}
         $G$ satisfies the $L$-theoretic Farrell-Jones Conjecture  
         relative to $\calf_2$.
    \end{enumerate}   
  \end{theorem}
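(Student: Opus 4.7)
The plan is to recognize the theorem as a reformulation of the main transfer-reducibility-style criteria of \cite{Bartels-Lueck(2012annals)} (for $K$-theory) and \cite{Bartels-Lueck-Reich(2008hyper)} (for $L$-theory). Both references take as input an action of $G$ on a compact contractible finite-dimensional $\ANR$ together with a family of open $\calf$-covers of $G \x \overline{X}$ which are (i) $G$-invariant, (ii) of uniformly bounded dimension, and (iii) wide in the $G$-direction in the sense that for every finite $S \subseteq G$ one can find a cover in the family such that every $\{x\}$-slice is $S$-wide. This matches Definition~\ref{def:N-calf-amenable} exactly, so the hypothesis of the theorem is precisely what the cited references require.

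For part~\ref{thm:cond-FJ:K}, I would feed these covers into the controlled algebra apparatus of \cite{Bartels-Lueck(2012annals)}. The combination of a contractible finite-dimensional $\ANR$ on which $G$ acts and a family of almost-equivariant wide $\calf$-covers is used there to construct a controlled transfer and to verify the vanishing obstruction for the $K$-theoretic assembly map relative to $\calf$. This directly yields that~\eqref{eq:K-theory-assembly-map} is an isomorphism for every additive $G$-category $\cala$, establishing~\ref{thm:cond-FJ:K}.

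For part~\ref{thm:cond-FJ:L}, the same input is processed by the $L$-theoretic controlled algebra machinery of \cite{Bartels-Lueck-Reich(2008hyper)}. The one asymmetry with the $K$-theoretic case, and the main technical subtlety in the original proofs, is that the quadratic duality underlying $L$-theory requires an orientation on the isotropy groups appearing in the cover. Virtually cyclic isotropy groups of type II do not admit such an orientation, and the algebraic machinery only detects the index-two subgroup of type I inside such groups. This forces one to enlarge $\calf$ to $\calf_2$, which by definition absorbs such $\mathbb{Z}/2$-extensions, and yields the $L$-theoretic Farrell-Jones Conjecture relative to $\calf_2$ rather than $\calf$.

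The main obstacle is not new work but a careful matching of formulations. The cited references typically express wideness metrically (in terms of balls in a proper left-invariant word metric $d_G$), whereas Definition~\ref{def:N-calf-amenable} uses finite subsets $S \subseteq G$. Since finite subsets and metric balls $B_r(e)$ are cofinal in both directions, the two formulations are equivalent, and Remark~\ref{rem:almost-equivariant-maps} (the passage to almost-equivariant maps via partitions of unity) verifies that the cover-based input of \cite{Bartels-Lueck(2012annals), Bartels-Lueck-Reich(2008hyper)} is provided by our hypothesis. Beyond this translation, the theorem follows by invoking the cited results.
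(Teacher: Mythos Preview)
Your overall plan is right in spirit---this theorem is indeed just a repackaging of existing transfer-reducibility criteria---but you have the two references swapped, and your explanation for the appearance of $\calf_2$ is not the correct one.

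For~\ref{thm:cond-FJ:K} the relevant reference is \cite[Thm.~1.1]{Bartels-Lueck-Reich(2008hyper)}, which is a $K$-theory paper; \cite{Bartels-Lueck(2012annals)} is the source for the $L$-theory statement~\ref{thm:cond-FJ:L}, not the other way around. This matters because \cite{Bartels-Lueck-Reich(2008hyper)} does not state its hypothesis in terms of a compact finite-dimensional $\ANR$: it asks instead for a $Z$-set complement that is a simplicial complex. The paper's proof explains that this extra assumption is only used in one lemma (\cite[Lem.~6.9]{Bartels-Lueck-Reich(2008hyper)}), and that lemma goes through once one knows $\overline{X}$ is controlled $N$-dominated---a property compact finite-dimensional $\ANR$s have. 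You would need to make this bridge explicit; without it, the hypotheses do not literally match.

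For~\ref{thm:cond-FJ:L}, the passage from $\calf$ to $\calf_2$ has nothing to do with orientations or type~II virtually cyclic groups. It arises because the $L$-theory argument in \cite{Bartels-Lueck(2012annals)} works with the space $P_2(\overline{X})$ of unordered pairs: the $\IZ/2$ swapping the two factors combines with the $G$-action, so isotropy groups of points in $P_2(\overline{X})$ are index-$\leq 2$ overgroups of isotropy groups for the original action. (See Remark~4.6 in the paper for a hint of this.) Your orientation story does not match what actually happens in the cited proof, and the type~I/type~II distinction is a separate issue discussed in Remark~4.3.
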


  \begin{proof}
    We start by recalling~\cite[Def.~1.5]{Bartels-Lueck(2012annals)} 
    where a metric space $Y$ 
    is called \emph{controlled $N$-dominated}
    if for every $\e > 0$ there is a finite $CW$-complex 
    $K$ of dimension at most $N$, maps $i \colon Y \to K$, 
    $p \colon K \to Y$ and a homotopy $H \colon Y \x [0,1] \to Y$
    between $p \circ i$ and $\id_Y$ such that the tracks
    $\{ H(y,t) \mid t \in [0,1] \}$ are of diameter at most $\e$.  
    Compact finite dimensional $\ANR$s have this 
    property; this follows for example 
    from~\cite[Thm.~10.1, p.122]{Borsuk(Theory-of-retracts)}.   
    \\[1ex]
    \ref{thm:cond-FJ:K} 
    In~\cite[Thm.~1.1]{Bartels-Lueck-Reich(2008hyper)} the 
    $K$-theoretic Farrell-Jones Conjecture relative to $\calf$ 
    is proven under very similar
    conditions.
    In this reference also an action of $G$ on a compact 
    metrizable space $\overline{X}$ is required. 
    Assumption~1.4 in this reference is what we defined as
    finitely $\calf$-amenability here.
    In this reference it is further assumed that $\overline{X}$ 
    contains a simplicial complex $X$ whose complement is 
    a $Z$-set in $\overline{X}$. 
    This further assumption is only used in the proof 
    of~\cite[Lem.~6.9]{Bartels-Lueck-Reich(2008hyper)}.
    It is not hard to see that this Lemma also holds for
    controlled $N$-dominated metric spaces,
    compare~\cite[Lem.~8.4]{Bartels-Lueck(2012annals)}. 
    Therefore, the $Z$-set assumption can be safely replaced with
    the assumption that $\overline{X}$ is a 
    compact finite-dimensional $\ANR$.
    Alternatively, \ref{thm:cond-FJ:K} can be deduced 
    from~\cite[Thm~1.1]{Wegner(2012CAT0)}. 
    The conditions given in this reference are more involved (using
    strong homotopy actions) and designed for more general situations,
    but can be checked to hold in our case.  
    \\[1ex]
    \ref{thm:cond-FJ:L}
    This follows from~\cite[Thm.~1.1(ii)]{Bartels-Lueck(2012annals)}.
    The assumption in this reference is that $G$ is
    \emph{transfer reducible over $\calf$}
        ~\cite[Def.~1.8]{Bartels-Lueck(2012annals)}.
    To check that this assumption is satisfied in our situation 
    we use the action of $G$ on $\overline{X}$.
    The covers appearing in~\cite[Def.~1.8]{Bartels-Lueck(2012annals)}
    exist because this action is finitely $\calf$-amenable.
  \end{proof}

  \begin{theorem}
     \label{thm:FJ-rel-hyp}
    Let $\calp$ be a family of subgroups of $G$ that contains
    all virtually cyclic subgroups.
    Suppose that $G$ is relatively hyperbolic to
    subgroups $P_1,\dots,P_n \in \calp$.
    Then
    \begin{enumerate}
      \item  \label{thm:FJ-rel-hyp:K}
         $G$ satisfies the $K$-theoretic Farrell-Jones Conjecture  
         relative to $\calp$;
      \item \label{thm:FJ-rel-hyp:L}
         $G$ satisfies the $L$-theoretic Farrell-Jones Conjecture  
         relative to $\calp_2$.
    \end{enumerate}  
  \end{theorem}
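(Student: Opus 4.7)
The plan is to apply Theorem~\ref{thm:conditions-for-FJ} with $\calf = \calp$ and a carefully chosen contractible compact finite-dimensional ANR $\overline{X}$ on which $G$ acts finitely $\calp$-amenably. The natural candidate for $\overline{X}$ is the compactified relative Rips complex $\overline{P_{d,\Theta}} = P_{d,\Theta} \cup \dd\Gamma$ introduced in Definition~\ref{def:relative-rips}, for suitable $d > 0$ and size for angles $\Theta$.

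First I would verify that, for sufficiently large $d$ and $\Theta$, the space $\overline{P_{d,\Theta}}$ satisfies the hypotheses of Theorem~\ref{thm:conditions-for-FJ}. Compactness of $\overline{P_{d,\Theta}}$ is the statement of Lemma~\ref{lem:properties-of-overlineP_d,Theta}, while finite-dimensionality of $P_{d,\Theta}$ is Lemma~\ref{lem:P_d,Theta-finite-dim+loc-finite}; combined with the finite dimension of $\Delta$ from Theorem~\ref{thm:dim-delta-finite}, this yields a finite-dimensional compactum. Contractibility and the ANR property of $\overline{P_{d,\Theta}}$ are provided by the relative Bestvina--Mess type result in the appendix (extending \cite{Bestvina-Mess(1991)} to relatively hyperbolic groups), which is where the standing hypothesis that $\Gamma$ is fine and hyperbolic with finite edge stabilizers is essential.

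Next I would invoke Corollary~\ref{cor:wide-cover-relative-Rips}, which asserts precisely that for any $d > 0$ and any size for angles $\Theta$ the action of $G$ on $\overline{P_{d,\Theta}}$ is finitely $\calp$-amenable; this is the main substantive input, and it rests on Theorem~\ref{thm:long-wide-GxDelta} together with the straightforward extension described in the proof of that corollary. Theorem~\ref{thm:conditions-for-FJ}\ref{thm:cond-FJ:K} then yields part~\ref{thm:FJ-rel-hyp:K} and Theorem~\ref{thm:conditions-for-FJ}\ref{thm:cond-FJ:L} yields part~\ref{thm:FJ-rel-hyp:L}, using that $\calp$ contains all virtually cyclic subgroups so that the family $\calp$ (respectively $\calp_2$) is a legitimate family in the sense of the Farrell-Jones Conjecture.

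The main obstacle in this argument is essentially external: one must know that $\overline{P_{d,\Theta}}$ is a contractible ANR, which is the content of the appendix generalizing the Bestvina--Mess theorem to the relatively hyperbolic setting. Granted this (and the finite $\calp$-amenability already established in Corollary~\ref{cor:wide-cover-relative-Rips}), the deduction of Theorem~\ref{thm:FJ-rel-hyp} itself is a formal and short application of Theorem~\ref{thm:conditions-for-FJ}; no further properties of relatively hyperbolic groups need to be invoked at this stage.
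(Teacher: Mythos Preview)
Your proposal is correct and follows essentially the same route as the paper: take $\overline{X} = \overline{P_{d,\Theta}}$, cite Lemma~\ref{lem:properties-of-overlineP_d,Theta} and Theorem~\ref{thm:overlineP-is-ANR} for the compact contractible finite-dimensional $\ANR$ properties, cite Corollary~\ref{cor:wide-cover-relative-Rips} for finite $\calp$-amenability, and conclude via Theorem~\ref{thm:conditions-for-FJ}. One small correction: $\overline{P_{d,\Theta}}$ is introduced in Definition~\ref{def:compactification}, not Definition~\ref{def:relative-rips} (which only defines the uncompactified $P_{d,\Theta}$).
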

   
  \begin{proof}
    Let $\overline{P_{d,\Theta}}$ be the space from 
    Definition~\ref{def:compactification}.
    By Corollary~\ref{cor:wide-cover-relative-Rips} 
    this space carries a finitely $\calp$-amenable action.
    It is a finite-dimensional compact contractible $\ANR$ 
    for suitable $d,\Theta$ by 
    Lemma~\ref{lem:properties-of-overlineP_d,Theta}
    and Theorem~\ref{thm:overlineP-is-ANR}.  
    Now use Theorem~\ref{thm:conditions-for-FJ}.
  \end{proof}
    
  \begin{remark}
    In Theorem~\ref{thm:FJ-rel-hyp} it suffices to assume 
    that $\calp$ contains all virtually cyclic subgroups of type $I$, instead of
    all virtually cyclic subgroup.
    The reason for this is that no virtually cyclic subgroups of type $II$
    appeared in the previous Sections, see Remark~\ref{rem:type-I}.

    For $L$-theory this does not strengthen Theorem~\ref{thm:FJ-rel-hyp},
    since all virtually cyclic groups contain a virtually cyclic group of
    type $I$ as a subgroup of index at most $2$.   
       
    For $K$-theory this slightly strengthens Theorem~\ref{thm:FJ-rel-hyp}.
    However, it is known that such a strengthening is always possible:
    in the $K$-theoretic Farrell-Jones Conjecture only virtually cyclic
    subgroups of type $I$ are needed~\cite{Davis-Quinn-Reich(2011)}.
  \end{remark}

  \begin{corollary}
     \label{cor:FJ-rel-hyp}
    Let $G$ be relatively hyperbolic to subgroups $P_1,\dots,P_n$. Then
    \begin{enumerate}
      \item \label{cor:FJ-rel-hyp:K} 
         $G$ satisfies the $K$-theoretic Farrell-Jones Conjecture,
         provided that
         the $P_i$ satisfies the $K$-theoretic Farrell-Jones Conjecture;
      \item \label{cor:FJ-rel-hyp:L} 
         $G$ satisfies the $L$-theoretic Farrell-Jones Conjecture, 
         provided that all subgroups of $G$ that contain one of the 
         $P_i$ as a subgroup of index at 
         most $2$ satisfy the $L$-theoretic Farrell-Jones Conjecture. 
    \end{enumerate}
  \end{corollary}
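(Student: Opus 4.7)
The plan is to combine Theorem~\ref{thm:FJ-rel-hyp} with the transitivity principle recalled at the start of Section~\ref{sec:FJ-rel-hyp}. I take $\calp$ to be the family of subgroups of $G$ that are either virtually cyclic or subconjugate to some $P_i$. Since $\calp$ contains the virtually cyclic subgroups and each $P_i$, Theorem~\ref{thm:FJ-rel-hyp} gives the $K$-theoretic Farrell--Jones Conjecture for $G$ relative to $\calp$ and the $L$-theoretic Farrell--Jones Conjecture for $G$ relative to $\calp_2$. By the transitivity principle, the absolute conjectures for $G$ will follow once each member of $\calp$ (respectively $\calp_2$) is shown to satisfy the corresponding absolute Farrell--Jones Conjecture on its own.

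For~\ref{cor:FJ-rel-hyp:K}, members of $\calp$ are either virtually cyclic --- in which case the $K$-theoretic Farrell--Jones Conjecture with arbitrary additive coefficients is known --- or subgroups of a conjugate of some $P_i$, hence isomorphic to subgroups of some $P_i$. The latter inherit the conjecture from $P_i$ by heredity under subgroups, so the hypothesis that each $P_i$ satisfies the conjecture suffices.

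For~\ref{cor:FJ-rel-hyp:L}, let $H \in \calp_2$, so there is $F \in \calp$ with $F \leq H$ and $[H:F] \leq 2$. If $F$ is virtually cyclic then so is $H$, and the $L$-theoretic conjecture for virtually cyclic groups is standard. Otherwise $F \leq g P_i g^{-1}$ for some $g$ and $i$, and I would enlarge $F$ to $F' := H \cap g P_i g^{-1}$, still with $[H:F'] \leq 2$. Three sub-cases arise. If $H \leq g P_i g^{-1}$, then $H$ is a subgroup of a conjugate of $P_i$ and inherits $L$-FJC from $P_i$ (which is part of the hypothesis, since $P_i$ contains itself with index~$1$) via subgroup heredity. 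If $F' = g P_i g^{-1}$ and $[H:F']=2$, then $g^{-1} H g$ is an overgroup of $P_i$ in $G$ of index~$2$ and the hypothesis applies directly. If $F' \lneq g P_i g^{-1}$ and $[H:F']=2$, then $F'$ is normal in $H$, and for any $h \in H \setminus F'$ the subgroups $g P_i g^{-1}$ and $(hg) P_i (hg)^{-1}$ are stabilizers of two distinct infinite-valence vertices of $\Gamma$; Lemma~\ref{lem:isotropy-2-points} makes their intersection virtually cyclic, and since $F' = h F' h^{-1}$ is contained in this intersection, $F'$ and hence $H$ are virtually cyclic, reducing to the first case.

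The only non-routine step is the last case-split in the $L$-theoretic argument, where Lemma~\ref{lem:isotropy-2-points} is used to rule out members of $\calp_2$ that would otherwise lie outside the hypothesis. Apart from this, the proof is a standard transitivity argument together with subgroup heredity of the Farrell--Jones Conjecture and its known validity for virtually cyclic groups; no geometric input beyond what was assembled in Theorem~\ref{thm:FJ-rel-hyp} is required.
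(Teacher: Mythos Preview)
Your proof is correct and follows the same route as the paper: apply Theorem~\ref{thm:FJ-rel-hyp} with the family $\calp$ of subgroups that are virtually cyclic or subconjugate to some $P_i$, then invoke the transitivity principle. The paper's own proof is a two-line citation of exactly these two ingredients and leaves the verification to the reader.

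Where you go beyond the paper is in the $L$-theory case: the hypothesis of~\ref{cor:FJ-rel-hyp:L} only assumes $L$-FJC for index~$\leq 2$ overgroups of the full $P_i$, whereas transitivity requires it for every member of $\calp_2$. Your case analysis closing this gap is sound. In particular, in your sub-case~2c the key point is that $h \notin F' = H \cap gP_ig^{-1}$ forces $h \notin gP_ig^{-1}$, so $gv_i \neq hgv_i$ are distinct vertices; Lemma~\ref{lem:isotropy-2-points} then gives that $gP_ig^{-1} \cap (hg)P_i(hg)^{-1}$ is virtually cyclic (indeed finite, since both points lie in $V$), and $F' = hF'h^{-1}$ sits inside this intersection. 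This is exactly the almost-malnormality of peripheral subgroups, and it is a legitimate and clean way to justify what the paper leaves implicit.
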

    
  \begin{proof}
    This follows from Theorem~\ref{thm:FJ-rel-hyp} 
    and the transitivity principle reviewed 
    earlier in this section.
  \end{proof}

  \begin{remark}
    Many classes of groups that are known to satisfy the 
    Farrell-Jones Conjecture 
    are closed under finite index overgroups, but there is no 
    general result to  this effect.
    A good formalism to circumvent this difficulty is the 
    Farrell-Jones Conjecture with wreath products considered for example
    in~\cite{Bartels-Lueck-Reich-Rueping(GLnZ),Farrell-Roushon(2000)}.
 
    A group $G$ is said to satisfy the Farrell-Jones Conjecture 
    with wreath products relative to a family of subgroups $\calf$ 
    if for any finite group $F$ the 
    wreath product $G \wr F$ satisfies the Farrell-Jones Conjecture relative
    to $\calf$. 
    This version of the conjecture passes to finite 
    overgroups and to finite wreath 
    products~\cite[Rem.~6.2]{Bartels-Lueck-Reich-Rueping(GLnZ)}. 
    Moreover, the conditions 
    that we verified for relatively hyperbolic groups in the proof
    of Theorem~\ref{thm:FJ-rel-hyp} can also be used to obtain
    results for the Farrell-Jones Conjecture with wreath 
    products~\cite[Thm.~5.1]{Bartels-Lueck-Reich-Rueping(GLnZ)}.
    Combining this observation with the transitivity principle we obtain the
    following variant of Corollary~\ref{cor:FJ-rel-hyp}:
    \emph{ Suppose that $G$ is realtively hyperbolic to subgroups 
      $P_1,\dots,P_n$
      all of which satisfy the Farrell-Jones Conjecture with wreath products,
      then $G$ satisfies the Farrell-Jones Conjecture with wreath products.}

    Here it is no longer necessary to distinguish between $K$-and $L$-Theory;
    everything in this remark applies as stated to the $K$-theoretic
    and the $L$-theoretic version of the conjecture.
  \end{remark}

  \begin{remark}
    If $G$ is relatively hyperbolic to infinite subgroups $P_1,\dots,P_n$, then for the action of $G$ on $\Delta$ each $P_i$ fixes a unique point in $\Delta$.
    In particular, no overgroups of the $P_i$ have fixed points on the space $P_2(\Delta)$ of unordered pairs in $\Delta$ used  in~\cite[Sec.~9]{Bartels-Lueck(2012annals)}. 
    It seems plausible that this observation together with a careful analysis of the arguments in~\cite{Bartels-Lueck(2012annals)} can be used to show that the appearance of index $2$ overgroups  in Theorem~\ref{thm:FJ-rel-hyp}~\ref{thm:FJ-rel-hyp:L} and Corollary~\ref{cor:FJ-rel-hyp}~\ref{cor:FJ-rel-hyp:L} is not necessary.
  \end{remark}

  \noindent
  We conclude this section with some examples where Corollary~\ref{cor:FJ-rel-hyp}
  applies.

  \begin{example}
    Let $G$ be the fundamental group of a complete Riemannian manifold $M$ of pinched
    negative curvature and finite volume.
    Then $G$ is hyperbolic relative to virtually 
    finitely generated nilpotent groups~\cite{Bowditch(Rel-hyperbolic-groups),Farb(1998)}.
    Since virtually finitely generated nilpotent groups satisfy both the 
    $K$- and $L$-theoretic 
    Farrell-Jones Conjecture~\cite{Bartels-Farrell-Lueck(cocompact-lattices)},
    the $K$- and $L$-theoretic Farrell-Jones Conjecture for $G$ holds. 
  \end{example}

  \begin{example}
    Let $G$ be the fundamental group of a finite graph of groups with finite edge groups.
    Then the associated action of $G$ on the Bass-Serre tree reveals $G$ as relatively
    hyperbolic to the vertex groups.
    Thus if all vertex groups satisfy the $K$-theoretic Farrell-Jones Conjecture then 
    $G$ satisfies the $K$-theoretic Farrell-Jones Conjecture.
    If all overgroups of vertex groups of index at most $2$ satisfy the $L$-theoretic
    Farrell-Jones Conjecture then $G$ satisfies the $L$-theoretic Farrell-Jones Conjecture.
  \end{example}

  \begin{example}
    Suppose that $G$ acts cocompactly and properly discontinuously on a systolic
    complex with the Isolated Flats Property. 
    Then $G$ is relatively hyperbolic to virtually finitely generated abelian 
    subgroups~\cite{Elsner(Systolic-isolated-flats)}. 
    Since virtually finitely generated abelian groups satisfy  both the 
    $K$- and $L$-theoretic 
    Farrell-Jones Conjecture~\cite{Bartels-Farrell-Lueck(cocompact-lattices),
                Quinn(2012virtab)},
    the $K$- and $L$-theoretic Farrell-Jones Conjecture for $G$ holds. 
  \end{example}

  \appendix

  \section{The relative Rips complex and the boundary}
     \label{app:Z-set}

  In this appendix we prove that $\Delta$ is finite dimensional and embeds into a finite dimensional compact contractible $\ANR$ with a complement homeomorphic to a simplicial complex\footnote{The complement will be the relative Rips complex minus its vertices of infinite valence. Thus the complement is not a subcomplex of the relative Rips complex, but it is homeomorphic to a simplicial complex}.  
  For the boundary of hyperbolic groups both these facts are 
  well-known~\cite{Bestvina-Mess(1991)}.
  For relative hyperbolic groups closely related
  results have been obtained
  by Dahmani~\cite{Dahmani(2003class-spaces+boundaries-rel-hyp)} 
  and Mineyev-Yaman~\cite{Mineyev-Yaman(rel-hyperbolic-bounded-cohom)}.
  Our treatment is very similar to the one in these references,
  but we do not require any assumptions on the parabolic subgroups.

  Throughout this appendix we use again the notation from
  Section~\ref{sec:relative-hyperbolic-groups}.
  In particular, $G$ is a relatively hyperbolic group
  exhibited by a cocompact action on the
  fine and hyperbolic graph $\Gamma$ with finite edge stabilizers. 
  Throughout this appendix we will make the following additional
  assumption on $\Gamma$:
  no two vertices from $V_\infty$ are adjacent.
  This can be easily arranged for by replacing $\Gamma$ with
  its first barycentric subdivision.

  \subsection*{$\Delta_+$ is finite dimensional.}

  \begin{theorem}
    \label{thm:dim-delta-finite}
    The dimension of $\Delta_+$ is finite.
  \end{theorem}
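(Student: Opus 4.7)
The plan is to bound $\dim \Delta_+$ by first establishing $\dim \Delta < \infty$ and then handling the additional points of $\Delta_+ \setminus \Delta$ as isolated singletons. Indeed, for $v \in V \setminus V_\infty$, if $V_0$ is the finite set of neighbors of $v$, then $M(v, V_0) = \{v\}$, since every nontrivial geodesic from $v$ begins with an edge incident to $v$ and hence meets $V_0 \setminus \{v\}$. A standard Hurewicz-type sum theorem for separable metric spaces then gives $\dim \Delta_+ \leq \dim \Delta + 1$, reducing the theorem to $\dim \Delta < \infty$.

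For that bound, I would fix a base vertex $v_0 \in V$. By Lemma~\ref{lem:angles-locally-finite} the sizes for angles form a countable directed family under inclusion; for such a size $\Theta$ with $\Theta^{(3)} \subseteq \Theta$, let $\Delta_\Theta \subseteq \Delta$ denote the set of points $\xi$ admitting a $\Theta$-small geodesic from $v_0$. By Addendum~\ref{add:compactness} and a standard limit argument on internal angles, $\Delta_\Theta$ is closed in $\Delta$, and $\Delta$ is the union of the $\Delta_\Theta$ over any cofinal sequence. For each such $\Theta$ and each $n \in \IN$ I would construct an open cover $\mathcal{U}_{\Theta, n}$ of $\Delta_\Theta$ whose members are shadows indexed by vertices $v$ at $d_\Gamma$-distance $n$ from $v_0$, patterned on the open sets $V(v, \Theta)$ of Definition~\ref{def:N(v)} but based at $v_0$ instead of a translate $g v_0$. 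Openness of each shadow in the observer topology follows by the argument of Lemma~\ref{lem:V(beta,v)}\ref{lem:V(beta,v):interior}, combined with Lemma~\ref{lem:pass-through-v} and the large-angle analysis of Lemma~\ref{lem:interior-V}.

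The order of $\mathcal{U}_{\Theta, n}$ is then bounded by a constant depending only on the hyperbolicity constant $\delta$ and the fineness data of $\Gamma$, independently of $n$ and $\Theta$: if $\xi$ lies in the shadows of $v_1, \dots, v_k$, then each $v_i$ lies at distance $n$ along some $\Theta$-small geodesic from $v_0$ to $\xi$, and Lemma~\ref{lem:large-angles-in-triangles-no-c} together with Lemma~\ref{lem:angles-locally-finite} confines $v_1, \dots, v_k$ to a uniformly bounded set. Lemma~\ref{lem:U-V} shows that the mesh of $\mathcal{U}_{\Theta, n}$ tends to $0$ in the observer topology as $n \to \infty$. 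Since $\Delta_\Theta$ is metrizable and admits a sequence of finite-order open covers of vanishing mesh, its covering dimension is uniformly bounded; the countable sum theorem applied to the cofinal family of $\Theta$'s then completes the argument.

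The main obstacle I anticipate is to ensure that the shadows are simultaneously open in the observer topology and of bounded order in the presence of $V_\infty$. Unlike the locally finite case underlying Bestvina--Mess, a single vertex of $V_\infty$ has infinitely many incident edges, and naive shadows could a priori overlap at a given $\xi$ through infinitely many of those edges. The angle-size machinery of Section~\ref{sec:relative-hyperbolic-groups} is precisely what tames this behavior, and I expect the proof to hinge on careful use of the large-angle trichotomy from Lemma~\ref{lem:large-angles-in-triangles-no-c} and Lemma~\ref{lem:interior-V} to limit the number of distinct shadows through any single $\xi \in \Delta$.
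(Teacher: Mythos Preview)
Your proposal has a genuine gap at the exhaustion step: the claim that $\Delta = \bigcup_\Theta \Delta_\Theta$ is not true in general. A geodesic ray $c$ from $v_0$ to $\xi \in \dd\Gamma$ has infinitely many internal angles $\varangle_{c(n)} c$, and there is no reason for these to fall into only finitely many $G$-orbits; but a size for angles is by definition $G$-cofinite, so if the orbits are infinite in number no single $\Theta$ makes $c$ a $\Theta$-small ray. Worse, once $\varangle_w c$ is $\Theta^{(3)}$-large at such a vertex $w$, Lemma~\ref{lem:large-angles} forces every geodesic from $v_0$ to $\xi$ through $w$, and Lemma~\ref{lem:geodesic-2-gons} shows the angle there differs from $\varangle_w c$ only by $2\Theta^{(3)}$; so switching to another geodesic cannot reduce the collection of angle orbits to a finite one. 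Concretely, in a coned-off Cayley graph one can arrange a ray that passes through cone points making angles whose $P$-orbit invariants exhaust an infinite set, and the resulting boundary point lies in no $\Delta_\Theta$. Your shadow-cover argument therefore bounds $\dim \Delta_\Theta$ for each $\Theta$ but leaves a residual subset of $\dd\Gamma$ completely untreated.

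This residual set is exactly what the paper handles by an independent mechanism: it observes that any $\xi$ with no $\Theta^{(3)}$-small geodesic from a finite-valency vertex has a \emph{unique} geodesic ray inside any maximal spanning subtree $T \subseteq \Gamma$ (the infinitely many $\Theta^{(3)}$-large angles force uniqueness via Lemma~\ref{lem:large-angles}), so the residual set embeds homeomorphically into $\dd T$ and is $0$-dimensional. For the complementary piece the paper does not build shadow covers directly but instead passes to an auxiliary locally finite hyperbolic graph $\Gamma_{v_0}$ on the vertex set $V_{v_0}$ and invokes the classical Bestvina--Mess bound; your direct shadow approach for that piece is reasonable, but note that the order bound you assert ``independently of $\Theta$'' would need justification beyond Corollary~\ref{cor:finite-theta-balls}, whose bound does depend on $\Theta$.
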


  \noindent
  A very similar result by Dahmani 
  is~\cite[Lemma~3.7]{Dahmani(2003class-spaces+boundaries-rel-hyp)},
  whose proof we mostly copy.

  \begin{proof}
    For a vertex $v$ of finite valency 
    let $\dd_{v} \Gamma \subseteq \dd \Gamma$ consist of all $\xi \in \dd \Gamma$
    for which there exists a $\Theta^{(3)}$-small geodesic from $v$ to $\xi$.
    Let $U$ be the union of all $\dd_{v} \Gamma$, where we vary $v$ over
    all vertices of finite valency.
    If $\xi \in \dd \Gamma \setminus U$, then any geodesic from a vertex
    of finite valency to $\xi$ will have a $\Theta^{(3)}$-large angle at
    infinitely many vertices.  
    The finite union of finite dimensional spaces is again finite 
    dimensional, see~\cite[p.28]{Hurewicz-Wallman(DimensionTheory)}.
    Therefore it suffices to show that $V$, $U$ and $\dd \Gamma \setminus U$
    are finite dimensional subspace of $\Delta_+$.

    We recall again the countable sum 
    theorem~\cite[Thm.~2.5, p.125]{Pears(Dimension-theory)}: 
    the countable union of closed subsets of dimension $\leq n$ is
    of dimension $\leq n$.
    In particular, the countable subspace $V \subseteq \Delta_+$
    is of dimension $0$.
    The spaces $\dd_v \Gamma$ are finite dimensional by 
    Lemma~\ref{lem:dim-dd_v-Gamma} below with a uniform bound on 
    their dimensions.
    As a consequence of Addendum~\ref{add:compactness} 
    the $\dd_v \Gamma$ are closed in $\dd \Gamma$.
    Thus $U$ is finite dimensional by the countable sum theorem.
 
    It remains to show that  $\dd \Gamma \setminus U$ is finite dimensional.
    In fact, we will show that it is homeomorphic to a subset of the boundary
    of a tree $T$ and therefore $0$-dimensional. 
    Fix a vertex $v_0$ of finite valency. 
    The tree $T$ is a maximal subtree of $\Gamma$ and can be build inductively
    by choosing, for each vertex at distance $n$ from $v_0$, an edge of 
    $\Gamma$ that connects it to a vertex at distance $n-1$ from $v_0$.
    For any $\xi \in \dd \Gamma$ the tree $T$ will contain 
    a geodesic from $v_0$ to $\xi$.
    (To construct such a geodesic pick a sequence of vertices $v_n$ with 
    $v_n \to \xi$ and apply Lemma~\ref{lem:compactness}~\ref{lem:compactness:boundary} 
    to geodesics $c_n$ from $v_0$ to $v_n$ in $T$.)    
    The inclusion $T \to \Gamma$ induces a continuous surjective
    map $f \colon \Delta_+(T) \to \Delta_+$, where $\Delta_+(T)$
    is the union of the vertices of $T$ with $\dd T$ and
    is also equipped with the observer topology.
    For any $\xi \in \dd \Gamma \setminus U$ there is a unique
    geodesic in $T$ from $v_0$ to $\xi$.
    Indeed, any geodesic $c$ in $\Gamma$ from $v_0$ to $\xi$ 
    will have a $\Theta^{(3)}$-large angle at
    infinitely many vertices and any other geodesic $c'$ in $\Gamma$
    from $v_0$ to $\xi$ will, by Lemma~\ref{lem:large-angles},
    also pass through these vertices. 
    Therefore $c=c'$ if both are in $T$.
    It follows that the restriction of $f$ to the preimage of
    $\dd \Gamma \setminus U$ is bijective.
    We claim that, since $\Delta_+(T)$ is compact, 
    this restriction of $f$ is a homeomorphism. 
    To prove this claim 
    we need to show that the inverse of $f$ on $\dd \Gamma \setminus U$
    is continuous.
    Let $\xi_n \to \xi$ be a convergent sequence in $\dd \Gamma \setminus U$.
    Let $\xi'_n, \xi'$ be the unique preimages in $\Delta_+(T)$ 
    of the $\xi_n$, $\xi$.
    We need to show $\xi'_n \to \xi'$ in $\Delta_+(T)$.
    Assume this fails.
    Then, as $\Delta_+(T)$ is compact, there is a subsequence $I \subseteq \IN$ 
    with $\lim_{n \in I} \xi'_n = \xi'' \neq \xi'$ in $\Delta_+(T)$.
    By continuity of $f$, we have 
    $f(\xi'') = \lim_{n \in I} f(\xi'_n) = \lim_{n \in I} \xi_n = \xi$. 
    In particular, $\xi''$ belongs to the preimage of $\dd \Gamma \setminus U$
    under $f$.
    Now we use that the restriction of $f$ to this preimage is injective 
    to contradict $\xi'' \neq \xi'$.
    Thus $\xi'_n \to \xi'$ in $\Delta_+(T)$.
  \end{proof}

  \begin{lemma} \label{lem:dim-dd_v-Gamma}
    There is $N$ such that $\dim \dd_{v_0} \Gamma \leq N$ for all vertices $v_0$
    of finite valency.
  \end{lemma}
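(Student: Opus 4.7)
The plan is to bound $\dim \dd_{v_0}\Gamma$ uniformly in $v_0$ by producing, for each $n \in \IN$, an open cover $\calu_n$ of $\dd_{v_0}\Gamma$ whose mesh tends to zero as $n \to \infty$ and whose order is bounded by a constant depending only on $\delta$ and $\Theta^{(3)}$.

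The combinatorial backbone is the rooted tree $T_{v_0}$ whose vertices are the finite $\Theta^{(3)}$-small geodesic segments starting at $v_0$, ordered by one-edge extension. By Lemma~\ref{lem:angles-locally-finite} each set $\Theta^{(3)}_e := \{e' \mid (e,e') \in \Theta^{(3)}\}$ is finite; together with cofiniteness of the $G$-action on $E$ this yields a uniform bound $K$ on $|\Theta^{(3)}_e|$, so that $T_{v_0}$ is uniformly locally finite (its root $v_0$ has finite valency by assumption). For each length-$n$ vertex $s \in T_{v_0}$, ending in some $v_s \in V$, I would define $U_s \subseteq \dd_{v_0}\Gamma$ to consist of those $\xi$ admitting a $\Theta^{(3)}$-small geodesic from $v_0$ to $\xi$ that extends $s$. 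Openness of $U_s$ and the cover property $\dd_{v_0}\Gamma = \bigcup_{|s|=n} U_s$ both follow from the pointwise-convergence description of the observer topology provided by Addendum~\ref{add:compactness}.

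The key step is bounding the order of $\calu_n$ independently of $n$. If $\xi \in U_{s_1} \cap \cdots \cap U_{s_k}$ then there are $k$ pairwise distinct length-$n$ $\Theta^{(3)}$-small initial segments from $v_0$, each extensible to a $\Theta^{(3)}$-small geodesic ray ending at $\xi$. Any two such rays lie within Hausdorff distance $2\delta$ in $\Gamma$ by hyperbolicity; combined with the uniform branching bound $K$ in $T_{v_0}$ the number $k$ is bounded by a constant $M$ depending only on $\delta$ and $K$. Separately, the mesh of $\calu_n$ with respect to a fixed compatible metric on $\Delta_+$ tends to zero with $n$: for any finite $V_0 \subseteq V$, once $n$ exceeds the distance from $v_0$ to the farthest point of $V_0$, each $U_s$ sits inside some observer-topology neighborhood $M(\xi_s, V_0)$. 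Together these two facts yield $\dim \dd_{v_0}\Gamma \leq M - 1$, uniformly in $v_0$, which is the desired bound.

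The main obstacle is the order estimate: although $\delta$-hyperbolicity forces the $k$ competing rays to remain Hausdorff close in $\Gamma$, converting this into a uniform count of distinct length-$n$ initial $\Theta^{(3)}$-small segments requires a careful combinatorial tracking of how the rays can diverge and re-merge, exploiting both the thin-triangle condition and the local finiteness of $\Theta^{(3)}$-extensions at each internal vertex. A secondary technical issue is verifying that the $U_s$ are genuinely open in $\dd_{v_0}\Gamma$ and that their meshes really shrink to zero; both reduce to careful application of Lemma~\ref{lem:compactness} and Addendum~\ref{add:compactness} from Section~\ref{sec:relative-hyperbolic-groups}.
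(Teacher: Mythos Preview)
Your order estimate fails as stated. Because you index the cover by length-$n$ \emph{segments} $s$ rather than by their endpoints, the multiplicity of $\calu_n$ at a point $\xi$ counts the number of distinct length-$n$ initial segments of $\Theta^{(3)}$-small geodesic rays from $v_0$ to $\xi$, and this number can grow with $n$. Take $\Gamma$ to be the Cayley graph of $\IZ$ with generating set $\{1,2\}$: it is locally finite (hence fine) and hyperbolic, with $\dd\Gamma=\{+\infty,-\infty\}$. A geodesic ray from $0$ to $+\infty$ is either the sequence $(+2,+2,\dots)$ or has exactly one step $+1$ (at some position $a\geq 0$) followed by $+2$'s forever; there are thus infinitely many such rays, and their length-$n$ initial segments give $n+1$ pairwise distinct members of $\calu_n$ all containing $+\infty$. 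Hausdorff closeness of the rays together with the branching bound $K$ does not prevent this: the branching bound controls one-step extensions, not the global count of distinct paths through a $\delta$-tube in which rays may repeatedly split and re-merge.

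The repair is to index by the \emph{endpoint} $v_s$ rather than by the segment $s$. Then the multiplicity at $\xi$ is bounded by the number of vertices of $V_{v_0}:=\{v\in V:\text{there is a }\Theta^{(3)}\text{-small geodesic from }v_0\text{ to }v\}$ lying in a ball of radius $\sim 2\delta$, and the real content becomes showing that such balls in $V_{v_0}$ (with the metric induced from $\Gamma$) have uniformly bounded cardinality. This is precisely the route the paper takes: it shows $V_{v_0}$ is quasi-convex, proves (via Lemma~\ref{lem:large-angles-in-triangles-no-c}) that any $\Gamma$-geodesic between two points of $V_{v_0}$ is $\Theta$-small for a suitable enlargement $\Theta\supseteq 3\Theta^{(3)}$, and then applies Corollary~\ref{cor:finite-theta-balls} to obtain uniform properness. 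With this in hand the paper builds an auxiliary uniformly locally finite hyperbolic graph $\Gamma_{v_0}$ on the vertex set $V_{v_0}$, identifies $\dd_{v_0}\Gamma$ with $\dd\Gamma_{v_0}$, and invokes the standard estimate bounding $\dim\dd\Gamma_{v_0}$ in terms of ball growth. Your direct cover construction, once re-indexed by vertices, would amount to reproving that standard estimate, but it cannot bypass the uniform properness step. A secondary gap: openness of $U_s$ does not follow from Addendum~\ref{add:compactness} as you suggest, since for $\xi_n\to\xi$ in $\dd_{v_0}\Gamma$ the addendum only produces a subsequence of rays converging to \emph{some} $\Theta^{(3)}$-small ray to $\xi$, which need not be the one extending $s$.
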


  \begin{proof}
    Let $V_{v_0}$ be the set of all vertices $v$ of $\Gamma$ for which there exists 
    a $\Theta^{(3)}$-small geodesic from $v_0$ to $v$.
    Hyperbolicity of $\Gamma$ implies that this is a quasi-convex subset of $\Gamma$:
    there is $r > 0$ such that
    for $v, v' \in V_{v_0}$ any geodesic between $v$ and $v'$ will be contained in the 
    $r$-neighborhood of the union of geodesics from $v_0$ to $v$ and $v'$.
    Consequently, $V_{v_0}$ is hyperbolic in the 
    metric $d_{v_0}$ inherited as a subspace of $\Gamma$.
    Let $\Gamma_{v_0}$ be the graph whose vertices are $V_{v_0}$ and 
    for which there is an edge between $v$ and $v'$ whenever $d_{v_0}(v,v') \leq 2r + 1$.
    Of course, the metric $d_{\Gamma_{v_0}}$ induced by $\Gamma_{v_0}$ on $V_{v_0}$ satisfies
    $d_{v_0} \leq (2r+1) d_{\Gamma_{v_0}}$.
    Conversely, since any $\Theta^{(3)}$-small geodesic in $\Gamma$ starting in $v_0$ defines also a
    path in $\Gamma_{v_0}$, we can as before apply hyperbolicity to any geodesic $c$ in $\Gamma$ between points $v, v' \in V_{v_0}$ and replace $c$ with a path in $\Gamma_{v_0}$ of length equal to the length of $c$.
    Thus $d_{\Gamma_{v_0}}$ and $d_{v_0}$ are Lipschitz equivalent. 
    In particular, $\Gamma_{v_0}$ is hyperbolic.
    
    Let $v, v' \in V_{v_0}$.
    Let $\Theta$ be a size for angles such that $3\Theta^{(3)} \subseteq \Theta$.
    Since $v_0$ is a vertex of finite valency we can, after enlarging $\Theta$, assume
    that all angles at the vertex $v_0$ are $\Theta$-small. 
    We claim that any geodesic $c$ in $\Gamma$ between $v$ and $v'$ is $\Theta$-small.
    If this fails, then for some internal vertex $w$ of $c$ the angle $\varangle_w c$
    is $\Theta$-large.
    Let $c_v$ and $c_{v'}$ be $\Theta^{(3)}$-small geodesics from $v_0$ to $v$ and $v'$.
    We apply Lemma~\ref{lem:large-angles-in-triangles-no-c} to the geodesic triangle  
    whose sides are $c$, $c_v$ and $c_{v'}$.
    Since $c_v$ and $c_v'$ are $\Theta^{(3)}$-small we conclude that $w = v_0$.
    But our choice of $\Theta$ guarantees that $\varangle_w c$ is $\Theta$-small.
    Therefore $c$ is $\Theta$-small.    

    For $v \in V_{v_0}$ the ball $B_r(v)$ of radius $r$ around $v$ in $V_{v_0}$
    with respect to $d_{v_0}$
    is contained in the set of vertices of $\Gamma$ that can be connected to $v$
    by a $\Theta$-small geodesic of length $r$. 
    If $v$ is of finite valency, then this set is finite by 
    Corollary~\ref{cor:finite-theta-balls}. 
    Moreover, as the action of $G$ on the vertices of $\Gamma$ is cofinite,
    the number of vertices in $B_r(v)$ is bounded by a number depending
    only on $r$.
    Any vertex $w \in V_{v_0}$ of infinite valency 
    is adjacent to a vertex $v \in V_{v_0}$ of finite valency.
    (Consider the $\Theta^{(3)}$-small geodesic between $w$ and $v_0$ and
    recall that we assumed that no two vertices of infinite valency are adjacent.)
    Since $B_r(w) \subseteq B_{r+1}(v)$ whenever $v,w \in V_{v_0}$
    are adjacent, we can now conclude that $V_{v_0}$ is 
    uniformly proper with respect to $d_{v_0}$ (or $d_{\Gamma_{v_0}}$):
    the number of elements in a ball is bounded by a number only depending on the
    radius of the ball.
    The dimension of the boundary of a hyperbolic graph  can be estimated in terms of
    the number of vertices in balls of a fixed radius (depending on the hyperbolicity constant);
    this is a standard fact see for example~\cite[Proof of Prop.9.3(ii)]{Bartels-Lueck-Reich(2008cover)}. 
    Consequently, the boundary of $\Gamma_{v_0}$ is finite dimensional.
    Since this boundary agrees with $\dd_{v_0} \Gamma$ it follows that
    $\dd_{v_0} \Gamma$ is finite dimensional.
    The action of $G$ on the set of vertices of finite valency is cocompact. 
    It follows that the maximum of the dimension of the $\dd_{v_0} \Gamma$ is finite.
  \end{proof}

  \subsection*{The relative Rips complex.}
 
  We will say that a geodesic is $(d,\Theta)$-small
  if it is $\Theta$-small and of length at most $d$. 

  \begin{definition} \label{def:relative-rips}
    Let $\Theta$ be a size for angles and  $d > 0$.
    The relative Rips complex $P_{d,\Theta}$ of $\Gamma$ has
    $V$ as the set of vertices.
    A finite set $\sigma$ of vertices spans a simplex for 
    $P_{d,\Theta}$ if and only if between any two vertices
    $v,v' \in \sigma$ there exists a $(d,\Theta)$-small geodesic. 
  \end{definition}

  \begin{lemma}
    \label{lem:P_d,Theta-finite-dim+loc-finite}
    The relative Rips complex $P_{d,\Theta}$ is
    finite dimensional.
    For $n \geq k \geq 1$ each $k$-simplex is contained in only finitely many 
    $n$-simplices.
  \end{lemma}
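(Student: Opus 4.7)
The plan is to reduce both statements to Corollary~\ref{cor:finite-theta-balls}. Set $\Theta' := \Theta + 2\Theta^{(3)}$; by Lemma~\ref{lem:theta_3-cofinite} and Lemma~\ref{lem:angles-locally-finite} this is again a size for angles. Note $\Theta \subseteq \Theta'$ (take $e' = e''$ in the sum and use that trivial angles lie in $\Theta^{(3)}$), so every $(d,\Theta)$-small geodesic is automatically $(d,\Theta')$-small. The main claim I want to establish is the following: whenever $v, v', w \in V$ span a $2$-simplex of $P_{d,\Theta}$, and $c_{v,v'}$ is a chosen $(d,\Theta)$-small geodesic with initial edge $e_0$ at $v$, there exists a $(d,\Theta)$-small geodesic from $v$ to $w$ whose initial edge $e_w$ at $v$ satisfies $(e_0, e_w) \in \Theta'$. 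Once this is proved, Corollary~\ref{cor:finite-theta-balls} applied with size $\Theta'$ and $\alpha = d$ yields a number $D$, depending only on $\Theta$ and $d$, that bounds the set of possible such $w$.

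Granting the claim, both conclusions follow immediately. For finite dimensionality, given any simplex $\sigma$ of dimension at least $1$, pick $v, v' \in \sigma$ spanning an edge and fix $c_{v,v'}$ as above; every other vertex of $\sigma$ plays the role of $w$ in the claim, so $|\sigma| \leq D + 2$ and therefore $\dim P_{d,\Theta} \leq D + 1$. For local finiteness of a $k$-simplex $\sigma$ with $k \geq 1$, pick any edge $\{v, v'\} \subseteq \sigma$; every $n$-simplex containing $\sigma$ is obtained by adjoining additional vertices from the finite set furnished by the claim, so there are only finitely many such. The hypothesis $k \geq 1$ is essential since it provides access to an edge.

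To establish the claim, choose $(d,\Theta)$-small geodesics $c_{v,w}$ from $v$ to $w$ and $c_{w,v'}$ from $w$ to $v'$, and let $e_w$ be the initial edge of $c_{v,w}$ at $v$. Consider the geodesic triangle with sides $c_{v,v'}, c_{v,w}, c_{w,v'}$ and corners $v, v', w$ (three distinct vertices since they span a simplex). If $v \notin c_{w,v'}$, then by the definition of $\Theta^{(3)}$ the angle $(e_0, e_w)$ at $v$ lies in $\Theta^{(3)} \subseteq \Theta'$, and we are done. Otherwise $v$ is an internal vertex of $c_{w,v'}$; let $e_0'$ and $e_w'$ be the edges of $c_{w,v'}$ incident to $v$ pointing towards $v'$ and $w$ respectively, so that $(e_0', e_w') \in \Theta$ since $c_{w,v'}$ is $\Theta$-small. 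Lemma~\ref{lem:geodesic-2-gons} applied to $c_{v,v'}$ and the restriction $c_{w,v'}|_{[v,v']}$, both geodesics from $v$ to $v'$, gives $(e_0, e_0') \in \Theta^{(3)}$; applied to $c_{v,w}$ and $c_{w,v'}|_{[v,w]}$ it gives $(e_w, e_w') \in \Theta^{(3)}$. Concatenating these three angles according to the definition of the sum of sizes yields $(e_0, e_w) \in \Theta^{(3)} + \Theta + \Theta^{(3)} = \Theta'$.

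The only point requiring care is the case split in the triangle according to whether $v$ lies on the opposite side $c_{w,v'}$; both cases are handled by short applications of Lemma~\ref{lem:geodesic-2-gons} together with the definition of $\Theta^{(3)}$, so I do not expect any substantial obstacle in carrying out the full proof.
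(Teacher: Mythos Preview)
Your proof is correct and follows essentially the same approach as the paper's: fix an edge $\{v,v'\}$ in the simplex, show that for any further vertex $w$ the initial edge of a $(d,\Theta)$-small geodesic from $v$ to $w$ makes a $\Theta+2\Theta^{(3)}$-small angle with the initial edge of $c_{v,v'}$, case-splitting on whether $v$ lies on the geodesic from $w$ to $v'$, and then invoke Corollary~\ref{cor:finite-theta-balls}. The paper's write-up is somewhat terser in the second case, but the argument and the key size $\Theta+2\Theta^{(3)}$ are identical.
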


  \begin{proof}
    Let $v \in V$ and $e$ be an edge incident to $v$.
    Because of Lemma~\ref{lem:angles-locally-finite} 
    there are only finitely many vertices $v'$ for which there exists
    a $(d,\Theta)$-small geodesic from $v$ to $v'$
    whose first edge is $e$.
    Since $G$ acts cocompactly on the set of edges the number of
    such geodesics is uniformly bounded.

    Fix $v' \neq v$ such that there exists a $\Theta$-small geodesic $c$
    between $v$ and $v'$.
    Let $W(v,v')$ be the set of vertices $w$ for which
    there exist $(d,\Theta)$-small geodesics,
    $c_w$ between $w$ and $v$ and $c'_w$ between $w$ and $v'$.
    We need to show that the number of elements in $W(v,v')$ 
    is uniformly bounded.
    Let $e$ be the initial edge of $c$, starting at $v$.
    Let $f$ be the initial edge of $c_w$, starting at $v$.
    We claim that $(e,f)$ is $\Theta+2\Theta^{(3)}$ small.
    Lemma~\ref{lem:angles-locally-finite} implies then that
    the number of such $f$ is uniformly bounded and the first paragraph
    of this proof implies then that the cardinality of $W(v,v')$ is
    uniformly bounded.
    
    It remains to prove that $(e,f)$ is $\Theta+2\Theta^{(3)}$ small.
    If $c'_w$ does not meet $v$, then $(e,f)$ is $\Theta^{(3)}$ small.
    Otherwise $\varangle_v c'_w$ is $\Theta$-small and
    Lemma~\ref{lem:geodesic-2-gons} implies then that 
    $(e,f)$ is $\Theta + 2\Theta^{(3)}$-small.
  \end{proof}

  \noindent
  In the next statement the positive integer $\delta$ will be again 
  a hyperbolicity constant for $\Gamma$.

  \begin{proposition}
    \label{prop:contract-P_d,Theta}
    Assume that $d \geq 4 \delta$ and $7\Theta^{(3)} \subseteq \Theta$.  
    Then $P_{d,\Theta}$ is contractible.
    More precisely the following holds:

    Let $K$ be a finite subcomplex of $P_{d,\Theta}$.
    Let $L$ be the subcomplex of $P_{d,\Theta}$ spanned by all
    vertices $\tilde v$ for which there are vertices $v,v' \in K$
    such that $\tilde v$ belongs to a geodesic between $v$ and $v'$.
    Then $K$ is contractible within $L$.
  \end{proposition}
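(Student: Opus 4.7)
The plan is to imitate the classical argument for contractibility of the Rips complex of a hyperbolic graph, additionally tracking $\Theta$-smallness of all geodesics that appear along the way. Fix a base vertex $v_0 \in K \cap V$. For each vertex $v$ of $K$ pick a geodesic $c_v \colon [0, r(v)] \to \Gamma$ from $v_0$ to $v$, where $r(v) := d_\Gamma(v_0, v)$, and set $R := \max_{v} r(v)$. For each integer $t$ with $0 \leq t \leq R$, define a vertex map $\pi_t$ on the vertex set of $K$ by $\pi_t(v) := c_v(\min\{t, r(v)\})$. Since any intermediate vertex of $c_v$ lies on a geodesic between the two vertices $v_0, v$ of $K$, the image of $\pi_t$ lies in the vertex set of $L$. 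Note that $\pi_0$ is constant at $v_0$ while $\pi_R$ is the identity on vertices of $K$.

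The heart of the argument is the following key claim: if $v, w$ are vertices of $K$ spanning an edge, and $s, t \in \{0, 1, \dots, R\}$ satisfy $|s - t| \leq 1$, then $\pi_s(v)$ and $\pi_t(w)$ are connected by a $(d, \Theta)$-small geodesic. Granting it, a pairwise application shows that for any simplex $\{v_1, \dots, v_n\}$ of $K$ and any choice of $t_i \in \{t, t+1\}$ the set $\{\pi_{t_1}(v_1), \dots, \pi_{t_n}(v_n)\}$ spans a simplex of $P_{d,\Theta}$, and this simplex automatically lies in $L$ since $L$ is the full subcomplex of $P_{d,\Theta}$ on its vertex set. Hence each $\pi_t$ extends to a simplicial map $K \to L$, and a straight-line simplicial homotopy inside $L$ connects $\pi_t$ and $\pi_{t+1}$. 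Concatenating these homotopies for $t = R - 1, R - 2, \dots, 0$ contracts the inclusion $K \hookrightarrow L$ to the constant map at $v_0$, as required.

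To prove the key claim I would consider the geodesic triangle $T$ with sides $c_v$, $c_w$, and a $(d, \Theta)$-small geodesic $c_{v,w}$ between $v$ and $w$ witnessing the edge of $K$. By $\delta$-thinness of $T$, the vertex $\pi_t(w)$ lies within $d_\Gamma$-distance $\delta$ of some vertex $u$ on $c_v \cup c_{v,w}$, and symmetrically for $\pi_s(v)$. A short case analysis using the triangle inequality for $d_\Gamma(v_0, \cdot)$ and the bound $d_\Gamma(v,w) \leq d$ yields $d_\Gamma(\pi_s(v), \pi_t(w)) \leq d$; the hypothesis $d \geq 4 \delta$ provides exactly the slack needed in the worst case, when $u$ lies near the middle of $c_{v,w}$ rather than on $c_v$. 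For the $\Theta$-smallness of any connecting geodesic between $\pi_s(v)$ and $\pi_t(w)$, the plan is to apply Lemma~\ref{lem:large-angles-in-triangles} iteratively to the sub-triangles of $T$ produced by subdividing at $\pi_s(v)$, $\pi_t(w)$, $u$, and the $\delta$-bridges realizing the closeness. Each application of that lemma enlarges the controlled size by a summand of $\Theta^{(3)}$-type, and Lemma~\ref{lem:geodesic-2-gons} contributes a further $\Theta^{(3)}$ on each of the resulting bigons; the hypothesis $7 \Theta^{(3)} \subseteq \Theta$ is precisely the amount of slack needed to absorb the handful of such applications that arise.

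The main obstacle I expect is the $\Theta$-smallness half of the key claim. The length bound is a routine consequence of $\delta$-hyperbolicity, but tracking the angular behaviour through the hyperbolic chase is more delicate: the worst case is when $\pi_t(w)$ is $\delta$-close to the interior of $c_{v,w}$ rather than to $c_v$, which forces splitting $T$ into several sub-triangles and threading the size of angles through two or three successive uses of Lemma~\ref{lem:large-angles-in-triangles} together with bigon comparisons from Lemma~\ref{lem:geodesic-2-gons}. Getting the combinatorics of the angular budget right, so that the total distortion stays within $7 \Theta^{(3)}$, is the most demanding part of the argument.
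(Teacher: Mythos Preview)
Your approach mirrors the classical Rips-complex contraction, but it has a genuine gap in the $\Theta$-smallness half of the key claim, and this gap is not a matter of bookkeeping: no fixed multiple of $\Theta^{(3)}$ will close it.

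The difficulty is that the chosen geodesics $c_v$ from $v_0$ to $v$ need not be $\Theta$-small; in a fine (but not locally finite) graph they may have \emph{arbitrarily} large angles at vertices of infinite valence. Suppose $c_v$ and $c_w$ both pass through such a vertex $p$ at distance $s$ from $v_0$, and suppose moreover $p$ lies on the $(d,\Theta)$-small witness $c_{v,w}$. At time $t = s+1$ the vertices $\pi_{t}(v)$ and $\pi_{t}(w)$ are the endpoints of the outgoing edges $e_v^+, e_w^+$ of $c_v, c_w$ at $p$. The only short connector passes through $p$, and its angle there is $(e_v^+, e_w^+)$. Lemma~\ref{lem:geodesic-2-gons} relates $e_v^+$ to the edge of $c_{v,w}$ towards $v$ and $e_w^+$ to the edge of $c_{v,w}$ towards $w$, each up to $\Theta^{(3)}$; since $\varangle_p c_{v,w}$ is only known to lie in $\Theta$, the best you obtain is $(e_v^+, e_w^+) \in \Theta + 2\Theta^{(3)}$, which is strictly larger than $\Theta$. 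The hypothesis $7\Theta^{(3)} \subseteq \Theta$ does not help here, because the error is added to $\Theta$, not to a sum of $\Theta^{(3)}$'s. Your proposed iteration of Lemma~\ref{lem:large-angles-in-triangles} runs into the same wall: that lemma requires one side of the triangle to be $\Theta_0$-small for some fixed $\Theta_0$, but the segments of $c_v$ and $c_w$ you would feed in carry no such bound.

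This is exactly the obstacle the paper's proof is designed around. Instead of sliding uniformly, the paper replaces one vertex $v$ at a time by a carefully chosen $\tilde v$ on $c_v$, and the choice is governed by a secondary parameter $\beta$ recording the position of the \emph{outermost} $2\Theta^{(3)}$-large angle along the geodesics from $v_0$. When such a large angle occurs within distance $d$ of $v$ (the case $\alpha < \beta + d$), one jumps directly to that large-angle vertex; Lemma~\ref{lem:large-angles-in-triangles-no-c} then forces the geodesic from $v_0$ to every neighbour $v'$ of $v$ to pass through the same vertex, which is what makes the replacement compatible with adjacency in $P_{d,\Theta}$. In short, the large-angle vertices have to be used as pivot points rather than slid through; the uniform flow of the classical argument does not survive the passage from hyperbolic to relatively hyperbolic.
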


  \begin{proof}
    Let $V_K$ be the set of vertices of $K$.
    Fix $v_0 \in V_K$.
    Let $\alpha := \max_{v \in V_K} d_\Gamma(v_0,v)$.
    Let $a$ be the number of vertices in $V_K$ for which
    $d_\Gamma(v_0,v) = \alpha$.
    For $v \in V_K$ let $W(v)$ be the set of vertices
    $w \in V$ such that there exists a geodesic $c$ from
    $v_0$ to $v$ such that
    $c$ passes through $w$ such that $\varangle_w c$
    is $2\Theta^{(3)}$-large.
    Let $\beta := \max_{v \in V_K, w \in W(v)} d_\Gamma(v_0,w)$.
    Let $b$ be the number of vertices $v \in V_K$ for which
    there is $w \in W(v)$ with $d_\Gamma(v_0,w) = \beta$.
    Clearly, $\alpha \geq \beta$.
    If $\alpha = 0$, then $K$ consists of a single vertex and
    there is nothing to prove.

    For the general case we will use induction on 
    $(\alpha + \beta, a + b)$.
    We claim that there is a vertex $v \in V_K$ and a vertex $\tilde v$ 
    on a geodesic from $v_0$ to $v$ such that:
    \begin{numberlist}
    \item[\label{nl:v-to-tilde-v}] 
       there is a $(d,\Theta)$-small geodesic from $v$ to $\tilde v$;
    \item[\label{nl:v'-to-tilde-v}] 
       if there is a $(d,\Theta)$-small geodesic from $v$ to $v'$,
       with $v' \in V_K$, then there is also
       a $(d,\Theta)$-small geodesic from $\tilde v$ to $v'$;
    \item[\label{nl:smaller-numbers}] either $d_\Gamma(v_0,v) = \alpha$ and 
      $d_\Gamma(v_0,\tilde v) < \alpha$ or there is $w \in W(v)$
      with $d_\Gamma(v_0, w) = \beta$, and 
      $d_\Gamma(v_0,\tilde w) < \beta$ for all $\tilde w \in W(\tilde v)$.
      (In fact, in the second case we will use $\tilde v = w$.)  
    \end{numberlist}
    Given this claim there is a homotopy in $L$ that replaces 
    $v$ by $\tilde v$.
    The effect on $(\alpha + \beta, a + b)$ is then that we either
    reduce $\alpha + \beta$ or that we reduce $a+b$ but do not 
    increase $\alpha + \beta$. 
    This will complete the induction step modulo our claim.
     
    To prove the claim, we distinguish two cases.
    In the first case we assume $\alpha \geq \beta + d$.
    Then choose $v \in V_K$ such that $d_\Gamma(v_0,v) = \alpha$.
    As $\delta$ is an integer, we can pick a vertex $\tilde v$ 
    with $d_\Gamma(v,\tilde v) = 2 \delta$ that
    belongs to a geodesic $c$ between $v_0$ and $v$.
    Clearly~\eqref{nl:smaller-numbers} holds.
    The geodesic $c|_{[v,\tilde v]}$ is of length $2\delta \leq d$. 
    Let $w$ be an internal vertex of $c|_{[v,\tilde v]}$.
    If  $\varangle_w c$ is $\Theta$-large,
    then, since $2\Theta^{(3)} \subseteq \Theta$, $w \in W(v)$ and
    $\beta \geq d_\Gamma(v_0,w) > d_\Gamma(v_0,\tilde v) 
                         \geq \alpha - 2\delta \geq \alpha - d$.
    Since this contradicts $\alpha \geq \beta + d$, the angle 
    $\varangle_w c$ at $w$ is $\Theta$-small.
    Thus $c|_{[v,\tilde v]}$ is  $\Theta$-small 
    and~\eqref{nl:v-to-tilde-v} holds.
    In fact, this argument proves that 
    $c|_{[v,\tilde v]}$ is  $2\Theta^{(3)}$-small. 
    To prove~\eqref{nl:v'-to-tilde-v} consider $v' \in V_K$ such that there
    exists a $(d,\Theta)$-small geodesic $c'$  between
    $v$ and $v'$.
    Hyperbolicity of $\Gamma$ implies that there is $w \in V$ with
    $d_\Gamma(\tilde v, w) \leq \delta$ such that $w$ belongs to
    a geodesic between $v_0$ and $v'$ or to a geodesic between
    $v$ and $v'$.
    In the first case 
    $d_\Gamma(w, v') = d_\Gamma(v_0, v') - d_\Gamma(w,v_0)
     \leq d_\Gamma(v_0, v') - ( d_\Gamma (v_0,\tilde v) - d_\Gamma(w,\tilde v) )
     \leq \alpha - ( \alpha - 2\delta - \delta ) = 3 \delta \leq d - \delta$.
    In the second case
    $d_\Gamma(w,v') = d_\Gamma( v,v') - d_\Gamma( w, v)
      \leq d_\Gamma(v,v') - (d_\Gamma(v,\tilde v) - d_\Gamma(w,\tilde v))
      \leq d - (2\delta - \delta) = d - \delta$.
    Thus $d_\Gamma(\tilde v, v') \leq d$ in both cases.
    Let $\tilde c$ be a geodesic from $v'$ to $\tilde v$.
    It remains to show that $\tilde c$ is $\Theta$-small.
    Assume this fails.
    Then there is an internal vertex $w$ of $\tilde c$ such that 
    $\varangle_w \tilde c$ is $\Theta$-large.
    We can then apply Lemma~\ref{lem:large-angles-in-triangles-no-c}
    to the geodesics $c|_{[v,\tilde v]}$, $c'$ and $\tilde c$.
    Since $c|_{[v, \tilde v]}$ is $2\Theta^{(3)}$-small and 
    $2\Theta^{(3)} + 2\Theta^{(3)} \subseteq \Theta$, 
    Lemma~\ref{lem:large-angles-in-triangles-no-c} implies 
    $w \in c'$.
    In particular, $d_\Gamma(w,v) < d$ and therefore 
    $d_\Gamma(v_0,w) \geq d_\Gamma(v_0,v) - d_\Gamma(w,v) > \alpha - d \geq \beta$.
    Let $c''$ be a geodesic from $v'$ to $v_0$.
    Consider the geodesic triangle whose sides are
    $\tilde c$, $c''$ and $c|_{[v_0,\tilde v]}$.
    Since $\varangle_w \tilde c$ is $\Theta$-large, we can again
    apply Lemma~\ref{lem:large-angles-in-triangles-no-c}
    and deduce $w \in c'' \cup c|_{[v_0,\tilde v]}$.
    Moreover, since $2 \cdot 2\Theta^{(3)} + 3\Theta^{(3)} \subseteq \Theta$
    we can combining this Lemma with Lemma~\ref{lem:tripod} and 
    deduce that the angle of $c''$ or of $c|_{[v_0,\tilde v]}$ at
    $w$ will be $2\Theta^{(3)}$-large. 
    In the first case $w \in W(v')$ in the second $w \in W(v)$.
    Both cases imply $d_\Gamma(w,v_0) \leq \beta$.
    But this is a contradiction since we proved earlier 
    $d_\Gamma(v_0,w) > \beta$.  

    It remains to prove our claim under the assumption $\alpha < \beta + d$.
    If $\beta = 0$ then there are $(d,\Theta)$-small geodesics
    between $v_0$ and any $v \in V_K$.
    Thus we can set $\tilde v := v_0$ if $\beta = 0$.
    So assume $\beta > 0$ and
    pick $v \in V_K$, a geodesic $c$ between $v_0$ and $v$ 
    and an internal vertex $\tilde v$ of $c$ such that 
    $\varangle_{\tilde v} c$ is $2\Theta^{(3)}$-large and
    $d_\Gamma(v_0,\tilde v) = \beta$.
    Clearly~\eqref{nl:smaller-numbers} holds.  
    The geodesic $c|_{[v,\tilde v]}$ is of length $\leq \alpha - \beta \leq d$. 
    Let $w$ be an internal vertex of $c|_{[v,\tilde v]}$.
    If $\varangle_w c$ is $\Theta$-large,
    then, since $2\Theta^{(3)} \subseteq \Theta$, $w \in W(v)$ and
    we obtain the contradiction
    $\beta \geq d_\Gamma(v_0,w) > d_\Gamma(v_0,\tilde v) = \beta$.                   
    Thus $c|_{[v,\tilde v]}$ is $\Theta$-small 
    and~\eqref{nl:v-to-tilde-v} holds.
    In fact, this argument proves that 
    $c|_{[v,\tilde v]}$ is $2\Theta^{(3)}$-small. 
    To prove~\eqref{nl:v'-to-tilde-v} let $v' \in V_K$ such that there
    exists a $(d,\Theta)$-small geodesic $c'$ between
    $v$ and $v'$.
    Let $c''$ be a geodesic between $v_0$ and $v'$.
    Consider the geodesic triangle whose sides are
    $c$, $c'$ and $c''$.
    Since $\varangle_{\tilde v} c$ is $2\Theta^{(3)}$-large,
    Lemma~\ref{lem:large-angles-in-triangles-no-c} implies
    $\tilde v \in c' \cup c''$.
    If $\tilde v \in c'$, then  $c'|_{[v',\tilde v]}$ is 
    a $(d,\Theta)$-small geodesic  between $v'$ and
    $\tilde v$ and~\eqref{nl:v'-to-tilde-v} holds.
    If $\tilde v \in c''$, then we use the restriction $c''|_{[v',\tilde v]}$.
    This restriction is $\Theta$-small, since otherwise we could
    argue as in the proof of~\eqref{nl:v-to-tilde-v} and find
    $w \in W(v')$ with $d_\Gamma(v_0,w) > \beta$.
    Similarly, in this case 
    $d_\Gamma(\tilde v, v') = d_\Gamma(v',v_0) - d_\Gamma(\tilde v, v_0) 
          \leq \alpha - \beta \leq d$.  
  \end{proof}

  \noindent
  From now on we will assume that $d$ and $\Theta$ satisfy the assumptions
  of Proposition~\ref{prop:contract-P_d,Theta}.

  \begin{definition} \label{def:compactification}
    Let $\overline{P_{d,\Theta}} := P_{d,\Theta} \cup \dd \Gamma$.
    For an open subset $U$ of $\Delta_+$ we define
    $P_{d,\Theta}(U) \subseteq P_{d,\Theta}$ as the subcomplex
    spanned by the vertices of $\Gamma$ that belong to $U$.
    For $\xi \in \overline{P_{d,\Theta}}$ we 
    define collections of subsets $\caln_\xi$ of 
    $\overline{P_{d,\Theta}}$ as follows.
    \begin{enumerate}
    \item \label{def:comp:P-V_inf} 
      If $\xi \in P_{d,\Theta} \setminus V_\infty$, then
      $\caln_\xi$ consists of all open neighborhoods $W$ of $\xi$
      in $P_{d,\Theta} \setminus V_\infty$.
    \item \label{def:comp:dd} 
      If $\xi \in \dd \Gamma$, then $\caln_\xi$ consists of all
      sets of the form $P_{d,\Theta}(U) \cup U$, where 
      $U$ is a  neighborhood of $\xi$ in $\Delta_+$. 
    \item \label{def:comp:V_inf} 
       If $\xi \in V_\infty$, then $\caln_\xi$ consists of all
       sets of the form $P_{d,\Theta}(U) \cup U \cup W$, 
       where $U$ is a neighborhood of $\xi$ in $\Delta_+$,
       and $W$ is a neighborhood of $\xi$ in $P_{d,\Theta}$ 
       (i.e., the intersection of $W$ with any simplex of
         $P_{d,\Theta}$ is open in the simplex). 
    \end{enumerate}
    We will use the topology on $\overline{P_{d,\Theta}}$ which  for
    $U \subseteq \overline{P_{d,\Theta}}$ is open 
    if and only if for every $\xi \in U$ there is 
    $N \in \caln_\xi$ with $N \subseteq U$.
  \end{definition}
  
  \begin{lemma}
    \label{lem:is-neighborhood}
    Each $N \in \caln_\xi$ is a (not necessarily open)
    neighborhood of $\xi$ in $\overline{P_{d,\Theta}}$.
  \end{lemma}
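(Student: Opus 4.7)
The plan is to exhibit, for each $N \in \caln_\xi$, an explicit open subset $O \subseteq N$ containing $\xi$, proceeding by cases on where $\xi$ sits. If $\xi \in P_{d,\Theta} \setminus V_\infty$ the argument is immediate: the subspace $P_{d,\Theta} \setminus V_\infty$ is open in $\overline{P_{d,\Theta}}$, since each of its points has a $\caln$-basis consisting of open subsets of $P_{d,\Theta} \setminus V_\infty$, and $N$ is already open in this subspace by definition.

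The remaining two cases, $\xi \in \dd \Gamma$ and $\xi \in V_\infty$, both hinge on the following convexity property, which I would establish first: given an open neighborhood $U$ of $\xi$ in $\Delta_+$, there exists a smaller open neighborhood $U' \subseteq U$ of $\xi$ such that every vertex $v \in V \cap U'$ (excluding $v = \xi$ when $\xi \in V_\infty$) has the property that every vertex $v'$ joined to $v$ by a $(d,\Theta)$-small geodesic lies in $U$. Assuming this fails yields sequences $v_n \to \xi$ in $\Delta_+$ together with $v'_n \in V$ at bounded $d_\Gamma$-distance from $v_n$ and $v'_n \notin U$; but $v'_n \to \xi$ by Lemma~\ref{lem:small-at-infinity} if $\xi \in \dd \Gamma$, and by Addendum~\ref{add:small-at-infty-V_infty} together with the restriction $v_n \neq \xi$ if $\xi \in V_\infty$, contradicting $v'_n \notin U$. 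When $\xi \in V_\infty$ I would additionally shrink the given CW-open $W$ to a CW-open $W' \subseteq W$ containing no vertex of $V_\infty$ other than $\xi$; the set of points in the open star of $\xi$ whose barycentric coordinate at $\xi$ exceeds $1/2$ does the job.

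With the convexity property and $W'$ in hand, set $O := P_{d,\Theta}(U') \cup U'$ in the $\dd \Gamma$ case and $O := P_{d,\Theta}(U') \cup U' \cup W'$ in the $V_\infty$ case; I would then verify that $O$ is open by producing $N_\eta \in \caln_\eta$ with $N_\eta \subseteq N$ for every $\eta \in O$. For $\eta = \xi$ one just takes $N_\xi = N$; for $\eta \in \dd \Gamma \cap U'$ one shrinks $U'$ to a smaller neighborhood $U_\eta$ of $\eta$ in $\Delta_+$; for $\eta \in V_\infty \cap U'$ with $\eta \neq \xi$ one combines such a $U_\eta$ with the open star of $\eta$ in $P_{d,\Theta}$, which lies inside $P_{d,\Theta}(U)$ by the convexity property applied at $\eta$; for $\eta \in P_{d,\Theta}(U') \setminus V_\infty$ one uses the open star in $P_{d,\Theta}$ of the smallest closed simplex $\sigma$ containing $\eta$, with $V_\infty$ removed, which, since $\eta \notin V_\infty$ forces $\sigma \neq \{\xi\}$ and hence $\sigma$ to have a vertex $v \neq \xi$, lies inside $P_{d,\Theta}(U)$ by the convexity property applied at $v$; and for $\eta \in W' \setminus \{\xi\}$ one has $\eta \in P_{d,\Theta} \setminus V_\infty$ by the choice of $W'$, so $W' \setminus V_\infty$ is an open neighborhood of $\eta$ in $P_{d,\Theta} \setminus V_\infty$ contained in $W \subseteq N$. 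The only nontrivial step is the convexity property itself: it is what turns proximity in the observer topology of $\Delta_+$ into proximity of all vertices at bounded $\Gamma$-distance, and this is precisely what the sequential compactness results Lemma~\ref{lem:small-at-infinity} and Addendum~\ref{add:small-at-infty-V_infty} were designed to deliver.
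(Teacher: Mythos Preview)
Your convexity step is exactly the right idea and matches the paper's use of Lemma~\ref{lem:small-at-infinity} and Addendum~\ref{add:small-at-infinty-V_infty}. The gap is in the logical structure of the last paragraph.

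You claim to ``verify that $O$ is open by producing $N_\eta \in \caln_\eta$ with $N_\eta \subseteq N$ for every $\eta \in O$''. But by definition of the topology, openness of $O$ requires $N_\eta \subseteq O$, not merely $N_\eta \subseteq N$; and your case analysis only delivers the latter (e.g.\ for $\eta \in V_\infty \cap U'$ you put the open star of $\eta$ into $P_{d,\Theta}(U)$, not $P_{d,\Theta}(U')$). In fact your $O = P_{d,\Theta}(U') \cup U'$ is typically \emph{not} open: $P_{d,\Theta}(U')$ is a closed subcomplex, so at a vertex $\eta \in V_\infty \cap U'$ every $\CW$-open neighborhood $W_\eta$ of $\eta$ meets the interior of simplices containing $\eta$, and such simplices can have vertices outside $U'$ (your convexity property only forces them into $U$).

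There are two clean fixes. The paper's route is to replace the closed subcomplex by something $\CW$-open: one checks once that whenever $U' \subseteq \Delta_+$ is open, $W' \subseteq P_{d,\Theta}$ is $\CW$-open, $P_{d,\Theta}(U') \subseteq W'$ and $V_\infty \cap W' \subseteq U'$, then $W' \cup U'$ is open in $\overline{P_{d,\Theta}}$; then one takes $W'$ to be the $\CW$-interior $P^\circ_{d,\Theta}(U)$ (together with $W$ in the $V_\infty$ case) with the unwanted $V_\infty$-points removed. Alternatively, what you \emph{actually} established---for every $\xi$ and every $N \in \caln_\xi$ there is $O \in \caln_\xi$ such that each $\eta \in O$ admits $N_\eta \in \caln_\eta$ with $N_\eta \subseteq N$---is precisely the neighborhood-basis axiom, and a standard argument (iterate to show the set of $\eta$ admitting such $N_\eta$ is itself open) then gives the conclusion. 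Either way your verification is the right one; only the sentence asserting that it makes $O$ open needs to be replaced.
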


  \begin{proof}
    If $\xi \in P_{d,\Theta} \setminus V_\infty$, then $\caln_\xi$ consists of open subsets of $\overline{P_{d,\Theta}}$. 
    Before we discuss the cases $\xi \in \dd \Gamma$ and $\xi \in V_\infty$ we point out a collection of open subsets of $\overline{P_{d,\Theta}}$.
    Consider $U' \subseteq \Delta_+$ open, $W' \subseteq P_{d,\Theta}$ open and assume that $P_{d,\Theta}(U') \subseteq W'$ and that $V_\infty \cap W' \subseteq U'$.
    It is not difficult to check that then $W' \cup U'$ is open in $\overline{P_{d,\Theta}}$.
        
    Let $\xi \in \dd \Gamma$ and $N = P_{d,\Theta}(U) \cup U \in \caln_\xi$.
    Lemma~\ref{lem:small-at-infinity} implies that there exists a neighborhood $U'$ of $\xi$ in $\Delta_+$ with the following property: 
    if $v$ is a vertex in $\Gamma$ for which there exists a geodesic of length $\leq d$ starting in $v$ and ending in $U'$, then $v \in U$. 
    It follows that $P_{d,\Theta}(U')$ is contained in the interior $P^\circ_{d,\Theta}(U)$ of $P_{d,\Theta}(U)$.
    Set now $W' := P^\circ_{d,\Theta}(U) \setminus (V_\infty \setminus U')$, i.e., we remove all vertices of infinite valency from $P_{d,\Theta}(U)$ that do not belong to $U'$.
    Then $W' \cup U'$ satisfies the conditions from the first paragraph of the proof and thus is open in $\overline{P_{d,\Theta}}$.     
    Since $\xi \in W' \cup U' \subseteq N$, $N$ is a neighborhood of $\xi$.
 
    Let $\xi \in V_\infty$ and $N = P_{d,\Theta}(U) \cup U \cup W \in \caln_\xi$.
    Addendum~\ref{add:small-at-infinty-V_infty} implies that there exists a neighborhood $U'$ of $\xi$ in $\Delta_+$  with the following property: 
    if $v$ is a vertex in $\Gamma$ for which there exists a $(d,\Theta)$-small geodesic starting in $v$ and ending in $U' \setminus \xi$, then $v \in U$. 
    It follows that $P_{d,\Theta}(U') \setminus \{ \xi \}$ is contained in $P^\circ_{d,\Theta}(U)$.
    Thus $P_{d,\Theta}(U') \subseteq P^\circ_{d,\Theta}(U) \cup W$. 
    Set $W' := (P^\circ_{d,\Theta}(U) \cup W) \setminus (V_\infty \setminus U')$.
    Again $W' \cup U'$ is open in $\overline{P_{d,\Theta}}$.
    Since $\xi \in W' \cup U' \subseteq N$, $N$ is a neighborhood of $\xi$. 
  \end{proof}

  Lemma~\ref{lem:is-neighborhood} implies that the inclusions $\Delta \to \overline{P_{d,\Theta}}$ and $P_{d,\Theta} \setminus V_\infty \to \overline{P_{d,\Theta}}$ are homeomorphisms onto their images.
  The canonical map $i \colon P_{d,\Theta} \to \overline{P_{d,\Theta}}$ is continuous, but $i$ is not a homeomorphism onto its image (unless $V_\infty = \emptyset$).
  For the continuity of $i$ it is important to include the sets $W$ in the definition of the neighborhood basis $\caln_\xi$ for $\xi \in V_\infty$, see Definition~\ref{def:compactification}~\ref{def:comp:V_inf}.
  
  \begin{lemma}
    \label{lem:properties-of-overlineP_d,Theta}
    \begin{enumerate}
    \item \label{lem:overlineP:countable} 
      The topology on $\overline{P_{d,\Theta}}$ is second countable;
    \item \label{lem:overlineP:compact} 
      $\overline{P_{d,\Theta}}$ is compact and metrizable;
    \item \label{lem:overlineP:dim} 
      $\overline{P_{d,\Theta}}$ is finite dimensional;
    \item \label{lem:overlineP:loc-contr}
      for any $\xi \in \overline{P_{d,\Theta}}$, and every neighborhood
      $U$ of $\xi$
      in $\overline{P_{d,\Theta}}$ there is a neighborhood $U'$ of $\xi$ in
      $\overline{P_{d,\Theta}}$ such that for any map 
      $f \colon S^k \to i^{-1}(U')$ there is a map
      $\hat f \colon D^{k+1} \to i^{-1}(U)$ such that
      $\hat f|_{S^k} = f$.
    \end{enumerate}
  \end{lemma}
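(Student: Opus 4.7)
My plan is to treat the four parts in order, with parts (a) and (b) providing the topological foundations (second countability, compactness, metrizability), part (c) following from a dimension-theoretic decomposition argument, and part (d) reducing to the contractibility result in Proposition~\ref{prop:contract-P_d,Theta} via the "coarse convexity" result Lemma~\ref{lem:U-V}. Throughout I plan to use that $\Delta_+$ has a countable basis, is compact and finite-dimensional, and that $P_{d,\Theta}$ is a finite-dimensional simplicial complex on a countable vertex set whose links away from $V_\infty$ are finite (Lemma~\ref{lem:P_d,Theta-finite-dim+loc-finite}).

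For (a) I will combine the countable basis of $\Delta_+$ with a countable basis for the standard simplicial topology on $P_{d,\Theta}$; the simplicial complex is second countable since it has countably many simplices, each of bounded dimension. The neighborhood bases $\caln_\xi$ of Definition~\ref{def:compactification} can thus be restricted to countable subfamilies, yielding a countable basis for $\overline{P_{d,\Theta}}$. For (b) I first establish Hausdorffness by choosing disjoint neighborhoods of pairs of points using the Hausdorff property of $\Delta_+$ and of each simplex; once Hausdorff, compact, and second countable are in hand, metrizability follows from Urysohn. For compactness I will argue sequentially: given $(x_n)$ in $\overline{P_{d,\Theta}}$, if infinitely many $x_n$ lie in a fixed finite subcomplex, extract a subsequence there; otherwise the smallest simplices $\sigma_n$ carrying $x_n$ have vertex sets escaping every finite subset of $V$, and picking a vertex $v_n\in\sigma_n$ we extract $v_n\to\xi\in\Delta$ in $\Delta_+$. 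By Lemma~\ref{lem:small-at-infinity} (if $\xi\in\dd\Gamma$) or Addendum~\ref{add:small-at-infinty-V_infty} (if $\xi\in V_\infty$, using that the edges of $\sigma_n$ come from $(d,\Theta)$-small geodesics), every vertex of $\sigma_n$ converges to $\xi$, so $\sigma_n\subseteq P_{d,\Theta}(U_0)$ for any fixed neighborhood $U_0$ of $\xi$ in $\Delta_+$ and large $n$, giving $x_n\to\xi$ in the topology of Definition~\ref{def:compactification}.

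For (c) I will note that $\Delta=V_\infty\cup\dd\Gamma$ is closed in $\overline{P_{d,\Theta}}$ (its complement $P_{d,\Theta}\setminus V_\infty$ is open) and inherits the observer topology from $\Delta_+$, hence is finite-dimensional by Theorem~\ref{thm:dim-delta-finite}. Each closed simplex of $P_{d,\Theta}$ is closed in $\overline{P_{d,\Theta}}$ (since simplices with disjoint vertex sets do not meet, and the neighborhood basis of Definition~\ref{def:compactification} permits shrinking $U_0$ to exclude any finite set of vertices), compact, and of dimension at most $\dim P_{d,\Theta}$. Writing $\overline{P_{d,\Theta}}=\Delta\cup\bigcup_\sigma\sigma$ as a countable union of closed finite-dimensional subsets in a separable metric space and applying the countable sum theorem~\cite[Thm.~2.5, p.125]{Pears(Dimension-theory)} then gives (c). For (d) I distinguish cases on $\xi$. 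If $\xi\in P_{d,\Theta}\setminus V_\infty$, local finiteness from Lemma~\ref{lem:P_d,Theta-finite-dim+loc-finite} gives local contractibility in the usual simplicial sense. If $\xi\in\dd\Gamma\cup V_\infty$, the neighborhood $U$ contains some $P_{d,\Theta}(U_0)\cup U_0$ (and possibly a simplicial neighborhood $W_0$ of $\xi$). I apply Lemma~\ref{lem:U-V} to find $U_0'\subseteq U_0$ with the property that every vertex on any geodesic between two vertices of $U_0'$ lies in $U_0$; I set $U':=P_{d,\Theta}(U_0')\cup U_0'$ (adjusting by a shrunken $W_0'$ in the $V_\infty$ case). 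Given $f:S^k\to i^{-1}(U')$, its image lies in a finite subcomplex $K$ of $P_{d,\Theta}(U_0')$, and Proposition~\ref{prop:contract-P_d,Theta} yields a nullhomotopy in the spanning subcomplex $L$, whose vertices lie in $U_0$ by the choice of $U_0'$, so $L\subseteq i^{-1}(U)$ as required.

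The main obstacle I expect is the case $\xi\in V_\infty$ throughout: the simplicial complex is not locally finite there, the neighborhood basis carries the extra simplicial piece $W$, and the compactness argument in (b) depends on the bounded-length $\Theta$-small geodesics built into simplices of $P_{d,\Theta}$ to invoke Addendum~\ref{add:small-at-infinty-V_infty}. Each subsidiary claim (closedness of simplices in (c), controlling the finite subcomplex $K$ in (d)) has to be checked with these $V_\infty$-neighborhoods kept in mind, and picking the correct auxiliary simplicial neighborhood $W_0'$ in (d) so that $K$ still lies in $P_{d,\Theta}(U_0')$ requires the assumption that $\Gamma$ has no two adjacent vertices of infinite valency together with Addendum~\ref{add:small-at-infinty-V_infty} to force the nearby vertices into $U_0'$.
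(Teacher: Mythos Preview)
Your proposal is essentially correct and follows the same strategy as the paper in all four parts: the same use of Lemma~\ref{lem:P_d,Theta-finite-dim+loc-finite} and the countable basis of $\Delta_+$ for (a), sequential compactness via vertex limits and Lemma~\ref{lem:small-at-infinity}/Addendum~\ref{add:small-at-infinty-V_infty} for (b), a dimension-theoretic decomposition for (c), and Lemma~\ref{lem:U-V} plus Proposition~\ref{prop:contract-P_d,Theta} for (d).

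Two small points where the paper is slightly cleaner. In (b), your claim that ``every vertex of $\sigma_n$ converges to $\xi$'' in the $V_\infty$ case overstates what Addendum~\ref{add:small-at-infinty-V_infty} gives: its conclusion is the \emph{alternative} that either $v'_n\to\xi$ or a subsequence is constant. The paper avoids this by extracting limits $\xi_j$ of \emph{all} vertex sequences $v_{0,n},\dots,v_{k,n}$ simultaneously and then arguing the dichotomy ``all $\xi_j$ coincide'' versus ``all $v_{j,n}$ eventually constant on a common subsequence''; this absorbs the constant alternative directly rather than looping back to your first case. In (d) for $\xi\in V_\infty$, you cannot literally arrange $W_0'\subseteq P_{d,\Theta}(U_0')$, since $W_0'$ must meet every simplex through $\xi$ and finitely many of these need not lie in $P_{d,\Theta}(U_0')$; the paper instead chooses $W'\subseteq U$ so that $W'\cap P_{d,\Theta}(U'_+)$ is a deformation retract of $W'$ and first retracts $f$ into $P_{d,\Theta}(U'_+)$ before applying Proposition~\ref{prop:contract-P_d,Theta}. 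Your (c) via the countable sum theorem is a valid alternative to the paper's finite-union argument from Hurewicz--Wallman.
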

  
  \begin{proof}
    \ref{lem:overlineP:countable} 
    Since $G$ is countable and acts cocompactly on $\Gamma$ there are only countably many simplices in $P_{d,\Theta}$.
    By Lemma~\ref{lem:P_d,Theta-finite-dim+loc-finite} every point in $P_{d,\Theta} \setminus V_\infty$ is contained in only finitely many simplices.
    Therefore, $P_{d,\Theta} \setminus V_\infty$ is second countable and we need for a basis of the topology on $\overline{P_{d,\Theta}}$ only countable many open subsets as in~\ref{def:comp:P-V_inf} from Definition~\ref{def:compactification}.
    Similarly, since $\Delta$ is second countable we need only countable many open subsets as in~\ref{def:comp:dd}.
    Finally, for~\ref{def:comp:V_inf}, since $V_\infty$ is countable, we need to argue that each $\xi \in V_\infty$ has a countable neighborhood basis. 
    It suffices to vary $U$ over a countable neighborhood basis of $\xi \in \Delta$.
    Addendum~\ref{add:small-at-infinty-V_infty} implies that
    for each such $U$ almost all vertices of $\Gamma$ that are incident to $\xi$ (in $P_{d,\Theta}$) are contained in $U$.
    Using Lemma~\ref{lem:P_d,Theta-finite-dim+loc-finite} it follows that all but finitely many of the simplices of $P_{d,\Theta}$ that contain $\xi$ are contained in $P_{d,\Theta}(U)$.
    Therefore, for each $U$ it suffices to vary $W$ over countable collection of neighborhoods of $\xi \in P_{d,\Theta}$.  
    \\[1ex] 
    \ref{lem:overlineP:compact}
    Let $\xi \in \Delta_+$, $v \in V_\infty$.
    Using  Addendum~\ref{add:small-at-infinty-V_infty} 
    we find open neighborhoods
    $U$ of $\xi$ in $\Delta_+$ and $W$ of $v$ in $P_{d,\Theta}$
    such that $P_{d,\Theta}(U) \cap W = \emptyset$.   
    Combining this observation with the
    fact that $\Delta_+$ and $P_{d,\Theta}$ are 
    Hausdorff we see that $\overline{P_{d,\Theta}}$ is also
    Hausdorff.

    Since the topology on $\overline{P_{d,\Theta}}$ has a countable basis
    it suffices to prove sequential compactness. 
    Let $(x_n)_{n \in \IN}$ be a sequence in $\overline{P_{d,\Theta}}$.
    We will produce a convergent subsequence.
    If $x_n \in \dd \Gamma$ for infinitely many $n$, then we can use
    the compactness of $\Delta$.
    Therefore we assume that $x_n \in P_{d,\Theta}$ for all $n$.
    Since $P_{d,\Theta}$ is finite dimensional we find $k$ and 
    vertices $v_{0,n}, \dots, v_{k,n}$ such that for each $n$
    the point $x_n$ belongs to the simplex spanned by $v_{0,n},\dots,v_{k,n}$.
    Since $\Delta_+$ is compact we can assume that
    $\xi_j := \lim_n v_{j,n}$ exists in $\Delta_+$ for each
    $j$.
    We now apply Lemma~\ref{lem:small-at-infinity}
    and Addendum~\ref{add:small-at-infinty-V_infty}
    to the $v_{0,n}, \dots, v_{k,n}$ as sequences in $n$.
    It follows that either the $\xi_j =: \xi$ all coincide
    or that we find a subsequence $I \subseteq \IN$
    such that for each $j$, $v_{j,n} =: w_j$ is constant in $n \in I$.
    In the first case $\lim_{n \in \IN} x_n = \xi$.
    In the second case we find a subsequence of $x_n$
    that converges to a point on the simplex spanned by the $w_j$.
    
    Since $\overline{P_{d,\Theta}}$ is compact and has a countable basis 
    for the topology it is metrizable.
    \\[1ex]
    \ref{lem:overlineP:dim} 
    This follows from 
    Theorem~\ref{thm:dim-delta-finite} and 
    Lemma~\ref{lem:P_d,Theta-finite-dim+loc-finite}
    and the fact that the finite union of finite dimensional spaces
    is finite dimensional~\cite[p.28]{Hurewicz-Wallman(DimensionTheory)}.
    \\[1ex]
    \ref{lem:overlineP:loc-contr}
    If $\xi \in \overline{P_{d,\Theta}} \setminus \Delta = 
          P_{d,\Theta} \setminus V_\infty$, then 
    $\xi$ has arbitrarily small contractible neighborhoods.
    If $\xi \in \Delta$, then we  may assume that
    $P_{d,\Theta}(U_+) \subseteq U$ for some neighborhood $U_+$ of
    $\xi \in \Delta_+$.
    Using Lemma~\ref{lem:U-V} we find a smaller neighborhood
    $U'_+$ of $\xi$ such that all vertices on geodesics between 
    points of $U'_+$ belong to $U_+$.
    Proposition~\ref{prop:contract-P_d,Theta}
    implies that any finite subcomplex of $P_{d,\Theta}(U'_+)$ is
    contractible within $P_{d,\Theta}(U_+)$.
    If $\xi \in \dd \Gamma$,
    we can use the interior of $P_{d,\Theta}(U'_+) \cup U'_+$ for $U'$.
    If $\xi \in V_\infty$, then we pick in addition an neighborhood
    $W'$ of $\xi$ in $P_{d,\Theta}$ such that
    $W' \subseteq U$.
    Moreover, since $P_{d,\Theta}(U'_+)$ is a subcomplex of $P_{d,\Theta}$,
    we can assume that
    $W' \cap P_{d,\Theta}(U'_+)$ is a deformation retract of $W'$.
    Now we can choose the interior of $P_{d,\Theta}(U'_+) \cup U'_+ \cup W'$
    for $U'$.
  \end{proof}

  Let $K$ be a simplicial complex and $\calu$ be a cover of a space $X$. 
  A map $f \colon K' \to X$ defined on a subcomplex $K'$ of $K$, 
  containing the $0$-skeleton $K^{(0)}$ of $K$,
  is said to be a \emph{partial $\calu$-realization} if 
  for every simplex $\sigma$ of $K$, there is a member of $\calu$
  that contains $f(K' \cap \sigma)$.
  If $K' = K$, then $f$ is called a \emph{full $\calu$-realization}. 
  
  \begin{lemma}
    \label{lem:extension}
    Let $\calu$ be an open cover of $\overline{P_{d,\Theta}}$ and $n \in \IN$.
    Then there exists an open cover $\calu'$ of 
    $\overline{P_{d,\Theta}}$ such that
    for any finite simplicial complex $K$ of dimension $\leq n$,
    any partial $i^{-1}\calu'$-realization of $K$ in $ P_{d,\Theta}$
    extends to a full $i^{-1}\calu$-realization $K \to P_{d,\Theta}$. 
  \end{lemma}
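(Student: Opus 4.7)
The plan is to deduce this lemma from the local contractibility property for maps into $P_{d,\Theta}$ established in Lemma~\ref{lem:properties-of-overlineP_d,Theta}\ref{lem:overlineP:loc-contr}, combined with the compactness (hence paracompactness) of $\overline{P_{d,\Theta}}$. The argument is the standard one used to show that finite-dimensional compact $\ANR$s admit extensions of partial realizations; the only new ingredient is the assertion that this local property is in fact sufficient to carry out this argument, and is phrased in terms of $i^{-1}(U)$ rather than $U$.

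First, I construct $\calu'$ iteratively. Set $\calv_{-1} := \calu$. Inductively, given $\calv_{k-1}$, I use Lemma~\ref{lem:properties-of-overlineP_d,Theta}\ref{lem:overlineP:loc-contr} at each point $\xi \in \overline{P_{d,\Theta}}$ (applied to any $U \in \calv_{k-1}$ containing $\xi$) to produce an open neighborhood $U'(\xi) \subseteq U$ with the property that any map $S^k \to i^{-1}(U'(\xi))$ extends to a map $D^{k+1} \to i^{-1}(U)$. A finite subcover of $\{U'(\xi)\}$ is then refined to a star refinement $\calv_k$, using that $\overline{P_{d,\Theta}}$ is paracompact (as a compact metrizable space). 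Set $\calu' := \calv_n$. By construction, for each $k$, whenever finitely many members of $\calv_k$ have a common point, their union is contained in some $U'(\xi) \in \calv_{k-1}$'s pre-refinement, and any $S^k$-map into such a $U'(\xi)$ extends to a $D^{k+1}$-map into some $U \in \calv_{k-1}$.

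Second, given a partial $i^{-1}\calu'$-realization $f \colon K' \to P_{d,\Theta}$, I extend $f$ across the simplices of $K \setminus K'$ one at a time in order of increasing dimension. The invariant maintained is that after processing all simplices of dimension $\leq k$, each simplex $\sigma$ of dimension $\leq k$ has $f(\sigma) \subseteq i^{-1}(V_\sigma)$ for some $V_\sigma \in \calv_{n-k}$. For a $(k+1)$-simplex $\tau \notin K'$, its boundary $\partial \tau$ is a union of $k$-faces whose images each lie in some $V_\sigma \in \calv_{n-k}$; these all meet at any vertex of $\tau$, so the star refinement property places their union inside a single $W \in \calv_{n-k-1}$'s pre-refinement $U'(\xi)$. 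Applying the extension property of $U'(\xi)$ to the map $\partial \tau \to i^{-1}(U'(\xi))$ yields an extension $\tau \to i^{-1}(W')$ for some $W' \in \calv_{n-k-1}$, which maintains the invariant with $V_\tau := W'$. After $n$ inductive steps we obtain a full $i^{-1}\calv_{-1} = i^{-1}\calu$-realization.

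The principal obstacle is not the existence of point-local extensions (these are given by Lemma~\ref{lem:properties-of-overlineP_d,Theta}\ref{lem:overlineP:loc-contr}) but the bookkeeping that ensures the image of each boundary $\partial \tau$ lies in a \emph{single} open set at the stage we come to extend across $\tau$. This is precisely what the iterated star refinements arrange. Everything else—existence of star refinements, counting of skeletal stages, and the final observation that the extended map remains in $P_{d,\Theta}$ throughout (since the local property in Lemma~\ref{lem:properties-of-overlineP_d,Theta}\ref{lem:overlineP:loc-contr} already produces extensions landing in $i^{-1}(U) \subseteq P_{d,\Theta}$)—is routine.
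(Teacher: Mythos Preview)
Your approach is the same as the paper's: iterate Lemma~\ref{lem:properties-of-overlineP_d,Theta}\ref{lem:overlineP:loc-contr} to produce a chain of refinements $\calu = \calu_n, \dots, \calu_0 = \calu'$ and then extend skeleton by skeleton. The paper's proof is a two-sentence sketch that omits exactly the bookkeeping you supply via star refinements.

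Two minor slips in your writeup. First, your indexing of sphere dimensions is reversed: at stage $k$ of your construction of $\calv_k$ you build in an $S^{k}$-extension property, but when you come to fill a $(k{+}1)$-cell the boundary lands in a $U'(\xi)$ from stage $n-k$, where only an $S^{n-k}$-extension was arranged. This is harmless because the $U'$ produced by Lemma~\ref{lem:properties-of-overlineP_d,Theta}\ref{lem:overlineP:loc-contr} in fact works for all $k$ simultaneously (its proof goes through Proposition~\ref{prop:contract-P_d,Theta}, which contracts arbitrary finite subcomplexes), but you should either say this or reverse the indexing. Second, the claim that the $k$-faces of a $(k{+}1)$-simplex ``all meet at any vertex of $\tau$'' is false once $k \geq 1$: each vertex lies in all facets but one. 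What you actually need, and what is true, is that the facets \emph{pairwise} intersect, so the sets $V_\sigma$ pairwise intersect and hence all lie in the star of any one of them; that is already enough for the star-refinement step to place $\partial\tau$ in a single $U'(\xi)$.
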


  \begin{proof}
    Using Lemma~\ref{lem:properties-of-overlineP_d,Theta}
       ~\ref{lem:overlineP:loc-contr} we find a sequence of
    successively smaller covers $\calu = \calu_n, \dots, \calu_0 = \calu'$
    such that for any $U' \in \calu_k$ there is $U \in \calu_{k+1}$
    such that any map $S^k \to i^{-1}(U')$ extends to $D^{k+1} \to i^{-1}(U)$.
    Inductively, any partial $i^{-1}\calu'$-realization  
    can then be extended to a $i^{-1}\calu$-realization $K \to P_{d,\Theta}$. 
  \end{proof}

  Let $\calu$ be a cover of a space $X$. 
  Maps $f, f' \colon Z \to X$ are said to be \emph{$\calu$-close} if 
  for any $x \in Z$ there is $U \in \calu$ containing  both $f(x)$ and $f'(x)$. 
  The \emph{mesh} of a cover of a metric space is the supremum of the
  diameter of its members.

  \begin{lemma}
    \label{lem:approximation}
    For any open cover $\calw$ of $\overline{P_{d,\Theta}}$ 
    and any map $f \colon K \to \overline{P_{d,\Theta}}$ defined on
    a finite complex there is a map
    $f' \colon K \to P_{d,\Theta}$ such that $f$ and $i \circ f'$ are
    $\calw$-close.
  \end{lemma}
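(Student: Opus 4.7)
The plan is to use metrizability of $\overline{P_{d,\Theta}}$ to replace $\calw$-closeness by a quantitative metric estimate, and then to combine density of $i(P_{d,\Theta})$ in $\overline{P_{d,\Theta}}$ with Lemma~\ref{lem:extension}. By Lemma~\ref{lem:properties-of-overlineP_d,Theta}~\ref{lem:overlineP:compact} the space $\overline{P_{d,\Theta}}$ is compact and metrizable; fix a metric $d$ on it and a Lebesgue number $\e > 0$ for $\calw$. It then suffices to construct $f' \colon K \to P_{d,\Theta}$ with $d(f(x), i(f'(x))) < \e$ for every $x \in K$, because any such pair is contained in a single member of $\calw$.

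I would then apply Lemma~\ref{lem:extension} not to $\calw$ itself but to the auxiliary open cover $\calw_0 := \{ B_{\e/4}(y) \mid y \in \overline{P_{d,\Theta}} \}$ together with $n := \dim K$, obtaining an open cover $\calw_0'$. Pick a Lebesgue number $\lambda > 0$ for $\calw_0'$ and an auxiliary constant $\eta$ with $0 < \eta < \min(\e/4, \lambda/3)$. By uniform continuity of $f$ on the compact complex $K$ I can subdivide $K$ into a simplicial complex $T$ so fine that $f$ maps each closed simplex of $T$ to a set of diameter less than $\eta$. For each vertex $v$ of $T$ I choose $f'(v) \in P_{d,\Theta}$ with $d(i(f'(v)), f(v)) < \eta$; this is possible since $i(P_{d,\Theta})$ is dense in $\overline{P_{d,\Theta}}$ (every basic neighborhood $P_{d,\Theta}(U) \cup U$ of a boundary point $\xi \in \dd \Gamma$ contains all sufficiently far out vertices of $\Gamma$ lying on a geodesic ray to $\xi$), and if $f(v) \in P_{d,\Theta}$ already one may simply take $f'(v) := f(v)$.

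For each simplex $\sigma$ of $T$ with vertices $v_0, \dots, v_k$ the finite set $\{ f(v_j), i(f'(v_j)) : 0 \leq j \leq k \}$ has diameter at most $3\eta < \lambda$ and is therefore contained in some $W'_\sigma \in \calw_0'$. Consequently the map $f' \colon T^{(0)} \to P_{d,\Theta}$ just defined is a partial $i^{-1}\calw_0'$-realization, and by Lemma~\ref{lem:extension} it extends to a full $i^{-1}\calw_0$-realization $f' \colon T \to P_{d,\Theta}$. For each simplex $\sigma$ we then have $i(f'(\sigma)) \subseteq B_{\e/4}(y_\sigma)$ for some $y_\sigma \in \overline{P_{d,\Theta}}$. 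For $x \in \sigma$ and any vertex $v_0$ of $\sigma$ the triangle inequality now yields $d(f(x), i(f'(x))) \leq d(f(x), f(v_0)) + d(f(v_0), i(f'(v_0))) + d(i(f'(v_0)), i(f'(x))) < \eta + \eta + \e/2 < \e$, which is the required metric estimate and hence the $\calw$-closeness.

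The main technical issue is that Lemma~\ref{lem:extension} does not in general produce a refinement of its input cover, so feeding it $\calw$ directly would not guarantee that $f$ and $i \circ f'$ lie in a common member of $\calw$ on each simplex. The workaround of feeding in the small-ball cover $\calw_0$ instead turns the extension statement into a uniform metric bound, after which the Lebesgue number of $\calw$ closes the loop. Beyond this bookkeeping, the only substantive ingredient is density of $i(P_{d,\Theta})$ in $\overline{P_{d,\Theta}}$, which reads off directly from the neighborhood basis in Definition~\ref{def:compactification}; the remaining estimates are routine applications of the triangle inequality and the Lebesgue number lemma.
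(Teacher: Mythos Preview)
Your proof is correct and follows essentially the same route as the paper's: fix a metric, replace $\calw$ by a small-mesh auxiliary cover (you use $\e/4$-balls, the paper just says ``a cover of mesh $\leq \e$''), apply Lemma~\ref{lem:extension} to that auxiliary cover, approximate $f$ on vertices by density of $i(P_{d,\Theta})$, extend via the lemma, and finish with a triangle-inequality estimate against the Lebesgue number of $\calw$. The constants differ slightly but the structure is identical, and your closing remark about why one cannot feed $\calw$ directly into Lemma~\ref{lem:extension} is exactly the point.
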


  \begin{proof}
    Pick a metric $d_{\overline{P}}$ for $\overline{P_{d,\Theta}}$ and
    $\e > 0$ such that $3\e$ is a Lebesgue number for $\calw$.
    Let $\calu$ be a cover of mesh $\leq \e$.
    Pick $\calu'$ as in Lemma~\ref{lem:extension} with respect to
    $\calu$ and $n := \dim K$.
    Pick $\delta > 0$ such that $3\delta$ is a Lebesgue number for
    $\calu'$ and $\delta < \e$.
    Now subdivide $K$ until the diameter of
    the image of each simplex in $\overline{P_{d,\Theta}}$
    is at most $\delta$. 
    Since $i(P_{d,\Theta})$ is dense in $\overline{P_{d,\Theta}}$ we find
    $f'_0 \colon K^{(0)} \to P_{d,\Theta}$ 
    defined on the $0$-skeleton $K^{(0)}$ of $K$ such that 
    $d_{\overline{P}} (i \circ f'_0 (x),f(x)) \leq \delta$ for any
    $x \in K^{(0)}$.
    It follows that $f'_0$ is a partial $i^{-1}\calu'$ realization and 
    therefore extends to a full  
    $i^{-1}\calu$-realization $f' \colon K \to P_{d,\Theta}$.
    By construction for any $x \in K$, 
    $d_{\overline{P}}(f(x),f'(x)) \leq 2\delta +\e \leq 3\e$. 
    Thus $f$ and $i \circ f'$ are $\calw$-close.
  \end{proof}

  \begin{lemma}
    \label{lem:homotopy}
    Let $f \colon K \to  \overline{P_{d,\Theta}}$ be a map defined on
    a finite simplicial complex.
    Then there exists a homotopy 
    $H \colon K \x (0,1] \to P_{d,\Theta}$
    such that 
    \begin{equation*}
      (x,t) \mapsto \begin{cases} 
                        f(x) & t=0 \\ i( H(x,t) ) & t > 0 
                    \end{cases}  
    \end{equation*}
    is a continuous homotopy $K \x [0,1] \to \overline{P_{d,\Theta}}$.
  \end{lemma}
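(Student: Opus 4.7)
The plan is to construct $H$ by stringing together countably many homotopies, one on each strip $K \x [1/(n+1), 1/n]$, with the control becoming tighter as $n \to \infty$ so that continuity extends across $t = 0$.

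First I fix a metric $d_{\overline{P}}$ on $\overline{P_{d,\Theta}}$, available by Lemma~\ref{lem:properties-of-overlineP_d,Theta}~\ref{lem:overlineP:compact}, and choose for each $n \in \IN$ an open cover $\calw_n$ of $\overline{P_{d,\Theta}}$ of $d_{\overline{P}}$-mesh at most $1/n$.  I apply Lemma~\ref{lem:extension} with the integer $\dim K + 1$ in place of $n$ to obtain a refinement $\calw'_n$ of $\calw_n$ such that any partial $i^{-1}\calw'_n$-realization in $P_{d,\Theta}$ of a complex of dimension at most $\dim K + 1$ extends to a full $i^{-1}\calw_n$-realization.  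Lemma~\ref{lem:approximation} then yields maps $f_n \colon K \to P_{d,\Theta}$ such that $f$ and $i \circ f_n$ are $\calw'_n$-close; by passing to a further refinement if necessary, I may arrange that $f_n$ and $f_{n+1}$ are both $\calw'_n$-close to $f$.

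The core step is to homotope $f_n$ to $f_{n+1}$ within $P_{d,\Theta}$ by a homotopy staying $\calw_n$-close to $f$.  For this I subdivide $K \x [1/(n+1), 1/n]$ (refining $K$ as necessary) into a triangulation $L_n$ so fine that $f$ (composed with the projection to $K$) sends every closed simplex of $L_n$ into a single member of $\calw'_n$.  The values of $f_n$ on $K \x \{1/n\}$ and $f_{n+1}$ on $K \x \{1/(n+1)\}$ then define a partial $i^{-1}\calw'_n$-realization on the corresponding subcomplex of $L_n$, because whenever two vertices share a simplex of $L_n$, a single member of $\calw'_n$ contains the $f$-images of both projections to $K$ as well as the corresponding $f_n$- or $f_{n+1}$-values.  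I extend this partial realization to all remaining $0$-cells of $L_n$ by picking, via density of $P_{d,\Theta}$ in $\overline{P_{d,\Theta}}$, points of $P_{d,\Theta}$ close to the $f$-image of the projection, so that it remains a partial $i^{-1}\calw'_n$-realization; applying Lemma~\ref{lem:extension} produces the desired homotopy $H_n \colon L_n \to P_{d,\Theta}$, and by construction every $(x,t)$ in the strip has $f(x)$ and $i \circ H_n(x,t)$ lying in a common member of $\calw_n$.

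Concatenating the $H_n$ yields $H \colon K \x (0,1] \to P_{d,\Theta}$.  Continuity of the extension by $f$ at $t = 0$ follows from the mesh bound: for $t \in [1/(n+1), 1/n]$ we have $d_{\overline{P}}(i \circ H(x,t), f(x)) \leq 1/n$, uniformly in $x \in K$, so $i \circ H(x,t) \to f(x)$ as $t \to 0$, and combined with the continuity of $f$ this gives continuity of the glued map at every $(x,0)$.  The main technical obstacle is bookkeeping in the second paragraph: one must arrange compatible simplicial subdivisions of $K$ so that, at each stage $n$, the (already chosen) approximations $f_n$ and $f_{n+1}$ both give partial $i^{-1}\calw'_n$-realizations on the subdivided strip $L_n$.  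This can be done by first fixing all covers $\calw_n$, then choosing all subdivisions and approximations simultaneously (with each $f_n$ defined on a refinement of $K$ finer than the $K$-direction of all of $L_1, \dots, L_n$), which requires no new ideas beyond the standard simplicial approximation theorem.
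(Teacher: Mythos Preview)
Your proposal is correct and follows essentially the same approach as the paper: approximate $f$ by maps $f_n \colon K \to P_{d,\Theta}$ via Lemma~\ref{lem:approximation} with increasingly fine control, then interpolate between consecutive $f_n$'s on the strips $K \times [1/(n+1),1/n]$ by treating $f_n \coprod f_{n+1}$ as a partial realization and invoking Lemma~\ref{lem:extension} (with $\dim K + 1$), and finally use the mesh decay to get continuity at $t=0$. The paper's version is more terse and organizes the bookkeeping via Lebesgue numbers $\lambda_l$ and a sequence $\e_l$ with $\e_l + \e_{l+1} \leq \lambda_l$, which is exactly the device needed to make your ``single member of $\calw'_n$'' claim in the second paragraph go through cleanly.
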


  \begin{proof}
    Let $d_{\overline{P}}$ be again a metric on $\overline{P_{d,\Theta}}$.
    Let $\calu_l$ be a sequence of open covers of
    $\overline{P_{d,\Theta}}$ with $\mesh \calu_l \to 0$.
    Let $\calu'_l$ be a cover as in Lemma~\ref{lem:extension}
    with respect to $\calu_l$ and $n := \dim K+1$.
    Let $\lambda_l$ be a Lebesgue number for $\calu_l$.
    Let $\e_l$ be a sequence such that $\e_l + \e_{l+1} \leq \lambda_l$
    for all $l$.
    Using Lemma~\ref{lem:approximation} we find maps 
    $f'_l \colon K \to P_{d,\Theta}$ such that $f$ and $i \circ f'_l$
    are $\e_l$-close, i.e.,
    $d_{\overline{P}}(f(x),i(f'_l(x))) < \e_l$ for all $x \in K$. 
    For $t = 1/l$ set $H(x,1/l) := f'_l(x)$.
    In order to define $H(x,t)$ for $1/l+1 < t < 1/l$, we can view
    $f'_{l+1} \coprod f'_l \colon K \x \{ 1/l+1,  1/l \} \to P_{d,\Theta}$
    as a partial $\calu'_l$-realization for an appropriate triangulation of
    $K \x [1/l+1,  1/l]$ and use Lemma~\ref{lem:extension}.
    Since $\mesh \calu_l \to 0$ the map $H$ defined this way
    has the desired continuity property.
  \end{proof}

  \begin{lemma}
    \label{lem:loc-contractible}
    The space $\overline{P_{d,\Theta}}$ is locally $n$-connected for each 
    $n$, i.e., for each $\xi \in \overline{P_{d,\Theta}}$ and each
    open neighborhood $U$ of $\xi$, there is a smaller neighborhood
    $U'$ of $\xi$ such that any map $f \colon S^k \to U'$ with 
    $k \leq n$ extends to a map $D^{k+1} \to U$.
  \end{lemma}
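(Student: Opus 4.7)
The plan is to bootstrap the relative local contractibility from Lemma~\ref{lem:properties-of-overlineP_d,Theta}~\ref{lem:overlineP:loc-contr} (which only concerns maps into $i^{-1}(U)$) to honest local $n$-connectedness of $\overline{P_{d,\Theta}}$, using the approximation Lemma~\ref{lem:approximation} and the homotopy Lemma~\ref{lem:homotopy} as the bridge between maps into $\overline{P_{d,\Theta}}$ and maps into $P_{d,\Theta}$. The basic idea is: given a sphere map into a small neighborhood $U'$ of $\xi$, approximate it by a map into $P_{d,\Theta}$, extend the approximation inside $i^{-1}(U)$, and glue the extension back to the original via a controlled homotopy.

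Fix a metric $d_{\overline{P}}$ on the compact metrizable space $\overline{P_{d,\Theta}}$. Given $\xi$ and $U$, I would apply Lemma~\ref{lem:properties-of-overlineP_d,Theta}~\ref{lem:overlineP:loc-contr} twice to obtain nested open neighborhoods $U \supseteq U_0 \supseteq U_1$ of $\xi$ with the properties: every map $S^k \to i^{-1}(U_0)$ extends to a map $D^{k+1} \to i^{-1}(U)$, and every map $S^k \to i^{-1}(U_1)$ extends to a map $D^{k+1} \to i^{-1}(U_0)$. Choose $\varepsilon>0$ so that the $2\varepsilon$-ball around $\xi$ is contained in $U_1$, and set $U' := B_\varepsilon(\xi)$. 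Given $f\colon S^k \to U'$, Lemma~\ref{lem:approximation} applied to a cover of $\overline{P_{d,\Theta}}$ by $\varepsilon$-balls yields $f'\colon S^k \to P_{d,\Theta}$ with $i\circ f'$ pointwise $\varepsilon$-close to $f$; hence $i(f'(S^k)) \subseteq B_{2\varepsilon}(\xi) \subseteq U_1$, so by our choice of $U_1$ there is an extension $\widehat{f'}\colon D^{k+1} \to i^{-1}(U_0)$.

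To produce the extension of $f$ itself, decompose $D^{k+1} = S^k \times [0,\tfrac{1}{2}] \cup_{S^k\times\{1/2\}} D'^{k+1}$ as a collar glued to an inner disk. On the inner disk use $i \circ \widehat{f'}$; on the collar we need a continuous homotopy from $f$ (at $t=0$) to $i\circ f'$ (at $t=\tfrac{1}{2}$) whose image lies in $U$. The image of the homotopy is the crux: we already know the two endpoint maps land in $U_0$ and are pointwise within $\varepsilon$ of each other. The homotopy will be produced by rerunning the construction used in the proof of Lemma~\ref{lem:homotopy}, but with the initial cover $\calu_0$ chosen so fine that every member meeting $f(S^k)$ sits inside $U_0$ and moreover has diameter small enough that the iterated $\calu'_l$-realizations — whose deviation from $f$ is controlled by the running mesh $\sum_l \mesh \calu_l$ — never leave $U_0$. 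This yields the required continuous homotopy $S^k \times [0,\tfrac12] \to U_0 \subseteq U$.

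The main obstacle is precisely this last step: ensuring that the approximation/interpolation scheme of Lemma~\ref{lem:homotopy} can be carried out with all intermediate approximations $f'_l$ and all interpolating realizations confined to a prescribed neighborhood of $\xi$, rather than merely lying somewhere in $P_{d,\Theta}$. This is a quantitative strengthening of Lemma~\ref{lem:homotopy} that requires choosing the $\calu_l$ and the tolerances $\varepsilon_l$ simultaneously as functions of the target neighborhood, and then invoking Lemma~\ref{lem:extension} locally relative to the nested pair $(U_0, U_1)$. Once this controlled homotopy is in hand, concatenation with $i\circ \widehat{f'}$ gives a continuous map $D^{k+1} \to U$ restricting to $f$ on $S^k$, proving the claim.
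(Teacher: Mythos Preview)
Your approach is in the right spirit but is considerably more complicated than necessary, and the ``main obstacle'' you identify is self-inflicted. The paper's proof avoids the quantitative re-engineering of Lemma~\ref{lem:homotopy} entirely by applying that lemma as a black box and then using a one-line compactness argument.

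Concretely: pick $U'$ as in Lemma~\ref{lem:properties-of-overlineP_d,Theta}~\ref{lem:overlineP:loc-contr}. Given $f \colon S^k \to U'$, apply Lemma~\ref{lem:homotopy} to $f$ to get $H \colon S^k \times (0,1] \to P_{d,\Theta}$ with $(x,0) \mapsto f(x)$, $(x,t) \mapsto i(H(x,t))$ continuous on $S^k \times [0,1]$. Since $f(S^k)$ is compact and contained in the open set $U'$, continuity forces $i(H(S^k \times [0,t_0])) \subseteq U'$ for some small $t_0 > 0$. Now set $f' := H(\,\cdot\,,t_0) \colon S^k \to i^{-1}(U')$; the restriction $H|_{S^k \times [0,t_0]}$ is already a homotopy from $f$ to $i \circ f'$ inside $U'$. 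Extend $f'$ over $D^{k+1}$ into $i^{-1}(U)$ via Lemma~\ref{lem:properties-of-overlineP_d,Theta}~\ref{lem:overlineP:loc-contr} and glue. No control on the covers $\calu_l$ is ever needed.

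Your detour through Lemma~\ref{lem:approximation} creates the loose end you then struggle with: the specific $f'$ that lemma hands you is unrelated to the approximations $f'_l$ produced inside the proof of Lemma~\ref{lem:homotopy}, so ``rerunning the construction'' does not obviously connect $f$ to your $f'$; you would still have to interpolate between your $f'$ and one of the $f'_l$. This can be done, but it is all avoided by letting Lemma~\ref{lem:homotopy} produce both the approximation and the homotopy to it in one stroke.
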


  \begin{proof}
    Pick $U'$ as in Lemma~\ref{lem:properties-of-overlineP_d,Theta}
      ~\ref{lem:overlineP:loc-contr}.
    Lemma~\ref{lem:homotopy} implies that any map
    $f \colon S^k \to U'$ is homotopic to a map
    of the form $i \circ f'$ with $f' \colon S^k \to i^{-1}(U')$.
    Now $f'$ extends to a map $D^{k+1} \to i^{-1}(U)$ by
    Lemma~\ref{lem:properties-of-overlineP_d,Theta}
      ~\ref{lem:overlineP:loc-contr} and this yields the desired 
    extension of $f$ as well.
  \end{proof}

  \noindent 
  Recall that we fixed $d$ and $\Theta$ such that  
  Proposition~\ref{prop:contract-P_d,Theta} applies.

  \begin{theorem}
    \label{thm:overlineP-is-ANR}
    The space $\overline{P_{d,\Theta}}$ is a contractible $\ANR$.
  \end{theorem}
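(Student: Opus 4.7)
The plan is to derive the two assertions separately, reducing each to classical machinery together with the lemmas established in the previous pages. All necessary structural properties of $\overline{P_{d,\Theta}}$ are already in place: compactness and metrizability come from Lemma~\ref{lem:properties-of-overlineP_d,Theta}~\ref{lem:overlineP:compact}, finite dimensionality from Lemma~\ref{lem:properties-of-overlineP_d,Theta}~\ref{lem:overlineP:dim}, and local $n$-connectedness for every $n$ from Lemma~\ref{lem:loc-contractible}.

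First I would dispatch the $\ANR$-property by appealing to a classical characterization (Borsuk, see e.g.~\cite[Thm.~V.7.1]{Hu(Theory-of-retracts)}): every finite-dimensional compact metrizable space that is $LC^n$ for all $n$ is an $\ANR$. All three hypotheses are supplied by the lemmas quoted above, so this gives the first half of the theorem immediately.

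For contractibility I would pass through weak contractibility. Given any continuous $f \colon S^n \to \overline{P_{d,\Theta}}$, Lemma~\ref{lem:homotopy} applies because $S^n$ is a finite simplicial complex, producing a homotopy in $\overline{P_{d,\Theta}}$ between $f$ and a composition $i \circ f'$ with $f' \colon S^n \to P_{d,\Theta}$. Proposition~\ref{prop:contract-P_d,Theta} then null-homotopes $f'$ inside $P_{d,\Theta}$, and composing with $i$ gives a null-homotopy of $i \circ f'$ in $\overline{P_{d,\Theta}}$; concatenating with the first homotopy kills $f$. Hence all homotopy groups of $\overline{P_{d,\Theta}}$ vanish (path-connectedness is obvious from Proposition~\ref{prop:contract-P_d,Theta} and the density of $i(P_{d,\Theta})$).

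To promote weak contractibility to genuine contractibility I would invoke the fact that a compact $\ANR$ has the homotopy type of a $\CW$-complex (West's theorem), so that Whitehead's theorem applies. The main conceptual content is therefore concentrated in Lemma~\ref{lem:homotopy}, i.e.\ in the assertion that $P_{d,\Theta}$ is homotopically dense in $\overline{P_{d,\Theta}}$; once this is accepted the rest is formal. The step I expect to require the most care in writing up is the invocation of Borsuk's $\ANR$ characterization, since one must ensure that the $LC^n$ condition supplied by Lemma~\ref{lem:loc-contractible} is in the precise form required by the cited theorem; if this proves awkward one can instead argue directly by using Lemma~\ref{lem:extension} to extend a partial retraction from a finite-dimensional neighborhood into $\overline{P_{d,\Theta}}$ skeleton by skeleton, which is essentially the proof of Borsuk's theorem adapted to our setting.
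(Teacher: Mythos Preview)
Your proposal is correct and follows essentially the same approach as the paper: establish the $\ANR$-property via Borsuk's characterization (compact, finite-dimensional, locally $n$-connected), then use Lemma~\ref{lem:homotopy} together with Proposition~\ref{prop:contract-P_d,Theta} to get weak contractibility, and finally upgrade to contractibility via $\CW$-homotopy type plus Whitehead. The only cosmetic difference is that the paper invokes the more elementary fact that finite-dimensional $\ANR$s are retracts of simplicial complexes (Borsuk) rather than West's theorem for the last step.
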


  \begin{proof}
    {}From Lemmas~\ref{lem:properties-of-overlineP_d,Theta}
    and~\ref{lem:loc-contractible}
    we know that $\overline{P_{d,\Theta}}$ is compact, finite
    dimensional and locally $n$-connected for any $n$.
    Since this is a characterization of
    finite dimensional compact 
    $\ANR$s~\cite[Thm.~10.3, p.122]{Borsuk(Theory-of-retracts)},
    $\overline{P_{d,\Theta}}$ is an $\ANR$.
    Lemma~\ref{lem:homotopy} together with the contractibility 
    of $P_{d,\Theta}$ (Proposition~\ref{prop:contract-P_d,Theta})
    imply that $\overline{P_{d,\Theta}}$ is weakly contractible.
    Since finite dimensional $\ANR$s are retracts of 
    simplicial complexes~\cite[Thm.10.1, p.122]{Borsuk(Theory-of-retracts)} 
    the contractibility of $\overline{P_{d,\Theta}}$ follows.
  \end{proof}

  \begin{remark}
    Bestvina and Mess~\cite{Bestvina-Mess(1991)} proved 
    Theorem~\ref{thm:overlineP-is-ANR} for hyperbolic groups
    and the argument in this section is closely modeled on their
    argument. 
    For hyperbolic groups Bestvina and Mess showed moreover
    that the boundary is a $Z$-set in the analog of 
    $\overline{P_{d,\Theta}}$, but the methods of this section
    do not give this stronger statement.
    The reason for this is, that the contractions from
    Proposition~\ref{prop:contract-P_d,Theta} may not work
    in $\overline{P_{d,\Theta}} \setminus \Delta 
                 = P_{d,\Theta} \setminus V_\infty$.
    For example, if $\Gamma$ is a tree and if we use
    for $\Theta$ only the trivial angles $(e,e)$, 
    then $P_{d,\Theta} = \Gamma$.
    Nevertheless, it seems likely that in many
    cases $\Delta_+$ is a $Z$-set in $\overline{P_{d,\Theta}}$
    for suitable $d,\Theta$.
  \end{remark}

  \section{Extending open sets.}
     \label{app:extending}
   
  Here we review a convenient procedure to extend open subset to
  open subsets of an ambient space.
  Let $(X,d_X)$ be a metric space.
  Let $X_0 \subseteq X$ be a  subspace.
  For an open subset $U$ of $X_0$ we define
  $U^+ := \{ x \in X \mid d_X(x,U) < d_X(x,X_0 \setminus U) \}$.
  This construction has the following properties 
  (compare \cite[Lem.~4.14]{Bartels-Lueck(2012CAT(0)flow)}):
  $U^+$ is open; $X_0 \cap U^+ = U$;  
  $(U \cap V)^+ = U^+ \cap V^+$.
  We record the following two consequences.

  \begin{lemma}
    \label{lem:dd-U_+}
    For any open subset $U \subseteq X_0$, the open 
    subset $U^+ \subseteq X$ satisfies
    $U = X_0 \cap U^+$ and $\dd_0 U = X_0 \cap \dd U^+$ where
    $\dd_0$ is the boundary in $X_0$ and $\dd$ is the boundary 
    in $X$. 
  \end{lemma}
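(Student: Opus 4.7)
The plan is to exploit only the three elementary properties of the construction $U^+$ stated immediately before the lemma, namely that $U^+$ is open in $X$, that $X_0 \cap U^+ = U$, and the fact that $U$ is open in $X_0$ with the restricted metric. I would proceed by first reproving $U = X_0 \cap U^+$ (for self-containedness, although it is listed as a property of the construction), and then handling the two inclusions of the boundary identity separately. Throughout, $\dd_0 U = \overline{U}^{X_0} \setminus U$ and $\dd U^+ = \overline{U^+}^X \setminus U^+$ since $U$ and $U^+$ are open.

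For the first identity: if $x \in U$, then openness of $U$ in $X_0$ (with respect to the metric $d_X$) gives some $\e > 0$ with $B_\e(x) \cap X_0 \subseteq U$, so $d_X(x, X_0 \setminus U) \geq \e > 0 = d_X(x,U)$, hence $x \in U^+$. Conversely, for $x \in X_0 \cap U^+$, the strict inequality $d_X(x,U) < d_X(x, X_0 \setminus U)$ forces the right-hand side to be positive, so $x \notin X_0 \setminus U$, i.e., $x \in U$.

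For $\dd_0 U \subseteq X_0 \cap \dd U^+$: let $x \in \dd_0 U$, so $x \in X_0 \setminus U$ and there exist $x_n \in U$ with $x_n \to x$ in $X_0$ (equivalently in $X$). Since $U \subseteq U^+$, we have $x \in \overline{U^+}^X$, while $x \notin U^+$ follows from the first identity and $x \notin U$. Hence $x \in \dd U^+$.

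For the reverse inclusion $X_0 \cap \dd U^+ \subseteq \dd_0 U$: take $x \in X_0 \cap \dd U^+$. The first identity gives $x \notin U$, so $x \in X_0 \setminus U$, and in particular $d_X(y, x) \geq d_X(y, X_0 \setminus U)$ for any $y$. Choose $y_n \in U^+$ with $y_n \to x$. The defining inequality for $U^+$ gives
\begin{equation*}
 d_X(y_n, U) < d_X(y_n, X_0 \setminus U) \leq d_X(y_n, x) \to 0,
\end{equation*}
so we may pick $z_n \in U$ with $d_X(y_n, z_n) \leq d_X(y_n,x) + 1/n$; then $d_X(x, z_n) \to 0$, so $z_n \to x$ with $z_n \in U \subseteq X_0$, giving $x \in \overline{U}^{X_0} \setminus U = \dd_0 U$. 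The only mildly subtle point — the sole place where one needs to do more than unwind definitions — is the extraction of the approximating sequence $z_n \in U$ from $y_n \in U^+$ in this last step, and it is handled by the observation that $d_X(y_n, X_0 \setminus U)$ is automatically bounded by $d_X(y_n, x)$ because $x$ itself lies in $X_0 \setminus U$.
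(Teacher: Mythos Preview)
Your proof is correct. The route you take for the nontrivial inclusion $X_0 \cap \dd U^+ \subseteq \dd_0 U$ differs from the paper's. The paper does not work with sequences at all: it introduces $V := X_0 \setminus \overline{U}^{X_0}$, observes that the intersection rule $(U \cap V)^+ = U^+ \cap V^+$ together with $U \cap V = \emptyset$ forces $U^+ \cap V^+ = \emptyset$, and since $V^+$ is open this yields $V^+ \cap \dd U^+ = \emptyset$; combined with $X_0 \cap V^+ = V$ and $X_0 \cap U^+ = U$ one gets $X_0 \cap \dd U^+ \subseteq X_0 \setminus (U \cup V) = \dd_0 U$. Your argument instead goes back to the defining inequality of $U^+$ and extracts, from an approximating sequence $y_n \in U^+$ converging to $x$, a sequence $z_n \in U$ converging to $x$ via the estimate $d_X(y_n,U) < d_X(y_n,X_0\setminus U) \leq d_X(y_n,x)$. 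This is slightly more hands-on but has the advantage of using only the definition of $U^+$ and not the intersection identity $(U \cap V)^+ = U^+ \cap V^+$; the paper's argument is shorter but leans on that extra listed property.
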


  \begin{proof}
    We already know that $U^+$ is open and satisfies $U = X_0 \cap U^+$.
    This implies $\dd_0 U \subseteq X_0 \cap \dd U  \subseteq X_0 \setminus U$.
    Let $V$ be the complement of the closure of $U$ in $X_0$.
    Then $V^+$ is open and $V^+ \cap U^+  = \emptyset$.
    Thus $V^+ \cap \dd U^+ = \emptyset$.
    Thus $X_0 \cap \dd U^+  \subseteq X_0 \setminus (U \cap V) = \dd_0 U$. 
  \end{proof}

  \noindent
  Suppose now that $G$ acts isometrically on $X$ and that $X_0 \subseteq X$
  is $G$-invariant.

  \begin{lemma}
    \label{lem:extend-cover}
    Let $\calf$ be a family of subgroups of $G$.
    Let $\calu$ be a $G$-invariant collection of $\calf$-subsets of $X_0$.
    Then $\calu^+ := \{ U^+ \mid U \in \calu \}$ is
    a $G$-invariant collection of $\calf$-subsets of $X$ extending
    the $U \in \calu$.
    The order of $\calu$ and $\calu^+$ agree. 
  \end{lemma}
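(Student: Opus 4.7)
The plan is to verify each of the required properties by exploiting the three formal properties of the operation $U \mapsto U^+$ collected in the excerpt: $U^+$ is open, $X_0 \cap U^+ = U$, and $(U \cap V)^+ = U^+ \cap V^+$. The first thing I would note is that because $G$ acts by isometries on $X$ and $X_0$ is $G$-invariant, the defining formula $U^+ = \{ x \in X \mid d_X(x,U) < d_X(x,X_0 \setminus U)\}$ gives $(gU)^+ = g(U^+)$ for every $g \in G$. Hence $G$-invariance of $\calu$ transfers to $\calu^+$, and the second assertion ($U^+ \cap X_0 = U$) is already built in.

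Next I would verify that each $U^+$ is an $\calf$-subset of $X$. Fix $U \in \calu$ with stabilizer contained in some $F \in \calf$ (so $gU = U$ for $g \in F$ and $gU \cap U = \emptyset$ for $g \notin F$). For $g \in F$ the $G$-equivariance gives $gU^+ = (gU)^+ = U^+$. For $g \notin F$ we have $U \cap gU = \emptyset$, and the intersection identity yields $U^+ \cap (gU)^+ = (U \cap gU)^+ = \emptyset^+ = \emptyset$, so $U^+ \cap gU^+ = \emptyset$ as required.

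For the statement about orders, the main point is that an iterated intersection $U_1^+ \cap \dots \cap U_k^+$ equals $(U_1 \cap \dots \cap U_k)^+$ by repeated application of $(U \cap V)^+ = U^+ \cap V^+$, and $W^+ = \emptyset$ whenever $W \subseteq X_0$ is empty (directly from the definition). Thus any $k$-fold intersection in $\calu^+$ being nonempty forces the corresponding $k$-fold intersection in $\calu$ to be nonempty, so $\mathrm{order}(\calu^+) \leq \mathrm{order}(\calu)$. Conversely, $X_0 \cap U^+ = U$ gives $U \subseteq U^+$, so any point lying in $k$ members of $\calu$ lies in $k$ members of $\calu^+$, yielding the reverse inequality.

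I expect no substantive obstacle; the entire argument is a bookkeeping exercise once one has the three properties of $(-)^+$ recalled in the excerpt plus equivariance under isometric actions. The only point to be careful about is the $\calf$-subset condition, where one must remember that the definition requires both invariance under the whole putative stabilizer $F$ and trivial intersection outside $F$; both are handled cleanly by the identity $(U \cap V)^+ = U^+ \cap V^+$ applied to $V = gU$.
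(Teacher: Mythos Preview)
Your proof is correct and follows exactly the same approach as the paper: equivariance $(gU)^+ = g(U^+)$ from the isometric action handles $G$-invariance and the $\calf$-subset property, while the intersection identity $(U \cap V)^+ = U^+ \cap V^+$ handles the order claim. Your write-up is simply a more detailed unpacking of the paper's three-sentence argument.
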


  \begin{proof}
    We have $(gU)^+ = g(U^+)$ as the action is isometric.
    This implies that $\calu^+$ consists of
    $\calf$-subsets and that $\calu^+$ is $G$-invariant.
    The equation $(U \cap V)^+ = U^+ \cap V^+$ implies that the orders
    of $\calu$ and $\calu^+$ agree.
  \end{proof}

  \section{Ideal triangles are slim}
     \label{app:ideal-slim}
     
  \begin{lemma}
  	\label{lem:ideal-slim}
  	Let $\Gamma$ be a $\delta$-hyperbolic graph, i.e., all finite geodesic triangles are $\delta$-slim.
  	Then all geodesic triangles, including those with one or more corners on the boundary $\dd \Gamma$ are
  	$5\delta$-slim.  
  \end{lemma}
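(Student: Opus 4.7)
The plan is to prove the lemma by induction on the number of ideal corners, using the approximation of ideal triangles by finite ones and a uniform thinness estimate for geodesics sharing an endpoint. First I would establish the following auxiliary fact: if $c$ and $c'$ are two geodesics between the same pair of points in $V \cup \dd \Gamma$, then $c$ lies in the $\delta$-neighborhood of $c'$. For the case of two boundary points $\xi_-,\xi_+$, pick sequences $p_n \to \xi_\pm$ on $c$ and $q_n \to \xi_\pm$ on $c'$, and for $x \in c$ apply $\delta$-slimness to the finite triangle with vertices $p_n^-,p_n^+,q_n^+$ (and similar). The key point is that as $p_n, q_n$ both tend to the same boundary point, the Gromov product $(p_n,q_n)_x$ tends to infinity, so the connecting geodesics $[p_n,q_n]$ move off to infinity; consequently $x$ (which is fixed) must eventually be $\delta$-close to $c'$ itself, not to the connecting side. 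The same trick with one of $\xi_\pm$ replaced by a vertex handles the mixed cases.

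Next I would prove slimness for a triangle $\Delta$ with one ideal corner $\xi$ and finite corners $a,b$, sides $c_1 = [a,b], c_2 = [a,\xi), c_3=[b,\xi)$. Given $x$ on one of the sides, pick $p_n \to \xi$ along $c_2$ and apply $\delta$-slimness to the finite triangle with vertices $a,b,p_n$ and sides $c_1$, $c_2|_{[a,p_n]}$, $[b,p_n]$. If $x$ is $\delta$-close to $c_2|_{[a,p_n]}$ there is nothing to do. Otherwise $x$ is $\delta$-close to some $x'\in[b,p_n]$, and to conclude we compare $[b,p_n]$ to $c_3$ by picking $q_n \to \xi$ along $c_3$ and applying $\delta$-slimness to the finite triangle $(b,p_n,q_n)$; the $[p_n,q_n]$ side is ruled out by the Gromov product argument above, since $d(b,x')$ is bounded while $d(b,[p_n,q_n])\to\infty$. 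This yields $2\delta$-slimness in the one-ideal-corner case.

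The two-ideal-corner case (corners $a,\xi_1,\xi_2$) is handled by the same approximation: pick $r_n\to\xi_2$ along the ideal side $c_1=(\xi_1,\xi_2)$ and consider the triangle $(a,\xi_1,r_n)$, which has only one ideal corner and so is $2\delta$-slim by the previous step. A point $x$ on any side of the original triangle is $2\delta$-close to a point of this auxiliary triangle, and comparing the auxiliary side $[a,r_n]$ to $c_3=[a,\xi_2)$ via the auxiliary fact (as $r_n \to \xi_2$, $[a,r_n]$ fellow-travels $c_3$ up to $\delta$ on any bounded ball around $a$) absorbs another $\delta$ or two. This yields slimness with constant at most $4\delta$. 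The three-ideal-corner case is handled by one further such approximation, landing at $5\delta$. The main obstacle in each inductive step is the careful bookkeeping of which auxiliary side an approximating point lands on, and in particular ruling out via the Gromov-product blow-up that a fixed $x$ could be close to the ``far'' connecting side $[p_n,q_n]$; once the auxiliary fact is established, this is a uniform estimate that depends only on $\delta$ and the (fixed) distances of $x$ from the finite corners of the triangle.
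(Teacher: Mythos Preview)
Your approach is correct but differs from the paper's. You proceed by induction on the number of ideal corners, at each step approximating one ideal corner by a sequence of vertices and comparing the resulting auxiliary side to the original side via a finite (or lower-ideality) triangle whose far side escapes to infinity. The paper instead handles all cases at once: it uses that the two sides meeting at each corner are asymptotic (hence at bounded, though not uniform, Hausdorff distance $A$), picks short connectors $a,a',a''$ of length $\leq A$ near each corner (placed far from the given vertex $v$ when the corner is ideal, and taken to be the constant geodesic at the corner otherwise), forms a finite inner triangle on points $x \in a$, $x' \in a'$, $x'' \in a''$, and then chains two $4$-gon estimates ($2\delta$ each) with one finite-triangle estimate ($\delta$) to reach $5\delta$ directly. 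Your inductive route is arguably more elementary in that it avoids the Hausdorff-distance observation, but it requires more case analysis; the paper's argument is more uniform and treats all ideality types in one pass.

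One remark on your sketch: your ``auxiliary fact'' that two geodesics between the same pair of (possibly ideal) points are $\delta$-close is not quite what your outline establishes in the bi-infinite case --- the triangle $(p_n^-, p_n^+, q_n^+)$ only gets you $\delta$-close to the new diagonal $[p_n^-, q_n^+]$, not to $c'$, and a second triangle is needed, yielding $2\delta$. Fortunately your main induction never actually uses the bi-infinite bigon: every comparison you invoke (e.g.\ $[b,p_n]$ versus $c_3=[b,\xi)$, or $[a,r_n]$ versus $c_3=[a,\xi_2)$) shares a finite endpoint, and there a single finite triangle with the far side escaping does give the honest $\delta$ you claim. With that bookkeeping the constants come out as $2\delta$, $3\delta$, $5\delta$ for one, two, three ideal corners respectively; in the last step the comparison of $[t_n,\xi_2)$ with $(\xi_0,\xi_2)$ is itself a one-ideal-corner triangle and costs $2\delta$, so $3\delta+2\delta=5\delta$ as required.
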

  
  \begin{proof}
  	Let $c$, $c'$ and $c''$ be the three sides of a geodesic triangles.
  	Write $\xi$, $\xi'$ and $\xi''$ for the three corners of the triangle, i.e.,
  	$c$ is geodesic between $\xi'$ and $\xi''$, $c'$ is a geodesic between $\xi$ and $\xi''$, and $c''$ is a geodesic between $\xi$ and $\xi'$.
  	We pick vertices $y \in c$, $y' \in c'$ and $y'' \in c''$,
  	We write $c_{\xi'}$ for the restriction of $c$ to a geodesic from $y$ to $\xi'$.
  	Similarly, we define $c_{\xi''}$, $c'_\xi$, $c'_{\xi''}$, $c''_{\xi}$ and $c''_{\xi'}$.
  	These six geodesics are asymptotic in three pairs.
  	Thus we find $A > 0$ such that the Hausdorff distances between $c'_{\xi}$ and $c''_{\xi}$, and between $c_{\xi'}$ and $c''_{\xi'}$, and between $c_{\xi''}$ and $c'_{\xi''}$ are all three $\leq A$.
  	 
  	Let now $v$ be a vertex on $c$.
  	We pick a geodesic $a$ of length at most $A$ that starts on $c'$ and ends on $c''$.
  	If $\xi \in \dd \Gamma$, then we can assume in addition that $d_\Gamma(v,a) \geq 100(A + \delta)$.
  	If $\xi$ is a vertex of $\Gamma$ then we assume that $a$ is the constant geodesic at $\xi$.
    We pick a vertex $x$ on $a$. 
  	Similarly, we pick geodesics $a'$ and $a''$ with vertices $x' \in a'$ and $x'' \in a''$.
  	We can assume that along $c$ the vertex $v$ is between $a' \cap c$ and $a'' \cap c$; if not then move $a'$ towards $\xi'$ and $a''$ towards $\xi''$.
  	We also pick geodesics $b$ from $x'$ to $x''$, $b'$ from $x$ to $x''$, and $b''$ from $x$ to $x'$.
  	
  	Consider now the geodesic $4$-gon whose four sides are $b$, and restrictions of $a'$, $c$ and $a''$.
  	Since $a'$ and $a''$ are either constant or far from $v$ we can apply hyperbolicity and find a vertex $w_1 \in b$ with $d_\Gamma(v,w_1) \leq 2 \delta$.
  	Next we use the geodesic triangle with sides $b$, $b'$ and $b''$.
  	By hyperbolicity there is $w_2 \in b' \cup b''$ with $d_\Gamma(w_1,w_2) \leq \delta$.
  	Without of loss of generality we assume $w_2 \in b'$.
  	Now we consider the geodesic $4$-gon whose four sides are $b'$ and restrictions of $a$, $c'$ and $a''$.   
  	Since $a$ and $a''$ are either constant or far from $v$ (and then also far from $w_2$) we can apply hyperbolicity and find a vertex $v' \in c'$ with $d_\Gamma(w_2,v') \leq 2\delta$.
  	Altogether, $d_\Gamma(v,v') \leq 5\delta$ and $v' \in c' \cup c''$.
  \end{proof}

\def\cprime{$'$} \def\polhk#1{\setbox0=\hbox{#1}{\ooalign{\hidewidth
  \lower1.5ex\hbox{`}\hidewidth\crcr\unhbox0}}}


\begin{thebibliography}{10}

\bibitem{Antolin-Coulon-Gandini(FJ-via-Dehn)}
Y.~Antol\'in, R.~Coulon, and G.~Gandini.
\newblock Farrell-Jones via Dehn fillings.
\newblock Preprint, arXiv:1510.08113v1, 2015.

\bibitem{Bartels-Farrell-Lueck(cocompact-lattices)}
A.~Bartels, F.~T. Farrell, and W.~L{\"u}ck.
\newblock The {F}arrell-{J}ones conjecture for cocompact lattices in virtually
  connected {L}ie groups.
\newblock {\em J. Amer. Math. Soc.}, 27(2):339--388, 2014.

\bibitem{Bartels-Lueck(2012annals)}
A.~Bartels and W.~L{\"u}ck.
\newblock The {B}orel conjecture for hyperbolic and {CAT(0)}-groups.
\newblock {\em Ann. of Math. (2)}, 175:631--689, 2012.

\bibitem{Bartels-Lueck(2012CAT(0)flow)}
A.~Bartels and W.~L{\"u}ck.
\newblock Geodesic flow for {C}{A}{T}(0)-groups.
\newblock {\em Geom. Topol.}, 16:1345--1391, 2012.

\bibitem{Bartels-Lueck-Reich(2008cover)}
A.~Bartels, W.~L{\"u}ck, and H.~Reich.
\newblock Equivariant covers for hyperbolic groups.
\newblock {\em Geom. Topol.}, 12(3):1799--1882, 2008.

\bibitem{Bartels-Lueck-Reich(2008hyper)}
A.~Bartels, W.~L{\"u}ck, and H.~Reich.
\newblock The {$K$}-theoretic {F}arrell-{J}ones conjecture for hyperbolic
  groups.
\newblock {\em Invent. Math.}, 172(1):29--70, 2008.

\bibitem{Bartels-Lueck-Reich(2008appl)}
A.~Bartels, W.~L\"uck, and H.~Reich.
\newblock On the {F}arrell-{J}ones {C}onjecture and its applications.
\newblock {\em Journal of Topology}, 1:57--86, 2008.

\bibitem{Bartels-Lueck-Reich-Rueping(GLnZ)}
A.~Bartels, W.~L{\"u}ck, H.~Reich, and H.~R{\"u}ping.
\newblock K- and {L}-theory of group rings over {$GL_n({\bf Z})$}.
\newblock {\em Publ. Math. Inst. Hautes \'Etudes Sci.}, 119:97--125, 2014.

\bibitem{Bartels-Reich(2007coeff)}
A.~Bartels and H.~Reich.
\newblock Coefficients for the {F}arrell-{J}ones {C}onjecture.
\newblock {\em Adv. Math.}, 209(1):337--362, 2007.

\bibitem{Bass(Degree-of-poly-growth)}
H.~Bass.
\newblock The degree of polynomial growth of finitely generated nilpotent
  groups.
\newblock {\em Proc. London Math. Soc. (3)}, 25:603--614, 1972.

\bibitem{Bestvina-Mess(1991)}
M.~Bestvina and G.~Mess.
\newblock The boundary of negatively curved groups.
\newblock {\em J. Amer. Math. Soc.}, 4(3):469--481, 1991.

\bibitem{Borsuk(Theory-of-retracts)}
K.~Borsuk.
\newblock {\em Theory of retracts}.
\newblock Monografie Matematyczne, Tom 44. Pa\'nstwowe Wydawnictwo Naukowe,
  Warsaw, 1967.

\bibitem{Bowditch(Rel-hyperbolic-groups)}
B.~H. Bowditch.
\newblock Relatively hyperbolic groups.
\newblock {\em Internat. J. Algebra Comput.}, 22(3):1250016, 66, 2012.

\bibitem{Bridson-Haefliger(1999)}
M.~R. Bridson and A.~Haefliger.
\newblock {\em Metric spaces of non-positive curvature}.
\newblock Springer-Verlag, Berlin, 1999.
\newblock Die Grundlehren der mathematischen Wissenschaften, Band 319.

\bibitem{Dahmani(2003class-spaces+boundaries-rel-hyp)}
F.~Dahmani.
\newblock Classifying spaces and boundaries for relatively hyperbolic groups.
\newblock {\em Proc. London Math. Soc. (3)}, 86(3):666--684, 2003.

\bibitem{Davis-Quinn-Reich(2011)}
J.~F. Davis, F.~Quinn, and H.~Reich.
\newblock {Algebraic $K$-theory over the infinite dihedral group: a controlled
  topology approach.}
\newblock {\em J. Topol.}, 4(3):505--528, 2011.

\bibitem{Elsner(Systolic-isolated-flats)}
T.~Elsner.
\newblock Systolic groups with isolated flats.
\newblock Preprint, 2008.

\bibitem{Farb(1998)}
B.~Farb.
\newblock Relatively hyperbolic groups.
\newblock {\em Geom. Funct. Anal.}, 8(5):810--840, 1998.

\bibitem{Farrell-Jones(1986a)}
F.~T. Farrell and L.~E. Jones.
\newblock ${K}$-theory and dynamics. {I}.
\newblock {\em Ann. of Math. (2)}, 124(3):531--569, 1986.

\bibitem{Farrell-Jones(1993a)}
F.~T. Farrell and L.~E. Jones.
\newblock Isomorphism conjectures in algebraic ${K}$-theory.
\newblock {\em J. Amer. Math. Soc.}, 6(2):249--297, 1993.

\bibitem{Farrell-Roushon(2000)}
F.~T. Farrell and S.~K. Roushon.
\newblock The {W}hitehead groups of braid groups vanish.
\newblock {\em Internat. Math. Res. Notices}, 10:515--526, 2000.

\bibitem{Gromov(1987)}
M.~Gromov.
\newblock Hyperbolic groups.
\newblock In {\em Essays in group theory}, pages 75--263. Springer-Verlag, New
  York, 1987.

\bibitem{Gromov(1993)}
M.~Gromov.
\newblock Asymptotic invariants of infinite groups.
\newblock In {\em Geometric group theory, Vol.\ 2 (Sussex, 1991)}, pages
  1--295. Cambridge Univ. Press, Cambridge, 1993.

\bibitem{Groves-Manning(Dehn-filling)}
D.~Groves and J.~F. Manning.
\newblock Dehn filling in relatively hyperbolic groups.
\newblock {\em Israel J. Math.}, 168:317--429, 2008.

\bibitem{Hurewicz-Wallman(DimensionTheory)}
W.~Hurewicz and H.~Wallman.
\newblock {\em Dimension {T}heory}.
\newblock Princeton Mathematical Series, v. 4. Princeton University Press,
  Princeton, N. J., 1941.

\bibitem{Kasprowski-Rueping(long-and-thin)}
D.~Kasprowski and H.~R\"uping.
\newblock Long and thin covers for cocompact flow spaces.
\newblock Preprint, arXiv:1502.05001, 2015.

\bibitem{Lueck(ICM2010)}
W.~L{\"u}ck.
\newblock {$K$}- and {$L$}-theory of group rings.
\newblock In {\em Proceedings of the {I}nternational {C}ongress of
  {M}athematicians. {V}olume {II}}, pages 1071--1098. Hindustan Book Agency,
  New Delhi, 2010.

\bibitem{Lueck-Reich(2005)}
W.~L{\"u}ck and H.~Reich.
\newblock The {B}aum-{C}onnes and the {F}arrell-{J}ones conjectures in {$K$}-
  and {$L$}-theory.
\newblock In {\em Handbook of $K$-theory. Vol. 1, 2}, pages 703--842. Springer,
  Berlin, 2005.

\bibitem{Mineyev(2005)}
I.~Mineyev.
\newblock Flows and joins of metric spaces.
\newblock {\em Geom. Topol.}, 9:403--482 (electronic), 2005.

\bibitem{Mineyev-Yaman(rel-hyperbolic-bounded-cohom)}
I.~Mineyev and A.~Yaman.
\newblock Relative hyperbolicity and bounded cohomology.
\newblock Preprint, http://www.math.uiuc.edu/~mineyev/math/art/rel-hyp.pdf.

\bibitem{Mole(2013)}
A.~Mole.
\newblock Extending a metric on a simplicial complex.
\newblock Preprint, arXiv:1309.0981 [math.AT], 2013.

\bibitem{Mole-Rueping(EquivRefine)}
A.~Mole and H.~R\"uping.
\newblock Equivariant refinements.
\newblock Preprint, arXiv:1308.2799, 2013.

\bibitem{Osin(Asym-dim-rel-hyp-groups)}
D.~Osin.
\newblock Asymptotic dimension of relatively hyperbolic groups.
\newblock {\em Int. Math. Res. Not.}, (35):2143--2161, 2005.

\bibitem{Ozawa(2006amenact)}
N.~Ozawa.
\newblock Amenable actions and applications.
\newblock In {\em International {C}ongress of {M}athematicians. {V}ol. {II}},
  pages 1563--1580. Eur. Math. Soc., Z\"urich, 2006.

\bibitem{Ozawa(2006boundary)}
N.~Ozawa.
\newblock Boundary amenability of relatively hyperbolic groups.
\newblock {\em Topology Appl.}, 153(14):2624--2630, 2006.

\bibitem{Pears(Dimension-theory)}
A.~R. Pears.
\newblock {\em Dimension theory of general spaces}.
\newblock Cambridge University Press, Cambridge, England-New York-Melbourne,
  1975.

\bibitem{Quinn(2012virtab)}
F.~Quinn.
\newblock Algebraic {$K$}-theory over virtually abelian groups.
\newblock {\em J. Pure Appl. Algebra}, 216(1):170--183, 2012.

\bibitem{Sauer(AmenableCovers)}
R.~Sauer.
\newblock Amenable covers, volume and {$L^2$}-{B}etti numbers of aspherical
  manifolds.
\newblock {\em J. Reine Angew. Math.}, 636:47--92, 2009.

\bibitem{Szabo-Wu-Zacharias-Rohklin-res-finite}
G.~Szab{\'o}, J.~Wu, and J.~Zacharias.
\newblock Rohklin dimension for actions of residually finite groups.
\newblock Preprint, arXiv:1408.6096, 2014.

\bibitem{Szczepanski(Rel-hyp-groups)}
A.~Szczepa{\'n}ski.
\newblock Relatively hyperbolic groups.
\newblock {\em Michigan Math. J.}, 45(3):611--618, 1998.

\bibitem{Wegner(2012CAT0)}
C.~Wegner.
\newblock The {$K$}-theoretic {F}arrell-{J}ones conjecture for {CAT}(0)-groups.
\newblock {\em Proc. Amer. Math. Soc.}, 140(3):779--793, 2012.

\end{thebibliography}
\end{document}